\documentclass[11pt]{amsart}
\usepackage{graphicx} % Required for inserting images
\usepackage{amsfonts,epsfig}
\usepackage{latexsym}
\usepackage{amssymb}
\usepackage{amsmath}
\usepackage{amsthm}
\usepackage{graphics}
\usepackage[all]{xy}
\usepackage[T2A]{fontenc}
\usepackage{multirow}
\usepackage{hhline}
\usepackage{tikz-cd}
\usepackage{array, booktabs}
\usepackage[backref]{hyperref}
\usepackage{enumerate}
\newtheorem{defn}{Definition}[section]
\newtheorem{definition}[defn]{Definition}

\newtheorem{corollary}[defn]{Corollary}
\newtheorem{lemma}[defn]{Lemma}

\newtheorem{thm}[defn]{Theorem}
\newtheorem{theorem}[defn]{Theorem}
\newtheorem{cor}[defn]{Corollary}
\newtheorem{prop}[defn]{Proposition}
\newtheorem{proposition}[defn]{Proposition}

\newtheorem{algorithm}[defn]{Algorithm}

\theoremstyle{definition}

\newtheorem*{ack}{Acknowledgements}
\newtheorem{remark}[defn]{Remark}

\newtheorem{example}[defn]{Example}

\newcommand{\Q}{\mathbb{Q}}
\newcommand{\Z}{\mathbb{Z}}
\newcommand{\N}{\mathbb{N}}

\newcommand{\OK}{\mathcal{O}_K}
\newcommand{\Of}{\mathcal{O}_{K,f}}
\newcommand{\cC}{\mathcal{C}}
\newcommand{\OO}{\mathcal{O}}

\newcommand{\Rats}{\mathbb{Q}}

\newcommand{\SL}{\operatorname{SL}}

\newcommand{\arrow}{\longrightarrow}

\newcommand{\Gal}{\operatorname{Gal}}
\newcommand{\Aut}{\operatorname{Aut}}
\newcommand{\GQ}{\Gal(\overline{\Rats}/\Rats)}

\newcommand{\GL}{\operatorname{GL}}

\newcommand{\lcm}{\operatorname{lcm}}
\newcommand{\End}{\operatorname{End}}
\DeclareMathOperator{\disc}{disc} %discriminant
\DeclareMathOperator{\im}{im}  %for image
\DeclareMathOperator{\Id}{Id}  %for identity

\newcommand{\eclabel}[1]{\href{https://www.lmfdb.org/EllipticCurve/Q/#1}{\texttt{#1}}}

\title[The adelic Galois image of an elliptic curve with complex multiplication]{The image of the adelic Galois representation of an elliptic curve with complex multiplication}

\author{\'Alvaro Lozano-Robledo and Benjamin York}

\begin{document}

\begin{abstract}
Let $E/\mathbb{Q}$ be an elliptic curve and let $\rho_E \colon \operatorname{Gal}(\overline{\mathbb{Q}}/\mathbb{Q}) \to \operatorname{GL}(2, \widehat{\mathbb{Z}})$ be the adelic Galois representation attached to $E$. Much work has been done in recent years to study the image of $\rho_E$ (up to conjugation) as part of Mazur's so called ``Program B.'' In this paper, we describe and implement an efficient algorithm to compute the image of $\rho_E$ in $\operatorname{GL}(2, \widehat{\mathbb{Z}})$ (up to conjugation) for an elliptic curve $E/\mathbb{Q}$ with complex multiplication (CM) and $j$-invariant not $0$ or $1728$.
\end{abstract}

\maketitle

\section{Introduction}

Let $F$ be a number field, let $E/F$ be an elliptic curve, let $\ell$ be a prime number, and let $N\geq 2$ be an integer. Let $T_\ell(E)=\varprojlim E[\ell^n]$ and $T(E)=\varprojlim E[N]$ be, respectively, the $\ell$-adic and the adelic Tate modules of $E/F$. Then, $T_\ell(E)$ and $T(E)$ are $\Gal(\overline{F}/F)$-modules, and the associated Galois representations
\begin{align*}
   \rho_{E,\ell^{\infty}} &\colon \Gal(\overline{F}/F)\to \Aut(T_\ell(E))\cong \GL(2,\Z_\ell)\\
      \rho_{E} &\colon \Gal(\overline{F}/F)\to \Aut(T(E))\cong \GL(2,\widehat{\Z})
\end{align*}
are objects of great interest in modern algebraic number theory. There is indeed much literature (see, for example, \cite{2adic}, \cite{elladic} and \cite{zywina}) on the problem of classifying the possible images of $\rho_{E,\ell^\infty}$ and $\rho_E$ (up to conjugation as a subgroup of $\GL(2,\Z_\ell)$ and $\GL(2,\widehat{\Z})$, respectively) of such Galois representations, which is usually referred to as Mazur's ``Program B'' after \cite{mazurprogramb}. In particular, Zywina has recently described an algorithm to compute the adelic image of $\rho_E$ when $E/\Q$ in \cite{zywinaadelic1}, and more generally for $E/F$ over a number field in \cite{zywinaadelic2}, provided that $E$ does not have CM. 

For an elliptic curve $E/F$ with complex multiplication by an order $\OO$, the first author (\cite{lozano-galoiscm}; see Section \ref{sec-alvaroCMresults}) described all the possible $\ell$-adic images of Galois representations attached to $E$ when $F=\Q(j(E))$, and  Rouse, Sutherland, and Zureick-Brown have described an efficient algorithm to describe such images for a fixed curve $E/\Q(j(E))$ (see \cite[Section 12]{elladic}). The authors of the current paper together with Gonz\'alez-Jim\'enez (\cite{modelspaper}) have characterized the Weierstrass models of curves with each possible $\ell$-adic image. Bourdon and Clark (e.g., \cite{bourdon2},
\cite{bourdon}, \cite{bourdon4}) have extensively analyzed the properties of division fields and isogenies of CM elliptic curves and, in particular in  \cite[Cor. 1.5]{bourdon2}, they proved a uniform bound for the  index of the adelic Galois representation $\rho_E$ as a subgroup of the maximal subgroup $\mathcal{N}_{\delta,\phi}$ of $\Aut(T(E))$ allowed by the CM order $\OO$ (see Section \ref{sec-notation} for an explicit matrix description of $\mathcal{N}_{\delta,\phi}$). Lombardo (\cite{lombardo}) has studied division fields and shown a similar bound for adelic Galois representations of abelian varieties of CM type. Campagna and Pengo in \cite{CampagnaPengo2}  have determined the index of the adelic Galois representation $\rho_E$ as a subgroup  $\mathcal{N}_{\delta,\phi}$   and in \cite{CampagnaPengo1}  they study the possible entanglements between the $\ell$-adic division fields of $E$. 

The goal of this paper is to describe an efficient computational method to explicitly describe the adelic image of an elliptic curve $E/\Q$ with complex multiplication, when $j(E)\neq 0,1728$, as a subgroup of $\GL(2,\widehat{\Z})$ up to conjugation. The description of the adelic image in the cases of $j(E)=0,1728$ and $j(E)\not\in\Q$ is much more delicate, and it will be described by the authors in upcoming work. 

The main result of this paper is as follows.

\begin{thm}\label{main-result}
    Let $E/\Q$ be an elliptic curve with complex multiplication by an order $\Of$ of an imaginary quadratic field $K$, such that $j(E)\neq 0,1728$. Let $G_E$ be the image of the Galois representation $\rho_E\colon \GQ\to \GL(2,\widehat{\Z})$. Then,
    \begin{enumerate}
        \item There is a group $\mathcal{N}_{\delta,\phi}$ of $\GL(2,\widehat{\Z})$ such that $G_E\subseteq\mathcal{N}_{\delta,\phi}$ with index $[\mathcal{N}_{\delta,\phi}:G_E]=2$. 
        \item There is an explicitly computable integer $M\geq 2$ such that $G_E=\pi_M^{-1}(\pi_M(G_E))$, where $\pi_M\colon \mathcal{N}_{\delta,\phi}\to \mathcal{N}_{\delta,\phi}(\Z/M\Z)$ is the natural reduction mod-$M$ map. In other words, $G_E$ is defined modulo $M$.
        \item The image $\pi_M(G_E)$ is explicitly computable.
    \end{enumerate}
\end{thm}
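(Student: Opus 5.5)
The plan rests on CM theory over $\Q$ and the structure of $\mathcal{N}_{\delta,\phi}$ recalled in Section \ref{sec-notation}. Since $j(E)\neq 0,1728$, we have $\Of^\times=\{\pm1\}$. The class number of $\Of$ is $1$, so $K\cdot\Q(j(E))=K$ and $E$ is defined over $\Q$.

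\textbf{Step 1: the containment.} I would fix a $\widehat{\Z}$-basis of $T(E)\cong \widehat{\Of}$ of the form $\{1,f\omega_K\}$ with $\delta,\phi$ chosen as in Section \ref{sec-notation}. In this basis the action of $\Gal(\overline{\Q}/K)$ commutes with $\Of$, so its image is contained in the Cartan group $\mathcal{C}_{\delta,\phi}=\widehat{\Of}^\times$. Complex conjugation normalizes this group and acts as $c=\begin{pmatrix}-1&0\\ \phi&1\end{pmatrix}$ (up to the paper's conventions), so $G_E\subseteq \mathcal{N}_{\delta,\phi}=\mathcal{C}_{\delta,\phi}\cup c\,\mathcal{C}_{\delta,\phi}$.

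\textbf{Step 2: the index equals $2$.} By the main theorem of CM (Shimura), $\rho_E|_{G_K}$ factors through the idèle class group. For an idèle $s$ with finite part $s_f\in\widehat{\Of}^\times$ and trivial infinite part, the image is $\psi_E(s)\,s_f^{-1}$, where $\psi_E$ is the Grössencharacter. The Artin map identifies the abelian image with $\widehat{\Of}^\times$ modulo the closure of the global units $\Of^\times=\{\pm1\}$. Hence $\rho_E(G_K)$ has index dividing $2$ in $\widehat{\Of}^\times$. The index is exactly $2$ because $-1$ cannot lie outside the image of $\ker$ in a nontrivial way: equivalently, $K(E_{\rm tors})=K^{ab}$ and the Galois group $\widehat{\Of}^\times/\{\pm1\}$ embeds into $\Aut T(E)$ with index $[\widehat{\Of}^\times:\rho_E(G_K)]=|\Of^\times|=2$. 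Since $G_E$ meets the nontrivial coset (complex conjugation lies in it), $[\mathcal{N}_{\delta,\phi}:G_E]=2$. This is the Bourdon--Clark / Campagna--Pengo index computation.

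\textbf{Step 3: finiteness of level and computability.} Here I would use the $\ell$-adic classification of Section \ref{sec-alvaroCMresults}. For $\ell\nmid 2fd_K\Delta_E$ the $\ell$-adic image is all of $\mathcal{N}_{\delta,\phi}(\Z_\ell)$. Take $M$ to be a product of $\ell^{n_\ell}$ over $\ell\mid 2 d_K f N_E$, with $n_\ell$ the $\ell$-adic levels from \cite{lozano-galoiscm}, including a power of $2$ large enough to see the quadratic character. Because $G_E$ has index $2$, it equals $\ker(\chi)$ for a quadratic character $\chi\colon\mathcal{N}_{\delta,\phi}\to\{\pm1\}$, and $G_E$ is defined mod $M$ as soon as $\chi$ factors through $\pi_M$.

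To identify $\chi$, I would note that on $\mathcal{C}_{\delta,\phi}$ the restriction of $\chi$ is the character cutting out $\rho_E(G_K)$. This character is determined by how $-1\in\widehat{\Of}^\times$ twists $\psi_E$, that is, by the conductor of $\psi_E$ together with the quadratic twist class of $E$ ($E$ versus its twist by $\Q(\sqrt{D})$). The conductor divides the conductor of $E$, so $\chi$ is trivial on $1+M\widehat{\Of}$ whenever $M$ is divisible by that conductor and by $2f d_K$. Then $\pi_M(G_E)=\ker(\bar\chi)$, and $\bar\chi$ is computed explicitly by evaluating $\rho_E$ on Frobenius elements via $a_p$, and equivalently $\psi_E(\mathfrak{p})$, for enough small primes to generate $\mathcal{N}_{\delta,\phi}(\Z/M\Z)$.

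\textbf{Main obstacle.} The main difficulty is this last identification: pinning down $\chi$ on the coset of $c$ and on the Cartan subgroup with a provably sufficient $M$, and handling entanglement at $\ell=2$ and at $\ell\mid d_K$ where the twist lives. I would first write $\chi=\chi_0\cdot\epsilon$, with $\chi_0$ a fixed character attached to a base curve in the $\Q$-isomorphism class with $j=j(E)$, and $\epsilon$ the Dirichlet character of the quadratic twist. I would then verify generation of $\mathcal{N}_{\delta,\phi}(\Z/M\Z)$ by Frobenius elements via Chebotarev.
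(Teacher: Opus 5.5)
\textbf{Verdict: genuine gap in part (3).} Parts (1) and (2) are fine; part (3) cannot be completed with Frobenius data, as proposed.

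\textbf{Parts (1) and (2).} For (1) you make the same appeal to the Campagna--Pengo index formula that the paper makes. Your route to (2) is a legitimate alternative to the paper's twist-and-entanglement argument. For $M\geq 3$, no element of $\mathcal{N}_{\delta,\phi}\setminus\mathcal{C}_{\delta,\phi}$ is $\equiv\operatorname{Id}\bmod M$. So $G_E$ is defined mod $M$ once $\chi|_{\mathcal{C}_{\delta,\phi}}$, the unit part of $\psi_E$, kills $1+M\widehat{\Of}$. That holds when $\mathfrak f_{\psi_E}\mid M$, and $\mathfrak f_{\psi_E}\mid N_E$ because $N_E=|\Delta_K|\,N_{K/\Q}(\mathfrak f_{\psi_E})$.

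\textbf{The gap in (3).} Frobenius data can only recover $G_{E/K,M}$.
At a split prime, $\mathrm{Frob}_{\mathfrak p}$ acts as the endomorphism $\psi_E(\mathfrak p)\in\Of$. So you do get honest elements of $\mathcal{C}_{\delta,\phi}(M)$, and you may stop once they generate an index-$2$ subgroup. They can never generate $\mathcal{N}_{\delta,\phi}(\Z/M\Z)$, as you wrote, since they all lie in $G_{E,M}$.
The non-Cartan coset is the problem. Every element of $\mathcal{N}_{\delta,\phi}\setminus\mathcal{C}_{\delta,\phi}$ has trace $0$, so an inert Frobenius only tells you $a_p=0$ and $\det=p$. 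The elements $C$ and $-C$ share both values, so this data cannot distinguish $\langle C,G_{E/K,M}\rangle$ from $\langle -C,G_{E/K,M}\rangle$.
This ambiguity actually occurs. Take an $\ell$-simplest $E$ and its twist $E^{-\ell}$, e.g.\ \texttt{49.a2} and \texttt{49.a4}.
\begin{itemize}
\item They have the same $G_{E/K}$, because $\chi_{-\ell}$ is trivial on $G_K$.
\item They have the same $a_p$ for every $p$, since both lie in the isogeny class \texttt{49.a}.
\item Yet by Theorem~\ref{thm-40simplestcurves}(2e) their $7$-adic images, $\langle J_{-7/4,0},c_{1}\rangle$ and $\langle J_{-7/4,0},c_{-1}\rangle$, are not conjugate.
\end{itemize}
So no Chebotarev argument on $a_p$ or $\psi_E(\mathfrak p)$ can determine $\chi$ on the non-Cartan coset. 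You correctly flagged this as the main obstacle, but your proposal does not resolve it.

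\textbf{The missing idea, and how the paper supplies it.} You need one explicitly known element of $G_{E,M}\setminus G_{E/K,M}$.
\begin{itemize}
\item The paper writes $E=(E')^N$ with $E'$ $\ell$-simplest and $\gcd(\ell,N^\dagger)=1$.
\item It takes the specific $c_\varepsilon\in G_{E',\ell^n}$ supplied by the classification in \cite{modelspaper} and lifts it to level $M$.
\item It transports this element via $\rho_E(\sigma)=\chi_N(\sigma)\rho_{E'}(\sigma)$ (Prop.~\ref{cpx_conj_prop2}).
\item The Cartan part comes from the entanglement $K(\sqrt N)=\Q(E[\ell^n])\cap\Q(E[N^\dagger])$, not from Frobenius sampling.
\end{itemize}

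\textbf{Your decomposition $\chi=\chi_0\cdot\epsilon$.} It has the right shape and can be made exact. Read $\epsilon$ as $\epsilon_N\circ\det$, where $\epsilon_N$ is the Dirichlet character of $\Q(\sqrt N)$. Use the compatible bases of Prop.~\ref{prop-basis-preserving} and the fact $-\operatorname{Id}\notin G_{E'}$. Then $G_E=\{g\in\mathcal{N}_{\delta,\phi}:\chi_{E'}(g)\,\epsilon_N(\det g)=1\}$, which gives both the level $\ell^nN^\dagger$ and $\pi_M(G_E)$ at once. But this formula still needs $\chi_{E'}$ on the non-Cartan coset as input, and that is exactly what Frobenius elements cannot provide.
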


The auxiliary constants $\delta$ and $\phi$, and the groups $\mathcal{N}_{\delta,\phi}$ will be defined in Section \ref{sec-notation}. For now, we provide two examples:
\begin{example}\label{ex-intro1}
    Let $E/\Q$ be the elliptic curve given by $y^2+xy=x^3-x^2-107x+552$ (with LMFDB label \href{https://www.lmfdb.org/EllipticCurve/Q/49/a/2}{49.a2}; see \cite{lmfdb} for more information on the LMFDB). This elliptic curve has CM by $\Z[(1+\sqrt{-7})/2]$, the maximal order of $\Q(\sqrt{-7})$. The mod-$7$ image of $\rho_{E,7}$ is of index $2$
 in the maximal possible image $\mathcal{N}_{-2,1}(\Z/7\Z)$ allowed by the CM order (this follows from, for example, \cite{modelspaper}, Table 4). In particular, $E/\Q$ is what we call a simplest CM curve (see Section \ref{sec-simplestCMcurves}), and therefore, the adelic level of definition of $\rho_E$ is $7$ and the adelic image is the full inverse image of $G_{E,7}$ in $\mathcal{N}_{-2,1}(\widehat{\Z})$ via the natural projection map  $\mathcal{N}_{-2,1}(\widehat{\Z})\to \mathcal{N}_{-2,1}(\Z/7\Z)$ given by reduction modulo $7$. Hence, the adelic image is given by
 $$G_{E}=\left\langle \left\{ \left(\begin{array}{cc} a+b & b\\ -2b & a \\\end{array}\right) : a,b \in \widehat{\Z}, a^2+ab+2b^2 \in \widehat{\Z}^\times, a+b/2 \in ((\Z/7\Z)^\times)^2\right\}, \left(\begin{array}{cc} 1 & 0\\ -1 & -1 \\\end{array}\right)\right\rangle$$
 in $\GL(2,\widehat{\Z})$. We will come back to this curve in Example \ref{ex-intro1-details} and show that the adelic image is as described here.
 \end{example}

 \begin{example}\label{ex-intro2}
    Let $E'/\Q$ be the elliptic curve given by $y^2+xy+y=x^3-x^2-965x-13940$ 
 (\href{https://www.lmfdb.org/EllipticCurve/Q/441/c/2}{441.c2}), which is a quadratic twist by $-3$ of $E$ in Example \ref{ex-intro1}. Thus, the elliptic curve $E'$ also has CM by $\Z[(1+\sqrt{-7})/2]$, the maximal order of $\Q(\sqrt{-7})$. The image of $\rho_{E',7^\infty}$ is the maximal possible image allowed by the CM order, namely $\mathcal{N}_{-2,1}(7^\infty)$. However, the image modulo $21$ is conjugate to
    $$G_{E',21}=\left\langle \left(\begin{array}{cc} -1 & 0\\ 1 & 1 \\\end{array}\right), \left(\begin{array}{cc} 1 & 10\\ 1 & 12 \\\end{array}\right)\right\rangle$$
    that has index $2$ in $\mathcal{N}_{-2,1}(\Z/21\Z)$. In this case, the level of definition of the adelic image is $21$ and image of $\rho_E$ is the full inverse image of $G_{E',21}$ via the natural reduction map $\mathcal{N}_{-2,1}(\widehat{\Z})\to \mathcal{N}_{-2,1}(\Z/21\Z)$.
\end{example} 

While this paper concerns mostly those elliptic curves $E/\Q$ with CM and $j(E)\neq 0,1728$, we are also able to compute the adelic images of simplest curves with $j(E)=0,1728$ defined over $\Q$. Here is one example of a curve with $j(E)=1728$. 

 \begin{example} Let $E/\Q$ be given by $y^2=x^3+x$ (\href{https://www.lmfdb.org/EllipticCurve/Q/64/a/4}{\texttt{64.a4}}). This elliptic curve has CM by $\Z[i]$, the maximal order of $\Q(i)$, and it is a $2$-simplest elliptic curve. According to \cite[Table 4]{modelspaper}, the $2$-adic image $G_{E,2^\infty}$ of $E/\Q$ was computed in  \cite[Table 4]{modelspaper} and it is defined modulo $2^{n_{E,2}}=16$. Therefore, by Theorem \ref{thm-adelicimageofsimplestcurves}, the adelic image $G_E$ of $E$ is given by
   $$\left\langle \left\{ c_{-1,0}(a,b)=\left(\begin{array}{cc} a & b\\ -b & a \\\end{array}\right) : a,b \in \widehat{\Z}, a^2+b^2 \in \widehat{\Z}^\times, (c_{-1,0}(a,b) \bmod 16) \in G_{E,16}\right\}, \left(\begin{array}{cc} 0 & -1\\ -1 & 0 \\\end{array}\right)\right\rangle$$
   as a subgroup of $\GL(2,\widehat{\Z})$. We will come back to this example in Example \ref{ex-intro3-details}.
 \end{example}

The paper is organized as follows. In Section \ref{sec-notation} we introduce basic notation and recall previous results from the literature on the classification of Galois representations in the CM case, and we recall some facts about division fields. In Section \ref{sec-level} we define the concept of ($\ell$-adic and adelic) level of definition of a Galois representation attached to CM elliptic curves. In Section \ref{sec-simplestCMcurves} we define  simplest CM elliptic curves whose adelic images are simplest to compute, and are used to compute the image of any other CM curve via twisting. In Section \ref{sec-determinelevelofdef} we prove results about the level of definition of an adelic image. In Section  \ref{sec-complexconj} we determine the image of complex conjugation under $\rho_E$, and in Section  \ref{sec-imageofcartan} we determine the image of the Cartan subgroup. In Section \ref{sec-proofofmaintheorem} we prove Theorem \ref{main-result} and we describe Algorithm \ref{alg-main} which computes a level of definition $M$ and the image modulo $M$. Finally, in Section \ref{sec-levelsofdiff} we show that the adelic level of definition $M$ computed in Algorithm \ref{alg-main} differentiates images up to conjugation (see Definition \ref{def-level-of-dif}).

\subsection{Code}

Algorithm \ref{alg-main} has been implemented in \verb|Magma| \cite{magma}. The implementation of this algorithm can be found in the GitHub repository \cite{githubrepo}
\begin{center}
    \url{https://github.com/benjamin-york/CM_adelic_Galois_images}
\end{center}

\begin{ack}
    The authors would like to thank Francesco Campagna, Riccardo Pengo, Drew Sutherland, and David Zywina for their thoughts and comments on previous versions of this paper. The second author would also like to thank Shiva Chidambaram, Asimina Hamakiotes, and David Roe for many valuable discussions during the completion of this project.
\end{ack}

\section{Notation and previous results}\label{sec-notation}

In this section, we will set the notation that we will follow in the rest of the paper, define the matrix groups that are the maximal possible images in the CM case, and recall results from previous articles that we will use in later sections. Our main references are \cite{lozano-galoiscm} and \cite{modelspaper}.

Let $K$ be an imaginary quadratic field, and let $\OK$ be the ring of integers of $K$ with discriminant $\Delta_K$. Let $f\geq 1$ be an integer and let $\OO_{K,f}$ be the order of $K$ of conductor $f$. Let $j_{K,f}$ be a CM $j$-invariant corresponding to the order $\OO_{K,f}$ (thus, $j_{K,f}$ is an algebraic integer and every CM $j$-invariant with CM by $\OO_{K,f}$ is a Galois conjugate of $j_{K,f}$) and let $N\geq 2$. We define associated constants $\delta$ and $\phi$ as follows:
\begin{itemize}
    \item If $\Delta_Kf^2\equiv 0\bmod 4$, let $\delta=\Delta_K f^2/4$, and $\phi=0$.
	\item If $\Delta_Kf^2\equiv 1 \bmod 4$, let $\delta=\frac{(\Delta_K-1)}{4}f^2$, let $\phi=f$.
\end{itemize}
We define matrices in $\GL(2,\Z/N\Z)$ by
$$c_\varepsilon=\left(\begin{array}{cc} \varepsilon & 0\\ -\varepsilon\phi & -\varepsilon\\\end{array}\right),\quad c'_\varepsilon = \left(\begin{array}{cc} 0 & \varepsilon \\ \varepsilon & 0 \\ \end{array}\right), \quad \text{ and } \quad c_{\delta,\phi}(a,b)=\left(\begin{array}{cc}a+b\phi & b\\ \delta b & a\\ \end{array}\right)$$ where $\varepsilon \in \{\pm 1\}$ and $a,b \in \Z/N\Z$ such that $\det(c_{\delta,\phi}(a,b))\in (\Z/N\Z)^\times$. 
We define the Cartan subgroup $\cC_{\delta,\phi}(N)$ of $\GL(2,\Z/N\Z)$ by
$$\cC_{\delta,\phi}(N)=\left\{c_{\delta,\phi}(a,b): a,b\in\Z/N\Z,\  \det(c_{\delta,\phi}(a,b)) \in (\Z/N\Z)^\times \right\},$$
and $\mathcal{N}_{\delta,\phi}(N) = \left\langle \cC_{\delta,\phi}(N),c_1\right\rangle$. We will sometimes call $\mathcal{N}_{\delta,\phi}(N)$ the ``normalizer'' of $\cC_{\delta,\phi}(N)$ in $\GL(2,\Z/N\Z)$. In the case of a prime number $N$, then $\mathcal{N}_{\delta,\phi}(N)$ is the true group-theoretic normalizer, but this is often not the case for composite values of $N$ (see \cite{lozano-galoiscm}, Section 5). Finally, we write $\mathcal{N}_{\delta,\phi} = \varprojlim \mathcal{N}_{\delta,\phi}(N)$, where the projective limit is taken over the natural projection maps $\mathcal{N}_{\delta,\phi}(N)\to \mathcal{N}_{\delta,\phi}(M)$ given by reduction mod-$N$ when $M|N$, and regard the inverse limit as a subgroup $\mathcal{N}_{\delta,\phi}\subseteq \GL(2,\widehat{\Z})$. If $\ell$ is a prime, we write  $\cC_{\delta,\phi}(\ell^{\infty}) = \varprojlim \cC_{\delta,\phi}(\ell^n)$ and $\mathcal{N}_{\delta,\phi}(\ell^{\infty}) = \varprojlim \mathcal{N}_{\delta,\phi}(\ell^n)$.

\begin{remark}
    If $\Delta_Kf^2\equiv 1 \bmod 4$ and $N$ is odd, one can find a basis of $E[N]$ such that the image of $\rho_{E,N}$ is a subgroup of $\mathcal{N}_{\delta,0}(N)$ with $\delta=\Delta_Kf^2/4$, which allows for simpler (and somewhat more familiar) descriptions of the images (see \cite[Thm. 1.1]{lozano-galoiscm}, and  Remark \ref{rem-changebasis}). However, under the same assumptions of $\Delta_Kf^2\equiv 1 \bmod 4$, we need to use the choices $\delta=\frac{(\Delta_K-1)}{4}f^2$ and $\phi=f$ in order to build a  $\widehat{\Z}$-basis that is compatible modulo $N$, for every $N>1$.
\end{remark}

\subsection{Notation for Galois representations}\label{sec-galoisrepnotation}
Let $E/\Q(j_{K,f})$ be an elliptic curve with CM by $\OO_{K,f}$. We define several Galois representations attached to $E$, as follows. Let  $G_{\Q(j_{K,f})}$  be the absolute Galois group $\Gal(\overline{\Q(j_{K,f})}/\Q(j_{K,f}))$ of the field of definition $\Q(j_{K,f})$, let $N\geq 2$ be an integer, and let $\ell$ be a prime. The natural action of $G_{\Q(j_{K,f})}$ on $E[N]$, on the $\ell$-adic  Tate module $T_\ell(E)=\varprojlim E[\ell^n]$, and on the adelic Tate module $T(E)=\varprojlim E[N]$ induce Galois representations
\begin{align*}
    \rho_{E,N} &\colon G_{\Q(j_{K,f})} \to \Aut(E[N]),\\
    \rho_{E,\ell^\infty} &\colon G_{\Q(j_{K,f})} \to \Aut(T_\ell(E)),\\
    \rho_{E} &\colon G_{\Q(j_{K,f})} \to \Aut(T(E)).
\end{align*}
Moreover, once we choose $\Z/N\Z$, $\Z_\ell$, and $\widehat{\Z}$-bases of $E[N]$, $T_\ell(E)$, and $T(E)$, respectively, we obtain non-canonical isomorphisms of the automorphism groups with the groups of matrices $\GL(2,\Z/N\Z)$, $\GL(2,\Z_\ell)$, and $\GL(2,\widehat{\Z})$. Thus, we will usually consider the images of the Galois representations as subgroups of matrices in $\GL_2$.  

We will typically denote the images of $\rho_{E,N}$, $\rho_{E,\ell^\infty}$, and $\rho_E$ by $G_{E,N}$, $G_{E,\ell^\infty}$, and $G_E$, respectively. The results stated below show that there exists an appropriate choice of basis such that $G_{E,\ell^\infty} \subseteq \mathcal{N}_{\delta,\phi}(\ell^{\infty})$, and $G_E\subseteq \mathcal{N}_{\delta,\phi}\subseteq \GL(2,\widehat{\Z})$.

Finally, we will often need to restrict the Galois representations to the index-$2$ subgroup $G_{K(j_{K,f})}=\Gal(\overline{\Q(j_{K,f})}/K(j_{K,f}))$ of the absolute Galois group $G_{\Q(j_{K,f})}$, and we will denote the restrictions by $\rho_{E/K,N}$, $\rho_{E/K,\ell^\infty}$, and $\rho_{E/K}$, and their images will be denoted by  $G_{E/K,N}$, $G_{E/K,\ell^\infty}$, and $G_{E/K}$, respectively. Our choice of $\widehat{\Z}$-basis will be such that $G_{E/K}\subseteq \mathcal{C}_{\delta,\phi}\subseteq \GL(2,\widehat{\Z})$.

\subsection{Classification of \texorpdfstring{$\ell$}{ell}-adic images} \label{sec-alvaroCMresults}

For a prime $\ell$, the first author (in \cite{lozano-galoiscm}) has given a complete classification of the possible $\ell$-adic images $G_{E,\ell^\infty} = \rho_{E,\ell^{\infty}}(G_{\Q(j_{K,f})})$ attached to elliptic curves $E/\Q(j_{K,f})$ with CM by $\OO_{K,f}$, which we summarize next.

\begin{theorem}[\cite{lozano-galoiscm}, Theorems 1.1, 1.2, and 4.1]\label{thm-cmrep-intro-alvaro}\label{thm-firstofsec1} Let $E/\Q(j_{K,f})$ be an elliptic curve with CM by $\OO_{K,f}$, and let $N\geq 2$.
	 Then:
	\begin{enumerate}
            \item There is a $\Z/N\Z$-basis of $E[N]$ such that the image $G_{E,N}$ of $\rho_{E,N}$
	is contained in $\mathcal{N}_{\delta,\phi}(N)$, and	 the index of the image of $\rho_{E,N}$ in $\mathcal{N}_{\delta,\phi}(N)$ is a divisor of the order of $\Of^\times/\mathcal{O}_{K,f,N}^\times$, where $\mathcal{O}_{K,f,N}^\times=\{u\in\Of^\times: u\equiv 1 \bmod N\Of\}$.
    \item With the basis chosen as in (1), and if the image of $\rho_{E,N}$ is $G_{E,N}\subseteq \mathcal{N}_{\delta,\phi}(N)$, then the image of $\rho_{E/K,N}$ is $G_{E,N}\cap \mathcal{C}_{\delta,\phi}(N)$. 
		\item There is a compatible system of bases of $E[N]$ such that the image of $\rho_E$ is contained in $\mathcal{N}_{\delta,\phi}$, and the index of the image of $\rho_{E}$ in $\mathcal{N}_{\delta,\phi}$ is a divisor of $|\Of^\times|$. In particular, the index is a divisor of $4$ or $6$. 
	\end{enumerate} 
\end{theorem}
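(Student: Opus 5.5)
The plan is to use CM theory: identify $T(E)$ with a free rank-one $\widehat{\Of}$-module, and read off the Galois action from the Artin map and complex conjugation.

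\emph{Step 1: the basis and the Cartan part.}
Write $\Of=\Z[\tau]$, where $\tau$ has minimal polynomial $x^2-\phi x+\delta$. This matches the two cases $\phi=0$, $\delta=\Delta_Kf^2/4$ and $\phi=f$, $\delta=\frac{\Delta_K-1}{4}f^2$.

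Over $\Q(j_{K,f})$ we have $E\cong\mathbb{C}/\Lambda$, and we may take $\Lambda$ to be a proper $\Of$-ideal. Then $E[N]\cong \Lambda/N\Lambda$ is free of rank one over $\Of/N\Of$.

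Choose a generator $P$ of $E[N]$ as an $\Of/N\Of$-module, and work in the $\Z/N\Z$-basis $\{\tau P, P\}$ (or its transpose ordering). In this basis, multiplication by $a+b\tau$ is given by exactly the matrix $c_{\delta,\phi}(a,b)$, using $\tau^2=\phi\tau-\delta$.

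For $\sigma\in G_{K(j_{K,f})}$, the action of $\sigma$ commutes with the endomorphisms, since they are defined over $K(j_{K,f})$. Hence $\rho_{E,N}(\sigma)$ is an $\Of/N\Of$-linear automorphism of a free rank-one module. Such an automorphism is multiplication by a unit of $\Of/N\Of$, so $\rho_{E,N}(\sigma)$ lies in $\cC_{\delta,\phi}(N)$.

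\emph{Step 2: complex conjugation and compatibility.}
Take $\sigma\in G_{\Q(j_{K,f})}\setminus G_{K(j_{K,f})}$. Since $\sigma$ acts on $K$ by conjugation, $\sigma\circ[\alpha]=[\bar\alpha]\circ\sigma$. Therefore $\rho(\sigma)$ is semilinear, and it normalizes the Cartan subgroup.

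Because $\Of\cong\Lambda$ as modules is compatible with conjugation, we can choose $P$ so that $\sigma(P)=uP$ for some unit $u$ of $\Of/N\Of$. Indeed, $\sigma$ maps generators to generators. Then $\rho(\sigma)=c_{\delta,\phi}(u)\cdot\kappa$, where $\kappa$ is the matrix of $\alpha\mapsto\bar\alpha$ on the basis. Since $\bar\tau=\phi-\tau$, the matrix $\kappa$ equals $c_1$ up to the ordering convention. This puts $G_{E,N}$ inside $\mathcal{N}_{\delta,\phi}(N)$.

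For part (2) of the theorem, elements of $G_{E,N}$ outside the Cartan subgroup act semilinearly. They therefore cannot come from $G_{K(j_{K,f})}$. Conversely, restrictions to $G_{K(j_{K,f})}$ land in $\cC_{\delta,\phi}(N)$. Hence the image of $\rho_{E/K,N}$ is exactly $G_{E,N}\cap\cC_{\delta,\phi}(N)$.

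For part (3), build the bases compatibly. Take a generator of $T(E)\cong\widehat{\Lambda}$ over $\widehat{\Of}$; it exists because $\Lambda$ is locally principal, being a proper ideal. Then reduce it modulo every $N$.

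\emph{Step 3: the index bound, which is the main obstacle.*}
By the main theorem of complex multiplication (Shimura reciprocity), the action of $G_{K(j_{K,f})}$ on torsion is given through the idelic Artin map. The Galois group of $K(j_{K,f},E[N])/K(j_{K,f})$ compares with the ring class field of conductor $fN$. Up to a twist by a Hecke character, the ray class field $K(j_{K,f},h(E[N]))$, with $h$ a Weber function, has Galois group surjecting onto $(\Of/N\Of)^\times/\mathrm{im}(\Of^\times)$.

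So the image in $\cC_{\delta,\phi}(N)\cong(\Of/N\Of)^\times$ has index dividing $|\mathrm{im}(\Of^\times\to(\Of/N\Of)^\times)|=|\Of^\times/\mathcal{O}_{K,f,N}^\times|$. Adding one element outside the Cartan, such as complex conjugation, doubles both the image and $\mathcal{N}_{\delta,\phi}(N)$, so the index bound is preserved.

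The delicate point is making the class-field-theory comparison precise for non-maximal orders. The surjectivity onto the ray class group quotient must be checked for the order $\Of$, which uses the idelic description $\widehat{K}^\times/K^\times\widehat{\Of}^{\times}$ together with the kernel of $\widehat{\Of}^\times\to(\Of/N\Of)^\times$.

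Passing to the inverse limit over $N$ gives index dividing $|\Of^\times|\in\{2,4,6\}$. In particular the index divides $4$ or $6$.
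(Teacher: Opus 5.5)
The paper does not prove this statement; it imports it from Lozano-Robledo's work. Your architecture is the standard one used there: $E[N]$ free of rank one over $\Of/N\Of$, $G_{K(j_{K,f})}$ acting $\Of$-linearly, the other coset acting semilinearly, and the index bound from CM theory of the order.

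\textbf{The gap is in your proof of part (2).} Both of your sentences establish only $\rho_{E,N}(G_{K(j_{K,f})})\subseteq G_{E,N}\cap\cC_{\delta,\phi}(N)$. The reverse inclusion requires that no $\sigma\notin G_{K(j_{K,f})}$ is sent into the Cartan subgroup.

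By your own computation $\rho_{E,N}(\sigma)=[u]\kappa$, so this is equivalent to $\kappa\notin\cC_{\delta,\phi}(N)$. That means conjugation on $\Of/N\Of$ is not $\Of/N\Of$-linear, i.e.\ $2\tau-\phi=f\sqrt{\Delta_K}\notin N\Of$. Since $2\tau-\phi$ has coordinates $(-\phi,2)$ in the basis $\{1,\tau\}$, this holds exactly when $N\geq 3$ or $\phi$ is odd.

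When $N=2$ and $\phi$ is even, the condition fails, and so does the claim itself. Take $E\colon y^2=x^3+x$, so $\delta=-1$, $\phi=0$ and $c_1\equiv \Id \bmod 2$. Then $\mathcal{N}_{-1,0}(2)=\cC_{-1,0}(2)$ has order $2$. Since $\Q(E[2])=\Q(i)=K$, we get $G_{E,2}=\cC_{-1,0}(2)$, while $G_{E/K,2}$ is trivial.

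So part (2), as reproduced here for $N\geq 2$, needs the hypothesis $N\geq 3$. That hypothesis is present in the original source and in the paper's own Proposition~\ref{prop-sameindex}. Under it, the one-line check above closes your gap.

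The same check is silently used in your Step 3 sentence about ``doubling.'' In the remaining case ($N=2$, $\phi$ even), argue instead that $G_{E/K,N}\subseteq G_{E,N}\subseteq\cC_{\delta,\phi}(N)=\mathcal{N}_{\delta,\phi}(N)$. Then the index of $G_{E,N}$ divides that of $G_{E/K,N}$.

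\textbf{Two smaller corrections.}

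First, $\tau=(\phi+f\sqrt{\Delta_K})/2$ satisfies $\tau^2=\phi\tau+\delta$, not $\tau^2=\phi\tau-\delta$; with your relation the lower-left entry would be $-\delta b$. With the correct relation, the basis $\{\tau P,P\}$ in the column convention gives exactly $c_{\delta,\phi}(a,b)$. Also $\kappa=\left(\begin{smallmatrix}-1&0\\ \phi&1\end{smallmatrix}\right)=c_{-1}$, which lies in $\mathcal{N}_{\delta,\phi}(N)$ because $-\Id\in\cC_{\delta,\phi}(N)$. (Note that $\sigma(P)=uP$ holds for every generator $P$; no special choice is needed.)

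Second, Step 3 has the right idea but is loosely phrased, and the ring class field of conductor $fN$ plays no role. What you need is that the composite $G_{E/K,N}\hookrightarrow(\Of/N\Of)^\times\to(\Of/N\Of)^\times/\operatorname{im}(\Of^\times)$ is surjective. This is exactly what Shimura reciprocity for $\Of$ gives: for each $s\in\widehat{\Of}^{\times}$ there is $\sigma\in G_{K(j_{K,f})}$ acting on $E[N]$ as multiplication by $\varepsilon_s s^{-1}$, for some $\varepsilon_s\in\Of^\times$. This unit is your ``twist.''

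Hence $G_{E/K,N}\cdot\operatorname{im}(\Of^\times)=(\Of/N\Of)^\times$. It follows that $[\cC_{\delta,\phi}(N):G_{E/K,N}]=[\operatorname{im}(\Of^\times):\operatorname{im}(\Of^\times)\cap G_{E/K,N}]$, which divides $|\Of^\times/\mathcal{O}_{K,f,N}^\times|$. The cited source also takes this input from the literature, so deferring it is acceptable. Your passage to the adelic limit is fine.
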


The following theorem shows that, for a prime $\ell>3$, the image of $\rho_{E,\ell^\infty}$ is, in fact, defined modulo $\ell$. Moreover, if the prime $\ell$ does not divide $2\Delta_K f$, then the $\ell$-adic image is as large as possible.

\begin{thm}[\cite{lozano-galoiscm}, Theorem 1.2]\label{thm-goodredn}
Let $E/\Q(j_{K,f})$ be an elliptic curve with CM by $\OO_{K,f}$, and let $\ell$ be prime. If $\ell > 3$, or $\ell > 2$ and $j_{K,f} \neq 0$, then $G_{E,\ell^\infty}$ is the full inverse image via the natural reduction mod-$\ell$ map $\mathcal{N}_{\delta,\phi}(\ell^\infty)\to \mathcal{N}_{\delta,\phi}(\ell)$ of the image $G_{E,\ell}$ of $\rho_{E,\ell}\equiv \rho_{E,\ell^\infty}\bmod \ell$. Further, if $\ell \nmid 2 \Delta_K f$, then $G_{E,\ell} = \mathcal{N}_{\delta, \phi}(\ell)$, and so $G_{E,\ell^{\infty}} = \mathcal{N}_{\delta, \phi}(\ell^{\infty})$.
\end{thm}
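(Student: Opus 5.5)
The plan is to work first with the restriction $\rho_{E/K,\ell^\infty}$ to $G_{K(j_{K,f})}$, whose image lies in the abelian group $\cC_{\delta,\phi}(\ell^\infty)\cong(\Of\otimes\Z_\ell)^\times$. The extra generator $c_1$ will be handled at the end. There are two parts: the statement that the image is defined mod $\ell$, and surjectivity mod $\ell$ when $\ell\nmid 2\Delta_K f$.

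\emph{Level $\ell$.} By Theorem \ref{thm-cmrep-intro-alvaro}(1),(3), applied $\ell$-adically, the index $d=[\cC_{\delta,\phi}(\ell^\infty):G_{E/K,\ell^\infty}]$ divides $|\Of^\times|$.
\begin{itemize}
\item If $j_{K,f}\neq 0,1728$, then $d\mid 2$.
\item If $j_{K,f}=1728$, then $d\mid 4$.
\item If $j_{K,f}=0$, then $d\mid 6$.
\end{itemize}
Under the hypotheses ($\ell>3$, or $\ell>2$ with $j_{K,f}\neq 0$), $d$ is therefore prime to $\ell$.

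The kernel of reduction $\cC_{\delta,\phi}(\ell^\infty)\to\cC_{\delta,\phi}(\ell)$ is $U=1+\ell(\Of\otimes\Z_\ell)$, which is a pro-$\ell$ abelian group. The $d$-th power map is therefore bijective on $U$. Write $H=G_{E/K,\ell^\infty}$; since $\cC/H$ has order $d$, we get $u^d\in H$ for all $u\in\cC$. Hence $U=U^d\subseteq H$, so $H$ is the full preimage of its reduction mod $\ell$.

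For the full image, $G_{E,\ell^\infty}$ is either $H$ or $H\cup gH$ for some $g\notin\cC$. In both cases $G_{E,\ell^\infty}$ contains $U$, so it is a union of cosets of $U$. This is exactly the statement that $G_{E,\ell^\infty}$ is the full inverse image of $G_{E,\ell}$ (using Theorem \ref{thm-cmrep-intro-alvaro}(2) to identify $H\bmod\ell$ with $G_{E,\ell}\cap\cC_{\delta,\phi}(\ell)$).

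\emph{Surjectivity when $\ell\nmid 2\Delta_K f$.} Here $\ell$ is unramified in $K$ and prime to the conductor, so $\cC_{\delta,\phi}(\ell)\cong(\Of/\ell\Of)^\times$. This is $\mathbb{F}_{\ell^2}^\times$ if $\ell$ is inert, and $\mathbb{F}_\ell^\times\times\mathbb{F}_\ell^\times$ if $\ell$ splits, with $c_1$ acting as the nontrivial automorphism (Frobenius, respectively the swap).

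I will use three facts.
\begin{enumerate}
\item $\det\circ\rho_{E,\ell}$ is the mod-$\ell$ cyclotomic character. It is surjective onto $\mathbb{F}_\ell^\times$ already on $G_{K(j_{K,f})}$, because $\ell\nmid\Delta_K f$ makes $K(j_{K,f})$ linearly disjoint from $\Q(\zeta_\ell)$.
\item $H_\ell=G_{E/K,\ell}$ is normalized by an element of $G_{E,\ell}\setminus\cC$, which acts on $\cC$ as $c_1$ does. Hence $H_\ell$ is $c_1$-stable.
\item $[\cC(\ell):H_\ell]$ divides $d$.
\end{enumerate}

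Take first $j_{K,f}\neq0,1728$, so $d\mid 2$, and suppose for contradiction that $H_\ell$ has index $2$.
\begin{itemize}
\item \emph{Inert case.} The only index-$2$ subgroup of $\mathbb{F}_{\ell^2}^\times$ is the group of squares, whose norms (that is, determinants) are the squares of $\mathbb{F}_\ell^\times$. This contradicts fact 1.
\item \emph{Split case.} The $c_1$-stable index-$2$ subgroups of $\mathbb{F}_\ell^\times\times\mathbb{F}_\ell^\times$ are kernels of characters $(a,b)\mapsto\chi(a)\chi(b)$ with $\chi$ the quadratic character. On such a subgroup $\det=ab$ is always a square, again contradicting fact 1.
\end{itemize}
Thus $H_\ell=\cC(\ell)$.

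Finally, $G_{E,\ell}\not\subseteq\cC(\ell)$ because $K\subseteq\Q(j_{K,f})(E[\ell])$ for $\ell\geq3$: the CM endomorphisms are Galois-equivariant on $E[\ell]$ only over $K$. So $G_{E,\ell}$ also contains an element of the coset $c_1\cC$, giving $G_{E,\ell}=\mathcal{N}_{\delta,\phi}(\ell)$. The $\ell$-adic statement then follows from the first part.

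\emph{Main obstacle.} The main obstacle is the cases $j_{K,f}=0,1728$, where $d$ can be $3$, $4$ or $6$. For $d=4$ in the inert case, any index-$4$ subgroup lies in the unique index-$2$ subgroup of the cyclic group, and the argument above applies. In the split case and for $3\mid d$, I would first try the same determinant-plus-$c_1$-stability analysis on the characters of $\mathbb{F}_\ell^\times\times\mathbb{F}_\ell^\times$. If that fails, I would fall back on a local argument: describe the action of inertia at a prime above $\ell$ through the CM character (Shimura's reciprocity law, up to a tame twist by a character valued in $\Of^\times$), and show that the inertia image alone is large enough.
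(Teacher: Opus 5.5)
\textbf{Overall.} The paper gives no proof of this statement; it quotes it from \cite{lozano-galoiscm}. Your first part is correct for every $j_{K,f}$ allowed by the hypothesis: $d$ divides $|\Of^\times|$ and is prime to $\ell$, so $U=U^d\subseteq G_{E/K,\ell^\infty}$. One line should be added there: for odd $\ell$ no element of $c_1\cC_{\delta,\phi}(\ell^\infty)$ is congruent to the identity mod $\ell$, so $U$ is the whole kernel of reduction on $\mathcal{N}_{\delta,\phi}(\ell^\infty)$. Your surjectivity argument for $j_{K,f}\neq 0,1728$ is also correct.

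\textbf{The gap.} What is missing is the second clause for $j_{K,f}\in\{0,1728\}$. It helps to see exactly what your facts 1 and 2 give: if $H=G_{E/K,\ell}$ is $c_1$-stable with $\det(H)=\mathbb{F}_\ell^\times$, then $[\cC_{\delta,\phi}(\ell):H]$ is odd.
\begin{itemize}
\item In the inert case, an even-index subgroup of the cyclic group $\mathbb{F}_{\ell^2}^\times$ lies in the squares, whose norms are squares.
\item In the split case, $(a,b)(b,a)=(ab,ab)$ puts every scalar in $H$. Hence $[\cC_{\delta,\phi}(\ell):H]=[\mathbb{F}_\ell^\times:\{a/b:(a,b)\in H\}]$. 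An element with $ab$ a generator has $a/b=ab\cdot b^{-2}$, which is a non-square, so this index is odd.
\end{itemize}
So $j_{K,f}=1728$ (where $d\mid 4$) is finished by your own method, split case included.

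For $j_{K,f}=0$ (where $d\mid 6$), index $3$ survives. Let $T$ be the $c_1$-stable index-$3$ subgroup: the cubes in $\mathbb{F}_{\ell^2}^\times$ if $\ell\equiv 2\bmod 3$, and $\{(a,b):a/b\in(\mathbb{F}_\ell^\times)^3\}$ if $\ell\equiv 1\bmod 3$. Then $T$ has surjective determinant, so facts 1--3 cannot exclude it.

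\textbf{Why no argument can close it.} Neither your inertia fallback nor anything else can exclude $T$, because it actually occurs.
\begin{itemize}
\item Complex conjugation acts by inversion on $\cC_{\delta,\phi}(\ell)/T\cong\Z/3$. So for $E_D\colon y^2=x^3+D$, the character $G_K\to\cC_{\delta,\phi}(\ell)/T$ satisfies $\eta(\tau\sigma\tau^{-1})=\eta(\sigma)^{-1}$ for $\tau\notin G_K$. By Kummer theory over $K\ni\zeta_3$, it is therefore the cubic Kummer character of a rational number.
\item Replacing $D$ by $D'$ multiplies $\rho_{E/K,\ell}$ by the $\mu_6$-valued cocycle $\sigma\mapsto\sigma(u)/u$, with $u^6=D'/D$. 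Its class modulo $T$ is, up to inversion, the Kummer character of $\sqrt[3]{D'/D}$, pushed through $\mu_3\to\cC_{\delta,\phi}(\ell)/T$.
\item That map is an isomorphism exactly when $\zeta_3$ is not a cube in $(\Of/\ell\Of)^\times$, i.e.\ when $\ell\equiv 2,4,5,7\bmod 9$. For such $\ell$ a suitable $D'$ kills the class, giving $[\mathcal{N}_{\delta,\phi}(\ell):G_{E_{D'},\ell}]=3$.
\end{itemize}
So the second clause, as quoted, is false for $j_{K,f}=0$ and these $\ell$. For $\ell\equiv\pm1\bmod 9$ one has $\mu_6\subseteq T$, the class is twist-invariant and non-trivial for the good-reduction model $y^2=x^3+16$, and the clause does hold.

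\textbf{What you have proved.} Once the $j_{K,f}=1728$ split case is added as above, your proposal proves the statement with $j_{K,f}\neq 0$ imposed in the second clause. The paper only uses the first clause, in Remark \ref{rem-ell-level-of-diff}, and that clause your argument covers for all $j_{K,f}$.
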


% In the following theorems, we describe certain explicit subgroups of $\GL(2,\Z_\ell)$ that correspond to the possible images (up to conjugation) of $\rho_{E,\ell^{\infty}}$. The group $G_{\delta,\phi}^{\, i,t}=G_{\delta,\phi}^{\, i,t}(\ell^\infty)$
% will denote a subgroup of $\mathcal{C}_{\delta,\phi}(\ell^{\infty})$ of index $i$, so $[\mathcal{C}_{\delta,\phi}(\ell^{\infty}) : G_{\delta,\phi}^{\, i,t}(\ell^\infty)] = i$. When there are several possible subgroups of $\mathcal{C}_{\delta,\phi}(\ell^{\infty})$ with index $i$, we use a parameter $t \in \{1,2,3,4\}$ to differentiate them (where the choice of numbering was arbitrary).

The following result describes the $\ell$-adic image in the case of a prime $\ell>2$ that divides $\Delta_K f$.

\begin{thm}[\cite{lozano-galoiscm}, Theorem 1.5]\label{thm-j0ell3alvaro}\label{thm-oddprimedividingdisc}
	Let $E/\Q(j_{K,f})$ be an elliptic curve with CM by $\Of$, and let $\ell$ be an odd prime dividing $f\Delta_K$ (so, in particular, $j_{K,f} \neq 1728$). Then $G_{E,\ell^\infty}$ is precisely one of the following groups:
	\begin{enumerate}
		\item[(a)] If $j_{K,f}\neq 0,1728$, then either $G_{E,\ell^\infty}=\mathcal{N}_{\delta,0}(\ell^\infty)$, or $G_{E,\ell^\infty}$ is generated by $c_{\varepsilon}$ and the group
		 \[ J_{\delta, 0} = \left\{ \left(\begin{array}{cc} a^2 & b\\ \delta b & a^2 \\ \end{array}\right): a \in \Z_\ell^\times, b \in \Z_\ell \right\}.\] 
		\item[(b)] If $j_{K,f}=0$, then $\ell=3$, and there are twelve possibilities for $G_{E,3^\infty}$. More concretely, either $G_{E,3^\infty}=\mathcal{N}_{-3/4,0}(3^\infty)$, or $[\mathcal{N}_{-3/4,0}(3^\infty):G_{E,3^\infty}]=2,3,$ or $6$, and one of the following holds:
		\begin{enumerate} \item[(i)] If $[\mathcal{N}_{-3/4,0}(3^\infty):G_{E,3^\infty}]=2$, then $G_{E,3^\infty}$ is generated by $c_{\varepsilon}$ and 
		\[ \left\{ \left(\begin{array}{cc}  a & b\\ -3b/4 & a\\ \end{array}\right): a,b\in  \Z_3,\ a\equiv 1 \bmod 3\right\}. \]
		
		\item[(ii)]  If $[\mathcal{N}_{-3/4,0}(3^\infty):G_{E,3^\infty}]=3$, then $G_{E,3^\infty}$ is generated by $c_{\varepsilon}$ and 
		\[ \left\{ \left(\begin{array}{cc} a & b\\ -3b/4 & a\\ \end{array}\right): a\in \Z_3^\times,\ b\equiv 0 \bmod 3 \right\}, \]
		\[ \text{or }\ \left\langle \left(\begin{array}{cc}  2 & 0\\ 0 & 2\\ \end{array}\right) ,\left(\begin{array}{cc}  1 & 1\\ -3/4 & 1\\ \end{array}\right)\right\rangle,\ \text{ or } \left\langle \left(\begin{array}{cc}  2 & 0\\ 0 & 2\\ \end{array}\right) ,\left(\begin{array}{cc}  -5/4 & 1/2\\ -3/8 & -5/4\\ \end{array}\right)\right\rangle. \] 
		\item[(iii)]  If $[\mathcal{N}_{-3/4,0}(3^\infty):G_{E,3^\infty}] = 6$, then $G_{E,3^\infty}$ is generated by $c_{\varepsilon}$ and one of
		\[  \text{or }\ \left\{ \left(\begin{array}{cc}  a & b\\ -3b/4 & a\\ \end{array}\right) : a\equiv 1,\ b\equiv 0 \bmod 3\Z_3 \right\}, \]
		\[ \text{or }\ \left\langle \left(\begin{array}{cc}  4 & 0\\ 0 & 4\\ \end{array}\right) ,\left(\begin{array}{cc}  1 & 1\\ -3/4 & 1\\ \end{array}\right)\right\rangle,\ \text{ or } \ \left\langle \left(\begin{array}{cc}  4 & 0\\ 0 & 4\\ \end{array}\right) ,\left(\begin{array}{cc}  -5/4 & 1/2\\ -3/8 & -5/4\\ \end{array}\right)\right\rangle. \]
		\end{enumerate}
	\end{enumerate}
\end{thm}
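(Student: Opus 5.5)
The plan is to reduce everything to the Cartan part and then classify the possible images of $\rho_{E/K,\ell^\infty}$ inside $\mathcal{C}_{\delta,\phi}(\ell^\infty)$. Since $\ell$ is odd, by the remark above we may change basis so that $\phi=0$, i.e.\ we work in $\mathcal{N}_{\delta,0}(\ell^\infty)$ with $\delta=\Delta_Kf^2/4$. By Theorem \ref{thm-cmrep-intro-alvaro}(1)--(2), $G_{E,\ell^\infty}\subseteq\mathcal{N}_{\delta,0}(\ell^\infty)$ and $G_{E/K,\ell^\infty}=G_{E,\ell^\infty}\cap\mathcal{C}_{\delta,0}(\ell^\infty)$. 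Moreover $[\mathcal{N}_{\delta,0}(\ell^\infty):G_{E,\ell^\infty}]$ divides $|\Of^\times|$. Complex conjugation (any element of $G_{\Q(j_{K,f})}$ not fixing $K$) maps to an element of $\mathcal{N}\setminus\mathcal{C}$. After conjugating by a Cartan element, I would show this element can be normalized to some $c_\varepsilon$, so that $G_{E,\ell^\infty}=\langle c_\varepsilon, G_{E/K,\ell^\infty}\rangle$. The problem is then to determine the subgroup $G_{E/K,\ell^\infty}\subseteq\mathcal{C}_{\delta,0}(\ell^\infty)\cong(\Of\otimes\Z_\ell)^\times$ of index dividing $|\Of^\times|$.

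For this I would use class field theory for CM curves. On $G_{K(j)}$ the representation is given by a Hecke character $\psi$, and restricted to inertia at $\ell$ the $\ell$-adic image is $u\mapsto\epsilon(u)u^{-1}$ for $u\in(\Of\otimes\Z_\ell)^\times$, where $\epsilon$ is a character with values in $\Of^\times$. Hence $G_{E/K,\ell^\infty}$ contains the image of $(\Of\otimes\Z_\ell)^\times$ twisted by a finite character. Its complement in $\mathcal{C}$ is governed by the $\ell$-part of the map $(\Of\otimes\Z_\ell)^\times\to\Of^\times$.

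Case (a): $j\neq0,1728$, so $\Of^\times=\{\pm1\}$. The index is $1$ or $2$. An index-$2$ subgroup of $\mathcal{C}_{\delta,0}(\ell^\infty)$ that contains the image of the inertia twist by a quadratic character must be the kernel of a quadratic character of $(\Of\otimes\Z_\ell)^\times$. Since $\ell\mid\Delta_Kf$, this ring is local with residue field $\mathbb{F}_\ell$ (ramified or non-maximal). Its quadratic characters factor through $a+b\sqrt\delta\mapsto a\bmod \ell\in\mathbb{F}_\ell^\times/(\mathbb{F}_\ell^\times)^2$, because $1+\sqrt\delta\,\Z_\ell$-type elements are pro-$\ell$. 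The kernel is exactly $J_{\delta,0}$: matrices $\begin{pmatrix}a^2&b\\\delta b&a^2\end{pmatrix}$, after absorbing the unit square into the diagonal, since $a+b\sqrt\delta$ has square norm iff $a$ is a square mod $\ell$. Compatibility of $\langle c_\varepsilon,J_{\delta,0}\rangle$ with the action of $c_\varepsilon$ is immediate. Realizability of both options follows from quadratic twisting, e.g.\ twisting by $\Q(\sqrt{\pm\ell})$ toggles the index.

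Case (b): $j=0$, so $K=\Q(\sqrt{-3})$ and $\Of^\times=\mu_6$. The relevant $\ell$ divides $3f$ with $f=1$ for $j=0$, so $\ell=3$. The index divides $6$, and $G_{E/K,3^\infty}$ is the kernel of a character $\chi:\mathcal{C}_{-3/4,0}(3^\infty)\to\mu_6$ arising from the sextic twist.

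I would enumerate the subgroups of index $2$, $3$, and $6$ of $(\Z_3[\zeta_3])^\times\cong\mu_6\times(1+\sqrt{-3}\Z_3[\zeta_3])$ that satisfy two conditions: they are stable under conjugation by $c_\varepsilon$, and their intersection with the image of inertia is compatible with the inertia formula $u\mapsto\epsilon(u)u^{-1}$.
\begin{itemize}
\item Index $2$ is unique, namely $a\equiv1\bmod3$.
\item Index $3$ comes from the cubic characters of $\mu_3\times$ (pro-$3$ part). The $c_\varepsilon$-stable ones are exactly three: $b\equiv0\bmod 3$, and the two "twisted" procyclic ones generated by $2I$ together with $\begin{pmatrix}1&1\\-3/4&1\end{pmatrix}$ or $\begin{pmatrix}-5/4&1/2\\-3/8&-5/4\end{pmatrix}$.
\item Index $6$ comes from intersecting with the index-$2$ group.
\end{itemize}
For each of these, the choice of $\varepsilon$ gives the count of twelve. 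Concretely, I would compute $\mathcal{C}_{-3/4,0}(27)$ and verify that each group is defined mod $9$ or $27$. I would check realizability with explicit models $y^2=x^3+k$ for suitable $k$.

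\textbf{Main obstacle.} The key difficulty is the $j=0$ enumeration. It requires showing which cubic characters actually occur, and in particular ruling out non-$c_\varepsilon$-stable subgroups and the extra index-$3$ subgroups. This needs the precise local structure of $\epsilon$ at $3$ (the cubic residue symbol of the twist $k$ at the prime above $3$), which I would compute via the explicit division-field description of $y^2=x^3+k$.
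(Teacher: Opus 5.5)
The paper gives no proof of this statement; it is quoted from \cite{lozano-galoiscm}, Theorem 1.5, so I am judging your outline on its own merits.

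\textbf{Reduction and part (a).} Your reduction is sound. The image of complex conjugation is an involution $uc_1$ with $u\bar u=1$. It can be conjugated by a Cartan element to $c_{\pm1}$, because norm-one units of $\Of\otimes\Z_\ell$ modulo $\{g/\bar g\}$ are just $\{\pm 1\}$. Part (a) is then essentially right. The index divides $2$, and $(\Of\otimes\Z_\ell)^\times\cong\mathbb{F}_\ell^\times\times(\text{pro-}\ell)$ has exactly one index-two subgroup, namely $J_{\delta,0}$. There are two slips:
\begin{itemize}
\item The membership criterion for that subgroup is that $a\bmod\ell$ is a square, not that the norm is a square: $a^2-\delta b^2\equiv a^2\bmod\ell$ is always a square.
\item Your realizability remark is false in general. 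When $K=\Q(\sqrt{-\ell})$, twisting by $-\ell$ is trivial on $G_K$ and only swaps $\varepsilon$ (compare Theorem \ref{thm-40simplestcurves}(2c)). Fortunately, (a) makes no realizability claim.
\end{itemize}

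\textbf{Part (b): the gap.} Part (b) has a genuine gap at exactly the step you flag, and your diagnosis of what is needed is off.

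First, the decomposition. One has $\cC:=\cC_{-3/4,0}(3^\infty)\cong\Z_3[\zeta_3]^\times=\mu_6\times(1+3\Z_3[\zeta_3])$. Your complement $1+\sqrt{-3}\,\Z_3[\zeta_3]$ already contains $\zeta_3$, so it is not a complement.

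Second, the count of stable index-$3$ subgroups. We have $\cC/\cC^3\cong\mathbb{F}_3^3$, and conjugation by $c_\varepsilon$ (Galois conjugation) acts as follows. Its $(+1)$-eigenspace is spanned by the class of $4$. Its $(-1)$-eigenspace is spanned by the classes of $\zeta_3$ and $1+3\sqrt{-3}$. A stable hyperplane is the sum of its intersections with the two eigenspaces. Hence there are five $c_\varepsilon$-stable subgroups of index $3$, not three: $\langle[4]\rangle\oplus L$ for each of the four lines $L$ in the $(-1)$-eigenspace, and the $(-1)$-eigenspace itself. The three groups in (b)(ii) are exactly the stable ones not containing $\zeta_3$. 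For example, $1+\tfrac{\sqrt{-3}}{2}=\zeta_3\cdot\tfrac14(1-3\sqrt{-3})$ has class $[\zeta_3]-[4]-[1+3\sqrt{-3}]$.

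Third, the missing constraint. The two extra subgroups are not excluded by any cubic-residue computation for $y^2=x^3+k$. They are excluded by a constraint your own inertia formula already provides. Since $G_{E/K,3^\infty}\supseteq\{\epsilon(u)u^{-1}\}$ with $\epsilon$ valued in $\mu_6$, you get $\mu_6\cdot G_{E/K,3^\infty}=\cC$. Therefore $\cC/G_{E/K,3^\infty}\cong\mu_6/(\mu_6\cap G_{E/K,3^\infty})$. It follows that $\zeta_3\notin G_{E/K,3^\infty}$ whenever $3$ divides the index, and $-1\notin G_{E/K,3^\infty}$ whenever $2$ divides it. Combined with $c_\varepsilon$-stability, this leaves exactly $1,1,3,3$ possible groups of index $1,2,3,6$. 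The index-$6$ groups are the intersections of $\{a\equiv 1\bmod 3\}$ with the three index-$3$ groups.

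Fourth, the count of twelve. Saying ``the choice of $\varepsilon$ gives twelve'' needs one more observation. The index-$3$ groups contain $-\operatorname{Id}$, so $\langle c_1,G\rangle=\langle c_{-1},G\rangle$ for them. The index-$2$ and index-$6$ groups do not contain $-\operatorname{Id}$, so there the two choices of $\varepsilon$ give distinct groups. Naive doubling would give $15$, not $12$.

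Finally, the explicit division-field analysis of $y^2=x^3+k$ is needed only to decide which twists realize which group. It is not needed for the classification itself.
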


\begin{remark}\label{rem-changebasis}
    Let $\ell$ be an odd prime, and let $\delta,\phi$ be the constants defined above for an order $\Of$. The matrix equalities 
    \begin{align*} \left(\begin{array}{cc}  1 & 0\\ \phi/2 & 1\\ \end{array}\right)\left(\begin{array}{cc}  a+b\phi & b\\ \delta b & a\\ \end{array}\right) \left(\begin{array}{cc}  1 & 0\\ -\phi/2 & 1\\ \end{array}\right) &= \left(\begin{array}{cc}  a+b\phi/2 & b\\ (\delta + \phi^2/4)b & a+b\phi/2\\ \end{array}\right), \\
    \left(\begin{array}{cc}  1 & 0\\ -\phi/2 & 1\\ \end{array}\right)\left(\begin{array}{cc}  a & b\\ \delta b & a\\ \end{array}\right)\left(\begin{array}{cc}  1 & 0\\ \phi/2 & 1\\ \end{array}\right) &= \left(\begin{array}{cc}  (a-b\phi/2)+b\phi & b\\ (\delta-\phi^2/4) b & a-b\phi/2\\ \end{array}\right),\\
    \left(\begin{array}{cc}  1 & 0\\ \phi/2 & 1\\ \end{array}\right)\left(\begin{array}{cc}  \varepsilon & 0\\ -\varepsilon\phi & -\varepsilon\\ \end{array}\right) \left(\begin{array}{cc}  1 & 0\\ -\phi/2 & 1\\ \end{array}\right) &= \left(\begin{array}{cc}  \varepsilon & 0\\ 0 & -\varepsilon\\ \end{array}\right), \text{ and}\\
    \left(\begin{array}{cc}  1 & 0\\ -\phi/2 & 1\\ \end{array}\right)\left(\begin{array}{cc}  \varepsilon & 0\\ 0 & -\varepsilon\\ \end{array}\right) \left(\begin{array}{cc}  1 & 0\\ \phi/2 & 1\\ \end{array}\right) &= \left(\begin{array}{cc}  \varepsilon & 0\\ -\varepsilon\phi  & -\varepsilon \\ \end{array}\right)
    \end{align*}
    show that $\mathcal{N}_{\delta,\phi}(\ell^\infty)$ is conjugate to $\mathcal{N}_{\delta+\phi^2/4,0}(\ell^\infty)$ when $\delta = (\Delta_K -1)f^2/4$ and $\phi = f$, and that $\mathcal{N}_{\delta,0}(\ell^\infty)$ is conjugate to $\mathcal{N}_{\delta-\phi^2/4,\phi}(\ell^\infty)$ when $\delta = \Delta_K f^2/4$ and $\phi = f$. In particular, the subgroup $\langle J_{\delta, 0},c_\varepsilon\rangle\subseteq \mathcal{N}_{\delta,0}(\ell^\infty)$ of Theorem \ref{thm-oddprimedividingdisc} is conjugate to
\[\left\langle \left\{ \left(\begin{array}{cc} a^2+b\phi/2 & b\\ (\delta-\phi^2/4) b & a^2-b\phi/2 \\ \end{array}\right): a \in \Z_\ell^\times, b \in \Z_\ell \right\}, \left(\begin{array}{cc}  \varepsilon & 0\\ -\varepsilon\phi  & -\varepsilon\\ \end{array}\right) \right\rangle\]  
    for some $\varepsilon\in \{\pm 1\}$, as a subgroup of $\mathcal{N}_{\delta-\phi^2/4,\phi}(\ell^\infty)$.
\end{remark}

The next result describes the $2$-adic image in the case of $j_{K,f}\neq 0,1728$.

\begin{thm}[\cite{lozano-galoiscm}, Theorem 1.6]\label{thm-m8and16alvaro}
	Let $E/\Q(j_{K,f})$ be an elliptic curve with CM by $\OO_{K,f}$ with $j_{K,f}\neq 0, 1728$. Then, either $G_{E,2^{\infty}} = \mathcal{N}_{\delta, \phi}(2^{\infty})$, or $[\mathcal{N}_{\delta, \phi}(2^{\infty}) :  G_{E,2^{\infty}}] = 2$ and one of the following two possibilities hold:
	\begin{enumerate} 
		\item $\Delta_K f^2 \equiv 0 \bmod 16$, and in particular
        \begin{itemize}
         \item $\Delta_K \equiv 1 \bmod 4$ and $f \equiv 0 \bmod 4$ or 
         \item $\Delta_K \equiv 0 \bmod 4$ and $f \equiv 0 \bmod 2$. \end{itemize}
      In this case, $G_{E,2^{\infty}}$ is generated by $c_{\varepsilon}$ and one of the groups
			\[ \left\langle \left(\begin{array}{cc} 5 & 0\\ 0 & 5\\\end{array}\right),\left(\begin{array}{cc} 1 & 1\\ \delta & 1\\\end{array}\right)\right\rangle \text{ or }\left\langle \left(\begin{array}{cc} 5 & 0\\ 0 & 5\\\end{array}\right),\left(\begin{array}{cc} -1 & -1\\ -\delta & -1\\\end{array}\right)\right\rangle.\]
		\item $\Delta_K\equiv 0 \bmod 8$, or $\Delta_K\equiv 4 \bmod 8$ and $f\equiv 0 \bmod 4$, or $\Delta_K\equiv 1 \bmod 4$ and $f\equiv 0 \bmod 8$, and in this case $G_{E,2^{\infty}}$ is generated by $c_{\varepsilon}$ and one of the groups
			\[ \left\langle \left(\begin{array}{cc} 3 & 0\\ 0 & 3\\\end{array}\right),\left(\begin{array}{cc} 1 & 1\\ \delta & 1\\\end{array}\right)\right\rangle \text{ or } \left\langle \left(\begin{array}{cc} 3 & 0\\ 0 & 3\\\end{array}\right),\left(\begin{array}{cc} -1 & -1\\ -\delta & -1\\\end{array}\right)\right\rangle.\]
	\end{enumerate}
	%$$\left\langle \left(\begin{array}{cc} \varepsilon & 0\\ 0 & -\varepsilon\\\end{array}\right),\left(\begin{array}{cc} \alpha & 0\\ 0  & \alpha\\\end{array}\right),\left(\begin{array}{cc} 1 & 1\\ \delta  & 1\\\end{array}\right)\right\rangle \text{ or } \left\langle \left(\begin{array}{cc} \varepsilon & 0\\ 0 & -\varepsilon\\\end{array}\right),\left(\begin{array}{cc} \alpha & 0\\ 0  & \alpha\\\end{array}\right),\left(\begin{array}{cc} -1 & -1\\ -\delta  & -1\\\end{array}\right)\right\rangle \subseteq \GL(2,\Z_2).$$
\end{thm}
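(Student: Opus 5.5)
Since this is \cite[Theorem 1.6]{lozano-galoiscm}, quoted here without proof, what follows is how I would reprove it. The approach is to reduce everything to the group theory of the $2$-adic Cartan subgroup. Because $j_{K,f}\neq 0,1728$, we have $\Of^\times=\{\pm1\}$. Theorem \ref{thm-cmrep-intro-alvaro}(3), applied $2$-adically, then says that $[\mathcal{N}_{\delta,\phi}(2^\infty):G_{E,2^\infty}]$ divides $2$.

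\textbf{Step 1: form of an index-$2$ image.} By Theorem \ref{thm-cmrep-intro-alvaro}(2), $G_{E/K,2^\infty}=G_{E,2^\infty}\cap\mathcal{C}_{\delta,\phi}(2^\infty)$. The group $G_{E,2^\infty}$ still contains an element of the form $c_\varepsilon\cdot(\text{Cartan element})$, coming from complex conjugation. Hence the index can only drop on the Cartan part, and in the index-$2$ case $H:=G_{E/K,2^\infty}$ is an index-$2$ subgroup of $\mathcal{C}_{\delta,\phi}(2^\infty)\cong(\Of\otimes\Z_2)^\times$.

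The main theorem of complex multiplication says the Artin image of the idèles surjects onto $(\Of\otimes\widehat{\Z})^\times/\Of^\times$. So $H\cdot\{\pm1\}=\mathcal{C}_{\delta,\phi}(2^\infty)$, and therefore $-1\notin H$. Equivalently, $H=\ker\chi$ for a continuous character $\chi\colon(\Of\otimes\Z_2)^\times\to\{\pm1\}$ with $\chi(-1)=-1$.

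Two further constraints on $\chi$:
\begin{itemize}
\item $H$ is normalized by the image of complex conjugation, so $\chi$ is invariant under the nontrivial automorphism of $\Of\otimes\Z_2$.
\item Surjectivity of $\det\circ\rho_{E,2^\infty}$ onto $\Z_2^\times$ constrains the scalars lying in $H$.
\end{itemize}

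\textbf{Step 2: case analysis on the $2$-adic unit group.} This is the step I expect to be the main obstacle. I would write $(\Of\otimes\Z_2)^\times$ as $\{a+b\omega\}$ with $\omega$ a root of $x^2-\phi x-\delta$, and compute the quotient by squares $(\Of\otimes\Z_2)^\times/((\Of\otimes\Z_2)^\times)^2$ in each residue class of $\Delta_K$ and $f$ modulo $8$ or $16$. The goal is to decide when a conjugation-invariant quadratic character $\chi$ with $\chi(-1)=-1$ exists.

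Such a $\chi$ requires that $-1$ not be a square in $(\Of\otimes\Z_2)^\times$, nor a product of squares with the invariant kernel directions. I would test this via the filtration $1+2^k\Of_2$, together with the elements $-1$, $3$, $5$ and $1+\omega$.
\begin{itemize}
\item I expect that for $\Delta_Kf^2\not\equiv0\bmod16$ no such $\chi$ exists, since $-1$ is then forced to be a square or a norm-type element. That would give $G_{E,2^\infty}=\mathcal{N}_{\delta,\phi}(2^\infty)$.
\item When $\Delta_Kf^2\equiv0\bmod16$, the torsion-free part splits cleanly. The admissible kernels are then those containing the scalar $5$ (case (1)) or, under the stronger $8$-divisibility conditions, the scalar $3$ (case (2)).
\item Within each case, the kernel contains exactly one of $\pm(1+\omega)$. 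This gives the two listed groups $\langle \alpha I, \pm c_{\delta,\phi}(1,1)\rangle$.
\end{itemize}
In each case I would verify that $\langle\alpha I,\pm(1+\omega)\rangle$ has index exactly $2$, that it omits $-1$, and that it is stable under $c_\varepsilon$.

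\textbf{Step 3: sharpness.} I would show that each listed group occurs, by starting from one curve and taking quadratic twists by $-1$, $2$ or $-2$. Twisting by a quadratic character $\psi$ multiplies $\rho_{E/K}$ by $\psi$, which swaps the two choices of sign. It also moves between the full normalizer and the index-$2$ subgroups, so all listed possibilities are realized. Finally, $G_{E,2^\infty}$ is generated by $H$ and the image $c_\varepsilon$ of complex conjugation, which gives the stated form.
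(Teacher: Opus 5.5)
The paper gives no proof of this statement; it quotes \cite[Theorem 1.6]{lozano-galoiscm}, so your reconstruction has to stand on its own. Step 1 is sound. With $\Of^\times=\{\pm1\}$, CM theory over $K(j_{K,f})$ gives $H\cdot\{\pm1\}=\mathcal{C}_{\delta,\phi}(2^\infty)$. So in the index-$2$ case $H=\ker\chi$ for a quadratic character $\chi$ of $U=(\Of\otimes\Z_2)^\times$ with $\chi(-1)=-1$, invariant under the involution $u\mapsto\bar u$ induced by $c_\varepsilon$.

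The genuine problem is Step 2: the case split you predict is false and contradicts the statement you are proving. Case (2) includes $\Delta_K\equiv0\bmod 8$ with $f$ odd, where $\Delta_Kf^2\equiv 8\bmod 16$. For $\Delta_Kf^2=-8$, the paper's own example in Section~\ref{sec-level} (the curve \texttt{256.d2}) has a $2$-adic image of index $2$ containing $3\cdot\Id$. Table~\ref{simplesttable} also lists $2$-simplest curves of discriminant $-8$. Your heuristic that $-1$ is then ``a square or a norm-type element'' fails here. In $\Z_2[\sqrt{-2}]$ the unit norms are $\equiv1,3\bmod 8$ and $-1$ is not a square, and there are exactly two admissible $\chi$, with kernels $\langle 3,\pm(1+\omega)\rangle$. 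Your second bullet places case (2) inside $16\mid\Delta_Kf^2$, so it misses exactly the discriminants where only the scalar $3$ can occur.

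Conversely, Step 3 claims too much. The theorem gives only necessary conditions, and for $\Delta_Kf^2=-24$, which formally lies in case (2), no index-$2$ image exists (see below). Also, $\det\circ\rho_{E,2^\infty}$ need not be onto $\Z_2^\times$ over $\Q(j_{K,f})$: for $\Delta_Kf^2=-24$ one has $\Q(j_{K,f})=\Q(\sqrt2)$. Do not use surjectivity as a constraint; $\det H=N(U)$ holds automatically.

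The missing idea that makes Step 2 short and correct is the identity $u/\bar u=u^2/N(u)$. It shows that a quadratic $\chi$ is conjugation-invariant if and only if $\chi$ is trivial on $N(U)\subseteq\Z_2^\times\subseteq U$. Hence index $2$ is possible only if $-1\notin N(U)\,U^2$, and the admissible $H$ are the kernels of the characters of $U/N(U)U^2$ with $\chi(-1)=-1$.

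If $8\nmid\Delta_Kf^2$, there is a unit $u$ with $\bar u=-u$: take $\omega$ if $\phi=0$ and $\delta$ is odd, and $2\omega-\phi$ if $\Delta_Kf^2$ is odd. Then $-1=u/\bar u$ and the index is $1$.

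If $8\mid\Delta_Kf^2$, then $\phi=0$, $\delta$ is even, and $U\cong\{\pm1\}\times\Z_2^2$. The group $U/U^2$ has basis $-1,5,1+\omega$, because elements of $\Z_2^\times U^2$ have even $\omega$-coefficient. Moreover $N(U)$ is $\{1,1-\delta\}$ modulo $(\Z_2^\times)^2$. This gives four sub-cases.
\begin{itemize}
\item If $\delta\equiv0\bmod 8$, you get all four kernels $\langle5,\pm(1+\omega)\rangle$ and $\langle3,\pm(1+\omega)\rangle$ (cases (1) and (2) at once).
\item If $\delta\equiv4\bmod8$, only $\langle5,\pm(1+\omega)\rangle$ occurs (case (1)).
\item If $\delta\equiv6\bmod 8$, only $\langle3,\pm(1+\omega)\rangle$ occurs (case (2) with $f$ odd, e.g.\ $\delta=-2$).
\item If $\delta\equiv2\bmod8$, then $-1\equiv 1-\delta=N(1+\omega)$ modulo squares, so there are none (e.g.\ $\delta=-6$).
\end{itemize}

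Finally, $\ker\chi\cong\Z_2^2$ satisfies $(\ker\chi)^2=U^2$, and the listed generators span $\ker\chi/U^2$. A Frattini (Nakayama) argument then identifies $\ker\chi$ with the closure of the listed group. That is what your ``verify index exactly $2$'' step needs.
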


The last result describes the $2$-adic image when $j_{K,f}=1728$.

\begin{thm}[\cite{lozano-galoiscm}, Theorem 1.7]\label{thm-j1728alvaro}\label{thm-lastofsec1}
	Let $E/\Q$ be an elliptic curve with $j(E)=1728$, and let $\gamma \in \{ c_{1}, c_{-1}, c'_{1}, c'_{-1} \}$. Then $G_{E,2^\infty} = \mathcal{N}_{-1,0}(2^\infty)$, or $[\mathcal{N}_{-1,0}(2^\infty) : G_{E,2^\infty}] = 2$ or $4$ and $G_{E,2^\infty}$ is one of the following groups:
	\begin{itemize} 
		\item If $[\mathcal{N}_{-1,0}(2^\infty):G_{E,2^\infty}]=2$, then $G_{E,2^\infty}$ is generated by $\gamma$ and one of the groups
		\[ \left\langle -\operatorname{Id}, 3\cdot \operatorname{Id},\left(\begin{array}{cc} 1 & 2\\ -2 & 1\\\end{array}\right) \right\rangle, \text{ or } \left\langle -\operatorname{Id}, 3\cdot \operatorname{Id},\left(\begin{array}{cc} 2 & 1\\ -1 & 2\\\end{array}\right) \right\rangle. \]
		\item If $[\mathcal{N}_{-1,0}(2^\infty) : G_{E,2^\infty}]=4$, then $G_{E,2^\infty}$ is generated by $\gamma$ and one of the group
		\[ \left\langle 5\cdot \operatorname{Id},\left(\begin{array}{cc} 1 & 2\\ -2 & 1\\\end{array}\right) \right\rangle, \text{ or } \left\langle 5\cdot \operatorname{Id},\left(\begin{array}{cc} -1 & -2\\ 2 & -1\\\end{array}\right) \right\rangle, \text{ or } \]
		\[  \left\langle -3\cdot \operatorname{Id},\left(\begin{array}{cc} 2 & -1\\ 1 & 2\\\end{array}\right) \right\rangle, \text{ or } \left\langle -3\cdot \operatorname{Id},\left(\begin{array}{cc} -2 & 1\\ -1 & -2\\\end{array}\right) \right\rangle. \]
\end{itemize}
\end{thm}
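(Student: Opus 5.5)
The plan is to combine the general CM structure from Theorem \ref{thm-cmrep-intro-alvaro} with an explicit description of the Cartan part via class field theory for $K=\Q(i)$, and then to enumerate. By Theorem \ref{thm-cmrep-intro-alvaro}(3), with $\delta=-1,\phi=0$, we may choose a basis with $G_{E,2^\infty}\subseteq\mathcal{N}_{-1,0}(2^\infty)$ of index dividing $|\Z[i]^\times|=4$. By part (2), $G_{E/K,2^\infty}=G_{E,2^\infty}\cap\cC_{-1,0}(2^\infty)$. We identify $\cC_{-1,0}(2^\infty)$ with $\Z_2[i]^\times$ via $c_{-1,0}(a,b)\mapsto a+bi$.

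\textbf{Step 1: constrain the Cartan part $H=G_{E/K,2^\infty}$.} Since $E$ has CM by $\Z[i]$ over $K$, the action of $G_K$ on $T_2(E)$ is given by the $2$-adic avatar of the Gr\"ossencharacter $\psi_E$. On the local units $\Z_2[i]^\times\subset\mathbb{A}_K^\times$ (via the Artin map), this character acts as $u\mapsto \chi(u)u^{-1}$, where $\chi$ takes values in $\mu_4=\Z[i]^\times$. Hence the image of the inertia part already satisfies $H\cdot\mu_4=\Z_2[i]^\times$. The unramified part contributes Frobenius elements $\psi_E(\mathfrak p)$, which also lie in $\Z_2[i]^\times$ and do not enlarge the group beyond this. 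So $H$ is a closed subgroup of $\Z_2[i]^\times$ of index $1$, $2$ or $4$ with $H\mu_4=\Z_2[i]^\times$. In addition, $H$ is stable under the conjugation $a+bi\mapsto a-bi$, because it is normalized by complex conjugation.

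\textbf{Step 2: enumerate the candidates for $H$.} The condition $H\mu_4=\Z_2[i]^\times$ forces $H\supseteq 1+4\Z_2[i]$ up to a harmless check. We will show this by noting that $1+(1+i)^3\Z_2[i]$ is torsion-free and intersects $\mu_4$ trivially. Thus everything happens inside the finite group $(\Z[i]/4)^\times$, or possibly $(\Z[i]/8)^\times$, of order $8$ (resp. $32$), and one lists all conjugation-stable subgroups $H$ with $H\mu_4$ equal to everything. We expect exactly the groups appearing in the statement:
\begin{itemize}
\item index $2$: $\langle -1,3,1+2i\rangle$ and $\langle -1,3,2+i\rangle$;
\item index $4$: $\langle 5,\pm(1+2i)\rangle$ and $\langle -3,\pm(2-i)\rangle$.
\end{itemize}
Each of these is a complement of $\mu_4$ or of $\{\pm1\}$ modulo the kernel. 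The matrices in the statement are exactly $c_{-1,0}$ of these generators.

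\textbf{Step 3: adjoin complex conjugation.} The image $\gamma=\rho_E(\text{complex conjugation})$ lies in $\mathcal{N}_{-1,0}\setminus\cC_{-1,0}$ and has order $2$, so it is of the form $c_1\cdot c_{-1,0}(a,b)$ with $a^2+b^2=1$ modulo the level. Conjugating by elements of $\cC_{-1,0}(2^\infty)$ changes $(a+bi)$ by $u/\bar u$ for units $u$. We will compute that $\{u/\bar u\}$ has index $2$ in the norm-one group at the relevant level, modulo $H$. This reduces $\gamma$, up to conjugation fixing $H$, to one of $c_{\pm1}$ or $c'_{\pm1}$. Then $G_{E,2^\infty}=\langle H,\gamma\rangle$.

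\textbf{Step 4: realize every case, and identify the main obstacle.} To show that each listed possibility actually occurs, we will exhibit explicit curves $y^2=x^3-dx$ with suitable $d$ and compute their mod $8$ or mod $16$ images. We expect the main obstacle to be Step 1: pinning down rigorously that the local units act through $u^{-1}$ times a $\mu_4$-valued character, using CM main theorem and class field theory, so that $H\mu_4$ is the full group. The second delicate point is the bookkeeping in Step 3, namely which pairs $(H,\gamma)$ are genuinely distinct up to $\GL(2,\Z_2)$-conjugacy. The finite enumeration itself can be checked directly.
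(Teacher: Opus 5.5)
The paper gives no proof of this statement; it is quoted from \cite{lozano-galoiscm}. So there is nothing in the paper to compare with, and I am judging your outline on its own terms.

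\textbf{The architecture is sound.} Step 1 shows that $H=G_{E/K,2^\infty}\subseteq\Z_2[i]^\times$ contains $\{\psi(u)u^{-1}\}$, so $H\mu_4=\Z_2[i]^\times$. Combined with stability of $H$ under $a+bi\mapsto a-bi$, this really does cut $H$ down to exactly the groups in the statement.

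\textbf{Step 2 contains a false claim.} The condition $H\mu_4=\Z_2[i]^\times$ does \emph{not} force $H\supseteq 1+4\Z_2[i]$. The facts you invoke, that $U_3:=1+(1+i)^3\Z_2[i]$ is torsion-free and meets $\mu_4$ trivially, only give the splitting $\Z_2[i]^\times=\mu_4\times U_3$. Your own list contains a counterexample. The group $H=\langle -3,2-i\rangle$ reduces modulo $8$ to $\{1,5,2\pm i,2\pm 3i,3+4i,7+4i\}$, which does not contain $1+4i$. Its reduction modulo $4$ has index $2$ in $(\Z[i]/4)^\times$, while $H$ has index $4$ in $\Z_2[i]^\times$. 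If you ran the enumeration in $(\Z[i]/4)^\times$ as proposed, the only index-$4$ candidates would be $\langle 5,\pm(1+2i)\rangle$. You would then have ``proved'' a false statement that excludes $\langle -3,\pm(2-i)\rangle$.

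\textbf{How to repair Step 2.} The correct finiteness statement is $H\supseteq(\Z_2[i]^\times)^4=U_3^4=1+(1+i)^7\Z_2[i]\supseteq 1+16\Z_2[i]$. This holds because $H$ has index dividing $4$ in an abelian group, so a search modulo $16$ is legitimate. A cleaner route is structural.
\begin{itemize}
\item $H\cap\mu_4$ has index $[\Z_2[i]^\times:H]$ in $\mu_4$, and $H=\{\zeta u: u\in U_3,\ \zeta\in\chi(u)\}$ for a continuous homomorphism $\chi\colon U_3\to\mu_4/(H\cap\mu_4)$.
\item The logarithm maps $U_3$ isomorphically onto $(1+i)^3\Z_2[i]=\Z_2(2+2i)\oplus\Z_2(2-2i)$. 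So $U_3$ has a $\Z_2$-basis $u_1,\ u_2=\overline{u_1}$ that conjugation swaps.
\item Conjugation-stability of $H$ is then exactly $\chi(u_2)=\chi(u_1)^{-1}$. This leaves four characters when $H\cap\mu_4=1$ and two when $H\cap\mu_4=\{\pm1\}$.
\item Using $\log(-1-2i)\equiv -2(2+2i)+(2-2i)$ modulo $(1+i)^7\Z_2[i]$, these six groups are $\langle 5,\pm(1+2i)\rangle$, $\langle -3,\pm(2-i)\rangle$, $\langle -1,3,1+2i\rangle$ and $\langle -1,3,2+i\rangle$. These are exactly the groups in the statement.
\end{itemize}
All six contain $1+8\Z_2[i]$. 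So your hedge ``possibly $(\Z[i]/8)^\times$'' lands on the right level, but it needs this argument, not the one you gave.

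\textbf{Two smaller remarks.}
\begin{itemize}
\item Step 3 needs no conjugation. Take $g=c_1w\in G_{E,2^\infty}\setminus H$ with $w\in\Z_2[i]^\times$, and write $w=\zeta h$ with $\zeta\in\mu_4$, $h\in H$; this is possible precisely because $H\mu_4=\Z_2[i]^\times$. Then $c_1\zeta=gh^{-1}\in G_{E,2^\infty}$. Since $c_1\mu_4=\{c_1,c_{-1},c'_1,c'_{-1}\}$, you get $G_{E,2^\infty}=\langle c_1\zeta,H\rangle$ directly, and the $u/\overline{u}$ bookkeeping is unnecessary.
\item Step 4 is not needed. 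The statement only asserts that $G_{E,2^\infty}$ is one of the listed groups, not that each one occurs or that they are pairwise non-conjugate.
\end{itemize}
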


\subsection{Field Theory}\label{sec-fieldtheory}

 In this section, we collect several results on division fields of elliptic curves, as well as cyclotomic extensions of $\Q$, in one theorem. Note: if $N\geq 2$ is an integer, then $N=n\cdot m^2$ for some integers $m,n$ with $n$ square-free, which we call the square-free part of $N$, and in this paper we denote by $n=N^\text{sf}$.

\begin{theorem}\label{cyc-division-thm}\label{kronecker-weber}\label{sqrt-general}\label{lem-clark}
    Let $E/F$ be an elliptic curve defined over a number field $F$. Let $M, N \geq 2$ be integers, where $N$ is non-zero and square-free. We define
    \[ N^{\dagger} : = \operatorname{disc}(\mathcal{O}_{\Q(\sqrt{N})}) =  
\begin{cases}
|N| & \text{ if } N \equiv 1 \bmod 4, \\
|4N| & \text{ if } N \equiv 2,3 \bmod 4, \\
\end{cases} \]
    Then:
    \begin{enumerate}
        \item  $F(\zeta_M) \subseteq F(E[M])$.
        \item $F(\sqrt{N}) \subseteq F(\zeta_{N^\dagger})\subseteq F(E[N^{\dagger}])$. 
        \item Let $K$ be an imaginary quadratic field, let $E/F$ be an elliptic curve with CM by $\OO_{K,f}$ and $j(E)=j_{K,f}$, and assume $M \geq 3$. Then,  $K\subseteq K(j_{K,f})\subseteq \Q(j_{K,f},E[M]) \subseteq F(E[M])$.
    \end{enumerate} 
\end{theorem}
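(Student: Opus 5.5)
The theorem collects three standard facts, and I will prove them in turn.

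For part (1), I will use the Weil pairing. Take any $\sigma$ fixing $F(E[M])$. It acts trivially on $E[M]$, so $e_M(\sigma P,\sigma Q)=e_M(P,Q)$. By Galois equivariance this value also equals $\sigma(e_M(P,Q))$. The Weil pairing is surjective onto $\mu_M$, so $\sigma$ fixes $\zeta_M$. Hence $F(\zeta_M)\subseteq F(E[M])$.

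For part (2), the first inclusion is the quadratic case of Kronecker--Weber. The quadratic Gauss sum gives $\sqrt{p^*}\in\Q(\zeta_p)$ with $p^*=(-1)^{(p-1)/2}p$. Also $i\in\Q(\zeta_4)$ and $\sqrt{2}=\zeta_8+\zeta_8^{-1}$. Writing $N=\pm 2^{e}\prod p_i$ as a product of the $p_i^*$, $-1$ and $2$, I will check in the two congruence cases $N\equiv1\bmod 4$ and $N\equiv 2,3\bmod 4$ that every needed root lies in $\Q(\zeta_{N^\dagger})$. Equivalently, the conductor of $\Q(\sqrt N)$ equals $|\operatorname{disc}|=N^\dagger$. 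Then $\Q(\sqrt N)\subseteq\Q(\zeta_{N^\dagger})$, and adjoining $F$ gives $F(\sqrt N)\subseteq F(\zeta_{N^\dagger})$. The second inclusion is part (1) with $M=N^\dagger\ge 3$.

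For part (3), the inclusions $K(j_{K,f})\subseteq\Q(j_{K,f},E[M])\subseteq F(E[M])$ are essentially formal. First, $j(E)=j_{K,f}$ is a rational function of the coefficients of any model over $F$, so $j_{K,f}\in F$. For the middle inclusion I will use the fact (part of the standard CM theory underlying Theorem \ref{thm-cmrep-intro-alvaro}) that $E$ has a model over $\Q(j_{K,f})$. The field $\Q(j_{K,f})(E[M])$ does not depend on the model up to twist. I will handle this the same way as the key step: it suffices to show that $K$ lies in the field generated by $E[M]$ over $\Q(j_{K,f})$, and likewise over $F$.

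The key step is showing $K\subseteq F(E[M])$. Let $\sigma$ fix $F(E[M])$ and let $\alpha\in\Of$ be any endomorphism. Since $\sigma$ fixes $E[M]$ pointwise, for $P\in E[M]$ we get $\alpha^\sigma(P)=\sigma\alpha\sigma^{-1}P=\sigma\alpha P=\alpha P$. So $\alpha^\sigma-\alpha$ kills $E[M]$ and lies in $M\,\End(E)$. Now $\sigma$ acts on $\End(E)\otimes\Q=K$ either trivially or by complex conjugation, and for $\alpha=f\frac{\Delta_K+\sqrt{\Delta_K}}{2}$ we have $\bar\alpha-\alpha=-f\sqrt{\Delta_K}$. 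If $\sigma$ acted by conjugation, then $f\sqrt{\Delta_K}\in M\Of$, i.e. $M$ would divide $f\sqrt{\Delta_K}$ in $\Of$. Taking norms gives $M^2\mid f^2|\Delta_K|$. Comparing against the basis $\{1,\tau\}$ of $\Of$, the $1$- and $\tau$-coordinates of $f\sqrt{\Delta_K}$ would both have to be divisible by $M$, which fails for $M\ge3$. This coordinate computation is routine and uses $M\ge 3$; for $M=2$ the claim genuinely fails, e.g. when $\Delta_K$ is even.

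The final step is to recall that the field of definition of endomorphisms is $F K$. This gives that $\sigma$ acts trivially on $\End(E)$ if and only if $\sigma$ fixes $K$, so $K\subseteq F(E[M])$, and the same argument works over $\Q(j_{K,f})$. The main obstacle is the $M\ge3$ coordinate check in the case $\Delta_K f^2\equiv 0\bmod 4$ with $M=4$. There I will verify directly that $f\sqrt{\Delta_K}=2f\sqrt{\delta/f^2}$ is not in $4\Of$.
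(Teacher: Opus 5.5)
Your parts (1) and (2) follow the paper: the Weil pairing, and the fact that $\Q(\sqrt N)$ has conductor $N^\dagger$. Your proof that $K\subseteq F(E[M])$ is also correct, and it takes a different route from the paper, which simply cites \cite[Lemma 3.15]{bourdon4}. Your argument (for $\sigma$ fixing $F(E[M])$, $\alpha^\sigma-\alpha\in M\,\End(E)$, and complex conjugation would force $f\sqrt{\Delta_K}\in M\Of$) is the CM case of Silverberg's theorem that endomorphisms are defined over $F(E[M])$ for $M\ge 3$. It is self-contained and shows exactly where $M\ge 3$ enters.

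There is no special case at $M=4$, and you do not need the norm condition. With $\tau=f(\Delta_K+\sqrt{\Delta_K})/2$ we have $f\sqrt{\Delta_K}=2\tau-f\Delta_K$. Its $\tau$-coordinate is $2$, so $f\sqrt{\Delta_K}\notin M\Of$ for every $M\ge 3$ at once.

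The gap is the middle inclusion $K(j_{K,f})\subseteq\Q(j_{K,f},E[M])$ when $E$ is defined only over $F\supsetneq\Q(j_{K,f})$.
\begin{enumerate}
\item Your claim that $\Q(j_{K,f})(E[M])$ does not depend on the model up to twist is false. Twisting changes division fields; this is exactly what Lemma \ref{lem-twistofsimplest} exploits.
\item ``The same argument works over $\Q(j_{K,f})$'' does not apply to the given $E$. An element $\sigma$ of $\Gal(\overline{\Q}/\Q(j_{K,f}))$ need not fix the coefficients of $E$. Then $\alpha^\sigma$ is an endomorphism of $E^\sigma$, not of $E$, and the step $\alpha^\sigma-\alpha\in M\,\End(E)$ has no meaning.
\item Running your argument on a model $E_0/\Q(j_{K,f})$ only gives $K\subseteq\Q(j_{K,f},E_0[M])$, which is a different field.
\end{enumerate}

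The missing idea is that for $M\ge 3$ the field $L=\Q(j_{K,f},E[M])$ already contains the Weierstrass coefficients of $E$. The difference of two Weierstrass cubics is $z$ times a conic passing through $O=[0:1:0]$. By B\'ezout, two distinct Weierstrass equations therefore share at most $5$ affine points, while $E[M]$ has $M^2-1\ge 8$ of them. So the coefficients are the unique solution of a linear system with coefficients in $L$, and hence lie in $L$. Then $E$ is defined over $L$ and $L(E[M])=L$, and your key argument with base field $L$ gives $K\subseteq L$.

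Only $K\subseteq F(E[M])$ is used later in the paper, and the paper's own support for the stronger inclusion is just a remark about the proof in \cite{bourdon4}. Even so, your justification of this step does not work as written.
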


\begin{proof}
    Part (1) follows from \cite[Ch. III, Corollary 8.1.1]{silverman1}. Part (2) follows from \cite[Ch. 2, Exercise 8]{marcus2018number} and part (1). For part (3), let $E/F$ be an elliptic curve with CM by $\mathcal{O}_{K,f}$ and $j(E)=j_{K,f}$. Then \cite[Lemma 3.15]{bourdon4} shows that $K\subseteq F(E[M])$ for $M \geq 3$ and, in fact, their proof shows something a bit stronger, which is that $K\subseteq \Q(j(E),E[M])\subseteq F(E[M])$. Therefore $K\subseteq K(j_{K,f})\subseteq \Q(j_{K,f},E[M]) \subseteq F(E[M])$, as desired.
\end{proof}

We note here that the definition of $N^\dagger$ is best possible. For instance, $\Q(\sqrt{-3})\subseteq \Q(\zeta_3)$ and $\Q(\sqrt{3})\subseteq \Q(\zeta_{12})$ but $\sqrt{3}\not\in \Q(\zeta_3)$.

% \begin{corollary}\label{sqrt-primes}
%     For $p$ an odd prime, we have $\Q(\sqrt{p^{\ast}}) \subseteq \Q(\zeta_p)$, where 
%     \[ p^{\ast} := \begin{cases}
%         p & \text{ if } p \equiv 1 \bmod 4, \\
%         -p & \text{ if } p \equiv 3 \bmod 4.
%     \end{cases}\]
% Further, $\Q(\sqrt{2},\sqrt{-2}) \subseteq F(E[8])$.
% \end{corollary}

\section{Level of Definition}\label{sec-level}

Let $E/\Q(j_{K,f})$ be an elliptic curve with CM by $\Of$ and let $\delta$ and $\phi$ be the associated constants defined in Section \ref{sec-notation}. For $M,N \geq 2$ where $M \mid N$, let
\[ \pi_{N,M} \colon \mathcal{N}_{\delta, \phi}(N) \to \mathcal{N}_{\delta, \phi}(M) \]
be the natural projection map given by reduction modulo $M$.
We also let
\[ \pi_{N} \colon  \mathcal{N}
_{\delta, \phi} \to \mathcal{N}_{\delta, \phi}(N) \]
be the natural projection map from the adelic normalizer of Cartan $\mathcal{N}
_{\delta, \phi}=\mathcal{N}
_{\delta, \phi}(\widehat{\Z})$, and for $\ell$ prime and $n \geq 1$, we let
\[ \pi_{\ell^{\infty}, \ell^n} \colon \mathcal{N}_{\delta, \phi}(\ell^{\infty}) \to \mathcal{N}_{\delta, \phi}(\ell^{n}) \]
be the natural projection map from the $\ell$-adic normalizer of Cartan. In terms of these projection maps, we make the following definitions about the level of an adelic or $\ell$-adic image.

\begin{definition}\label{def-level-of-definition}
    Let $E/\Q(j_{K, f})$ be an elliptic curve with CM by $\Of$, and fix a compatible adelic basis for $T(E)$ such that  $G_{E} := \im \rho_{E} \subseteq \mathcal{N}_{\delta, \phi}$. We say that the adelic image of $E$ is \emph{defined at level $N$} for some $N \geq 1$ or, equivalently, that $N$ is an \emph{adelic level of definition for $E$}, if \[ G_{E} = \pi_N^{-1}(\pi_N(G_{E})). \] We will further say that the \emph{minimal adelic level of definition for $E$} is the smallest level $N$ for which the adelic image is defined. Likewise, we define the \emph{minimal $\ell$-adic level of definition for $E$} to be the least power of $\ell$, denoted $\ell^n$, for which \[ G_{E, \ell^{\infty}} = \pi_{\ell^{\infty}, \ell^n}^{-1}(\pi_{\ell^{\infty}, \ell^n}(G_{E, \ell^{\infty}})). \]
\end{definition}

To begin studying the adelic image of CM elliptic curves defined over $\Q$, we first cite a result on the size of the adelic image.

\begin{theorem}[\cite{CampagnaPengo2}, Theorem 1.1]\label{thm-adelic-indexQ}
    Let $E/\Q(j_{K,f})$ be an elliptic curve with CM by $\Of$, and fix a choice of adelic basis so that $G_{E} := \im \rho_{E} \subseteq \mathcal{N}_{\delta, \phi}$. Then, the index of $\im \rho_E$ in $\mathcal{N}_{\delta, \phi}$ is given by the formula
    $$\frac{|\Of^{\times}|}{[\Q(j_{K,f},E_\text{tors}):K^\text{ab}]}.$$
   In particular, if $\Q(j_{K,f})=\Q$, then the index is exactly $|\Of^{\times}|$. More precisely, 
    \[ [\mathcal{N}_{\delta, \phi} : G_E] = \begin{cases}
2 & \text{ if } j(E) \neq 0, 1728, \\
4 & \text{ if } j(E) = 1728, \\
6 & \text{ if } j(E) = 0.
\end{cases}  \]
\end{theorem}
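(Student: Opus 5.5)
We plan to prove the general index formula by reducing to the Cartan subgroup and applying the main theorem of complex multiplication together with class field theory. The case $\Q(j_{K,f})=\Q$ will then follow by showing that the denominator $[\Q(j_{K,f},E_\text{tors}):K^\text{ab}]$ equals $1$.

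\emph{Step 1: reduction to the Cartan subgroup.} The group $\mathcal{C}_{\delta,\phi}$ has index $2$ in $\mathcal{N}_{\delta,\phi}$. By Theorem \ref{thm-cmrep-intro-alvaro}(2), passed to the limit, we have $G_{E/K}=G_E\cap\mathcal{C}_{\delta,\phi}$. We have $K\not\subseteq \Q(j_{K,f})$, since $K(j_{K,f})/\Q(j_{K,f})$ has degree $2$. On the other hand, $K\subseteq \Q(j_{K,f},E[M])$ for $M\ge 3$ by Theorem \ref{cyc-division-thm}(3). Hence some element of $G_E$ lies outside $\mathcal{C}_{\delta,\phi}$, and therefore
$$[\mathcal{N}_{\delta,\phi}:G_E]=[\mathcal{C}_{\delta,\phi}:G_{E/K}].$$
Note also that $\mathcal{C}_{\delta,\phi}$ is the image of $(\Of\otimes\widehat{\Z})^\times=\widehat{\OO}^\times$ acting on $T(E)\cong\widehat{\OO}$. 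The same part (3) gives $\Q(j_{K,f},E_\text{tors})=K(j_{K,f},E_\text{tors})$.

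\emph{Step 2: the reciprocity map to $K^\text{ab}$.} By the main theorem of CM, $K^\text{ab}=K(j_{K,f},h(E_\text{tors}))$, where $h$ is a Weber function. This field is contained in $K(j_{K,f},E_\text{tors})$. Via the Artin map, $\Gal(K^\text{ab}/K(j_{K,f}))$ is identified with $\widehat{\OO}^\times/\Of^\times$, since $K(j_{K,f})$ is the ring class field of conductor $f$.

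Shimura reciprocity then says the following. If $\sigma\in G_{K(j_{K,f})}$ restricts to the class of the idèle $s\in\widehat{\OO}^\times$ on $K^\text{ab}$, then $\rho_{E/K}(\sigma)=s^{-1}\cdot u$ for some $u\in\Of^\times=\Aut(E)$. Consequently the composite
$$G_{E/K}\hookrightarrow \widehat{\OO}^\times\twoheadrightarrow\widehat{\OO}^\times/\Of^\times$$
is surjective. Its kernel is $\Gal(K(j_{K,f},E_\text{tors})/K^\text{ab})$, which embeds in $\Of^\times$. Counting gives
$$[\widehat{\OO}^\times:G_{E/K}]=\frac{|\Of^\times|}{[K(j_{K,f},E_\text{tors}):K^\text{ab}]}.$$
We expect this step to be the main obstacle, because it requires the precise idelic form of the reciprocity law and care with the profinite counting. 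Taking closures, the relevant maps are open and continuous, so finite-index arguments apply.

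\emph{Step 3: the case $j\in\Q$.} Now $K(j_{K,f})=K$. The group $\Gal(K(E_\text{tors})/K)$ embeds in $\mathcal{C}_{\delta,\phi}\cong\widehat{\OO}^\times$, which is abelian. Hence $K(E_\text{tors})\subseteq K^\text{ab}$, so the denominator in the formula is $1$ and the index is $|\Of^\times|$.

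Finally, $|\Of^\times|$ equals $4$ exactly when $\Of=\Z[i]$ (that is, $j=1728$), equals $6$ exactly when $\Of=\Z[\zeta_3]$ (that is, $j=0$), and equals $2$ otherwise. This gives the stated case split.
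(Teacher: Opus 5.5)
Your proof is correct, but it takes a different route from the paper. The paper does not re-derive the index formula. It quotes Campagna--Pengo's general formula $\frac{[(FK)\cap K^{\text{ab}}:H_{\Of}]\cdot|\Of^{\times}|}{[F(E_{\text{tors}}):FK^{\text{ab}}]}$, valid for $E$ over an arbitrary number field $F$, and specializes it to $F=\Q(j_{K,f})$. There $H_{\Of}=K(j_{K,f})\subseteq K^{\text{ab}}$, so the extra numerator factor is $1$ and $FK^{\text{ab}}=K^{\text{ab}}$. For $j\in\Q$ the paper cites Silverman for $K^{\text{ab}}=K(E_{\text{tors}})$ and Theorem~\ref{lem-clark}(3) for $K\subseteq\Q(E_{\text{tors}})$.

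You instead reprove the specialized formula from the main theorem of CM, in two moves:
\begin{enumerate}
\item You replace the normalizer index by the Cartan index, using $[G_E:G_{E/K}]=2$, which follows from $K\subseteq\Q(j_{K,f},E[3])$.
\item You use the idelic reciprocity law $\rho_{E/K}(\sigma)=s^{-1}u$ to get $G_{E/K}\cdot\Of^{\times}=\widehat{\Of}^{\times}$ and $G_{E/K}\cap\Of^{\times}\cong\Gal(K(j_{K,f},E_{\text{tors}})/K^{\text{ab}})$. The index is then $[\Of^{\times}:G_{E/K}\cap\Of^{\times}]$.
\end{enumerate}

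The paper's route is short and sits inside a statement valid over any base field. Yours is self-contained and shows where the factor $|\Of^{\times}|$ comes from: it is the unit ambiguity $u$ in the reciprocity law. Your Step 3 also handles the non-maximal orders with $j\in\Q$ uniformly, through the commutativity of $\mathcal{C}_{\delta,\phi}$ together with $K^{\text{ab}}\subseteq K(E_{\text{tors}})$, rather than by citation.

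The price is a few inputs you should state explicitly:
\begin{itemize}
\item the reciprocity law and $K^{\text{ab}}=K(j_{K,f},h(E_{\text{tors}}))$ for arbitrary orders, which Shimura's formulation for arbitrary lattices in $K$ provides;
\item the identification $\Gal(K^{\text{ab}}/K(j_{K,f}))\cong\widehat{\Of}^{\times}/\Of^{\times}$, which uses that $K^{\times}\mathbb{C}^{\times}$ is closed in the id\`eles because $\OK^{\times}$ is finite.
\end{itemize}
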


\begin{proof} Let $E/F$ be an elliptic curve with CM by $\Of$, and let $H_{\Of}$ be the ring class field of $K$ relative to the order $\Of$. Then, \cite[Theorem 1.1]{CampagnaPengo2}, shows that the index of $G_E$ in $\mathcal{N}_{\delta, \phi}$ is given by
$$\frac{[(FK)\cap K^\text{ab} : H_{\Of}]\cdot |\Of^{\times}|}{[F(E_\text{tors}):FK^\text{ab}]}.$$
In our case, $F=\Q(j_{K,f})$ and \cite[Theorem 3.2]{lozano-galoiscm} shows that $H_{\Of}=K(j_{K,f})$. Since $K(j_{K,f})/K$ is abelian, we have that $FK^{\text{ab}} = K^\text{ab}$ in this case. Therefore, $[(FK)\cap K^\text{ab} : H_{\Of}]=1$, and the index formula in our statement follows.

Finally, if $F=\Q(j_{K,f})=\Q$, then \cite[Example 5.8]{silverman2} shows that $K^\text{ab} = K(E_\text{tors})$. Further, Theorem \ref{lem-clark} shows that $K\subseteq \Q(E_{\text{tors}})$, and therefore $K^\text{ab} = \Q(E_{\text{tors}})$ and so $[F(E_\text{tors}):FK^\text{ab}]=1$ and the index of the image is precisely given by $|\Of^{\times}|$.  
\end{proof}

\begin{remark}\label{rem-level1}
    Let $E/\Q(j_{K,f})$ be an elliptic curve with CM by $\Of$. If $N=1$ is a level of definition, then $G_E=\mathcal{N}_{\delta, \phi}$, i.e., the adelic index is $1$. While Theorem \ref{thm-adelic-indexQ} says that the adelic index is never one for $E/\Q$, the index can be one when $\Q(j_{K,f})\neq \Q$ (see Section 5 of \cite{CampagnaPengo2} for examples). Indeed, for $\pi_N$ as in Def. \ref{def-level-of-definition} with $N=1$ we have that $\pi_1(G_E)$ is the trivial group, and therefore $\pi_1^{-1}(\pi_1(G_E))=\mathcal{N}_{\delta, \phi}$. However, the adelic index is at least $2$ when $E$ is defined over $\Q$, so the minimal level of definition must be at least $N \geq 2$ in that case.
\end{remark}

The rest of this section is dedicated to justify Definition \ref{def-level-of-definition}, and prove results about levels of definition of an adelic image. First, we show that if an adelic image is defined modulo $N$, then it is also defined at level $nN$ for any $n\geq 1$.

\begin{lemma}\label{lem-levelup}
	Let $E/\Q(j_{K, f})$ be an elliptic curve with CM by $\Of$, suppose that the adelic image of $E$ is defined modulo $N\geq 1$, and let $n\geq 1$. Then, the adelic image is also defined modulo $nN$.
	\end{lemma}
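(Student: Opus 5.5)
The argument is purely group-theoretic: it only uses the fact that the projections factor, $\pi_N = \pi_{nN,N}\circ \pi_{nN}$, where $\pi_{nN,N}\colon \mathcal{N}_{\delta,\phi}(nN)\to\mathcal{N}_{\delta,\phi}(N)$ is reduction modulo $N$. This compatibility holds because $\mathcal{N}_{\delta,\phi}$ is defined as the inverse limit along exactly these reduction maps. We need to show $G_E=\pi_{nN}^{-1}(\pi_{nN}(G_E))$.

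The inclusion $G_E\subseteq \pi_{nN}^{-1}(\pi_{nN}(G_E))$ is automatic for any map, so only the reverse inclusion requires work. Let $g\in\mathcal{N}_{\delta,\phi}$ with $\pi_{nN}(g)\in \pi_{nN}(G_E)$. Then there is $h\in G_E$ with $\pi_{nN}(g)=\pi_{nN}(h)$. Applying $\pi_{nN,N}$ to both sides and using the factorization gives $\pi_N(g)=\pi_N(h)\in\pi_N(G_E)$. Hence $g\in \pi_N^{-1}(\pi_N(G_E))$, and this equals $G_E$ by the hypothesis that the image is defined modulo $N$. Therefore $g\in G_E$.

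Equivalently, one can phrase this with kernels: being defined modulo $N$ means $\ker\pi_N\subseteq G_E$ (given that $G_E$ is a subgroup), and $\ker\pi_{nN}\subseteq\ker\pi_N$. Either formulation proves the lemma. The case $N=1$ is covered as well, since $\pi_1$ is trivial and the hypothesis then forces $G_E=\mathcal{N}_{\delta,\phi}$, which is defined at every level.

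There is no real obstacle here. The only point to state explicitly is the compatibility $\pi_N=\pi_{nN,N}\circ\pi_{nN}$ of the projection maps, which follows directly from the inverse-limit definition of $\mathcal{N}_{\delta,\phi}$.
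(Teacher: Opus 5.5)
Your proposal is correct and is essentially the paper's argument: both rest on the factorization $\pi_N=\pi_{nN,N}\circ\pi_{nN}$. The paper first asserts $\pi_{nN}(G_E)=\pi_{nN,N}^{-1}(\pi_N(G_E))$, a step that tacitly uses surjectivity of $\pi_{nN}$, and then composes preimages. Your element chase only needs $\pi_{nN}^{-1}(\pi_{nN}(G_E))\subseteq\pi_N^{-1}(\pi_N(G_E))$, so it is slightly more direct.
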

	\begin{proof}
		Suppose that $N$ is a level of definition of the adelic image of $E$. Then, by definition, we have 
		\[ G_{E} = \pi_N^{-1}(\pi_N(G_{E})). \]
		Let $n\geq 1$ and let $N'=nN$. Since $N$ is a level of definition, it follows that 
		$$\pi_{N'}(G_E) = \pi_{N',N}^{-1}(\pi_N(G_E)).$$
		Thus,
		$$\pi_{N'}^{-1}(\pi_{N'}(G_E))= \pi_{N'}^{-1}(\pi_{N',N}^{-1}(\pi_N(G_E)) = \pi_{N}^{-1}(\pi_N(G_E))=G_E,$$
		as desired.
	\end{proof}

\begin{remark}
Under Definition \ref{def-level-of-definition}, the adelic image of a CM elliptic curve $E$ is defined at any level $N$ for which the index of the mod-$N$ image is equal to the index of the adelic image. We explain why this is the case through the following results.
\end{remark}

\begin{proposition}\label{prop-gen-surj-map}
    Let $G$ and $H$ be groups and let $\varphi \colon G \to H$ be a surjective group homomorphism. Let $H' \subseteq H$ be a subgroup of $H$, and define $G'= \varphi^{-1}(H')$ to be the preimage of $H'$.
    Then 
    \[ [G : G'] \geq [H : H'].\]
\end{proposition}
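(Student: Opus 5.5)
The plan is to prove the stronger statement that $[G:G']=[H:H']$, by constructing an explicit bijection between the left coset spaces $G/G'$ and $H/H'$. The claimed inequality then follows at once. The argument is purely set-theoretic about cosets, so it works equally when the indices are infinite, in the sense of cardinalities, as for profinite groups like $\mathcal{N}_{\delta,\phi}$.

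First, define $\Phi\colon G/G'\to H/H'$ by $\Phi(gG')=\varphi(g)H'$. The key step is to check that, for $g_1,g_2\in G$,
$$g_1^{-1}g_2\in G' \iff \varphi(g_1^{-1}g_2)\in H' \iff \varphi(g_1)^{-1}\varphi(g_2)\in H'.$$
The first equivalence is just the definition of $G'=\varphi^{-1}(H')$, and the second holds because $\varphi$ is a homomorphism. Reading this chain from left to right shows that $\Phi$ is well defined. Reading it from right to left shows that $\Phi$ is injective.

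Second, surjectivity of $\Phi$ comes directly from surjectivity of $\varphi$. Any coset $hH'$ equals $\varphi(g)H'$ for some $g\in G$ with $\varphi(g)=h$, so $hH'=\Phi(gG')$.

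Hence $\Phi$ is a bijection, giving $[G:G']=[H:H']$ and in particular $[G:G']\geq[H:H']$. There is no real obstacle here; the only point needing care is to use the exact preimage $G'=\varphi^{-1}(H')$, since that is what makes the injectivity step go through. Surjectivity of $\varphi$ is used only in the second step. In the paper's application, $\varphi=\pi_N$ and $H'=\pi_N(G_E)$, so $G'=\pi_N^{-1}(\pi_N(G_E))\supseteq G_E$.
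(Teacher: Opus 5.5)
Your argument is correct, and it proves the stronger statement $[G:G']=[H:H']$. Since $G'=\varphi^{-1}(H')$ contains $\ker\varphi$, the map $gG'\mapsto\varphi(g)H'$ is a well-defined bijection $G/G'\to H/H'$; this is just the correspondence theorem.

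The paper argues in the opposite direction. It uses surjectivity to lift two distinct coset representatives $h_1,h_2$ of $H'$ to elements $g_1,g_2\in G$, and notes that $\varphi(g_1G'\cap g_2G')\subseteq h_1H'\cap h_2H'=\emptyset$. This gives an injection $H/H'\hookrightarrow G/G'$, and hence only the inequality. That route needs only $\varphi(G')\subseteq H'$, so it applies verbatim to any subgroup of $\varphi^{-1}(H')$. It does not exploit the implication $\varphi(x)\in H'\Rightarrow x\in G'$, which is exactly what your injectivity step uses to upgrade the inequality to an equality.

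Your version also shortens the proof of Theorem \ref{thm-levelofdef}. It gives $[\mathcal{N}_{\delta,\phi}:\pi_N^{-1}(G_{E,N})]=[\mathcal{N}_{\delta,\phi}(N):G_{E,N}]=[\mathcal{N}_{\delta,\phi}:G_E]$ directly. Together with $G_E\subseteq\pi_N^{-1}(G_{E,N})$ and finiteness of these indices, this forces $G_E=\pi_N^{-1}(G_{E,N})$.
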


\begin{proof}
    It is sufficient to show that for every distinct coset representative of $H'$ in $H$, there is a distinct coset representative of $G'$ in $G$. Let $h_1, h_2 \in H$ be distinct coset representatives of $H'$, and let $g_1, g_2 \in G$ be such that $\varphi(g_i) = h_i$ for $i = 1, 2$. Then, since $\varphi(G') = H'$ by definition and $h_1 H' \cap h_2 H' = \emptyset$ by hypothesis, we have 
    \begin{align*}
        \varphi(g_1 G' \cap g_2 G') 
        &\subseteq \varphi(g_1 G') \cap \varphi(g_2 G') = h_1 H' \cap h_2 H' = \emptyset,
    \end{align*}    
    so $g_1 G' \cap g_2 G' = \emptyset$ as desired.
\end{proof}

\begin{theorem}\label{thm-levelofdef}
    Let $E/\Q(j_{K,f})$ be an elliptic curve with CM by $\Of$, and suppose a compatible choice of bases is made so that $G_{E} = \im \rho_{E} \subseteq \mathcal{N}_{\delta, \phi}$, that $G_{E, N} = \im \rho_{E, N} \subseteq \mathcal{N}_{\delta, \phi}(N)$, and that $G_{E,N} = \pi_{N}(G_{E})$ for $N \geq 2$. Moreover, assume that $[\mathcal{N}_{\delta, \phi}(N) : G_{E, N}] = [\mathcal{N}_{\delta, \phi} : G_{E}]$. Then, the adelic image of $E$ is defined at level $N$.
\end{theorem}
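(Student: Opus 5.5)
Set $G'=\pi_N^{-1}(\pi_N(G_E))=\pi_N^{-1}(G_{E,N})$, a subgroup of $\mathcal{N}_{\delta,\phi}$ containing $G_E$. The goal is to show $G'=G_E$, which is exactly the condition in Definition \ref{def-level-of-definition}. The strategy is an index comparison. We sandwich $[\mathcal{N}_{\delta,\phi}:G']$ between the two equal indices in the hypothesis and then deduce $[G':G_E]=1$.

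First, note that $\pi_N\colon \mathcal{N}_{\delta,\phi}\to\mathcal{N}_{\delta,\phi}(N)$ is surjective. This holds because $\mathcal{N}_{\delta,\phi}$ is defined as the inverse limit of the $\mathcal{N}_{\delta,\phi}(N)$. Concretely, any $c_{\delta,\phi}(a,b)$ with unit determinant mod $N$ lifts to some $c_{\delta,\phi}(\tilde a,\tilde b)$ with unit determinant in $\widehat{\Z}$: a determinant that is a unit mod $N$ lifts to one that is a unit at every prime, after adjusting $\tilde a,\tilde b$ at primes not dividing $N$ via CRT, for instance taking $\tilde b\equiv 0$ and $\tilde a\equiv 1$ there. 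Also, $c_1$ is defined integrally. Once surjectivity is in hand, Proposition \ref{prop-gen-surj-map} applied to $\varphi=\pi_N$ and $H'=G_{E,N}$ gives $[\mathcal{N}_{\delta,\phi}:G']\ge[\mathcal{N}_{\delta,\phi}(N):G_{E,N}]$.

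In fact equality holds, since $\pi_N$ induces a bijection of coset spaces $\mathcal{N}_{\delta,\phi}/G'\to\mathcal{N}_{\delta,\phi}(N)/G_{E,N}$ when $G'$ is the full preimage. The reverse inequality is not actually needed, though. We have $G_E\subseteq G'\subseteq\mathcal{N}_{\delta,\phi}$, and the index $[\mathcal{N}_{\delta,\phi}:G_E]$ is finite (by Theorem \ref{thm-cmrep-intro-alvaro}(3), or by the hypothesis together with finiteness of $\mathcal{N}_{\delta,\phi}(N)$). Multiplicativity of indices then gives
\[ [\mathcal{N}_{\delta,\phi}:G_E]=[\mathcal{N}_{\delta,\phi}:G'][G':G_E]\ge[\mathcal{N}_{\delta,\phi}(N):G_{E,N}][G':G_E]=[\mathcal{N}_{\delta,\phi}:G_E][G':G_E]. \]
Hence $[G':G_E]=1$, so $G'=G_E$ and $N$ is a level of definition.

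The only step requiring any care is the surjectivity of $\pi_N$ on the normalizer groups; everything else is formal index bookkeeping. For surjectivity I would use the explicit description via $c_{\delta,\phi}(a,b)$ and $c_1$ together with the CRT lifting above. Alternatively, I would appeal directly to the inverse-limit construction, since the transition maps $\mathcal{N}_{\delta,\phi}(N')\to\mathcal{N}_{\delta,\phi}(N)$ are surjective by the same lifting argument and the groups are finite.
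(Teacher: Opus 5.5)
Your proposal is correct and follows the paper's proof. Both sandwich $[\mathcal{N}_{\delta,\phi}:\pi_N^{-1}(G_{E,N})]$ between the two equal indices using Proposition \ref{prop-gen-surj-map}, then conclude that $[\pi_N^{-1}(G_{E,N}):G_E]=1$. Your explicit check that $\pi_N$ is surjective on $\mathcal{N}_{\delta,\phi}$ supplies a hypothesis of Proposition \ref{prop-gen-surj-map} that the paper uses without comment.
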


\begin{proof}
   Since $\pi_N(G_E)=G_{E,N}$, it follows that $G_{E}$ is a subgroup of $\pi_{N}^{-1}(G_{E, N})$. By Proposition \ref{prop-gen-surj-map} and our hypotheses, it follows that 
    \[ [\mathcal{N}_{\delta, \phi}(N) : G_{E, N}] \leq [\mathcal{N}_{\delta, \phi} : \pi_{N}^{-1}(G_{E, N})] \leq [\mathcal{N}_{\delta, \phi} : G_{E}] = [\mathcal{N}_{\delta, \phi}(N) : G_{E, N}],  \]
    so $[\mathcal{N}_{\delta, \phi} : \pi_{N}^{-1}(G_{E, N})] = [\mathcal{N}_{\delta, \phi} : G_{E}]$, and therefore $[\pi_{N}^{-1}(G_{E, N}) : G_{E}] = 1$.
\end{proof}

Note that a similar proof shows that if $[\mathcal{N}_{\delta, \phi}(\ell^n) : G_{E, \ell^n}] = [\mathcal{N}_{\delta, \phi}(\ell^{\infty}) : G_{E, \ell^{\infty}}]$, then the $\ell$-adic image of $E$ is defined at level $\ell^n$.

\begin{remark}
    It would be reasonable at this stage to ask whether the minimal adelic level of definition of a CM elliptic curve, as stated, does not miss some arithmetic information about the corresponding elliptic curve. Namely, it is not clear \textit{a priori} why the adelic image of a CM elliptic curve could not be defined at two positive integers $N$ and $M$ where $N$ and $M$ are relatively prime, in which case there could be said to be two minimal levels of definition capturing two different kinds of arithmetic information. In the following propositions, we show that this is not an issue.
\end{remark}

\begin{figure}[ht!]\label{fig-prop3.4}
% https://tikzcd.yichuanshen.de/#N4Igdg9gJgpgziAXAbVABwnAlgFyxMJZABgBoBGAXVJADcBDAGwFcYkQAdDgW3pwAsAxk2ABhAL4B9YF1iMc9UlzT8s4gBQA5AJQhxpdJlz5CKAEwVqdJq3ZdeA4YzFSZHOQqUcVa9QFldfUNsPAIiclJiKwYWNkROHj4hEQlpWRh5RWVVDQARQIMQDBCTcNIzaJs4hIdk51S3DyzvHPUoQKsYKABzeCJQADMAJwhuJDIQHAgkAGYaGNt45SxpTVIocRAaRnoAIwyABSNQ0xAhrG7+HD1C4dGkC0npxAmF6uXpXNJNTe29w+OpXi50u1yCIDuY0QjymSAiIB2+0YRxKYXijBgA2u8yqdm8K2AXz8m3BkLhNFhiDm1lieLQBL86xJlHEQA
\begin{tikzcd}
                                                          & {\mathcal{C}_{\delta,\phi}(D)} \arrow[ld, "{\pi_{D,N}}"'] \arrow[rd, "{\pi_{D,M}}"] &                                                          \\
{\mathcal{C}_{\delta,\phi}(N)} \arrow[rd, "{\pi_{N,d}}"'] &                                                                                     & {\mathcal{C}_{\delta,\phi}(M)} \arrow[ld, "{\pi_{M,d}}"] \\
                                                          & {\mathcal{C}_{\delta,\phi}(d)}                                                      &                                                         
\end{tikzcd}
\label{blah}
\caption{Diagram of groups and maps that appear in Prop. \ref{prop-normalizerCRT}}
\end{figure}
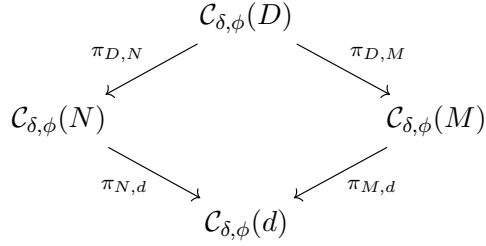 

\begin{proposition}\label{prop-normalizerCRT}
    Let $\Delta(\Of)=\Delta_K f^2$ be the discriminant of $\Of$ and define the constants $\delta$ and $\phi$ as in Section \ref{sec-notation}. For integers $M, N \geq 2$, let $D = \lcm(M,N)$ and $d = \gcd(M,N)$. Choose $g_{M} \in \mathcal{C}_{\delta, \phi}(M)$ and $g_{N} \in \mathcal{C}_{\delta, \phi}(N)$ so that there is some $g_d \in \mathcal{C}_{\delta, \phi}(d)$ such that $\pi_{M,d}(g_M) = g_d = \pi_{N,d}(g_N)$.    Then, there exists a unique $g_D \in \mathcal{C}_{\delta, \phi}(D)$ such that $\pi_{D,M}(g_D) = g_M$ and $\pi_{D,N}(g_D) = g_N$.
\end{proposition}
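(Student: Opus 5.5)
The plan is to reduce the statement to the Chinese Remainder Theorem for the rings $\Z/D\Z \to \Z/M\Z \times \Z/N\Z$. First, I will use the fact that an element of $\cC_{\delta,\phi}(R)$, for $R=\Z/k\Z$, is determined by a pair $(a,b)\in R^2$. Indeed, $c_{\delta,\phi}(a,b)$ has lower-right entry $a$ and upper-right entry $b$, so the map $(a,b)\mapsto c_{\delta,\phi}(a,b)$ is injective. This parametrization is compatible with the reduction maps: $\pi_{k,k'}(c_{\delta,\phi}(a,b))=c_{\delta,\phi}(a \bmod k', b\bmod k')$, because the matrix entries are polynomials in $a,b$ with integer coefficients $\delta,\phi$. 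Note that $\delta,\phi$ are integers in both cases of their definition, since $\Delta_Kf^2\equiv 0,1\bmod 4$.

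Next, I will write $g_M=c_{\delta,\phi}(a_M,b_M)$ and $g_N=c_{\delta,\phi}(a_N,b_N)$. By the injectivity above, the hypothesis $\pi_{M,d}(g_M)=g_d=\pi_{N,d}(g_N)$ gives $a_M\equiv a_N$ and $b_M\equiv b_N \bmod d$. The generalized Chinese Remainder Theorem states that the sequence $\Z/D\Z\to \Z/M\Z\times\Z/N\Z \rightrightarrows \Z/d\Z$ is exact, i.e.\ $\Z/D\Z$ is the fibre product over $\Z/d\Z$. Applying it, I obtain unique $a_D,b_D\in\Z/D\Z$ reducing to $(a_M,a_N)$ and $(b_M,b_N)$ respectively. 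I then set $g_D=c_{\delta,\phi}(a_D,b_D)$, so that $\pi_{D,M}(g_D)=g_M$ and $\pi_{D,N}(g_D)=g_N$ by compatibility.

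It remains to check that $g_D$ actually lies in $\cC_{\delta,\phi}(D)$, i.e.\ that its determinant $(a_D+b_D\phi)a_D-\delta b_D^2$ is a unit mod $D$. This holds because its reductions mod $M$ and mod $N$ are $\det g_M$ and $\det g_N$, which are units. An integer coprime to both $M$ and $N$ is coprime to $D=\lcm(M,N)$, since the primes dividing $D$ are exactly those dividing $M$ or $N$.

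For uniqueness, any $g'_D\in\cC_{\delta,\phi}(D)$ with the same two projections has the form $c_{\delta,\phi}(a',b')$ with $a',b'$ having the same residues mod $M$ and mod $N$. The uniqueness part of the Chinese Remainder Theorem then forces $a'=a_D$ and $b'=b_D$. I expect no genuine obstacle. The only point needing care is the fibre-product form of the Chinese Remainder Theorem for non-coprime moduli: existence uses the congruence mod $d$, and uniqueness uses $\ker(\Z\to\Z/M\times\Z/N)=\lcm(M,N)\Z$.
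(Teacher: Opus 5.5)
Your proposal is correct and follows essentially the same route as the paper: you apply the CRT for non-coprime moduli (solvable mod $D$ exactly when the residues agree mod $d$) separately to the parameters $a$ and $b$ of $c_{\delta,\phi}(a,b)$, set $g_D=c_{\delta,\phi}(a_D,b_D)$, and get uniqueness from the CRT. You are in fact slightly more careful than the paper: you explicitly check that $\delta,\phi$ are integers, that $\det g_D$ is a unit mod $D$, and that the congruences mod $d$ on $a$ and $b$ follow from the injectivity of $(a,b)\mapsto c_{\delta,\phi}(a,b)$. The paper leaves these points implicit.
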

\begin{proof} First, we note that if $u,v$ are integers such that $u\equiv v \bmod d$, then the system of congruences 
	\begin{align}\label{eq-1} \begin{cases}
		x\equiv u \bmod M,\\
		x\equiv v \bmod N,
	\end{cases}\end{align} 
	has a unique solution modulo $x_0=x_0(u,v)\bmod D$. Indeed, if we write $M=M'd$ and $N=N'd$, then the system (\ref{eq-1}) is equivalent to 
		\begin{align}\label{eq-2} \begin{cases}
			x\equiv u \bmod M',\\
			x\equiv u \bmod d,\\
			x\equiv v \bmod N',\\
			x\equiv v \bmod d
	\end{cases}\end{align}
which, by the Chinese remainder theorem, has a unique solution modulo $M'N'd = D$ if and only if $u\equiv v \bmod d$. 

In particular, the uniqueness of solutions of the system (\ref{eq-1}) modulo $D$ shows that for any $g_N \in \GL(2, \Z/N\Z)$ and $g_M\in \GL(2, \Z/M\Z)$, there exists a unique $g_D \in \GL(2,\Z/D\Z)$ which reduces to $g_N$ and $g_M$ modulo $N$ and $M$, respectively, if and only if $g_N\equiv g_M \bmod d$.

Now, if  $g_{M} \in \mathcal{C}_{\delta, \phi}(M)$ and $g_{N} \in \mathcal{C}_{\delta, \phi}(N)$ so that there is some $g_d \in \mathcal{C}_{\delta, \phi}(d)$ such that $\pi_{M,d}(g_M) = g_d = \pi_{N,d}(g_N)$, as in the statement of the lemma, then our previous remarks show that there is a unique $g_D\in \GL(2,\Z/D\Z)$ such that $g_D\equiv g_M\bmod M$ and $g_D\equiv g_N\bmod N$. Moreover, since $g_M$ is an element of the Cartan subgroup, it must be of the form $c_{\delta,\phi}(a_M,b_M)$ for some integers $a_M,b_M$ (as in the notation established in Section \ref{sec-notation}), and similarly $g_N= c_{\delta,\phi}(a_N,b_N)$ for some $a_N,b_N$. Let $x_0(a_M,a_N)$ and $x_0(b_M,b_N)$ be the unique solutions of the corresponding systems as in Eq. (\ref{eq-1}).  Then, the element
$$g=c_{\delta,\phi}(x_0(a_M,a_N),x_0(b_M,b_N))\in  \mathcal{C}_{\delta, \phi}(D)\subseteq \GL(2,\Z/D\Z)$$
satisfies $g\equiv g_N \bmod N$ and $g\equiv g_M\bmod M$, and therefore, by the uniqueness of $g_D$, we must have $g=g_D$ belongs to $\mathcal{C}_{\delta, \phi}(D)$, as desired. 
 \end{proof}

An important fact for us will be that the index of the image inside the Cartan is the same as the index of the image in the normalizer of Cartan, a fact that is stated precisely in the following proposition.

\begin{prop}\label{prop-sameindex}
	Let $N\geq 3$, let $E/\Q(j_{K,f})$ be an elliptic curve with CM by $\Of$, and let $G_{E,N}\subseteq \mathcal{N}_{\delta, \phi}(N)$ be the image of $\rho_{E,N}$. Then,
	$$[\mathcal{N}_{\delta, \phi}(N):G_{E,N}] = [\mathcal{C}_{\delta, \phi}(N):G_{E,N}\cap \mathcal{C}_{\delta, \phi}(N)]=[\mathcal{C}_{\delta, \phi}(N):G_{E/K,N}].$$
\end{prop}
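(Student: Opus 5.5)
The plan is to compare the two indices through the exact sequence $1\to \mathcal{C}_{\delta,\phi}(N)\to \mathcal{N}_{\delta,\phi}(N)\to \{\pm1\}\to 1$. We need $[\mathcal{N}_{\delta,\phi}(N):\mathcal{C}_{\delta,\phi}(N)]=2$ for $N\geq 3$; this holds because $c_1\notin\mathcal{C}_{\delta,\phi}(N)$ normalizes the Cartan. Concretely, $c_1$ acts on $\mathcal{C}_{\delta,\phi}(N)$ as the matrix of complex conjugation on $\Of/N\Of$, and $c_1^2=\mathrm{Id}$. To see $c_1\notin \mathcal{C}_{\delta,\phi}(N)$, suppose $c_1=c_{\delta,\phi}(a,b)$. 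The upper right entry gives $b=0$. The diagonal entries then give $a=1$ and $a=-1$, so $2\equiv 0 \bmod N$, which is impossible for $N\geq 3$.

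Next, identify $G_{E/K,N}$ with $G_{E,N}\cap\mathcal{C}_{\delta,\phi}(N)$. This is exactly Theorem \ref{thm-firstofsec1}(2), which gives the second equality immediately.

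For the first equality, it suffices to show that $G_{E,N}\not\subseteq \mathcal{C}_{\delta,\phi}(N)$. Then $G_{E,N}\cap\mathcal{C}_{\delta,\phi}(N)$ has index exactly $2$ in $G_{E,N}$, and
$$[\mathcal{N}_{\delta,\phi}(N):G_{E,N}]=\frac{2|\mathcal{C}_{\delta,\phi}(N)|}{2|G_{E,N}\cap \mathcal{C}_{\delta,\phi}(N)|}=[\mathcal{C}_{\delta,\phi}(N):G_{E,N}\cap\mathcal{C}_{\delta,\phi}(N)].$$
To get the non-containment, we use Theorem \ref{lem-clark}(3): for $N\geq3$ we have $K\subseteq \Q(j_{K,f},E[N])$. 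Since $K\not\subseteq\Q(j_{K,f})$ (the field $\Q(j_{K,f})$ is real, or at least $K(j_{K,f})$ has degree $2$ over it), there is $\sigma\in G_{\Q(j_{K,f})}$ acting nontrivially on $K$.

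The main point, and the likely obstacle, is to show that $\rho_{E,N}(\sigma)\notin\mathcal{C}_{\delta,\phi}(N)$. I would argue by contradiction via the Galois correspondence. Suppose $G_{E,N}\subseteq \mathcal{C}_{\delta,\phi}(N)$. Then every element of $G_{E,N}$ commutes with the action of $\Of$ on $E[N]$. Equivalently, with the basis chosen as in Theorem \ref{thm-firstofsec1}, the Cartan is the centralizer of the image of $\Of$. For $\tau\in G_{\Q(j_{K,f})}$ and $\alpha\in\Of$, we have $\tau\circ[\alpha]\circ\tau^{-1}=[\tau(\alpha)]$ on $E[N]$. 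If $\tau$ is nontrivial on $K$, this equals $[\bar\alpha]$. If $\rho_{E,N}(\tau)$ commuted with $[\alpha]$, we would get $[\alpha]=[\bar\alpha]$ on $E[N]$. Taking $\alpha$ to be the generator corresponding to $c_{\delta,\phi}(0,1)$, this forces $\alpha-\bar\alpha\in N\Of$. But $\alpha-\bar\alpha=\pm\sqrt{\Delta_Kf^2}$, which is not in $N\Of$ for $N\geq 3$: in the basis $\{1,\alpha\}$ its $\alpha$-coordinate is $\pm 2$, which is not divisible by $N$.

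Alternatively, one can argue directly from Theorem \ref{thm-firstofsec1}(2): if $G_{E,N}\subseteq\mathcal{C}_{\delta,\phi}(N)$, then $G_{E/K,N}=G_{E,N}$. That would make the restriction $\Gal(\Q(j_{K,f},E[N])/K(j_{K,f}))\to\Gal(\Q(j_{K,f},E[N])/\Q(j_{K,f}))$ surjective, which contradicts $K\subseteq\Q(j_{K,f},E[N])$ since then the former is an index-$2$ subgroup. This second route is cleaner, and I would present it, keeping the commutation argument as justification if needed.
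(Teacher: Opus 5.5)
Your proof is correct, but it takes a different route from the paper. The paper gets the second equality from Theorem \ref{thm-firstofsec1}(2), exactly as you do. For the first equality, however, it gives no argument and simply cites \cite[Theorem 6.3(3)]{lozano-galoiscm}.

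You instead derive the first equality from two index-$2$ facts and a counting step:
\begin{itemize}
\item $[\mathcal{N}_{\delta,\phi}(N):\mathcal{C}_{\delta,\phi}(N)]=2$. You check this by hand: $c_1$ acts on the Cartan by $c_{\delta,\phi}(a,b)\mapsto c_{\delta,\phi}(a+b\phi,-b)$, $c_1^2=\operatorname{Id}$, and $c_1\notin\mathcal{C}_{\delta,\phi}(N)$ because $2\not\equiv 0 \bmod N$.
\item $[G_{E,N}:G_{E/K,N}]=[K(j_{K,f}):\Q(j_{K,f})]=2$. This comes from the inclusion $K(j_{K,f})\subseteq \Q(j_{K,f},E[N])$ of Theorem \ref{lem-clark}(3) via the Galois correspondence.
\end{itemize}

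What your version buys:
\begin{itemize}
\item It is self-contained apart from the Bourdon--Clark inclusion.
\item It shows exactly where $N\geq 3$ is needed: once for $c_1\notin\mathcal{C}_{\delta,\phi}(N)$ and once for $K\subseteq\Q(j_{K,f},E[N])$.
\item It yields along the way that $G_{E,N}\not\subseteq\mathcal{C}_{\delta,\phi}(N)$. The paper instead imports the adelic version of this from \cite[Thm.~6.7]{lozano-galoiscm} in the proof of Proposition \ref{prop-reconstructimage}.
\end{itemize}
The paper's version is shorter, but it hides all of this in the citation.

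Two small points on presentation:
\begin{itemize}
\item No contradiction is needed. The inclusion $K(j_{K,f})\subseteq\Q(j_{K,f},E[N])$ directly shows that $\rho_{E,N}(G_{K(j_{K,f})})$ has index $2$ in $\rho_{E,N}(G_{\Q(j_{K,f})})$.
\item $\Q(j_{K,f})$ need not be a real field, since $j_{K,f}$ is an arbitrary Galois conjugate. Rely only on $[K(j_{K,f}):\Q(j_{K,f})]=2$, which the paper records in Section \ref{sec-galoisrepnotation}.
\end{itemize}

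You are right to present the Galois-theoretic route. The commutation argument additionally needs $\mathcal{C}_{\delta,\phi}(N)$ to be exactly the image of $(\Of/N\Of)^\times$ in the chosen basis. That holds for the basis of \cite{lozano-galoiscm}, but it is not part of Theorem \ref{thm-firstofsec1} as quoted.
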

\begin{proof}
	This is precisely the content of part (3) of \cite[Theorem 6.3]{lozano-galoiscm}, where we remind the reader of the notation from Section \ref{sec-galoisrepnotation} where we defined $G_{E/K,N}$ as the image of $\rho_{E,N}$ when restricted to $\Gal(\overline{\Q(j_{K,f})}/K(j_{K,f}))$ and, by Theorem \ref{thm-firstofsec1}, we have $G_{E,N}\cap \mathcal{C}_{\delta,\phi}(N)=G_{E/K,N}$.
\end{proof}

We record the following corollary for later use.

\begin{corollary}\label{cor-galoistheoryofcompositum}
    Let $E/\Q(j_{K,f})$ be an elliptic curve with CM by $\Of$, let $F=\Q(j_{K,f})$, and let $N>M\geq 2$ be relatively prime integers. Then,
    \begin{enumerate}
        \item $\mathcal{C}_{\delta, \phi}(MN)\cong \mathcal{C}_{\delta, \phi}(M)\times \mathcal{C}_{\delta, \phi}(N)$ via the natural isomorphism induced by the Chinese remainder theorem,
        \item $G_{E/K,MN}$ is isomorphic to a subgroup  of $ G_{E/K,M}\times G_{E/K,N} \subseteq \mathcal{C}_{\delta, \phi}(M)\times \mathcal{C}_{\delta, \phi}(N)$, 
        \item the index $[\mathcal{C}_{\delta, \phi}(MN): G_{E/K,MN}]$ is divisible by the degree of $F(E[M])\cap F(E[N])/FK$, and
        \item if $j(E)\neq 0,1728$ and $[F(E[M])\cap F(E[N]):FK]=2$, then $[\mathcal{N}_{\delta, \phi}(M):G_{E,M}]=[\mathcal{N}_{\delta, \phi}(N):G_{E,N}]=1$.
    \end{enumerate}
\end{corollary}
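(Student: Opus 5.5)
Part (1) will come from the Chinese remainder theorem applied entry by entry. An element $c_{\delta,\phi}(a,b)\in\mathcal{C}_{\delta,\phi}(MN)$ is determined by the pair $(a,b)\in(\Z/MN\Z)^2$. Reducing modulo $M$ and modulo $N$ gives a homomorphism $\mathcal{C}_{\delta,\phi}(MN)\to\mathcal{C}_{\delta,\phi}(M)\times\mathcal{C}_{\delta,\phi}(N)$; it is a homomorphism because reduction maps are. Since $\gcd(M,N)=1$, Proposition \ref{prop-normalizerCRT} with $d=1$ (where the compatibility condition modulo $d$ is vacuous) shows this map is bijective. A determinant is a unit modulo $MN$ exactly when it is a unit modulo both $M$ and $N$, so the Cartan conditions match on the two sides.

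For part (2), I will use $F(E[MN])=F(E[M])F(E[N])$ when $\gcd(M,N)=1$, which holds because $E[MN]=E[M]\oplus E[N]$. Restricting $\sigma\in G_{FK}$ to $E[M]$ and $E[N]$ sends $\rho_{E/K,MN}(\sigma)$ to $(\rho_{E/K,M}(\sigma),\rho_{E/K,N}(\sigma))$. Under the isomorphism of (1) (with compatible bases), this embeds $G_{E/K,MN}$ into $G_{E/K,M}\times G_{E/K,N}$.

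For part (3), I will use Galois theory. Set $L_M=F(E[M])$, $L_N=F(E[N])$, and assume $M\ge3$ so that Theorem \ref{lem-clark}(3) gives $FK\subseteq L_M$. Then $G_{E/K,MN}\cong\Gal(L_ML_N/FK)$, and the image in $\Gal(L_M/FK)\times\Gal(L_N/FK)$ is the fibered product over $\Gal(L_M\cap L_N/FK)$. Its index in the full product is therefore $[L_M\cap L_N:FK]$. Using (1),
$$[\mathcal{C}(MN):G_{E/K,MN}]=[\mathcal{C}(M):G_{E/K,M}]\,[\mathcal{C}(N):G_{E/K,N}]\,[L_M\cap L_N:FK],$$
which gives the divisibility. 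If $M=2$, then $FK$ may not lie in $L_2$. Since $N>M$ forces $N\ge3$, $FK\subseteq L_N$, and I will replace $L_M$ by $L_MK$ so that the intersection degree over $FK$ still divides the index. This is the one point requiring care, and I would handle it by working throughout with $L_MK$ and $L_NK$.

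For part (4), assume $j(E)\ne0,1728$. Theorem \ref{thm-firstofsec1}(1) says each index $[\mathcal{N}(N):G_{E,N}]$ divides $|\Of^\times/\mathcal{O}^\times_{K,f,N}|$, which divides $2$. By Proposition \ref{prop-sameindex} (for level at least $3$), this equals the Cartan index. Applying Theorem \ref{thm-firstofsec1}(1) at level $MN$ and Proposition \ref{prop-sameindex}, the index $[\mathcal{C}(MN):G_{E/K,MN}]$ is at most $2$. By the product formula above, if the intersection has degree $2$, both other factors must equal $1$. For $M=2$, I will use the index bound for $\mathcal{N}(2)$ directly, via the corresponding Cartan argument with $K$ adjoined. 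The main obstacle is the bookkeeping with $FK$ when $M=2$.
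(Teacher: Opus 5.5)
Your proposal is correct and follows essentially the paper's route: (1) comes from Proposition~\ref{prop-normalizerCRT} with $d=1$, (2)--(3) from the Galois theory of the compositum $F(E[M])F(E[N])$ over $FK$, and (4) from the resulting index formula combined with the fact that the index is at most $|\Of^\times|=2$ (the paper invokes the adelic index bound, while you use the level-$MN$ bound of Theorem~\ref{thm-firstofsec1}(1), which works equally well). Your extra care with $M=2$, where $K$ need not lie in $F(E[2])$, addresses a point the paper's proof silently passes over, and the claim about $\mathcal{N}_{\delta,\phi}(2)$ in (4) is in fact immediate: since $-1\equiv 1 \bmod 2\Of$, the bound of Theorem~\ref{thm-firstofsec1}(1) at level $2$ equals $1$.
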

\begin{proof}
    Part (1) follows from Prop. \ref{prop-normalizerCRT} with $d=\gcd(M,N)=1$ and $D=\lcm(M,N)=MN$. For an integer $S>2$, let $G_{E/K,S}$ be the image of $\rho_{E/K,S}$. Then, $K(j_{K,f})\subseteq F(E[S])$ by Theorem \ref{cyc-division-thm}, and $\Gal(F(E[S])/FK) \cong G_{E/K,S}$ from the definition and properties of the Galois representation $\rho_{E/K,S}$. Since $F(E[MN])$ is the compositum of the Galois extensions $F(E[M])$ and $F(E[N])$ of $FK=K(j_{K,f})$, parts (2) and (3) now follow from Galois theory, because the index $[G_{E/K,M}\times G_{E/K,N}:G_{E/K,MN}]$ is equal to the degree of the extension $F(E[M])\cap F(E[N])/FK$, and
    $$[\mathcal{C}_{\delta, \phi}(MN): G_{E/K,MN}] = [\mathcal{C}_{\delta, \phi}(MN):G_{E/K,M}\times G_{E/K,N}]\cdot [G_{E/K,M}\times G_{E/K,N}:G_{E/K,MN}],$$
    as claimed.

    Now suppose that $[F(E[M])\cap F(E[N]):FK]=2$. Then, $[G_{E/K,M}\times G_{E/K,N}:G_{E/K,MN}]=2$. Moreover, if either $[\mathcal{N}_{\delta, \phi}(M):G_{E,M}]$ or $[\mathcal{N}_{\delta, \phi}(N):G_{E,N}]$ is bigger than $1$, then the above displayed equation of indices shows that $[\mathcal{C}_{\delta, \phi}(MN): G_{E/K,MN}]>2$. Hence, the adelic index of $E$ is greater than $2$, which is impossible since $j(E)\neq 0,1728$ and so the index must be $1$ or $2$. Thus, $[\mathcal{N}_{\delta, \phi}(M):G_{E,M}]=[\mathcal{N}_{\delta, \phi}(N):G_{E,N}]=1$ as desired. This shows (4).
\end{proof}

We are now ready to justify our way of defining the minimal level of definition with the following three results.

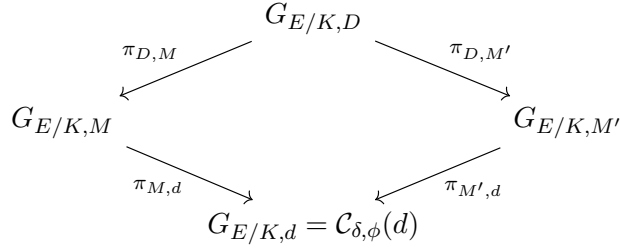
\begin{figure}[ht!]
    \centering 
\begin{tikzcd}
&  {G_{E/K, D}} \arrow[ld, "{\pi_{D,M}}"'] \arrow[rd, "{\pi_{D,M'}}"] & \\
{G_{E/K, M}} \arrow[rd, "{\pi_{M,d}}"'] & &  {G_{E/K, M'}} \arrow[ld, "{\pi_{M',d}}"] \\
& {G_{E/K, d} = \mathcal{C}_{\delta, \phi}(d)} & 
\end{tikzcd}\caption{A diagram for the groups and maps for the proof of Prop. \ref{prop-levelofdefngcd}.}
\end{figure}

\begin{prop}\label{prop-levelofdefngcd}
    Let $E/\Q(j_{K,f})$ be an elliptic curve with CM by $\Of$. We make a compatible choice of bases so that $G_{E} = \im \rho_{E} \subseteq \mathcal{N}_{\delta, \phi}$, that $G_{E,N}=\im \rho_{E,N} \subseteq \mathcal{N}_{\delta, \phi}(N)$, and that $G_{E, N} = \pi_{N}(G_E)$ for all $N \geq 2$. Suppose that for integers $M, M' \geq 2$, we have $G_{E} = \pi_{M}^{-1}(G_{E, M})$ and $G_{E} = \pi_{M'}^{-1}(G_{E, M'})$. Then, the adelic image of $E$ is defined at level $d = \gcd(M, M')$ if $d>2$, or at level $4$ if $d=2$.
\end{prop}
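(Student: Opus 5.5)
The plan is to reduce the statement to a purely group-theoretic fact about the Cartan subgroup, and then return to the normalizer via the index comparisons already established. We work with $G_{E/K}=G_E\cap\mathcal{C}_{\delta,\phi}$ (the adelic analogue of Theorem \ref{thm-firstofsec1}(2)). For $S\geq 1$ let $\mathcal{K}_S=\ker(\pi_S)\cap\mathcal{C}_{\delta,\phi}$. Since $G_E=\pi_M^{-1}(G_{E,M})$, we have $\ker\pi_M\subseteq G_E$, hence $\mathcal{K}_M\subseteq G_{E/K}$. Likewise $\mathcal{K}_{M'}\subseteq G_{E/K}$.

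\textbf{Key step: $\mathcal{K}_d\subseteq \mathcal{K}_M\mathcal{K}_{M'}$.} It suffices to show this at the finite level $D=\lcm(M,M')$, on the preimages in $\mathcal{C}_{\delta,\phi}$. Take $g\in\mathcal{C}_{\delta,\phi}$ with $g\equiv \Id \bmod d$. Consider the pair $(\Id \bmod M,\ g\bmod M')\in \mathcal{C}_{\delta,\phi}(M)\times\mathcal{C}_{\delta,\phi}(M')$. These two elements agree modulo $d$, so Proposition \ref{prop-normalizerCRT} produces $k_D\in\mathcal{C}_{\delta,\phi}(D)$ with $k_D\equiv \Id\bmod M$ and $k_D\equiv g\bmod M'$. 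We lift $k_D$ to some $k_1\in\mathcal{C}_{\delta,\phi}$; the Cartan is surjective onto its finite levels because $c_{\delta,\phi}(a,b)$ lifts coordinatewise and the determinant condition is local. We may choose $k_1\in\mathcal{K}_M$, since we may multiply by an element of $\ker\pi_D\cap\mathcal{C}_{\delta,\phi}\subseteq\mathcal{K}_M$. Then $k_2=k_1^{-1}g$ satisfies $k_2\equiv \Id\bmod M'$, so $k_2\in\mathcal{K}_{M'}$. Hence $g=k_1k_2\in\mathcal{K}_M\mathcal{K}_{M'}\subseteq G_{E/K}$.

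This shows $\mathcal{K}_d\subseteq G_{E/K}$, so $G_{E/K}$ is the full preimage of its image in $\mathcal{C}_{\delta,\phi}(d)$. In particular it is also the full preimage of its image in $\mathcal{C}_{\delta,\phi}(N)$ for $N=d$ or $N=4$: when $d=2$ we use that $\mathcal{K}_4\subseteq\mathcal{K}_2$.

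\textbf{Passing back to the normalizer.} Let $N=d$ if $d>2$ and $N=4$ if $d=2$, so that $N\geq 3$. The preimage property gives
\[
[\mathcal{C}_{\delta,\phi}(N):G_{E/K,N}]=[\mathcal{C}_{\delta,\phi}:G_{E/K}].
\]
By Proposition \ref{prop-sameindex} the left side equals $[\mathcal{N}_{\delta,\phi}(N):G_{E,N}]$. Applying Proposition \ref{prop-sameindex} at the levels $M$ and $M'$, where both $G_E$ and $G_{E/K}$ are full preimages, gives $[\mathcal{C}_{\delta,\phi}:G_{E/K}]=[\mathcal{N}_{\delta,\phi}:G_E]$. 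This requires $M\geq 3$; if both $M$ and $M'$ equal $2$ then $d=2$ and we first raise to level $4$ using Lemma \ref{lem-levelup}. Combining these, $[\mathcal{N}_{\delta,\phi}(N):G_{E,N}]=[\mathcal{N}_{\delta,\phi}:G_E]$, and Theorem \ref{thm-levelofdef} then says that $G_E$ is defined at level $N$.

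\textbf{Expected obstacles.} I expect the main obstacle to be the key step, specifically checking that the lift $k_1$ can be chosen inside the Cartan and inside $\mathcal{K}_M$. This is where Proposition \ref{prop-normalizerCRT} is essential, because a naive CRT lift in $\GL_2$ need not lie in $\mathcal{C}_{\delta,\phi}$. A secondary point is the degenerate case $d=1$, where the argument would force the adelic index to be $1$. For $E/\Q$ this contradicts Theorem \ref{thm-adelic-indexQ}, and over general $\Q(j_{K,f})$ it corresponds to the index-one situation of Remark \ref{rem-level1}, so that case is vacuous or trivially consistent.
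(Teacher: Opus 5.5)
Your proof is correct and is essentially the paper's argument. Both proofs work with $G_{E/K}$ and use Proposition \ref{prop-normalizerCRT} to glue two Cartan elements at levels $M$ and $M'$ that agree modulo $d$; you phrase this through kernels ($\mathcal{K}_d\subseteq\mathcal{K}_M\mathcal{K}_{M'}$), whereas the paper shows directly that $\pi_{M',d}^{-1}(G_{E/K,d})\subseteq G_{E/K,M'}$. Two remarks on your write-up. First, any lift of $k_D$ already lies in $\mathcal{K}_M$, so no adjustment is needed there. Second, your closing index comparison via Proposition \ref{prop-sameindex} and Theorem \ref{thm-levelofdef} makes explicit the passage from $G_{E/K}$ back to $G_E$, which the paper leaves implicit after proving $G_{E/K}=\pi_d^{-1}(G_{E/K,d})$.
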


\begin{proof} Since $M$ and $M'$ are both levels of definition for $E$, it follows by Lemma \ref{lem-levelup} that $D := \lcm(M, M')$ is also a level of definition. Also by Lemma \ref{lem-levelup}, if $\gcd(M,M')=2$, then we may replace $M$ by $2M$ and $M'$ by $2M'$, in which case $\gcd(M,M')=4$. Thus, we may assume that $d \geq 3$.

    Choose an element $g_{M'} \in \pi_{M', d}^{-1}(G_{E/K, d}) \subseteq \mathcal{C}_{\delta, \phi}(M')$ of the pullback from $G_{E/K, d}$ to $G_{E/K, M'}$, and let $g_d := \pi_{M',d}(g_{M'}) \in G_{E/K,d}$. Since we have that $\pi_{M,d}(G_{E/K,M}) = G_{E/K,d}$, there exists some $g_M \in G_{E/K,M}$ such that $\pi_{M,d}(g_{M}) = g_d$.

   Since $\pi_{M,d}(g_{M}) = g_d = \pi_{M',d}(g_{M'})$, it follows by Proposition \ref{prop-normalizerCRT} that there exists some $g_D \in \mathcal{C}_{\delta, \phi}(D)$ such that $\pi_{D,M}(g_D) = g_M$ and $\pi_{D,M'}(g_D) = g_{M'}$. However, since $M$ is a level of definition, we must have $G_{E/K,D} = \pi_{D,M}^{-1}(G_{E/K,M})$, so in fact $g_D \in G_{E/K,D}$. Thus, since $\pi_{D,M'}(G_{E/K,D}) = G_{E/K,M'}$, we have $g_{M'} \in G_{E/K,M'}$. Since $g_{M'} \in \pi_{M', d}^{-1}(G_{E/K, d})$ was an arbitrary element, if follows that $G_{E/K,M'} = \pi_{M', d}^{-1}(G_{E/K, d})$. Therefore, since $M'$ is a level of definition, it follows that $G_{E/K} = \pi_{M'}^{-1}(G_{E/K, M'}) = \pi_{M'}^{-1}(\pi_{M', d}^{-1}(G_{E/K, d})) = \pi_{d}^{-1}(G_{E/K, d})$. This shows that $d$ is a level of definition and it completes the proof.
\end{proof}

\begin{prop}\label{prop-gcdoneimpliesindex1}
    Let $E/\Q(j_{K,f})$ be an elliptic curve with CM by $\Of$, and assume $j_{K,f} \neq 0, 1728$. We make a compatible choice of bases as in Prop. \ref{prop-levelofdefngcd}. Suppose that for integers $M, M' \geq 2$, we have $G_{E} = \pi_{M}^{-1}(G_{E, M})$ and $G_{E} = \pi_{M'}^{-1}(G_{E, M'})$, and $\gcd(M,M')=1$. Then, the adelic index is $1$, i.e., $[\mathcal{N}_{\delta, \phi} : G_{E}]= 1$ or equivalently $G_E=\mathcal{N}_{\delta, \phi}$.
\end{prop}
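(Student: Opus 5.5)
The plan is to reduce to Prop.~\ref{prop-levelofdefngcd} and handle the degenerate gcd. Since $\gcd(M,M')=1$, that proposition does not apply directly: the gcd is $1$, not $2$ or $\geq 3$. So I would first replace $M$ and $M'$ by suitable multiples, using Lemma~\ref{lem-levelup}, so that the argument of Prop.~\ref{prop-levelofdefngcd} runs with $d$ small but $\geq 3$.

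\textbf{Step 1 (raise the levels).} By Lemma~\ref{lem-levelup}, both $3M$ and $3M'$ are levels of definition. If $3\mid M$, say, I would instead use $4M$ and $4M'$; since $\gcd(M,M')=1$, at most one of $M,M'$ is divisible by $3$ and at most one is even, and I pick whichever multiplier suits. For definiteness, choose a prime $p\geq 3$ dividing neither $M$ nor $M'$, for example $p$ large. Then $\gcd(pM,pM')=p\geq 3$, and Prop.~\ref{prop-levelofdefngcd} shows that the adelic image is defined at level $p$.

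\textbf{Step 2 (vary the prime).} Repeating Step 1 with two distinct primes $p,q\geq 3$ coprime to $MM'$ shows that $p$ and $q$ are both levels of definition. I would then choose $p,q>3$ with $p,q\nmid 2\Delta_K f$. By Theorem~\ref{thm-goodredn}, $G_{E,p}=\mathcal{N}_{\delta,\phi}(p)$. Since $p$ is a level of definition, $G_E=\pi_p^{-1}(G_{E,p})=\pi_p^{-1}(\mathcal{N}_{\delta,\phi}(p))=\mathcal{N}_{\delta,\phi}$, so the adelic index is $1$. This step needs only one prime $p$; the second prime $q$ is unnecessary.

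\textbf{Main obstacle.} The only delicate point is whether Prop.~\ref{prop-levelofdefngcd} really applies with $pM$ and $pM'$. Its hypotheses are just that both numbers are levels of definition, which holds by Lemma~\ref{lem-levelup}, and that their gcd $p$ exceeds $2$. Inside that proof, $d\geq 3$ is used so that $G_{E/K,d}$ is the Galois image over $K(j_{K,f})$ with $K\subseteq F(E[d])$, via Theorem~\ref{cyc-division-thm}. It is also used for the index compatibility of Prop.~\ref{prop-sameindex}. Both hold for $d=p\geq 5$.

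If one is uneasy about that proposition's argument, an alternative route uses Corollary~\ref{cor-galoistheoryofcompositum}(4) and the index count. Suppose the index were $2$. Then $[\mathcal{N}_{\delta,\phi}(M):G_{E,M}]=2$ and $[\mathcal{N}_{\delta,\phi}(M'):G_{E,M'}]=2$, because a level of definition preserves the index. The CRT decomposition $\mathcal{C}_{\delta,\phi}(MM')\cong\mathcal{C}_{\delta,\phi}(M)\times\mathcal{C}_{\delta,\phi}(M')$ from Corollary~\ref{cor-galoistheoryofcompositum}(1), together with Corollary~\ref{cor-galoistheoryofcompositum}(2), would then give index at least $4$ at level $MM'$. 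That contradicts Theorem~\ref{thm-adelic-indexQ}'s bound of $2$ (the index divides $|\Of^\times|=2$ by Theorem~\ref{thm-firstofsec1}(3)). This alternative needs $M,M'\geq 3$ for Prop.~\ref{prop-sameindex}; the case where one of them equals $2$ I would handle by passing to $4$ via Lemma~\ref{lem-levelup}.
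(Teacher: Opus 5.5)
Your main argument is correct, but it is not the paper's route. The paper's proof is essentially your fallback.

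The paper assumes $[\mathcal{N}_{\delta,\phi}:G_E]=2$ and transfers this to index $2$ at levels $M$ and $M'$. Via Prop.~\ref{prop-sameindex} it passes to the Cartan images, then uses the CRT splitting $\mathcal{C}_{\delta,\phi}(MM')\cong\mathcal{C}_{\delta,\phi}(M)\times\mathcal{C}_{\delta,\phi}(M')$ to get index at least $4$ at level $MM'$. This contradicts $[\mathcal{N}_{\delta,\phi}:G_E]\mid 2$.

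Your primary route instead takes a prime $p>3$ with $p\nmid 2\Delta_K f$ and observes $\gcd(pM,pM')=p$. Then Lemma~\ref{lem-levelup} and Prop.~\ref{prop-levelofdefngcd} make $p$ itself a level of definition. Theorem~\ref{thm-goodredn} gives $G_{E,p}=\mathcal{N}_{\delta,\phi}(p)$, hence $G_E=\pi_p^{-1}(\mathcal{N}_{\delta,\phi}(p))=\mathcal{N}_{\delta,\phi}$. There is no circularity, since Prop.~\ref{prop-levelofdefngcd} is proved independently.

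The two routes buy different things. The paper's is a short, self-contained index count, and it is exactly where the hypothesis $j_{K,f}\neq 0,1728$ (the index bound $2$) enters. Yours rests on two heavier inputs: the CRT lifting argument of Prop.~\ref{prop-levelofdefngcd} and surjectivity at good primes. In exchange it never uses the index bound, so apart from choosing $p>3$ it does not need $j_{K,f}\neq 0,1728$ at all.

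Two minor remarks. First, your concern in Step~1 about $3\mid M$ is unfounded: $\gcd(pM,pM')=p\gcd(M,M')=p$ for every prime $p$, whether or not $p$ divides $M$ or $M'$. Second, in your fallback, passing from $M=2$ to $4$ (still coprime to the odd $M'$) repairs a small gap in the paper's proof. That proof applies Prop.~\ref{prop-sameindex}, which is stated only for levels $\geq 3$, with $M,M'\geq 2$.
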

\begin{proof}
    Suppose that the adelic image $G_E$ is defined at relatively prime levels $M$ and $M'\geq 2$, and suppose for a contradiction that $[\mathcal{N}_{\delta, \phi} : G_{E}]= 2$. Thus, $[\mathcal{N}_{\delta, \phi}(M) : G_{E, M}] = [\mathcal{N}_{\delta, \phi}(M') : G_{E, M'}] = 2$ and further that $[\mathcal{N}_{\delta, \phi}(D) : G_{E, D}] = 2$ for $D := \lcm(M, M')=MM'$, since the image is defined at level $M$ and $M\mid D$. By Prop. \ref{prop-sameindex}, we also have
    $$[\mathcal{C}_{\delta, \phi} : G_{E/K}] = [\mathcal{C}_{\delta, \phi}(D) : G_{E/K, D}]= [\mathcal{C}_{\delta, \phi}(M) : G_{E/K, M}] = [\mathcal{C}_{\delta, \phi}(M') : G_{E/K, M'}]= 2,$$
    However, since $\gcd(M,M')=1$, it follows that 
    $\mathcal{C}_{\delta, \phi}(MN)\cong \mathcal{C}_{\delta, \phi}(M)\times \mathcal{C}_{\delta, \phi}(N)$ by Cor. \ref{cor-galoistheoryofcompositum}. Moreover, $G_{E/K,MN}$ is isomorphic to a subgroup  of $ G_{E/K,M}\times G_{E/K,N} \subseteq \mathcal{C}_{\delta, \phi}(M)\times \mathcal{C}_{\delta, \phi}(N)$, and therefore
    $$2=[\mathcal{C}_{\delta, \phi}(D) : G_{E/K, D}]\geq [\mathcal{C}_{\delta, \phi}(M) : G_{E/K, M}] \cdot  [\mathcal{C}_{\delta, \phi}(M') : G_{E/K, M'}]\geq 2\cdot 2 = 4,$$
    which is the contradiction we were looking for. Hence $[\mathcal{N}_{\delta, \phi} : G_{E}]= 1$, as claimed.
\end{proof}

\begin{prop}\label{prop-minimalleveldivideslevelsofdef}
    Let $E/\Q(j_{K,f})$ be an elliptic curve with CM by $\Of$, and assume $j_{K,f} \neq 0, 1728$. We make a compatible choice of bases as in Prop. \ref{prop-levelofdefngcd}. Suppose that $M, M' \geq 1$ are levels of definition, i.e., we have $G_{E} = \pi_{M}^{-1}(G_{E, M})$ and $G_{E} = \pi_{M'}^{-1}(G_{E, M'})$. Moreover, assume that $M=M_E$ is the minimal level of definition for the adelic image of $E$. Then, $M_E$ is a divisor of $M'$.
\end{prop}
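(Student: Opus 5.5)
We want to show that the minimal level $M_E$ divides every level of definition $M'$. The plan is to combine Prop. \ref{prop-levelofdefngcd} and Prop. \ref{prop-gcdoneimpliesindex1} with the minimality of $M_E$, splitting into cases according to $d=\gcd(M_E,M')$.

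First we dispose of degenerate cases. If $M_E=1$, there is nothing to prove. By Remark \ref{rem-level1}, $M_E=1$ holds exactly when $G_E=\mathcal{N}_{\delta,\phi}$, and in that case every $M'$ is a level of definition. So we may assume $M_E\geq 2$, i.e. $G_E\neq \mathcal{N}_{\delta,\phi}$. If $M'=1$, then $G_E=\mathcal{N}_{\delta,\phi}$, which is a contradiction; hence $M'\geq 2$.

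Now set $d=\gcd(M_E,M')$.
\begin{enumerate}
\item If $d=1$, Prop. \ref{prop-gcdoneimpliesindex1} (which uses $j_{K,f}\neq 0,1728$) gives $G_E=\mathcal{N}_{\delta,\phi}$. This contradicts $M_E\geq 2$, so $d\geq 2$.
\item If $d\geq 3$, Prop. \ref{prop-levelofdefngcd} shows that $d$ is a level of definition. Since $d\leq M_E$, minimality forces $d=M_E$, so $M_E\mid M'$.
\end{enumerate}

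The remaining case, $d=2$, is the main obstacle, since Prop. \ref{prop-levelofdefngcd} then only yields level $4$. Here I would argue as follows.

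\emph{Case $M_E$ odd or $M'$ odd.} Then $d$ would be odd, so this cannot happen when $d=2$. Hence both $M_E$ and $M'$ are even.

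\emph{Case $4\mid M_E$ and $4\mid M'$.} Then $4\mid d$, contradicting $d=2$. So at least one of $M_E,M'$ is exactly divisible by $2$.

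\emph{Case $M_E=2$.} Then $M_E\mid M'$ trivially.

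\emph{Case $M_E>2$.} Write $M_E=2m$ and $M'=2m'$ with $\gcd(m,m')=1$. I would try to show directly that the level $2$ works, which contradicts minimality. The idea is to rerun the proof of Prop. \ref{prop-levelofdefngcd} with $d=2$. The only place that proof needs $d\geq 3$ is to make sense of $G_{E/K,d}$, i.e. to know $K\subseteq F(E[d])$ via Theorem \ref{cyc-division-thm}(3). At $d=2$, I would instead work with the full images $G_{E,\cdot}$ inside $\mathcal{N}_{\delta,\phi}(\cdot)$. This requires a CRT gluing statement for $\mathcal{N}_{\delta,\phi}$ analogous to Prop. \ref{prop-normalizerCRT}, obtained by gluing elements of the form $c_{\delta,\phi}(a,b)c_1^{e}$ with matching $e$.

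If that gluing fails, a fallback is to use $4$ as the level. Prop. \ref{prop-levelofdefngcd} makes $4$ a level of definition. Applying the $d\geq 3$ case to the pair $(M_E,4)$ gives $\gcd(M_E,4)\geq 3$ unless it equals $2$. When $\gcd(M_E,4)=2$, minimality forces $M_E\leq 4$, so $M_E\in\{2,4\}$, and $M_E=4$ is incompatible with $\gcd(M_E,4)=2$. This yields $M_E=2\mid M'$.

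This last bootstrapping step, namely showing that an exactly-$2$-divisible minimal level can be reduced to $2$, is where I expect the real difficulty. I would check it carefully against the structure of $\mathcal{N}_{\delta,\phi}(2)$.
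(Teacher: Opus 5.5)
Your reduction is sound through the case $d=\gcd(M_E,M')\ge 3$. In the case $d=2$ you note that Prop.~\ref{prop-levelofdefngcd} makes $4$ a level of definition, so minimality gives $M_E\le 4$, and since $M_E$ is even, $M_E\in\{2,4\}$. The gap is that nothing you write excludes $M_E=4$ with $M'\equiv 2\pmod 4$, which is the only nontrivial content of this case. Your fallback handles only $\gcd(M_E,4)=2$, i.e.\ $M_E=2$, which was already trivial. When $M_E=4$, applying the $d\ge 3$ case to the pair $(M_E,4)$ just returns $M_E\mid 4$ and says nothing about $M'$.

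Your primary route (prove that $2$ is a level, contradicting $M_E=4$) does not close this gap as stated. In the gcd argument you must choose $g_M\in G_{E,M}$ that lifts $\pi_{M',2}(g_{M'})$ and lies in the same coset of the Cartan as $g_{M'}$. Otherwise the glued matrix is not in $\mathcal{N}_{\delta,\phi}(D)$, because for $M,M'\ge 3$ the two cosets reduce disjointly, and the level-$M$ hypothesis cannot be used. Now $M_E=4$ forces the $2$-adic image to have index $2$. By Theorem~\ref{thm-m8and16alvaro} this only occurs when $\Delta_Kf^2\equiv 0\pmod 4$, i.e.\ $\phi=0$. Then $c_1=\bigl(\begin{smallmatrix}1&0\\0&-1\end{smallmatrix}\bigr)\equiv\operatorname{Id}\pmod 2$, so reduction mod $2$ cannot detect the coset. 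Guaranteeing a lift of the right type would need an independence statement such as $FK\cap F(E[2])=F$ (with $F=\Q(j_{K,f})$), which you have not established.

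The paper avoids level $2$ altogether. It writes $M'=2M''$ with $M''$ odd and does an index count at level $4M''$, which is a level of definition and so has Cartan index $2$. Since $4$ is a level, $[\mathcal{C}_{\delta,\phi}(4):G_{E/K,4}]=2$. The level $2M''$ forces an index-$2$ entanglement between the $2$- and $M''$-division fields, and this entanglement persists at level $4M''$. Together these give index at least $2\cdot 2=4$ at level $4M''$, a contradiction.
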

\begin{proof}
    If $M_E=1$ (and therefore, by Remark \ref{rem-level1}, the adelic index of the image is $1$), then there is nothing to prove, so let us assume that $2\leq M_E\leq M'$. If $\gcd(M,M')=1$, then Prop. \ref{prop-gcdoneimpliesindex1} implies that the adelic index is $1$ and so $M_E=1$, a contradiction. Thus, assume $d=\gcd(M,M')\geq 2$. Prop. \ref{prop-levelofdefngcd} now shows that $G_E$ is also defined at level $d$ (or level $4$ if $d=2$). If $d>2$ and since $d$ is a gcd of $M$ and $M'$, it follows that $d\leq M_E$, and since $M_E$ is minimal, it must be that $d=M_E$. Hence, $M_E$ divides $M'$ as claimed.

    If $d=2$ and $M_E=2$, then $M_E$ is a divisor of $M'$. If $d=2$ and $M_E=4$, suppose for a contradiction that $M'\equiv 2 \bmod 4$. Then, $M'=2\cdot M''$ with $M''\geq 3$ an odd integer. Now consider $D=4M''$. Since $M_E=4$, we must have $[\mathcal{C}_{\delta, \phi}(4) : G_{E/K, 4}] = 2$, and since $2M''$ is a level of definition, we must have that $G_{E/K,2M''}$ is a subgroup of index $2$ of $G_{E/K,2}\times G_{E/K,M''}$ by Cor. \ref{cor-galoistheoryofcompositum}. 
         \begin{align*} 2 &=[\mathcal{C}_{\delta, \phi}(4M''): G_{E/K,4M''}] \\
         &= [\mathcal{C}_{\delta, \phi}(4M''):G_{E/K,4}\times G_{E/K,M''}]\cdot [G_{E/K,4}\times G_{E/K,M''}:G_{E/K,4M''}],\\
          &\geq [\mathcal{C}_{\delta, \phi}(4):G_{E/K,4}] \cdot [\mathcal{C}_{\delta, \phi}(M''):G_{E/K,M''}]\cdot [G_{E/K,2}\times G_{E/K,M''}:G_{E/K,2M''}] \\
          &\geq 2\cdot 1 \cdot 2 = 4,
         \end{align*}
         which leads to a contradiction. Hence, we must have $M'\equiv 0 \bmod 4$, and $M_E=4$ is a divisor of $M'$, as desired.
\end{proof}

\begin{remark}

The convention used in this paper is a departure from the way in which other authors have considered levels of definition. For example, in \cite[Proposition 12.1.4]{elladic}, the level of definition $\ell^n$ for the $\ell$-adic image of a CM elliptic curve $E$ agrees with ours, except the assumption is made that if $E'$ is another CM elliptic curve, where $G_{E', \ell^{\infty}}$ is not conjugate to $G_{E, \ell^{\infty}}$ in $\GL(2, \Z_\ell)$, then $G_{E', \ell^{n}}$ is not conjugate to $G_{E, \ell^{n}}$ in $\GL(2, \Z/\ell^n \Z)$.

This motivates the following definition, which is in line with what other authors would consider the ``level of definition'' of an image.
\end{remark}

\begin{definition}\label{def-level-of-dif}
    Let $E/\Q(j_{K,f})$ be an elliptic curve with CM by $\Of$. We make a compatible choice of bases so that $G_{E} = \im \rho_{E} \subseteq \mathcal{N}_{\delta, \phi}$, that $G_{E,N} = \im \rho_{E,N} \subseteq \mathcal{N}_{\delta, \phi}(N)$, and that $G_{E, N} = \pi_{N}(G_E)$ for all $N \geq 2$. We say that $N$ is an \emph{adelic level of differentiation} for $E$ if 
    \begin{enumerate}
        \item $N$ is an adelic level of definition for $E$, so $G_E = \pi_{N}^{-1}(\pi_N(G_E))$, and
        \item If $E'/\Q(j_{K,f})$ is an elliptic curve with CM by $\mathcal{O}_{K, f}$ and $G_{E,N}$ is conjugate to $G_{E',N}$ in $\GL(2,\Z/N\Z)$, then $G_{E}$ is conjugate to $G_{E'}$ in $\GL(2, \widehat{\Z})$.
 \end{enumerate}
Similarly, we say that $\ell^n$ is an \emph{$\ell$-adic level of differentiation} for $E$ if the $\ell$-adic image of $E$ is defined at level $\ell^n$, and $G_{E, \ell^n}$ being conjugate to $G_{E', \ell^n}$ in $\GL(2, \Z/\ell^n \Z)$ implies  $G_{E,\ell^\infty}$ is conjugate to $G_{E', \ell^{\infty}}$ in $\GL(2, \Z_{\ell})$.
\end{definition}

We illustrate the distinction between a level of definition and a level of differentiation with the following example.

\begin{example}
    Let $E_1/\Q : y^2=x^3-x^2-3x-1$ and $E_2/\Q : y^2=x^3+x^2-13x-21$ be elliptic curves with LMFDB labels \eclabel{256.d2} and \eclabel{256.a1}, respectively. Both $E_1$ and $E_2$ have CM by the order $\Z[\sqrt{-2}]$ of discriminant $\Delta_K f^2 = -8$ in $K = \Q(\sqrt{-2})$. The $2$-adic images of these curves lie inside $\mathcal{N}_{-2,0}(2^{\infty})$, and we find their images to be
    \begin{align*}  G_{E_1, 2^{\infty}} &= \left\langle 
    \left( \begin{array}{cc} 3 & 0 \\ 0 & 3 \end{array} \right),
    \left( \begin{array}{cc} 1 & 1 \\ -2 & 1 \end{array} \right),
    \left( \begin{array}{cc} -1 & 0 \\ 0 & 1 \end{array} \right) \right\rangle \text{ and }\\
    G_{E_2, 2^{\infty}} &= \left\langle 
    \left( \begin{array}{cc} 3 & 0 \\ 0 & 3 \end{array} \right),
    \left( \begin{array}{cc} -1 & -1 \\ 2 & -1 \end{array} \right),
    \left( \begin{array}{cc} -1 & 0 \\ 0 & 1 \end{array} \right) \right\rangle.
    \end{align*}

    For $i = 1, 2$, we find that $[\mathcal{N}_{-2, 0}(2^{\infty}) : G_{E_i, 2^{\infty}}] = 2$, and reducing modulo $8$, we also have $[\mathcal{N}_{-2, 0}(8) : G_{E_i, 8}] = 2$. Thus, by Theorem \ref{thm-levelofdef}, the $2$-adic images of $E_1$ and $E_2$ are defined modulo $8$.

    However, $G_{E_1, 8}$ and $G_{E_2, 8}$ are conjugate in $\GL(2,\Z/8\Z)$, with conjugacy matrix $\left( \begin{array}{cc} 1 & 0 \\ 4 & 1 \end{array} \right)$, while $G_{E_1, 16}$ and $G_{E_2, 16}$ are not conjugate in $\GL(2, \Z/16\Z)$. Thus, the $2$-adic level of differentiation for $E_1$ and $E_2$ is $16$.
\end{example}

\begin{remark}\label{ell-level-of-diff}\label{rem-ell-level-of-diff}
    Let $E/\Q$ be an elliptic curve with CM. From Theorem \ref{thm-goodredn} and \cite[Proposition 12.1.4]{elladic}, it follows that $\ell^{n_{E, \ell}}$ is an $\ell$-adic level of differentiation for $E$, where
\[ n_{E, \ell} := \begin{cases}
1 & \text{ if } \ell > 3 \text{ or } \ell > 2 \text{ and } j(E) \neq 0, \\
3 & \text{ if } \ell = 3 \text{ and } j(E) = 0, \\
4 & \text{ if } \ell = 2.
\end{cases} \]
We will use the notation $n_{E,\ell}$ for the exponent of an $\ell$-adic level of differentiation of a CM elliptic curve over $\Q$ in later sections.
\end{remark}

% \begin{definition}
%     Let $E/\Q$ be an elliptic curve without CM, and let $\rho_{E} : G_{\Q} \to \GL(2, \widehat{\Z})$ be its adelic Galois representation, where we set $G_{E} := \im \rho_{E}$. Further, for an integer $N \geq 2$, define
%     \[ \widehat{\pi}_{N} : \GL(2, \widehat{\Z}) \to \GL(2, \Z/N\Z) \]
%     to be the natural projection map. Then we say that $N$ is the \emph{adelic level of definition for $E$} if 
%     \[ G_{E} = \widehat{\pi}_{N}^{-1}(G_{E, N}).
%     \]
% \end{definition}

% \textcolor{red}{Ben: I need something about indices here.}

% \begin{theorem}
%     Let $E_1/\Q$ and $E_2/\Q$ be elliptic curves without CM. Let $G_{E_i}$ be the adelic Galois representation for $E_i$, $i = 1, 2$, and suppose that $G_{E_i}$ is defined at level $N$. Then $G_{E_1}$ is conjugate to $G_{E_2}$ in $GL(2, \widehat{\Z})$ if and only if $G_{E_1, N}$ is conjugate to $G_{E_2, N}$ in $GL(2, \Z/N\Z)$.
% \end{theorem}

% \begin{proof}
%     The forward implication is immediate, so we prove the reverse implication.

%     Suppose there exists a matrix $A_N \in GL(2, \Z/N\Z)$ such that $G_{E_1, N} = A_N G_{E_2, N} A_N^{-1}$. Next, let $A \in \GL(2, \widehat{\Z})$ be a matrix such that $\widehat{\pi}_{N}(A) = A_N$. 

%     Then, since 
% \end{proof}

\section{Defining Simplest CM Curves}\label{sec-simplestCMcurves}

Let $E/\Q(j_{K,f})$ be an elliptic curve with CM by $\Of$, where $K$ is an imaginary quadratic field and $f \geq 1$ is the conductor of $\Of$. We let $G_{E}$ denote the adelic image of $E$, let $G_{E, \ell^{\infty}}$ denote the $\ell$-adic image of $E$ for prime $\ell$, and let $G_{E, N}$ denote the image of $E$ modulo $N$ for $N \geq 2$.

By Theorem \ref{thm-cmrep-intro-alvaro} (1), if we define
\[ d_E
:=
\begin{cases}
2 & \text{ if } j(E) \neq 0, 1728, \\
4 & \text{ if } j(E) = 1728, \\
6 & \text{ if } j(E) = 0,
\end{cases} \]
then $[\mathcal{N}_{\delta, \phi} : G_{E}] \mid d_{E}$. For this reason, we call $d_E$ the \emph{maximal adelic index of $E$}. Note that we might also denote the maximal adelic index as $d_{K,f}$ or $d_{j}$, since the index is dependent only on the CM class (equivalently the $j$-invariant) of the elliptic curve, not on the curve itself. We also remind the reader that all CM elliptic curves defined over $\Q$ achieve the maximal adelic index allowed by Theorem \ref{thm-adelic-indexQ}. 

We now prove a result for elliptic curves whose adelic image can immediately be determined by their $\ell$-adic image.

\begin{theorem}\label{thm-ell-simplest}
    Let $E/\Q(j_{K, f})$ be an elliptic curve with CM by $\Of$, and suppose that for some prime $\ell$, we have that $[\mathcal{N}_{\delta, \phi}(\ell^{\infty}) : G_{E, \ell^{\infty}}] = d_E$. Further, suppose that the $\ell$-adic image of $E$ is defined at level $\ell^n$ for $n \geq 1$. Then, the adelic image of $E$ is defined at level $\ell^n$. In particular, $G_{E} = \pi_{\ell^n}^{-1}(\pi_{\ell^n}(G_{E}))$.
\end{theorem}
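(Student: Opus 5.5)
The plan is to compare indices through the projection $\mathcal{N}_{\delta,\phi}\to\mathcal{N}_{\delta,\phi}(\ell^\infty)$ and then descend to level $\ell^n$ using the same sandwich argument as in Theorem \ref{thm-levelofdef}. Write $\widehat{\Z}=\Z_\ell\times\prod_{p\neq\ell}\Z_p$, so that $\mathcal{N}_{\delta,\phi}\subseteq \GL(2,\widehat{\Z})$ has the natural projection $\pi_{\ell^\infty}\colon \mathcal{N}_{\delta,\phi}\to\mathcal{N}_{\delta,\phi}(\ell^\infty)$. This map is surjective, since $\cC_{\delta,\phi}$ projects onto $\cC_{\delta,\phi}(\ell^\infty)$ by lifting $a,b$ and $c_1$ maps to $c_1$. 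By compatibility of the bases we have $\pi_{\ell^\infty}(G_E)=G_{E,\ell^\infty}$. The $\ell$-adic level of definition gives $G_{E,\ell^\infty}=\pi_{\ell^\infty,\ell^n}^{-1}(G_{E,\ell^n})$, and $\pi_{\ell^n}=\pi_{\ell^\infty,\ell^n}\circ\pi_{\ell^\infty}$. Therefore
\[
\pi_{\ell^n}^{-1}(\pi_{\ell^n}(G_E))=\pi_{\ell^\infty}^{-1}\bigl(\pi_{\ell^\infty,\ell^n}^{-1}(G_{E,\ell^n})\bigr)=\pi_{\ell^\infty}^{-1}(G_{E,\ell^\infty}).
\]
It therefore suffices to show $G_E=\pi_{\ell^\infty}^{-1}(G_{E,\ell^\infty})$.

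The inclusion $G_E\subseteq \pi_{\ell^\infty}^{-1}(G_{E,\ell^\infty})$ is clear. For the reverse, we compare indices. By Proposition \ref{prop-gen-surj-map}, applied to the surjection $\pi_{\ell^\infty}$ and the subgroup $G_{E,\ell^\infty}$ (for profinite groups the coset argument there works verbatim), we get
\[
d_E=[\mathcal{N}_{\delta,\phi}(\ell^\infty):G_{E,\ell^\infty}]\le[\mathcal{N}_{\delta,\phi}:\pi_{\ell^\infty}^{-1}(G_{E,\ell^\infty})]\le[\mathcal{N}_{\delta,\phi}:G_E].
\]
By Theorem \ref{thm-cmrep-intro-alvaro}(3), the index $[\mathcal{N}_{\delta,\phi}:G_E]$ divides $|\Of^\times|$. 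This equals $d_E$ in each case: it is $2$ for $j\neq0,1728$, $4$ for $j=1728$, and $6$ for $j=0$. So $[\mathcal{N}_{\delta,\phi}:G_E]\le d_E$, all the inequalities are equalities, and hence $[\pi_{\ell^\infty}^{-1}(G_{E,\ell^\infty}):G_E]=1$. Since the indices are finite, this gives equality of the groups.

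The main point needing care is the identification $|\Of^\times|=d_E$ together with the divisibility bound; this is exactly what Theorem \ref{thm-cmrep-intro-alvaro}(3) supplies. We only need the upper bound, not Theorem \ref{thm-adelic-indexQ}, so the argument works over $\Q(j_{K,f})$ as stated. The other point is the surjectivity and compatibility of the projection maps, which are routine with the chosen compatible bases.
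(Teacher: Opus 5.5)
Your proof is correct and is essentially the paper's argument. The paper gets $[\mathcal{N}_{\delta,\phi}:G_E]=d_E$ from the divisibility bound, transfers the index $d_E$ to level $\ell^n$ using the $\ell$-adic level of definition, and applies the sandwich of Theorem \ref{thm-levelofdef} at level $\ell^n$; you run the same index sandwich through $\pi_{\ell^\infty}$ and then descend formally via $\pi_{\ell^n}=\pi_{\ell^\infty,\ell^n}\circ\pi_{\ell^\infty}$, correctly citing part (3) of Theorem \ref{thm-cmrep-intro-alvaro} for the bound $[\mathcal{N}_{\delta,\phi}:G_E]\mid|\Of^\times|=d_E$.
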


\begin{proof}
    By assumption, $[\mathcal{N}_{\delta, \phi}(\ell^{\infty}) : G_{E, \ell^{\infty}}] = d_E$, and since $[\mathcal{N}_{\delta, \phi}(\ell^{\infty}) : G_{E, \ell^{\infty}}] \mid [\mathcal{N}_{\delta, \phi} : G_{E}]$ and $[\mathcal{N}_{\delta, \phi} : G_{E}] \mid  d_E$, we have $[\mathcal{N}_{\delta, \phi} : G_{E}] = d_E$.

    Since the $\ell$-adic image of $E$ is defined at level $\ell^n$, we have $[\mathcal{N}_{\delta, \phi}(\ell^n) : G_{E, \ell^n}] = d_E$, and so $[\mathcal{N}_{\delta, \phi}(\ell^n) : G_{E, \ell^n}] = [\mathcal{N}_{\delta, \phi} : G_{E}]$. By Theorem \ref{thm-levelofdef}, the adelic image is defined modulo $\ell^n$, as desired, and we have $G_{E} = \pi_{\ell^n}^{-1}(\pi_{\ell^n}(G_{E}))$.
\end{proof}

\begin{definition}\label{def-ell-simplest}
    Let $E/\Q(j_{K,f})$ be an elliptic curve with CM by $\Of$, and let $\ell$ be a prime number such that $[\mathcal{N}_{\delta, \phi}(\ell^{\infty}) : G_{E, \ell^{\infty}}] = d_E$. Then, we say that $E$ is an \emph{$\ell$-simplest CM curve}, or equivalently, a \emph{simplest CM curve for the prime $\ell$}. 
\end{definition}

Put another way, a \emph{simplest CM curve} is an elliptic curve $E/\Q(j_{K,f})$ with CM whose adelic image is completely determined by its $\ell$-adic data. In the following theorem, we use the results from \cite{modelspaper} to determine all the $\ell$-simplest CM curves defined over $\Q$, where  $E^N$ denotes a quadratic twist of $E$ by a non-zero, square-free integer $N$. We remark here that one could also use the results of \cite{modelspaper} to determine the simplest CM curves defined over quadratic number fields.

\begin{theorem}\label{thm-40simplestcurves}\label{lem-TwistsPreservingSimplestCM}\label{thm-TwistsPreservingSimplestCM}
    There are precisely $40$ simplest CM elliptic curves defined over $\Q$, and each such elliptic curve $E/\Q$ is given in Table \ref{simplesttable} together with the prime $\ell$ such that $E$ is an $\ell$-simplest curve. Moreover,
    \begin{enumerate}
        \item Let $\Of$ be a CM order such that $j_{K,f}\in \Q$. Then there are at least two simplest CM curves with CM by $\Of$ defined (and non-isomorphic) over $\Q$ and $j$-invariant equal to $j_{K,f}$.
        \item Let $E/\Q$ be an $\ell$-simplest curve with CM by $\Of$.
        \begin{enumerate} 
        \item $\ell$ is the unique prime dividing $\Delta_K$.
        \item The conductor of $E/\Q$ is a power of $\ell$, except when $\Delta_K f^2 = -12$ (and $\ell=3$), in which case the conductor is $36$.
        \item The quadratic twist $E^{-\ell}/\Q$ is another non-isomorphic $\ell$-simplest curve with CM by $\Of$.
        \item Let $E'/\Q$ be another CM curve with CM by $\Of$. Then $E'$ is an $\ell$-simplest curve if and only if $E'$ is isomorphic over $\Q$  to 
        \begin{itemize}
            \item ($j(E)\neq 0,1728$, $\ell>2$) the quadratic twist $E^{N}$ for $N\in \{1,-\ell\}$, or
            \item ($j(E)\neq 0,1728$, $\ell=2$) the quadratic twist $E^{N}$, where $N \in \{\pm 1, \pm 2\}$, and in this case $\Delta_Kf^2 = -8$ or $-16$, or
                       \item ($j(E)=0$, $\ell=3$) the sextic twists $y^2 = x^3 + 16N$ with $N\in \{1,-3,9,-27,81,-243 \}$, or
            \item ($j(E)=1728$, $\ell=2$) the quartic twists $y^2=x^3+Nx$ with $N\in \{\pm 1, \pm 2, \pm 4, \pm 8\}$.
 
        \end{itemize}
        \item If $E'/\Q$ is another $\ell$-simplest curve with CM by $\Of$, then $G_{E, \ell^{\infty}}$ is conjugate to $G_{E', \ell^{\infty}}$ in $\GL(2, \Z_{\ell})$ only when $E$ and $E'$ are $\Q$-isomorphic.
        \end{enumerate}
    \end{enumerate}
\end{theorem}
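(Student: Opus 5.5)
The plan is a finite case analysis over the thirteen CM $j$-invariants in $\Q$, driven by the classification of $\ell$-adic images in Theorems \ref{thm-goodredn}--\ref{thm-j1728alvaro} and by the explicit model data of \cite{modelspaper}.

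\textbf{Step 1: restrict the prime $\ell$.} Fix an order $\Of$ with $j_{K,f}\in\Q$; its class number is one, so there are only $13$ such orders. If $E/\Q$ is $\ell$-simplest, then $[\mathcal{N}_{\delta,\phi}(\ell^\infty):G_{E,\ell^\infty}]=d_E\geq 2$.
\begin{itemize}
\item If $\ell\nmid 2\Delta_K f$, Theorem \ref{thm-goodredn} forces index $1$, so this case is excluded.
\item If $\ell$ is odd and $\ell\mid\Delta_K f$, we use Theorem \ref{thm-oddprimedividingdisc}.
\item If $\ell=2$, we use Theorems \ref{thm-m8and16alvaro} and \ref{thm-j1728alvaro}.
\end{itemize}
These results say exactly when index $d_E$ can occur. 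For example, Theorem \ref{thm-m8and16alvaro} requires $\Delta_Kf^2\equiv 0\bmod 16$ or $\Delta_K\equiv 0 \bmod 8$. Running through the $13$ discriminants, the surviving prime in each case is the unique prime dividing $\Delta_K$. This gives (2)(a). For instance, for $\Delta_K=-3$ with $f=2,3$ only $\ell=3$ survives, and for $\Delta_K=-4$, $f=2$ only $\ell=2$ survives.

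\textbf{Step 2: list the curves via twists.} All curves over $\Q$ with a given $j\neq 0,1728$ are quadratic twists $E^N$ of one fixed curve. The $\ell'$-adic images for $\ell'\neq\ell$ do not matter here; only the $\ell$-adic image does. Twisting by $N$ multiplies $\rho_{E,\ell^\infty}$ by the quadratic character $\chi_N$. The image $G_{E,\ell^\infty}$ therefore depends only on $\chi_N$ restricted to the inertia and decomposition data at $\ell$. More precisely, it depends on whether the twist is unramified at $\ell$, which matters once we intersect with the Cartan and use Theorem \ref{thm-firstofsec1}(2).

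Concretely, I would quote the tables of \cite{modelspaper}, which give, for each $j$, the Weierstrass models attaining each $\ell$-adic image. An $\ell$-simplest curve must have index $2$ at $\ell$. A twist by an $N$ divisible by a prime $p\neq\ell$ introduces ramification at $p$ and forces a nontrivial entanglement. That would push the adelic index above the index at $\ell$, which is a contradiction only if that index is already $d_E$. To exclude such $N$, I would instead argue directly that the index-$2$ subgroup of $\mathcal{N}_{\delta,\phi}(\ell^\infty)$ cuts out a quadratic field. For odd $\ell$ this is the field $\Q(\sqrt{\pm\ell})$ inside $\Q(E[\ell])$, and $E^N$ has the same property iff $N\in\{1,-\ell\}$ up to squares. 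For $\ell=2$ the quadratic character must be ramified only at $2$, so $N\in\{\pm1,\pm2\}$. For $j=0$ and $j=1728$ the analogous argument uses sextic and quartic characters, giving the listed $N$. This proves (2)(c) and (2)(d). Part (1) follows, since $N=1$ and $N=-\ell$ give two non-isomorphic curves.

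\textbf{Step 3: count and conductors.} Summing over the $j$-invariants:
\begin{itemize}
\item the odd $\ell$, $j\neq 0,1728$ cases contribute $2$ curves each;
\item $\Delta_Kf^2\in\{-8,-16\}$ contribute $4$ each;
\item $j=0$ contributes $6$;
\item $j=1728$ contributes $8$.
\end{itemize}
Checking that this totals $40$ yields Table \ref{simplesttable}. The conductor statement (2)(b) is read off the models: twisting only by characters ramified at $\ell$ keeps bad reduction at $\ell$ alone. The exception is $\Delta_Kf^2=-12$, where $j$ forces additive reduction at $2$ as well.

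Part (2)(e) follows by comparing the explicit $\ell$-adic images in \cite{modelspaper} (e.g. the two groups in Theorem \ref{thm-m8and16alvaro}, or $\varepsilon=\pm1$ in $\langle J_{\delta,0},c_\varepsilon\rangle$). Distinct twists give non-conjugate images at the level of differentiation $\ell^{n_{E,\ell}}$ from Remark \ref{rem-ell-level-of-diff}.

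The main obstacle is Step 2, the "only if" direction in (2)(d): showing that no twist involving an auxiliary prime remains $\ell$-simplest. I would handle it with the quadratic-subfield characterization of index-$2$ subgroups, supplemented by direct reference to the model tables of \cite{modelspaper}.
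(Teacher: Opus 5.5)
Your proposal rests on the same foundation as the paper's proof. The paper reads everything off the tables of \cite{modelspaper}, organized by $d_E\in\{6,4,2\}$: six index-$6$ $3$-adic images for $j=0$, eight index-$4$ $2$-adic images for $j=1728$, and $26$ index-$2$ images otherwise, each realized by exactly one model. It then checks (1) and (2a)--(2e) by inspection. You also need those tables for existence, for the count $9\cdot 2+2\cdot 4+6+8=40$, for the conductors, and for (2e). The classification theorems of Section \ref{sec-alvaroCMresults} only list \emph{possible} images, so the existence of an $\ell$-simplest curve in every class, which your proof of (1) presupposes, must come from the tables.

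Where you go beyond the paper is in deriving (2a) and (2d) conceptually, and two steps there need repair.

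\textbf{Step 1.} Theorems \ref{thm-m8and16alvaro} and \ref{thm-j1728alvaro} say nothing about $j=0$ with $\ell=2$. You must invoke the separate $2$-adic result for $j=0$ in \cite{lozano-galoiscm} to rule out $2$-adic index $d_E=6$ there.

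\textbf{Step 2.} This is the more serious problem. You assert that $G_{E^N,\ell^\infty}$ depends only on the local behaviour of $\chi_N$ at $\ell$, i.e.\ on whether $\chi_N$ is ramified at $\ell$. This is false. For $E=\texttt{49.a2}$, both $N=1$ and $N=-1$ are unramified at $7$, yet $E$ is $7$-simplest and $E^{-1}$ is not.

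The correct criterion is global. Its key input, which your sketch never states, is that $-\operatorname{Id}\notin G:=G_{E,\ell^\infty}$, because the index equals $|\Of^\times|=2$ (\cite[Cor. 4.2]{lozano-galoiscm}). Hence $\mathcal{N}_{\delta,\phi}(\ell^\infty)=G\times\{\pm\operatorname{Id}\}$, and there are two cases. If $\sqrt N\notin\Q(E[\ell^\infty])$, the twisted image is $\langle-\operatorname{Id},G\rangle$, of index $1$. If $\sqrt N\in\Q(E[\ell^\infty])$, then $\chi_N=\psi\circ\rho_{E,\ell^\infty}$ for a character $\psi$ of $G$. The map $g\mapsto\psi(g)g$ is injective on $G$, so the twisted image is again a complement of $\{\pm\operatorname{Id}\}$, of index $2$. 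So $E^N$ is $\ell$-simplest iff $\Q(\sqrt N)\subseteq\Q(E[\ell^\infty])$, and you still have to determine these quadratic subfields.

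For odd $\ell$, we have $G=\langle J_{\delta,0},c_\varepsilon\rangle$, and $J_{\delta,0}$ has no subgroup of index $2$. Its prime-to-$\ell$ part $(\mathbb{F}_\ell^\times)^2$ has odd order because every relevant $\ell$ is $3\bmod 4$, and the rest is pro-$\ell$. So $K=\Q(\sqrt{-\ell})$ is the unique quadratic subfield and $N\in\{1,-\ell\}$. This argument also covers the conductor-$36$ curves.

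For $\ell=2$, your ramification argument works, given that the base curve has $2$-power conductor and $\Q(\zeta_8)\subseteq\Q(E[8])$.

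For $j=0,1728$, the twist is by a cocycle that is a character only on $G_K$, so the ``analogous argument'' is not routine. There you should cite \cite{modelspaper}, as the paper does.
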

\begin{proof}
    In \cite{modelspaper}, the authors (together with Gonz\'alez-Jim\'enez) determined (i) every possible $\ell$-adic image $G_{E,\ell^\infty}$ of an $\ell$-adic Galois representation attached to an elliptic curve $E/\Q$ with CM, and (ii) the model of every curve $E/\Q$ with $\ell$-adic image conjugate to $G_{E,\ell^\infty}$. In particular, the possible images and twists with each image are listed in Tables 1, 2, 3, and 4 of that paper. We proceed by the maximal adelic index $d_E$ defined above, and let $E/\Q$ be an $\ell$-simplest CM curve.
    \begin{enumerate}
         \item[(i)] If $d_E=6$, then we must have $j(E)=0$ and $\ell=3$. There are six possible $3$-adic images of index $6$ in the $3$-adic normalizer, which are listed in \cite[Table 3]{modelspaper}. Each image occurs for exactly one elliptic curve $E/\Q$ which is a twist of $y^2 = x^3 + 16$ (\href{https://www.lmfdb.org/EllipticCurve/Q/27/a/4}{\texttt{27.a4}}). The fact that there are exactly $6$ such curves is shown in \cite[Theorem 3.17]{modelspaper}.
         
        \item[(ii)] If $d_E=4$, then we must have $j(E)=1728$ and $\ell=2$. There are eight possible $2$-adic images of index $4$ in the $2$-adic normalizer, which are listed in \cite[Table 2]{modelspaper}. Each image occurs for exactly one elliptic curve $E/\Q$ which is a twist of $y^2=x^3+x$ (\href{https://www.lmfdb.org/EllipticCurve/Q/64/a/4}{\texttt{64.a4}}). The fact that there are exactly $8$ such curves is shown in \cite[Theorem 3.5]{modelspaper}.
        \item[(iii)] If $d_E=2$ and $j(E)\neq 0,1728$, then the tables of \cite{modelspaper} show that there are $26$ $\ell$-adic images of index $2$ in the $\ell$-adic normalizer, as listed in Table \ref{simplesttable} in the current paper. Further, for those images realized within each CM class, each image is realized by exactly one simplest curve.
    \end{enumerate}
    Therefore, there are $40$ simplest CM elliptic curves over $\Q$, as claimed.

  Now, part (1) follows simply from inspection of Table \ref{simplesttable} since we list at least $2$ simplest curves for each of the $13$ CM $j$-invariants defined over $\Q$. Moreover, in all cases, either 
    \begin{itemize}
        \item 
    $\ell=2$ and $\Delta_K = -4$ or $-8$, or 
    \item $\ell>2$ and $\Delta_K=-\ell$,
    \end{itemize}
    so $\ell$ is the unique prime dividing $\Delta_K$. This shows (2a). Part (2b) follows from inspection of the conductors of each curve in Table \ref{simplesttable}. Further, for (2c), in all cases, if $E/\Q$ is $\ell$-simplest, then $E^{-\ell}/\Q$ is non-isomorphic over $\Q$ and it is also an $\ell$-simplest curve with the same CM (see Tables 1-4 in \cite{modelspaper}). 

Part (2d) follows, for $j(E)\neq 0,1728$, from the fact that when $\ell>2$ there are exactly two $\ell$-simplest curves, which are precisely $E$ and $E^{-\ell}$ (a fact that is shown in \cite[Table 4]{modelspaper}), and if $\ell=2$ (so $\Delta_Kf^2=-8$ or $-16$), then there are exactly four $2$-simplest curves, which are $E$, $E^{-1}$, $E^2$, and $E^{-2}$ (a fact that is shown in \cite[Table 2]{modelspaper}). When $j(E)=0$ then $\ell=3$ and the simplest CM curves correspond to the sextic twists $y^2=x^3 + 16N$ with  $N\in \{1,-3,9,-27,81,-243 \}$ (\cite[Table 4]{modelspaper}), and when  $j(E)=1728$ then $\ell=2$ and the simplest CM curves are the quartic twists $y^2=x^3+Nx$ with $N\in \{\pm 1, \pm 2, \pm 4, \pm 8\}$ (\cite[Table 2]{modelspaper}).

Finally, part (2e) follows from inspection of (\cite[Tables 2 and 4]{modelspaper}), which show that, within each CM class, each $\ell$-adic image with index $d_E$ is realized by exactly one $\ell$-simplest curve up to $\Q$-isomorphism.
\end{proof}

\begin{center}
\begin{table}
\caption{A complete list of $\ell$-simplest CM curves defined over $\Q$. For each $\ell$-simplest curve, we provide its CM discriminant $\Delta_K f^2$, the prime $\ell$, the maximal adelic index $d_E$, and its LMFDB label.}\label{simplesttable}
\begin{tabular}{|c|c|c|c|}
\hline
$\Delta_K f^2$ & $\ell$ & $d_E$ & LMFDB Labels \\
\hline
$-3$ & $3$ & $6$ & \eclabel{27.a3}, \eclabel{27.a4}, \eclabel{243.a1}, \eclabel{243.a2}, \eclabel{243.b1}, \eclabel{243.b2}  \\
\hline
$-12$ & $3$ & $2$ & \eclabel{36.a1}, \eclabel{36.a2}  \\
\hline
$-27$ & $3$ & $2$ & \eclabel{27.a1}, \eclabel{27.a2}  \\
\hline
$-4$ & $2$ & $4$ & \eclabel{32.a3}, \eclabel{32.a4}, \eclabel{64.a3}, \eclabel{64.a4}, \eclabel{256.b1}, \eclabel{256.b2}, \eclabel{256.c1}, \eclabel{256.c2}  \\
\hline
$-16$ & $2$ & $2$ & \eclabel{32.a1}, \eclabel{32.a2}, \eclabel{64.a1}, \eclabel{64.a2}    \\
\hline
$-7$ & $7$ & $2$ & \eclabel{49.a2}, \eclabel{49.a4}  \\
\hline
$-28$ & $7$ & $2$ & \eclabel{49.a1}, \eclabel{49.a3}   \\
\hline
$-8$ & $2$ & $2$ & \eclabel{256.a1}, \eclabel{256.a2}, \eclabel{256.d1}, \eclabel{256.d2}  \\
\hline
$-11$ & $11$ & $2$ & \eclabel{121.b1}, \eclabel{121.b2}  \\
\hline
$-19$ & $19$ & $2$ & \eclabel{361.a1}, \eclabel{361.a2}  \\
\hline
$-43$ & $43$ & $2$ & \eclabel{1849.b1}, \eclabel{1849.b2}   \\
\hline
$-67$ & $67$ & $2$ & \eclabel{4489.b1}, \eclabel{4489.b2}   \\
\hline
$-163$ & $163$ & $2$ & \eclabel{26569.a1}, \eclabel{26569.a2}   \\
\hline
\end{tabular}
\end{table}
\end{center}

\begin{remark}
    Let $E/\Q$ be an elliptic curve with CM by $\Of$ and with $j_{K, f} \neq 0, 1728$, and let $K(E[p^{\infty}])$ denote the compositum of the family of division fields $\{ K(E[p^n])\}_{n \in \N}$. We note here that the curves we define here as a simplest CM curves correspond precisely to those curves such that the family $\{ K(E[q^{\infty}])\}_{q}$ is linearly disjoint over $K$, where the family runs over all rational primes $q$ (which have been previously studied in the work of Campagna and Pengo; see for example \cite[Theorem 6.3]{CampagnaPengo1}).
\end{remark}

As a corollary of Theorem \ref{thm-ell-simplest} we obtain an easy criterion to decide what twists are not simplest CM curves.

\begin{corollary}\label{cor-notsimplest} Let $E/\Q$ be an $\ell$-simplest CM curve and let $E'/\Q$ be a twist of $E$. Further, suppose the following:
\begin{enumerate}
    \item  $j(E)\neq 0,1728$, and  $E'/\Q$ is a quadratic  twist of $E$ by a non-zero square-free $N\neq 1$ with $\gcd(\ell,N^\dagger)=1$ (with $N^\dagger$ as defined in Theorem \ref{cyc-division-thm}), or
    \item $j(E)=1728$ and $\ell=2$, and $E'/\Q$ is a quartic twist $y^2=x^3+Nx$ with $N$ a non-zero fourth-power-free integer such that $N\neq \pm 1$ and $\gcd(2,N)=1$, or
    \item $j(E)=0$, and $E'/\Q$ is a sextic twist $y^2=x^3 + 16N$ with $N$ a non-zero sixth-power-free integer such that  $N\neq \pm 1$ and $\gcd(3,N)=1$,
\end{enumerate}
then $E'/\Q$ is not a simplest CM curve.
\end{corollary}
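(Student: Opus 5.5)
The plan is to argue by contradiction. Suppose $E'$ is a simplest CM curve, say $\ell'$-simplest for some prime $\ell'$. Since $E'$ is a twist of $E$, it has CM by the same order $\Of$. By Theorem \ref{thm-40simplestcurves}(2a), $\ell'$ is the unique prime dividing $\Delta_K$, and the same is true of $\ell$, so $\ell'=\ell$. Theorem \ref{thm-40simplestcurves}(2d) then lists every $\ell$-simplest curve with CM by $\Of$ as an explicit family of twists of the $\ell$-simplest curve $E$. It therefore suffices to show that $E'$ is not $\Q$-isomorphic to any member of that list. In each case this reduces to a statement about the twisting parameter modulo squares, fourth powers, or sixth powers.

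\textbf{Case (1).} Here $j(E)\neq 0,1728$. If $\ell>2$, the $\ell$-simplest curves are $E^{1}$ and $E^{-\ell}$. For $j\neq 0,1728$, quadratic twists $E^{N}\cong E^{N'}$ over $\Q$ exactly when $NN'$ is a square. So we would need $N\in\{1,-\ell\}$ as a square-free integer. The value $N=1$ is excluded by hypothesis. If $N=-\ell$, then $\ell\mid N\mid N^\dagger$, which contradicts $\gcd(\ell,N^\dagger)=1$. If $\ell=2$, we would need $N\in\{\pm1,\pm2\}$. Now $N=\pm 2$ makes $N^\dagger$ even, so it is excluded. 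For $N=-1$ we get $N^\dagger=4$, which is also even, so it is excluded as well. This is exactly why the hypothesis uses $N^\dagger$ rather than $N$: the twist by $-1$ must be ruled out when $\ell=2$.

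\textbf{Case (2).} Here $j=1728$ and $\ell=2$. The curves $y^2=x^3+Nx$ and $y^2=x^3+N'x$ are $\Q$-isomorphic iff $N/N'$ is a fourth power. The list in (2d) consists of $\pm 2^k$ with $0\le k\le 3$, and these are pairwise inequivalent modulo fourth powers. The twist $E'$ is therefore simplest iff $N\equiv \pm 2^k$ modulo $(\Q^\times)^4$. Since $N$ is fourth-power-free, this forces $N=\pm 2^k$. Because $N$ is odd, $k=0$, and $N=\pm1$ is excluded.

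\textbf{Case (3).} Here $j=0$. The curves $y^2=x^3+16N$ and $y^2=x^3+16N'$ are isomorphic iff $N/N'$ is a sixth power. The list is $N\in\{(-3)^k: 0\le k\le 5\}$. A sixth-power-free $N$ prime to $3$ could only match $k=0$, that is $N=1$. The value $N=-1$ is also excluded by hypothesis, and indeed $-1$ is not a sixth power times a power of $-3$.

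The main point needing care is the first step. We must make sure that a simplest curve $E'$ with CM by $\Of$ is necessarily $\ell$-simplest for the same $\ell$, which follows from (2a). We must also make sure that the twist parametrization in the hypotheses matches the one used in (2d). For the case $j=0$, one has to confirm that the sextic twists in (2d) are normalized with the same "$16N$" convention. The remaining steps are routine comparisons of twist classes.
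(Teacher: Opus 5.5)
Your proof is correct and takes essentially the same route as the paper's. Both assume $E'$ is simplest, use Theorem~\ref{thm-40simplestcurves}(2d) to identify $E'$ with a twist by one of the listed parameters, and note that square-free (resp.\ fourth-power-free, sixth-power-free) parameters giving $\Q$-isomorphic twists must be equal. Each listed value is then ruled out by the hypotheses on $N$. Your use of (2a) to force $\ell'=\ell$, and your explicit case checks (e.g.\ $(-1)^\dagger=4$ excluding the twist by $-1$ when $\ell=2$), fill in details the paper leaves implicit.
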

\begin{proof}
Let $E/\Q$ be an $\ell$-simplest CM curve, and suppose $E'/\Q$ is a quadratic, quartic, or sextic twist by $N\neq 1$ (non-zero, and square-, fourth-power-, or sixth-power-free, respectively) where $N$ satisfies the conditions of the corollary. Suppose for a contradiction that $E'$ also a simplest CM curve. Then, by Theorem \ref{thm-ell-simplest}, part (2d), it follows that $E'$ is isomorphic to  a quadratic, quartic, or sextic twist by $N'$, with $N'$ taking values as in the statement of (2d). Since $E^N \cong_\Q E' \cong_\Q E^{N'}$ and both $N$ and $N'$ are non-zero, and square, fourth-power, or sixth-power-free, respectively, then $N=N'$ (by \cite[Ch. X, Prop. 5.4, and Cor. 5.4.1]{silverman1}). Now, the conditions on $N$ in the statement of the corollary, together with the conditions on the possible values of $N'$ lead to a contradiction, and concludes the proof.
\end{proof}

Now we can put together Theorems \ref{thm-ell-simplest} and \ref{thm-40simplestcurves}  to explicitly describe the adelic image of each simplest CM curve over $\Q$.

\begin{theorem}\label{thm-adelicimageofsimplestcurves}
    Let $E/\Q$ be any of the $\ell$-simplest curves described in Theorem \ref{thm-40simplestcurves}, let $G_{E,\ell^\infty}$ be the $\ell$-adic image of $E$ as described in \cite{modelspaper}, and let $n=n_{E,\ell}$ be the number defined in Remark \ref{rem-ell-level-of-diff}. Let $G_{E,\ell^n}=\pi_{\ell^\infty,\ell^{n}}(G_{E,\ell^\infty})$, where $\pi_{\ell^\infty,\ell^n}\colon  \mathcal{N}
_{\delta, \phi}(\Z_\ell) \to \mathcal{N}_{\delta, \phi}(\ell^n)$ is given by reduction modulo $\ell^n$. Then, the adelic image is given by $G_E=\pi_{\ell^{n}}^{-1}(G_{E,\ell^n})$.
\end{theorem}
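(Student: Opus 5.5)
The plan is to reduce the statement to Theorem \ref{thm-ell-simplest}. Its hypotheses are that $[\mathcal{N}_{\delta,\phi}(\ell^\infty):G_{E,\ell^\infty}]=d_E$ and that the $\ell$-adic image is defined at level $\ell^n$. Let $E/\Q$ be one of the $\ell$-simplest curves of Theorem \ref{thm-40simplestcurves}. By Definition \ref{def-ell-simplest}, the $\ell$-adic index equals the maximal adelic index $d_E$. The $\ell$-adic images of these curves are read off from \cite{modelspaper} (Tables 1--4), with a basis chosen so that $G_{E,\ell^\infty}\subseteq \mathcal{N}_{\delta,\phi}(\ell^\infty)$. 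We extend this to a compatible adelic basis so that $G_E\subseteq\mathcal{N}_{\delta,\phi}$ and $G_{E,\ell^\infty}$ is the $\ell$-primary projection of $G_E$. This is possible by Theorem \ref{thm-cmrep-intro-alvaro}(3), together with the fact that conjugation in the $\ell$-component can be done independently.

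The main step is to verify that the $\ell$-adic image is defined modulo $\ell^{n}$ with $n=n_{E,\ell}$, as in Remark \ref{rem-ell-level-of-diff}. I split into cases by $\ell$.

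\emph{Case $\ell>3$, or $\ell=3$ with $j(E)\neq0$.} Here Theorem \ref{thm-goodredn} says directly that $G_{E,\ell^\infty}$ is the full preimage of its reduction mod $\ell$, so the level is $\ell^1$.

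\emph{Case $\ell=3$, $j(E)=0$.} I use the explicit list in Theorem \ref{thm-j0ell3alvaro}(b). Each group there contains the kernel of reduction modulo $27$ in $\mathcal{N}_{-3/4,0}(3^\infty)$: the congruence conditions in the list are imposed modulo $3$, and the generators $2\cdot\operatorname{Id}$ and $4\cdot\operatorname{Id}$, together with the Cartan elements listed, generate enough of the principal congruence subgroup. Once this containment holds, the image is defined mod $27$.

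\emph{Case $\ell=2$.} Theorems \ref{thm-m8and16alvaro} and \ref{thm-j1728alvaro} give groups generated by scalars $3,5,-3$ and one Cartan element. Here I would check that each contains the kernel of reduction mod $16$, for instance by comparing indices mod $16$ with the $2$-adic indices. Alternatively, in all cases I can simply cite \cite[Proposition 12.1.4]{elladic}, as summarized in Remark \ref{rem-ell-level-of-diff}: $\ell^{n_{E,\ell}}$ is a level of differentiation, and in particular a level of definition.

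With both hypotheses in hand, Theorem \ref{thm-ell-simplest} shows that the adelic image is defined at level $\ell^n$, so $G_E=\pi_{\ell^n}^{-1}(\pi_{\ell^n}(G_E))$. It remains to identify $\pi_{\ell^n}(G_E)$ with $\pi_{\ell^\infty,\ell^n}(G_{E,\ell^\infty})$. This holds because both are the image of $\rho_{E,\ell^n}$ in the compatible basis: reduction mod $\ell^n$ from $\mathcal{N}_{\delta,\phi}$ factors through projection to $\mathcal{N}_{\delta,\phi}(\ell^\infty)$. This gives $G_E=\pi_{\ell^n}^{-1}(G_{E,\ell^n})$.

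I expect the main obstacle to be the bookkeeping of bases. The groups in \cite{modelspaper} are stated up to conjugation in $\GL(2,\Z_\ell)$, possibly in the $\mathcal{N}_{\delta,0}$ normalization of Remark \ref{rem-changebasis}. The conjugating matrix must therefore be chosen inside the $\ell$-component only, so that it does not disturb the other components and lands in $\mathcal{N}_{\delta,\phi}(\ell^\infty)$. The explicit level checks for $\ell=2,3$ are routine by comparison.
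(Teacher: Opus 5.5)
Your proposal is correct and follows the paper's proof: apply Theorem \ref{thm-ell-simplest} with the $\ell$-adic level of definition $\ell^{n_{E,\ell}}$ from Remark \ref{rem-ell-level-of-diff}, and use $\pi_{\ell^n}=\pi_{\ell^\infty,\ell^n}\circ\pi_{\ell^\infty}$ to identify $\pi_{\ell^n}(G_E)$ with $G_{E,\ell^n}$. The paper skips your case-by-case check of the $\ell$-adic level for $\ell=2,3$, which you only sketch, and instead relies on the citation you offer as your alternative.
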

\begin{proof}
    Let $E/\Q$ be an $\ell$-simplest curve, let $G_E$ be its adelic image, and let $G_{E,\ell^\infty}$ be its $\ell$-adic image. Let $\pi_{\ell^\infty}\colon  \mathcal{N}
_{\delta, \phi} \to \mathcal{N}_{\delta, \phi}(\Z_{\ell})$ be the map induced by the projection $\tau_{\ell^\infty}\colon \widehat{\Z}\cong \prod_\ell \Z_\ell\to \Z_{\ell}$ to the $\ell$-adic component. Since $\tau_{\ell^\infty}\circ \rho_E = \rho_{E,\ell^\infty}$, it follows that $\pi_{\ell^\infty}(G_E)=G_{E,\ell^\infty}$. Let $\pi_{\ell^\infty,\ell^n}\colon  \mathcal{N}
_{\delta, \phi}(\Z_\ell) \to \mathcal{N}_{\delta, \phi}(\ell^n)$ be given by reduction modulo $\ell^n$, and let $G_{E,\ell^n}=\pi_{\ell^\infty,\ell^{n}}(G_{E,\ell^\infty})$.  Since $\pi_{\ell^\infty,\ell^n} \circ \pi_{\ell^\infty} = \pi_{\ell^n}$, it follows that 
$$G_{E,\ell^n}=\pi_{\ell^\infty,\ell^n}(G_{E,\ell^\infty}) = \pi_{\ell^n}(G_E).$$
Since $E/\Q$ is an $\ell$-simplest curve, now Theorem \ref{thm-ell-simplest} implies that $$G_E=\pi_{\ell^{n}}^{-1}(\pi_{\ell^n}(G_{E}))=\pi_{\ell^{n}}^{-1}(G_{E,\ell^n})$$ as claimed.
\end{proof}

\begin{example}\label{ex-intro1-details}
    Let $E/\Q$ be the elliptic curve given by $y^2+xy=x^3-x^2-107x+552$ (with LMFDB label \href{https://www.lmfdb.org/EllipticCurve/Q/49/a/2}{49.a2}). This elliptic curve has CM by $\Z[(1+\sqrt{-7})/2]$, the maximal order of $\Q(\sqrt{-7})$, and it is a $7$-simplest CM curve.  According to \cite[Table 4]{modelspaper}, the $7$-adic image of $E/\Q$ is given by $\langle J_{-7/4, 0},c_1 \rangle \subseteq \GL(2,\Z_7)$, where the group $J_{-7/4,0}$ is described in Theorem \ref{thm-oddprimedividingdisc} and $c_1= \left(\begin{array}{cc} 1 & 0\\  0 & -1 \\ \end{array}\right)$. However, for an adelic image of $E$, we have quantities $\delta=-2$ and $\phi=1$, so by Remark \ref{rem-changebasis}, the $7$-adic image of $\rho_{E,7^\infty}$ is conjugate to
$$G_{E,7^\infty}=\left\langle \left\{ \left(\begin{array}{cc} s+r/2 & r\\ -2r & s-r/2 \\\end{array}\right) : s \in ((\Z_7)^\times)^2, r\in \Z_7 \right\}, \left(\begin{array}{cc} 1 & 0\\ -1 & -1 \\\end{array}\right)\right\rangle$$
in $\GL(2,\Z_7)$, and it is of index $2$
 in the maximal possible image $\mathcal{N}_{-2,1}(\Z_7)$ allowed by the CM order. Moreover, the $7$-adic image is defined modulo $7$ (i.e., $n_{E,7}=1$ in the notation of Theorem \ref{thm-adelicimageofsimplestcurves}). Therefore, the minimal adelic level of definition of $\rho_E$ is $7$ and the adelic image is the full inverse image of $G_{E,7}=\pi_{7^\infty,7}(G_{E,7^\infty})$ in $\mathcal{N}_{-2,1}(\widehat{\Z})$ via the natural projection map  $\mathcal{N}_{-2,1}(\widehat{\Z})\to \mathcal{N}_{-2,1}(\Z/7\Z)$ given by reduction modulo $7$. Therefore, the adelic image is given by
 $$G_{E}=\left\langle \left\{ \left(\begin{array}{cc} a+b & b\\ -2b & a \\\end{array}\right) : a,b \in \widehat{\Z}, a^2+ab+2b^2 \in \widehat{\Z}^\times, a+b/2 \in ((\Z/7\Z)^\times)^2\right\}, \left(\begin{array}{cc} 1 & 0\\ -1 & -1 \\\end{array}\right)\right\rangle$$
 in $\GL(2,\widehat{\Z})$.
 \end{example}

While this paper concerns mostly those elliptic curve $E/\Q$ with CM and $j(E)\neq 0,1728$, we are also able to compute the adelic image of those curves with $j(E)=0,1728$ that are simplest, thanks to Theorem \ref{thm-adelicimageofsimplestcurves}.

 \begin{example}\label{ex-intro3-details} Let $E/\Q$ be given by $y^2=x^3+x$ (\href{https://www.lmfdb.org/EllipticCurve/Q/64/a/4}{\texttt{64.a4}}). This elliptic curve has CM by $\Z[i]$, the maximal order of $\Q(i)$, and it is a $2$-simplest CM curve. According to \cite[Table 4]{modelspaper}, the $2$-adic image $G_{E,2^\infty}$ of $E/\Q$ is the closure of  
     \[ G=\left\langle 5\cdot \operatorname{Id},\left(\begin{array}{cc} 1 & 2\\ -2 & 1\\\end{array}\right), \left(\begin{array}{cc} 0 & -1\\ -1 & 0\\\end{array}\right) \right\rangle\]
     as a subgroup of $\GL(2,\Z_2)$. The image $G$ is defined modulo $2^{n_{E,2}}=16$ (it is in fact defined modulo $4$ also), so let $G_{E,16}\equiv G \bmod 16$. Therefore, by Theorem \ref{thm-adelicimageofsimplestcurves}, the adelic image $G_E$ of $E$ is given by
   $$\left\langle \left\{ c_{-1,0}(a,b)=\left(\begin{array}{cc} a & b\\ -b & a \\\end{array}\right) : a,b \in \widehat{\Z}, a^2+b^2 \in \widehat{\Z}^\times, (c_{-1,0}(a,b) \bmod 16) \in G_{E,16}\right\}, \left(\begin{array}{cc} 0 & -1\\ -1 & 0 \\\end{array}\right)\right\rangle$$
   as a subgroup of $\GL(2,\widehat{\Z})$.
 \end{example}

To conclude this section, we prove that every CM elliptic curve $E/\Q$ (with $j(E)\neq 0,1728$) is a quadratic twist (possibly trivial) of an $\ell$-simplest CM curve, where the twist itself satisfies additional properties that will be used in the proof of the main theorem.

\begin{remark}\label{rem-notation}
    Let $E/\Q$ be an elliptic curve with CM by $\Of$, let $\ell$ be a prime, and let $N$ be a non-zero square-free integer. Throughout the rest of the paper, we fix the notation 
    \[ n_{E, \ell} := \begin{cases}
    1 & \text{ if } \ell > 3 \text{ or } \ell > 2 \text{ and } j(E) \neq 0, \\
    3 & \text{ if } \ell = 3 \text{ and } j(E) = 0, \\
    4 & \text{ if } \ell = 2,
    \end{cases} \]
    for the exponent of an $\ell$-adic level of differentiation as in Remark \ref{ell-level-of-diff}, and 
    \[ N^{\dagger} :=   
\begin{cases}
|N| & \text{ if } N \equiv 1 \bmod 4, \\
|4N| & \text{ if } N \equiv 2,3 \bmod 4, \\
\end{cases} \]
for the absolute value of the field discriminant of $\Q(\sqrt{N})$ as in Theorem \ref{kronecker-weber}.
\end{remark}

\begin{prop}\label{prop-correcttwist}
    Let $E/\Q$ be an elliptic curve with CM by $\Of$ such that $j(E)\neq 0,1728$, and let $\ell$ be the unique prime dividing $\Delta_K$. Then, there is an $\ell$-simplest CM curve $E'/\Q$ and a non-zero square-free integer $N\in\Z$ with $\gcd(\ell,N^\dagger)=1$, such that $E^N$ is isomorphic to $E'$ over $\Q$.
\end{prop}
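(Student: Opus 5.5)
Since $j(E)\neq 0,1728$ and $j(E)\in\Q$, every elliptic curve over $\Q$ with $j$-invariant $j_{K,f}$ is a quadratic twist of any fixed one (\cite[Ch. X, Prop. 5.4]{silverman1}), because $\Aut(E)=\{\pm1\}$. By Theorem \ref{thm-40simplestcurves}(1) there is at least one $\ell$-simplest curve $E_0/\Q$ with CM by $\Of$, and by part (2a) of that theorem $\ell$ is the unique prime dividing $\Delta_K$. Hence $E\cong_\Q E_0^{M}$ for some non-zero square-free $M$, and $E^{M}\cong_\Q E_0$. The plan is to adjust $M$ using the other $\ell$-simplest curves listed in Theorem \ref{thm-40simplestcurves}(2d), so that the new twisting parameter $N$ satisfies $\gcd(\ell,N^\dagger)=1$.

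\textbf{Case $\ell$ odd.} Here $\gcd(\ell,M^\dagger)=1$ exactly when $\ell\nmid M$. If $\ell\nmid M$, take $N=M$ and $E'=E_0$. If $\ell\mid M$, write $M=-\ell M'$ with $M'$ square-free and $\ell\nmid M'$. Then
\[
E^{M'}\cong_\Q E_0^{MM'}=E_0^{-\ell M'^2}\cong_\Q E_0^{-\ell}.
\]
By Theorem \ref{thm-40simplestcurves}(2c), $E_0^{-\ell}$ is again $\ell$-simplest, so we take $N=M'$ and $E'=E_0^{-\ell}$. We have $\ell\nmid N^\dagger$, since $N^\dagger\in\{|N|,4|N|\}$ and $\ell\nmid N$. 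The degenerate case $M'=1$ is handled by taking $N=1$, with $1^\dagger=1$.

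\textbf{Case $\ell=2$.} Now $\Delta_K\in\{-4,-8\}$, so (excluding $1728$) $\Delta_Kf^2\in\{-8,-16\}$, as in Table \ref{simplesttable}. The condition $\gcd(2,N^\dagger)=1$ means $N\equiv 1\bmod 4$. Theorem \ref{thm-40simplestcurves}(2d) says $E_0^{u}$ is $2$-simplest for every $u\in\{\pm1,\pm2\}$. Write $M=uN$ with $u\in\{\pm1,\pm2\}$ and $N$ odd, square-free, and $N\equiv 1\bmod 4$. Such a decomposition exists: take $u=\pm1$ or $\pm2$ according to whether $M$ is odd or even, with the sign chosen so that the odd part becomes $1\bmod 4$. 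Then
\[
E^{N}\cong_\Q E_0^{MN}=E_0^{uN^2}\cong_\Q E_0^{u},
\]
which is $2$-simplest, and $N^\dagger=|N|$ is odd.

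\textbf{Remaining checks.} Quadratic twisting composes multiplicatively up to squares, so $(E^{a})^{b}\cong E^{ab/\square}$. The one thing that needs care is that every step stays among curves over $\Q$ with the given CM. The main obstacle is really only the bookkeeping at $\ell=2$. There one must confirm, from Theorem \ref{thm-40simplestcurves}(2d) and Table \ref{simplesttable}, that for $j(E)\neq 0,1728$ with $\ell=2$ the order is indeed $\Delta_Kf^2\in\{-8,-16\}$, so that all four twists by $\pm1,\pm2$ are available. For instance, $\Delta_K=-4$ with $f=2$ gives $-16$, and $f=1$ gives $j=1728$, which is excluded.
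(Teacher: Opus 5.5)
Your proposal is correct and follows the paper's proof essentially step for step. You fix an $\ell$-simplest $E_0$ with $E\cong_\Q E_0^M$. For odd $\ell$ you absorb a factor of $\ell$ in $M$ by passing to $E_0^{-\ell}$ (Theorem~\ref{thm-40simplestcurves}(2c)); for $\ell=2$ you write $M=uN$ with $u\in\{\pm1,\pm2\}$ and $N\equiv 1 \bmod 4$ and invoke part (2d). Your use of part (2a) to confirm that the curve from part (1) is simplest for the same prime $\ell$ is a small point of care that the paper leaves implicit.
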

\begin{proof}
    Let $E/\Q$ be an elliptic curve with CM by $\Of$ such that $j(E)=j_{K,f}\neq 0,1728$, and let $\ell$ be the unique prime dividing $\Delta_K$. By part (1) of Theorem \ref{thm-40simplestcurves}, there is an elliptic curve $E'/\Q$ with CM by $\Of$ that is $\ell$-simplest, and $j(E')=j_{K,f}$, such that its quadratic twist $E''=(E')^{-\ell}$ is also an $\ell$-simplest curve defined over $\Q$.

    Since $j(E)=j(E')=j_{K,f}\neq 0,1728$, it follows that $E'$ is isomorphic to a quadratic twist $E^d$ of $E$ by a non-zero square-free integer $d$. Let $d=N\ell^e$, where  $e=\nu_\ell(d)$ is the $\ell$-adic valuation of $d$, and $N=d\cdot \ell^{-e}$ is another square-free integer relatively prime to $\ell$. Note that $e=0$ or $1$ because $d$ is square-free.

    Let us first consider the case of $\ell\neq 2$. If $e=0$, then $d=N$ and since $\ell$ is odd, we have $\gcd(\ell,N)=\gcd(\ell,N^\dagger)=1$, and $E^N$ is isomorphic to $E'$, as desired. If $e=1$, then $E^{N\ell} \cong E'$ and therefore 
    $$E^{-N}\cong (E^{N\ell})^{-\ell}\cong (E')^{-\ell} =: E''.$$
    Hence if we put $N'=-N$, it follows that $E^{N'}\cong E''$ which is another $\ell$-simplest curve, and $\gcd(\ell,N)=\gcd(\ell,-N) = \gcd(\ell,(-N)^\dagger)=\gcd(\ell,(N')^\dagger)=1$ as needed.

    Now suppose that $\ell=2$. In this case, by Theorem \ref{thm-40simplestcurves} we have that $\Delta_K f^2 = -8$ or $-16$, and if $E'$ is $2$-simplest, then $(E')^M$ with $M=\pm 1$ or $\pm 2$ is also $2$-simplest. As before, suppose $d=N2^e$ with $N$ an odd integer, $e=\nu_2(d)$, and let 
    $$N^\ast = (-1)^{(N-1)/2}N=\begin{cases}
        N &\text{ if } N\equiv 1 \bmod 4,\\
        -N &\text{ if } N\equiv 3 \bmod 4.
    \end{cases}$$ 
    Then, we may write $d=N^\ast\cdot d'$ with $d'=(-1)^{(N-1)/2}\cdot 2^{e}$, and since $E'\cong E^d$ we have
    $$E^{N^\ast}\cong (E^{N^\ast \cdot d'})^{d'} \cong (E')^{d'} =: E'''$$
    where $E''' = (E')^{d'}$ is another $2$-simplest CM curve because $d'=\pm 1$ or $\pm 2$. Moreover, $$\gcd(2,(N^\ast)^\dagger)=1$$ because $(N^\ast)^\dagger = |N^\ast|=|N|$, since $N^\ast \equiv 1 \bmod 4$, and $\gcd(N,2)=1$. Hence, in all cases, there is a quadratic twist of $E$ isomorphic to an $\ell$-simplest curve, as claimed.
\end{proof}

\section{Determining Level of Definition}\label{sec-determinelevelofdef}

In this section, we will determine a level of definition for the adelic image of an elliptic curve $E/\Q$ with CM and $j(E) \neq 0, 1728$. We want to remark that our methods here build on previous work of the authors with Gonz\'alez-Jim\'enez (see \cite{modelspaper}) and that our goal is an explicit computational approach. Some of the results below could  in fact be retrieved from previous work of Campagna and Pengo (\cite{CampagnaPengo1}) but their approach is more theoretical and it would have to be modified anyway to achieve our more explicit computational goals.

To compute adelic levels of definition, we first need the following result.

\begin{lemma}[\cite{modelspaper}, Lemma 3.4]\label{modelspaper-twistlemma}\label{lem-twistimage}
    Let $E/F : y^2=x^3+Ax+B$ be an elliptic curve defined over a number field $F$, let $N>2$, and let $G_{E,N}$ be the image of $\rho_{E,N}\colon \Gal(\overline{F}/F)\to \GL(2,\Z/N\Z)$. Let $\alpha \in F$ and let $E^\alpha$ be the quadratic twist of $E$ by $\alpha$, i.e., $E^\alpha : \alpha y^2 = x^3+Ax+B$. Then, 
    \begin{enumerate}
        \item $F(E^\alpha[N])\subseteq F(\sqrt{\alpha},E[N])$. In particular, if $\sqrt{\alpha}\in F(E[N])$, then $F(E^\alpha[N])\subseteq F(E[N])$.
        \item If $[F(E^\alpha[N]):F] > [F(E[N]):F]$ and $\sqrt{\alpha}$ does not belong to $F(E[N])$, then $F(E^\alpha[N])=F(\sqrt{\alpha},E[N])$.
        \item If $\sqrt{\alpha}$ does not belong to $F(E[N])$, then $G_{E^\alpha,N}$ is conjugate to $\langle -\operatorname{Id},G_{E,N}\rangle \subseteq \GL(2,\Z/N\Z)$.
    \end{enumerate}
\end{lemma}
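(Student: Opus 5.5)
We work with the explicit twisting cocycle. Put $L=F(\sqrt{\alpha})$; if $\sqrt{\alpha}\in F$ then $E^\alpha\cong E$ over $F$ and everything is immediate, so assume $[L:F]=2$ and let $\chi\colon \Gal(\overline{F}/F)\to\{\pm1\}$ be the quadratic character cutting out $L$. The map $\psi\colon E^\alpha\to E$, $(x,y)\mapsto(x,\sqrt{\alpha}\,y)$, is an isomorphism defined over $L$. A direct computation gives $\psi^\sigma=\chi(\sigma)\psi$ for every $\sigma$, since $[-1](x,y)=(x,-y)$. For $P\in E^\alpha[N]$ we therefore get
\[
\sigma(\psi(P))=\psi^\sigma(\sigma P)=\chi(\sigma)\psi(\sigma P).
\]
Fix a $\Z/N\Z$-basis $\{P_1,P_2\}$ of $E[N]$ and take $\{\psi^{-1}(P_1),\psi^{-1}(P_2)\}$ as the basis of $E^\alpha[N]$. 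In these bases the displayed relation reads
\[
\rho_{E^\alpha,N}(\sigma)=\chi(\sigma)\,\rho_{E,N}(\sigma).
\]
All three parts will be read off from this identity.

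\emph{Part (1).} If $\sigma$ fixes both $\sqrt{\alpha}$ and $E[N]$, then $\chi(\sigma)=1$ and $\rho_{E,N}(\sigma)=\Id$. The identity then gives $\rho_{E^\alpha,N}(\sigma)=\Id$, so $F(E^\alpha[N])\subseteq F(\sqrt{\alpha},E[N])$. The second sentence of (1) is immediate from this inclusion.

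\emph{Part (3).} Assume $\sqrt{\alpha}\notin F(E[N])$. The fields $L$ and $F(E[N])$ are then linearly disjoint over $F$, because $L$ is quadratic and not contained in $F(E[N])$. Hence the map $\sigma\mapsto(\chi(\sigma),\rho_{E,N}(\sigma))$ is surjective onto $\{\pm1\}\times G_{E,N}$. Its image under $(\varepsilon,g)\mapsto\varepsilon g$ is exactly $\{\pm g: g\in G_{E,N}\}=\langle-\Id,G_{E,N}\rangle$. Since $N>2$, we have $-\Id\neq\Id$, so this is a genuine group, and it equals $G_{E^\alpha,N}$ in the chosen bases. Changing bases only conjugates, which gives (3).

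\emph{Part (2).} We will argue by symmetry. Since $E=(E^\alpha)^\alpha$ over $F$, part (1) applied with the roles of $E$ and $E^\alpha$ swapped gives $F(E[N])\subseteq F(\sqrt{\alpha},E^\alpha[N])$. The main step is to show $\sqrt{\alpha}\in F(E^\alpha[N])$. Suppose instead that $\sqrt{\alpha}\notin F(E^\alpha[N])$. Then part (3) applied to $E^\alpha$ would give $|G_{E,N}|\ge |G_{E^\alpha,N}|$, that is, $[F(E[N]):F]\ge[F(E^\alpha[N]):F]$. This contradicts the hypothesis of (2). Hence $\sqrt{\alpha}\in F(E^\alpha[N])$, and therefore
\[
F(\sqrt{\alpha},E[N])\subseteq F(E^\alpha[N])\subseteq F(\sqrt{\alpha},E[N]),
\]
which gives the equality in (2). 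Note that this argument does not use the hypothesis $\sqrt{\alpha}\notin F(E[N])$ of (2); it only needs the degree comparison.

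The step needing the most care is the cocycle sign $\psi^\sigma=\chi(\sigma)\psi$ together with its use in (3). In particular, the surjectivity onto $\{\pm1\}\times G_{E,N}$ depends on linear disjointness, and this is precisely where the hypothesis $\sqrt{\alpha}\notin F(E[N])$ enters.
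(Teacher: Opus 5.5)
Your proof is correct. The paper does not reprove this lemma; it imports it from \cite{modelspaper}. Your key identity $\rho_{E^\alpha,N}(\sigma)=\chi(\sigma)\,\rho_{E,N}(\sigma)$ is the same relation the paper derives in Section~\ref{sec-complexconj} (Proposition~\ref{cpx_conj_prop2}), by the same computation. You get it from the $F(\sqrt{\alpha})$-isomorphism $(x,y)\mapsto(x,\sqrt{\alpha}\,y)$; the paper uses its inverse $(x_0,y_0)\mapsto(x_0,y_0/\sqrt{d})$. Parts (1) and (3) follow from it exactly as you describe.

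For (2), your symmetry argument is valid, and it shows that the hypothesis $\sqrt{\alpha}\notin F(E[N])$ is redundant. A shorter route uses that hypothesis directly:
\begin{itemize}
\item By (1), $[F(E^\alpha[N]):F]$ divides $[F(\sqrt{\alpha},E[N]):F]=2[F(E[N]):F]$.
\item By hypothesis, $[F(E^\alpha[N]):F]$ also exceeds $[F(E[N]):F]$, so it must equal $2[F(E[N]):F]$.
\item Hence the inclusion in (1) is an equality.
\end{itemize}

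One minor point: your remark that $N>2$ is what makes $\{\pm g : g\in G_{E,N}\}$ a group is unnecessary. That set is a group for every $N$, because $-\Id$ is central.
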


Next we prove a corollary of the previous lemma.

\begin{cor}\label{cor-twistimage}
    Under the assumptions of Lemma \ref{lem-twistimage}, if
    \begin{enumerate}
        \item $\sqrt{\alpha}$ does not belong to $F(E[N])$, and
        \item $-\operatorname{Id}$ does not belong to $G_{E,N}$, 
    \end{enumerate}
    then $G_{E^\alpha,N}$  is conjugate to $\langle -\operatorname{Id},G_{E,N}\rangle \subseteq \GL(2,\Z/N\Z)$, and $F(E^\alpha[N])=F(\sqrt{\alpha},E[N])$.
\end{cor}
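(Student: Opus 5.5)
The first claim is exactly Lemma \ref{lem-twistimage}(3), since hypothesis (1) is the hypothesis of that part; it yields that $G_{E^\alpha,N}$ is conjugate to $\langle -\operatorname{Id},G_{E,N}\rangle$. What remains is the equality of fields $F(E^\alpha[N])=F(\sqrt{\alpha},E[N])$. For this I will invoke Lemma \ref{lem-twistimage}(2). Its hypotheses are that $\sqrt{\alpha}\notin F(E[N])$, which is our (1), and that $[F(E^\alpha[N]):F] > [F(E[N]):F]$, which I still have to establish.

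To get the degree inequality, note that for any $N$ the extension $F(E[N])/F$ is Galois with group isomorphic to $G_{E,N}$, via $\rho_{E,N}$. Likewise $\Gal(F(E^\alpha[N])/F)\cong G_{E^\alpha,N}$. Hence
$$[F(E^\alpha[N]):F]=|G_{E^\alpha,N}|=|\langle -\operatorname{Id},G_{E,N}\rangle|,$$
because conjugate subgroups have the same order.

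Since $-\operatorname{Id}$ is central of order $2$ (we have $N>2$, so $-\operatorname{Id}\neq \operatorname{Id}$), the group $\langle -\operatorname{Id},G_{E,N}\rangle$ equals $G_{E,N}\cup(-\operatorname{Id})G_{E,N}$. This is a union of two cosets of $G_{E,N}$, and by hypothesis (2) they are distinct. Therefore $|\langle -\operatorname{Id},G_{E,N}\rangle|=2|G_{E,N}|$, which gives $[F(E^\alpha[N]):F]=2[F(E[N]):F]>[F(E[N]):F]$. Lemma \ref{lem-twistimage}(2) now yields $F(E^\alpha[N])=F(\sqrt{\alpha},E[N])$.

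As an alternative to citing part (2), the equality also follows directly from a containment plus a degree count. Part (1) gives $F(E^\alpha[N])\subseteq F(\sqrt{\alpha},E[N])$. The right-hand field has degree exactly $2[F(E[N]):F]$ over $F$, because $\sqrt{\alpha}\notin F(E[N])$. Both fields thus have the same degree over $F$, so they coincide.

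There is no real obstacle. The only point needing care is that $\langle -\operatorname{Id},G_{E,N}\rangle$ has order exactly twice $|G_{E,N}|$, which uses both the centrality of $-\operatorname{Id}$ and hypothesis (2).
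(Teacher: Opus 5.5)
Your proposal is correct and follows the paper's proof: part (3) of Lemma \ref{lem-twistimage} gives the conjugacy, hypothesis (2) doubles the order so $[F(E^\alpha[N]):F]=2[F(E[N]):F]$, and part (2) of the lemma then gives the field equality. You justify the order doubling explicitly, which the paper leaves implicit, and your alternative argument via part (1) plus a degree count is also valid.
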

\begin{proof}
    By Lemma \ref{lem-twistimage}, part (3), we have $G_{E^\alpha,N}$ is conjugate to $\langle -\operatorname{Id},G_{E,N}\rangle \subseteq \GL(2,\Z/N\Z)$. Thus, $\Gal(F(E[N])/F)\cong G_{E,N}$ and $\Gal(F(E^\alpha[N])/F)\cong \langle -\operatorname{Id},G_{E,N}\rangle$. Hence, 
    $$[F(E^\alpha[N]):F]=2\cdot [F(E[N]):F] > [F(E[N]):F]$$
    and by Lemma \ref{lem-twistimage}, part (2), we have $F(E^\alpha[N])=F(\sqrt{\alpha},E[N])$ as desired.
\end{proof}

In the following result we use the notation $n_{E,\ell}$ and $N^\dagger$ from Remark \ref{rem-notation}.

\begin{lemma}
\label{lem-twistofsimplest}
    Let $E/\Q$ be an $\ell$-simplest elliptic curve with CM by $\Of$ and $j(E) \neq 0, 1728$. For $N$ a non-zero, square-free integer, let $E^N$ denote the quadratic twist of $E$ by $N$.    If $\gcd(\ell, N^{\dagger}) = 1$, then $\Q(E^N[\ell^n]) = \Q(\sqrt{N}, E[\ell^n])$ for all $n\geq n_{E,\ell}$.
\end{lemma}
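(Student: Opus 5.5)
The plan is to apply Corollary \ref{cor-twistimage} with $F=\Q$, $\alpha=N$ and modulus $\ell^n$, where $n\geq n_{E,\ell}$; note that $\ell^n>2$ in every case, since $n_{E,2}=4$. The case $N=1$ is trivial, so assume $N\neq 1$. Two hypotheses must then be checked: that $\sqrt{N}\notin\Q(E[\ell^n])$, and that $-\operatorname{Id}\notin G_{E,\ell^n}$. Once both hold, the corollary gives $\Q(E^N[\ell^n])=\Q(\sqrt{N},E[\ell^n])$ directly.

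\textbf{Step 1: $-\operatorname{Id}\notin G_{E,\ell^n}$.} I would derive this from index considerations. Since $E$ is $\ell$-simplest, $[\mathcal{N}_{\delta,\phi}(\ell^\infty):G_{E,\ell^\infty}]=d_E=2$. By Remark \ref{rem-ell-level-of-diff} the $\ell$-adic image is defined modulo $\ell^n$, so also $[\mathcal{N}_{\delta,\phi}(\ell^n):G_{E,\ell^n}]=2$.

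Now use Lemma \ref{lem-twistimage}(3) with $\alpha=-\ell$. By Theorem \ref{thm-40simplestcurves}(2c), $E^{-\ell}$ is also $\ell$-simplest, and it is not isomorphic to $E$. Its mod-$\ell^n$ image also has index $2$, and by part (2e) it is not conjugate to $G_{E,\ell^n}$, at least once $n\geq n_{E,\ell}$, which is a level of differentiation.

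Suppose $-\operatorname{Id}\in G_{E,\ell^n}$. Twisting by any $\alpha$ then gives image $G_{E,\ell^n}$ or $\langle -\operatorname{Id},G_{E,\ell^n}\rangle=G_{E,\ell^n}$, up to conjugacy. This would contradict the non-conjugacy of the images of $E$ and $E^{-\ell}$. An alternative would be to read this off the explicit groups in Theorems \ref{thm-oddprimedividingdisc} and \ref{thm-m8and16alvaro}: the index-$2$ subgroups there, such as $\langle J_{\delta,0},c_\varepsilon\rangle$ with $a^2$ a square, visibly exclude $-\operatorname{Id}$ when $\ell\equiv 3\bmod 4$. I would use the twisting argument since it is uniform in $\ell$.

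\textbf{Step 2: $\sqrt{N}\notin\Q(E[\ell^n])$.} This is where I expect the main obstacle. The plan is to bound ramification. The field $\Q(E[\ell^n])$ is unramified outside $\ell$ and the primes of bad reduction of $E$. By Theorem \ref{thm-40simplestcurves}(2b), the conductor of $E$ is a power of $\ell$, except when $\Delta_Kf^2=-12$, where it is $36$ and $\ell=3$.

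Outside that exception, $\Q(E[\ell^n])$ is unramified outside $\ell$. On the other hand, $\Q(\sqrt{N})$ has discriminant $N^\dagger$, which is coprime to $\ell$ and exceeds $1$ because $N\neq 1$ is square-free. So $\Q(\sqrt{N})$ ramifies at some prime $p\neq \ell$, and hence cannot lie in $\Q(E[\ell^n])$.

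In the exceptional case $\ell=3$, $\Delta_Kf^2=-12$, the prime $2$ may also ramify, so a direct argument is needed there. The problematic $N$ are those with $N^\dagger$ a power of $2$ coprime to $3$, namely $N\in\{-1,\pm 2\}$. For these I would check that $\sqrt{-1},\sqrt{\pm 2}\notin\Q(E[3^n])$, either via the explicit curves \eclabel{36.a1} and \eclabel{36.a2}, or by arguing through the quotient structure of $\Gal(\Q(E[3^\infty])/\Q)$ and its abelianization. If this check fails, the fallback is to treat this case using the explicit tables of \cite{modelspaper}.
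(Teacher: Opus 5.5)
Your skeleton matches the paper's proof. Both apply Corollary \ref{cor-twistimage} with $F=\Q$ and $\alpha=N$. Both use the N\'eron--Ogg--Shafarevich argument for $\sqrt{N}\notin\Q(E[\ell^n])$, and both isolate the exceptional set $N\in\{-1,\pm 2\}$ for the conductor-$36$ curves.

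The problem is Step 1. You claim that if $-\operatorname{Id}\in G_{E,\ell^n}$, then every twist has image $G_{E,\ell^n}$ or $\langle -\operatorname{Id},G_{E,\ell^n}\rangle$ up to conjugacy. Lemma \ref{lem-twistimage}(3) supplies this only when $\sqrt{\alpha}\notin\Q(E[\ell^n])$. When $\sqrt{\alpha}\in\Q(E[\ell^n])$, Proposition \ref{cpx_conj_prop2} shows the twisted image is $\{\chi(\sigma)\rho_{E,\ell^n}(\sigma)\}$. If $\rho_{E,\ell^n}(\sigma_0)=-\operatorname{Id}$ with $\chi(\sigma_0)=-1$, this is a proper index-$2$ subgroup of $G_{E,\ell^n}$, so your dichotomy is false in general. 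Your choice $\alpha=-\ell$ always lands in this uncovered case, because $\sqrt{-\ell}\in\Q(\zeta_{\ell^n})\subseteq\Q(E[\ell^n])$. (The odd $\ell$ that occur are all $\equiv 3 \bmod 4$, and for $\ell=2$ one has $\sqrt{-2}\in\Q(\zeta_8)$.) So as written, Step 1 does not go through.

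The repair uses only facts you already assembled. If $-\operatorname{Id}\in G_{E,\ell^n}$, then every $\chi(\sigma)\rho_{E,\ell^n}(\sigma)$ lies in $G_{E,\ell^n}$, so $G_{E^{-\ell},\ell^n}\subseteq G_{E,\ell^n}$ in the bases of Proposition \ref{cpx_conj_prop2}. Both groups have index $2$ in $\mathcal{N}_{\delta,\phi}(\ell^n)$, so they are equal. That contradicts the non-conjugacy you derived from Theorem \ref{thm-40simplestcurves}(2c),(2e).

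The paper avoids this detour entirely. Once the mod-$\ell^n$ index is known to be $2$, it cites \cite[Cor. 4.2]{lozano-galoiscm} for $-\operatorname{Id}\notin G_{E,\ell^n}$ directly. Your explicit-group check for odd $\ell$ amounts to the same thing.

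You also leave the exceptional case $\Delta_Kf^2=-12$ open. The paper closes it by a degree count. $G_{E,3^n}$ has index $2$ in $\mathcal{N}_{-3,0}(3^n)$, so it has order $2\cdot 3^{2n-1}$ and hence a unique index-$2$ subgroup. Therefore $K=\Q(\sqrt{-3})$ is the only quadratic subfield of $\Q(E[3^n])$, which rules out $\sqrt{-1}$ and $\sqrt{\pm 2}$.
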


\begin{proof}
Let $E/\Q$ be as in the statement. If $N = 1$, then $E = E^N$ and the result is trivial. Otherwise, if $N\neq 1$ is non-zero, square-free,  and $\gcd(\ell,N^\dagger)=1$, then Corollary \ref{cor-notsimplest} shows that $E^N$ is not an $\ell$-simplest curve. Since $j(E^N)=j(E)\neq 0,1728$, it follows that  $[\mathcal{N}_{\delta, \phi}(\ell^{\infty}) : G_{E^N, \ell^{\infty}}] < d_E=2$ and so the index of the  $\ell$-adic image of $E^N$ in the $\ell$-adic normalizer must be $1$. Thus $[\mathcal{N}_{\delta, \phi}(\ell^n) : G_{E^N, \ell^n}]=1$ for all $n\geq 1$. 

Now consider $F=\Q(\sqrt{N})$. We will show that $F\cap \Q(E[\ell^n])=\Q$ for all $n\geq 1$. Since $N\neq 1$ and $N$ is square-free, the extension $F/\Q$ is quadratic and ramified at all primes dividing $N^\dagger$. Let $p$ be a prime that ramifies in $F/\Q$. We have assumed that $\gcd(\ell,N^\dagger)=1$, and therefore $\ell$ does not ramify in $F/\Q$ and so $p\neq \ell$. If $E/\Q$ is any of the curves in Table \ref{simplesttable}, except for \eclabel{36.a1} or \eclabel{36.a2}, it follows that $F\cap \Q(E[\ell^n])=\Q$, for any $n\geq 1$, because the conductor of $E/\Q$ is a power of $\ell$, and only $\ell$ can ramify in $\Q(E[\ell^n])$ by the criterion of N\'eron--Ogg--Shafarevich. 

If $E/\Q$ is \eclabel{36.a1} or \eclabel{36.a1}, then $\ell=3$, and only $2$ and $3$ may ramify in $\Q(E[3^n])$. If $N\neq -1,\pm 2$, then there is a prime $p>3$ that ramifies in $F$ and the same argument as above shows $F\cap \Q(E[\ell^n])=\Q$ for all $n\geq 1$. If $N= -1$, or $\pm 2$, then $F=\Q(i)$, $\Q(\sqrt{2})$, or $\Q(\sqrt{- 2})$, and if $F\cap \Q(E[3^n])\neq \Q$ for some $n\geq 1$, then we must have $F\cap \Q(E[3])=F$ because $[\Q(E[3^{n}]):\Q]=2\cdot 3^n$ for $n\geq 1$ (from the description of the image given by \cite[Table 4]{modelspaper}). However, $\Q(E[3])/\Q$ is a Galois $S_3$-extension and the unique quadratic field in $\Q(E[3])$ is $K=\Q(\sqrt{-3})$ as determined by Theorem \ref{cyc-division-thm}, because $E$ has CM by an order of $K$.

Now let $n_{E,\ell}$ be as in Remark \ref{rem-notation}, and recall from Remark \ref{rem-ell-level-of-diff} that $\ell^{n_{E,\ell}}$ is an $\ell$-adic level of differentiation (and thus also an adelic level of definition) for $E$. In particular, the index $[\mathcal{N}_{\delta, \phi}(\ell^n) : G_{E, \ell^n}]=2$ for all $n\geq n_{E,\ell}$. By \cite[Cor. 4.2]{lozano-galoiscm}, it follows that $-\operatorname{Id}$ does not belong to $G_{E,N}$. Since our work above shows that $\Q(\sqrt{N})\cap \Q(E[\ell^n])=\Q$, then Cororllary \ref{cor-twistimage} implest that $\Q(E^N[\ell^n])=\Q(\sqrt{N},E[\ell^n])$ for all $n\geq n_{E,\ell}$, as desired to complete the proof of the lemma.
\end{proof}

\begin{theorem}
\label{thm-entanglement}
Let $E/\Q$ be an elliptic curve with CM by $\Of$ and $j(E) \neq 0, 1728$, and suppose that $E$ is a simplest CM curve for the prime $\ell$. For prime $\ell$ and for $N$ a square-free integer with $N \notin \{0, 1 \}$, we define $n=n_{E,\ell}$, and $N^{\dagger}$ as in Remark \ref{rem-notation}.
If $\gcd(\ell, N^\dagger) = 1$, then $E^{N}/\Q$, the quadratic twist of $E$ by $N$, has a non-trivial entanglement between its $\ell^n$-division field and its $N^\dagger$-division field. In particular, $K(\sqrt{N}) = \Q(E^{N}[\ell^n]) \cap \Q(E^{N}[N^\dagger])$.

\end{theorem}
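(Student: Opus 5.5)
We prove the two inclusions $K(\sqrt{N})\subseteq \Q(E^N[\ell^n])\cap\Q(E^N[N^\dagger])$ and the reverse, writing $n=n_{E,\ell}$ throughout.

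\textbf{The $\ell^n$-division field.} By Lemma \ref{lem-twistofsimplest}, since $\gcd(\ell,N^\dagger)=1$, we have $\Q(E^N[\ell^n])=\Q(\sqrt{N},E[\ell^n])$. Part (3) of Theorem \ref{cyc-division-thm} gives $K\subseteq \Q(E[\ell^n])$; if $\ell^n=2$ this needs care, but $n_{E,2}=4$, so $\ell^n\geq 3$ always. Hence $K(\sqrt{N})\subseteq \Q(E^N[\ell^n])$.

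\textbf{The $N^\dagger$-division field.} Part (2) of Theorem \ref{cyc-division-thm} gives $\sqrt{N}\in\Q(\zeta_{N^\dagger})\subseteq \Q(E^N[N^\dagger])$. Moreover $N^\dagger\geq 3$ because $N\neq 0,1$ (e.g. $N=-1$ gives $N^\dagger=4$), so part (3) gives $K\subseteq \Q(E^N[N^\dagger])$. Thus $K(\sqrt{N})$ lies in both division fields.

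\textbf{Nontriviality.} We need $\sqrt N\notin K$, i.e.\ $N\neq \Delta_K\cdot(\text{square})$. By Theorem \ref{thm-40simplestcurves}(2a), $\ell$ is the unique prime dividing $\Delta_K$, so $\Q(\sqrt{N})=K$ would force $N^\dagger=|\Delta_K|$, which is divisible by $\ell$. This contradicts $\gcd(\ell,N^\dagger)=1$. Hence $[K(\sqrt N):\Q]=4$ and the entanglement is nontrivial.

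\textbf{Reverse inclusion.} We show the intersection $L=\Q(E^N[\ell^n])\cap\Q(E^N[N^\dagger])$ is no larger than $K(\sqrt N)$; this is the main step. Work over $K$. Since $E^N$ has adelic index exactly $2$ by Theorem \ref{thm-adelic-indexQ}, Proposition \ref{prop-sameindex} and Corollary \ref{cor-galoistheoryofcompositum} give
$$[\mathcal{C}(\ell^nN^\dagger):G_{E^N/K,\ell^nN^\dagger}] = [\mathcal C(\ell^n):G_{E^N/K,\ell^n}]\cdot[\mathcal C(N^\dagger):G_{E^N/K,N^\dagger}]\cdot[L:K]\le 2.$$
The containment $\ell^n N^\dagger$-image $\subseteq$ adelic image gives the inequality since indices can only drop under projection. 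We already know $[L:K]\geq [K(\sqrt N):K]=2$, so equality holds: $[L:K]=2$ and $L=K(\sqrt N)$. As a consistency check, Corollary \ref{cor-galoistheoryofcompositum}(4) then forces both the $\ell^n$- and $N^\dagger$-images of $E^N$ to be full, matching what is shown in the proof of Lemma \ref{lem-twistofsimplest} for the $\ell$-adic part.

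The delicate point is justifying the index inequality: the index of $G_{E^N,M}$ in $\mathcal N_{\delta,\phi}(M)$ is at most the adelic index $2$ for every $M$. This follows because $\pi_M$ is surjective onto $\mathcal N_{\delta,\phi}(M)$, so cosets map onto cosets.
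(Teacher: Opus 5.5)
Your proof is correct and follows essentially the same route as the paper. Like the paper, you show $K(\sqrt{N})$ lies in both division fields (via Lemma \ref{lem-twistofsimplest} and Theorem \ref{cyc-division-thm}), check that $K(\sqrt{N})/K$ is a genuine quadratic extension since $\ell\nmid N^\dagger$, and then use Corollary \ref{cor-galoistheoryofcompositum} with the fact that the Cartan index at level $\ell^n N^\dagger$ is at most $2$ to force $[L:K]=2$. The only cosmetic difference is where that bound comes from: you project the adelic index of Theorem \ref{thm-adelic-indexQ}, while the paper reads it off from Theorem \ref{thm-cmrep-intro-alvaro}; both are fine.
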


\begin{proof}

Let $E, N, \ell$ and $E^N$ be as described above. Note that in all cases, $N^\dagger$ and $\ell^n$ are greater than $2$, so $K \subseteq \Q(E^{N}[\ell^n]) \cap \Q(E^{N}[N^\dagger])$ by Theorem \ref{lem-clark}, part (3). Also, by Theorem \ref{sqrt-general} part (2), we have that $\Q(\sqrt{N}) \subseteq \Q(E^N[N^\dagger])$. Since $\gcd(\ell, N^\dagger) = 1$, we have that $\Q(E^N[\ell^n]) = \Q(\sqrt{N}, E[\ell^n])$ by Lemma \ref{lem-twistofsimplest}, so $K(\sqrt{N})$ is contained in the intersection $\Q(E^{N}[\ell^n]) \cap \Q(E^{N}[N^\dagger])$. We also note here that by Theorem \ref{thm-ell-simplest}, the prime $\ell$ is the unique prime ramifying in $K/\Q$, and therefore $\Q(\sqrt{N})\cap K=\Q$, and so $K(\sqrt{N})/K$ is a non-trivial quadratic extension.

Let $D=\ell^n N^\dagger$. Since $j(E) \neq 0, 1728$, it follows from Theorem \ref{thm-cmrep-intro-alvaro} that \[ [\mathcal{N}_{\delta, \phi}(D) : G_{E^N, D}] = [\mathcal{C}_{\delta, \phi}(D) : G_{E^N/K, D}] = 2,\] where $G_{E^N, D} := \im \rho_{E^N, D}$ and $G_{E^N/K, D} := \im \rho_{E^N/K, D}$ as in Section \ref{sec-galoisrepnotation}. By Corollary \ref{cor-galoistheoryofcompositum}, it follows that the degree of $[\Q(E^N[\ell^n])\cap \Q(E^N[N^\dagger]):K]$ divides $[\mathcal{C}_{\delta, \phi}(D) : G_{E^N/K, D}] = 2$.
Since $\Q(E^N[\ell^n])\cap \Q(E^N[N^\dagger])$ contains the quadratic extension $K(\sqrt{N})/K$, it follows that $K(\sqrt{N}) = \Q(E^{N}[\ell^n]) \cap \Q(E^{N}[N^\dagger])$, as claimed.
\end{proof}

We note here again that the previous result could also be derived from \cite[Theorem 6.3]{CampagnaPengo1}.

\begin{corollary}\label{cor-levelofdef}
    Let $E/\Q$ be an elliptic curve with CM by $\Of$ and $j(E) \neq 0, 1728$, and suppose that $E$ is an $\ell$-simplest CM curve for the prime $\ell$. Let $N$ be a square-free integer with $N \notin \{0, 1\}$, and let $n$ and $N^\dagger$ be as in Theorem \ref{thm-entanglement}. If $\gcd(\ell, N^\dagger) = 1$, then the adelic image of $E^N$ is defined at level $D=\ell^n N^\dagger$. Thus, $[\mathcal{N}_{\delta, \phi}(D) : G_{E^N, D}] = 2$.
\end{corollary}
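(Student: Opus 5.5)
By Theorem \ref{thm-levelofdef}, it is enough to show that the index of the mod-$D$ image of $E^N$ in $\mathcal{N}_{\delta,\phi}(D)$ equals the adelic index. Theorem \ref{thm-adelic-indexQ} gives $[\mathcal{N}_{\delta,\phi}:G_{E^N}]=2$, since $E^N/\Q$ has CM and $j(E^N)=j(E)\neq 0,1728$. The index is non-decreasing as the level grows and is bounded above by the adelic index. It therefore suffices to show that $[\mathcal{N}_{\delta,\phi}(D):G_{E^N,D}]\geq 2$. The first step is to pass to the Cartan subgroup via Proposition \ref{prop-sameindex}, which applies because $D\geq 3$. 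This reduces the claim to $[\mathcal{C}_{\delta,\phi}(D):G_{E^N/K,D}]\geq 2$.

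Next, split $D$ by the Chinese remainder theorem. Since $\gcd(\ell,N^\dagger)=1$, we have $\gcd(\ell^n,N^\dagger)=1$. Corollary \ref{cor-galoistheoryofcompositum}(1),(3) then shows that the index $[\mathcal{C}_{\delta,\phi}(D):G_{E^N/K,D}]$ is divisible by the degree of $\Q(E^N[\ell^n])\cap\Q(E^N[N^\dagger])$ over $K$ (here $F=\Q$). By Theorem \ref{thm-entanglement}, this intersection is $K(\sqrt N)$.

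It remains to check that $K(\sqrt N)\neq K$, i.e.\ that $\sqrt N\notin K$. By Theorem \ref{thm-40simplestcurves}(2a), $\ell$ is the only prime ramifying in $K$. Every prime dividing $N^\dagger>1$ ramifies in $\Q(\sqrt N)$, and all of these are prime to $\ell$, so $\Q(\sqrt N)\neq K$. Hence the degree is $2$, and the Cartan index mod $D$ is at least $2$. Combining this with the upper bound forces $[\mathcal{N}_{\delta,\phi}(D):G_{E^N,D}]=2=[\mathcal{N}_{\delta,\phi}:G_{E^N}]$, and Theorem \ref{thm-levelofdef} then shows that $D$ is a level of definition.

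The hypotheses of Theorem \ref{thm-levelofdef} also require bases chosen compatibly, so that $\pi_D(G_{E^N})=G_{E^N,D}$. This is supplied by Theorem \ref{thm-cmrep-intro-alvaro}(3), so I expect no real obstacle there. The only point needing care is the bookkeeping of the index inequalities: the mod-$D$ index is at most the adelic index because $\pi_D$ is surjective on $\mathcal{N}_{\delta,\phi}$ and $\pi_D^{-1}(G_{E^N,D})\supseteq G_{E^N}$. The main substantive input, the entanglement, is already provided by Theorem \ref{thm-entanglement}.
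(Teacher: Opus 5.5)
Your proposal is correct and follows the paper's route. Theorem \ref{thm-entanglement} gives the quadratic entanglement $K(\sqrt N)/K$, which via Corollary \ref{cor-galoistheoryofcompositum} and Proposition \ref{prop-sameindex} forces $[\mathcal{N}_{\delta,\phi}(D):G_{E^N,D}]=2$, and Theorem \ref{thm-levelofdef} finishes. You simply make explicit the index bookkeeping (upper bound from the adelic index, lower bound from the entanglement degree, and $K(\sqrt N)\neq K$) that the paper's short proof leaves implicit or absorbs into the proof of Theorem \ref{thm-entanglement}.
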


\begin{proof}
    By Theorem \ref{thm-entanglement}, we have that $K(\sqrt{N}) = \Q(E^{N}[\ell^n]) \cap \Q(E^{N}[N^\dagger])$. Let $D=\ell^n N^\dagger$. Then, we have by Proposition \ref{prop-sameindex} that $[\mathcal{C}_{\delta, \phi}(D) : G_{E^N/K, D}] = [\mathcal{N}_{\delta, \phi}(D) : G_{E^N, D}] = 2$. The result now follows from Theorem \ref{thm-levelofdef}. 
\end{proof}

\begin{corollary}\label{cor-indexisalways2}
    Let $E/\Q$ be an elliptic curve with CM by $\Of$ and $j(E) \neq 0, 1728$. Then, all levels of definition for $G_E$ are divisible by $\ell$, the unique prime dividing $\Delta_K$.
\end{corollary}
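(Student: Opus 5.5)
We argue by contradiction, combining the index count of Theorem \ref{thm-adelic-indexQ} with the twist description of Proposition \ref{prop-correcttwist}. Suppose $M$ is a level of definition for $G_E$ with $\ell\nmid M$. Since $E$ is defined over $\Q$ and $j(E)\neq 0,1728$, Theorem \ref{thm-adelic-indexQ} gives $[\mathcal{N}_{\delta,\phi}:G_E]=2$. A level of definition forces the index at level $M$ to equal the adelic index, because $\pi_M$ is surjective and $G_E$ is the full preimage of $\pi_M(G_E)$. Hence $[\mathcal{N}_{\delta,\phi}(M):G_{E,M}]=2$.

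By Proposition \ref{prop-correcttwist} there are an $\ell$-simplest curve $E'$ and a square-free $N$ with $\gcd(\ell,N^\dagger)=1$ and $E\cong (E')^N$. Put $n=n_{E',\ell}$.

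\textbf{Case $N=1$.} Then $E$ is itself $\ell$-simplest, so $[\mathcal{N}_{\delta,\phi}(\ell^\infty):G_{E,\ell^\infty}]=2$ and $\ell^n$ is a level of definition by Theorem \ref{thm-ell-simplest}. Since $\gcd(M,\ell^n)=1$, Proposition \ref{prop-gcdoneimpliesindex1} forces the adelic index to be $1$. This contradicts Theorem \ref{thm-adelic-indexQ}.

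\textbf{Case $N\neq 1$.} By Corollary \ref{cor-levelofdef}, the level $D=\ell^nN^\dagger$ is a level of definition for $E$. The key input is the $\ell$-adic index of $E$ in this case. In the proof of Lemma \ref{lem-twistofsimplest} it is shown that $[\mathcal{N}_{\delta,\phi}(\ell^\infty):G_{E,\ell^\infty}]=1$, so $G_{E,\ell^k}=\mathcal{N}_{\delta,\phi}(\ell^k)$ for every $k$.

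Now apply Proposition \ref{prop-minimalleveldivideslevelsofdef}: the minimal level of definition $M_E$ divides both $M$ and $D$. Hence $M_E$ is prime to $\ell$, and therefore $M_E\mid N^\dagger$. So $N^\dagger$ (or a divisor $M_E\ge 2$ of it) is a level of definition with index $2$.

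Write $D=\ell^n\cdot N^\dagger$ with coprime factors. Since $M_E\mid N^\dagger$ is a level of definition, so is $N^\dagger$, and thus $[\mathcal{N}_{\delta,\phi}(N^\dagger):G_{E,N^\dagger}]=2$. Proposition \ref{prop-sameindex} converts this to $[\mathcal{C}_{\delta,\phi}(N^\dagger):G_{E/K,N^\dagger}]=2$.

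Now combine Corollary \ref{cor-galoistheoryofcompositum}(3) with Theorem \ref{thm-entanglement}. The latter gives $[\Q(E[\ell^n])\cap\Q(E[N^\dagger]):K]=2$. Corollary \ref{cor-galoistheoryofcompositum}(4) then forces both the $\ell^n$ and $N^\dagger$ indices to be $1$. This contradicts $[\mathcal{N}_{\delta,\phi}(N^\dagger):G_{E,N^\dagger}]=2$.

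The main obstacle is the bookkeeping in the case $N\neq 1$. We must ensure that Proposition \ref{prop-minimalleveldivideslevelsofdef} legitimately yields a level of definition dividing $N^\dagger$, including the degenerate situation $\gcd(M,D)=2$, where Proposition \ref{prop-levelofdefngcd} only gives level $4$. A level $4\mid N^\dagger$ still works, since $4\mid N^\dagger$ whenever $2\mid N^\dagger$. Finally, Corollary \ref{cor-galoistheoryofcompositum}(4) requires $N^\dagger>\ell^n$ or the reverse; the statement assumes $N>M$ only for labeling, so symmetry handles this.
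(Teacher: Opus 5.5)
Your proof is correct and follows the paper's argument. Both use Proposition \ref{prop-gcdoneimpliesindex1} when $N=1$, and when $N\neq 1$ both use $K(\sqrt{N})=\Q(E[\ell^n])\cap\Q(E[N^\dagger])$ with Corollary \ref{cor-galoistheoryofcompositum}(4) to force index $1$ at $N^\dagger$, which contradicts $N^\dagger$ being a level of definition. The only differences are that you get that level via Proposition \ref{prop-minimalleveldivideslevelsofdef} ($M_E\mid\gcd(M,D)\mid N^\dagger$) where the paper applies Proposition \ref{prop-levelofdefngcd} to $\gcd(D,M)$ directly, and that the $\ell$-adic index fact you call ``the key input'' is never actually used.
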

\begin{proof}

   Suppose for a contradiction that $M\geq 2$ is another level of definition for $G_E$, such that $\gcd(M,\ell)=1$. We have just shown that $D=\ell^n N^\dagger$ is a level of definition, where $\gcd(\ell^n,N^\dagger)=1$. Suppose first that $N=1$, and so $E$ is an $\ell$-simplest curve, and $\ell^n$ is a level of definition. Then, by Prop. \ref{prop-gcdoneimpliesindex1}, the adelic index would be $1$, which contradicts Theorem \ref{thm-adelic-indexQ}. Hence such a level $M$ relatively prime to $\ell$ cannot exist.    
    
    Now suppose that $E$ is not $\ell$-simplest, so that $N\neq 1$, and $E=(E')^N$. By Theorem \ref{thm-entanglement}, we know that $K(\sqrt{N}) = \Q(E[\ell^n]) \cap \Q(E[N^\dagger])$. Then, by Cor. \ref{cor-galoistheoryofcompositum}, we have that $[\mathcal{N}_{\delta, \phi}(\ell^n) : G_{E, \ell^n}]=[\mathcal{N}_{\delta, \phi}(N^\dagger) : G_{E, N^\dagger}] = 1$, that is, the image is not defined at level $\ell^n$ or $N^\dagger$. Then, by Prop. \ref{prop-levelofdefngcd}, the number $d=\gcd(D,M)=\gcd(\ell^n N^\dagger,M)=\gcd(N^\dagger,M)$ (or $2\cdot \gcd(N^\dagger,M)$ if $d=\gcd(N^\dagger,M)=2$) is another level of definition. But then, $N^\dagger$ itself would also be a level of definition because $d\mid  N^\dagger$. However we have just shown that the image is not defined at level $N^\dagger$, which is a contradiction.

    Thus, in all cases, $\gcd(\ell,M)=1$ leads to a contradiction, and therefore all levels of definition are divisible by $\ell$ as claimed.
\end{proof}

\section{Image of Complex Conjugation}\label{sec-complexconj}

Let $E/\Q$ be an elliptic curve given by a short Weierstrass equation $y^2=x^3+Ax+B$ for some $A,B\in\Z$. Let $d$ be a non-zero square-free integer, and let $\chi=\chi_d\colon G_\Q \to \{\pm 1\}$ be the quadratic character attached to the field $\Q(\sqrt{d})$, defined by $\chi(\sigma)=\sigma(\sqrt{d})/\sqrt{d}$. Let $E^\chi$ be the twist of $E$ by $\chi$, given by the Weierstrass equation $dy^2=x^3+Ax+B$ (see \cite[Ch. X, Example 2.4]{silverman1}). 

The adelic Tate module $T(E)$ is naturally a $\widehat{\Z}[G_\Q]$-module, such that 
$$(n\cdot \sigma)(P) = [n](\sigma(P))$$
for any $n\in \widehat{\Z}$, $\sigma\in G_\Q$, and $P\in T(E)$. Moreover, following the construction of $E^\chi$ as in \cite[Ch. X, Example 2.4]{silverman1}, one sees that the adelic Tate module of $E^\chi$ is isomorphic as a $\widehat{\Z}[G_\Q]$-module to $T(E)$ together with the action defined by
$$(n\cdot \sigma)(P) = [n]([\chi(\sigma)]\cdot\sigma(P)) = [\chi(\sigma)\cdot n](\sigma(P)).$$
In particular, if we define $\widehat{\Z}^\chi$ to be the $\widehat{\Z}$-module $\widehat{\Z}$ together with the action $n\cdot \alpha = (\chi(\sigma)\cdot n) \cdot \alpha$, for any $n,\alpha\in\widehat{\Z}$, then it follows that
\begin{align} \label{eq-tatemodules}
    \widehat{\Z}^\chi \otimes_{\widehat{\Z}}  T(E) \cong \widehat{\Z}\otimes_{\widehat{\Z}}  T(E^\chi) \cong T(E^\chi)
    \end{align}
as $\widehat{\Z}[G_\Q]$-modules, where the copy of $\widehat{\Z}$ in the middle is supposed to be understood as a one-dimensional $\widehat{\Z}$-module with trivial Galois action. Notice that one can write an analogous isomorphism $T(E)\cong \widehat{\Z}\otimes_{\widehat{\Z}} T(E)$ of adelic Galois modules.

By rephrasing Eq.~(\ref{eq-tatemodules}) in terms of Galois representations, with the identification of the adelic Galois modules $T(E)$, $T(E^\chi)$, and $\widehat{\Z}^\chi$ with the adelic Galois representations $\rho_E$, $\rho_{E^\chi}$, and $\chi$, we obtain the following result. 

\begin{prop}
\label{cpx_conj_lemma}\label{cpx_conj_prop}
Let $E/\Q$ be an elliptic curve, and let $E^{\chi}/\Q$ be a quadratic twist of $E$ by a quadratic character $\chi$. Let $\rho_{E} \colon G_{\Q} \to \Aut(T(E))$, and $\rho_{E^\chi}$, be the adelic Galois representations attached to $E$ and $E^\chi$, respectively, and let $\chi\colon G_\Q \to \Aut(\widehat{\Z})$ be the quadratic adelic character given by $\chi(\sigma)(n) = (\sigma(\sqrt{d})/\sqrt{d})\cdot n$, for any $n\in \widehat{\Z}$. Then, there is an isomorphism of adelic Galois representations:
\[ \rho_{E^{\chi}} \cong \chi \otimes_{\widehat{\Z}} \rho_{E}. \]
\end{prop}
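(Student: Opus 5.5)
The plan is to build an explicit $\overline{\Q}$-isomorphism $\psi\colon E\to E^\chi$ and compute how Galois twists it. Write $E\colon y^2=x^3+Ax+B$ and $E^\chi\colon dy^2=x^3+Ax+B$. Define $\psi(x,y)=(x,y/\sqrt{d})$ and $\psi(O)=O$. Then $\psi$ is an isomorphism of elliptic curves over $\Q(\sqrt{d})$. It is a group isomorphism because it is an isomorphism of curves sending $O$ to $O$.

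The key step is the cocycle identity. For $\sigma\in G_\Q$ and $P=(x,y)\in E(\overline{\Q})$ we have
$$\sigma(\psi(P))=\bigl(\sigma(x),\sigma(y)/\sigma(\sqrt{d})\bigr)=\bigl(\sigma(x),\chi(\sigma)\sigma(y)/\sqrt{d}\bigr)=\psi\bigl([\chi(\sigma)]\sigma(P)\bigr),$$
using that $[-1](x,y)=(x,-y)$ on a short Weierstrass model. In other words, $\sigma\circ\psi=\psi\circ[\chi(\sigma)]\circ\sigma$ on $E(\overline{\Q})$.

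Next, restrict $\psi$ to $E[N]$ for every $N\geq 1$. These restrictions are compatible with the multiplication-by-$M$ transition maps, because $\psi$ is a homomorphism. So $\psi$ induces a $\widehat{\Z}$-linear isomorphism $\psi_*\colon T(E)\to T(E^\chi)$ satisfying
$$\rho_{E^\chi}(\sigma)\circ\psi_*=\psi_*\circ\chi(\sigma)\rho_E(\sigma)$$
for all $\sigma\in G_\Q$. This is exactly Eq. (\ref{eq-tatemodules}). Indeed, the map $T(E)\cong\widehat{\Z}^\chi\otimes_{\widehat{\Z}}T(E)$, $P\mapsto 1\otimes P$, intertwines $\chi(\sigma)\rho_E(\sigma)$ with the diagonal action on the tensor product, since $\sigma$ acts on $\widehat{\Z}^\chi$ by the scalar $\chi(\sigma)$. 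Composing the two isomorphisms gives $\rho_{E^\chi}\cong\chi\otimes_{\widehat{\Z}}\rho_E$.

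Finally, we must check that the result does not depend on the model. If $E$ is given by a general Weierstrass equation, or if the twist is presented differently, then any other model of $E$ over $\Q$ is related to the short one by a $\Q$-rational isomorphism. A $\Q$-rational isomorphism commutes with every $\sigma\in G_\Q$, so it induces an isomorphism of adelic representations. The same holds for $E^\chi$, because the quadratic twist by $\chi$ is well defined up to $\Q$-isomorphism.

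There is no real obstacle in this argument. The only point that needs care is the sign bookkeeping, namely $\sigma(\sqrt{d})^{-1}=\chi(\sigma)/\sqrt{d}$ together with $[-1](x,y)=(x,-y)$. One also needs $[\pm1]$ to commute with $\sigma$, which holds because $[-1]$ is defined over $\Q$.
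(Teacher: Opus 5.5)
Your proof is correct and follows essentially the same route as the paper. The paper obtains the proposition by viewing $T(E^\chi)$ as $T(E)$ with the $\chi$-twisted Galois action (citing Silverman's construction of twists) and rephrasing this as $\widehat{\Z}^\chi\otimes_{\widehat{\Z}}T(E)$, and it carries out the explicit computation $\sigma(\phi(R))=[\chi(\sigma)](\phi(\sigma(R)))$ with $\phi(x_0,y_0)=(x_0,y_0/\sqrt{d})$ only immediately afterwards, en route to Prop.~\ref{cpx_conj_prop2}; your version puts that cocycle identity first, so the argument is self-contained.
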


Now, we fix a $\widehat{\Z}$-basis $\{P,Q\}$ of $T(E)$ and the corresponding $\widehat{\Z}$-basis $\{\phi(P),\phi(Q)\}$ of $T(E^\chi)$ induced by the isomorphism $\phi\colon E\to E^\chi$ that sends $\phi((x_0,y_0))=(x_0,y_0/\sqrt{d})$, and let $\rho_E$ and $\rho_{E^\chi}$ be the adelic Galois representations $G_\Q\to \GL(2,\widehat{\Z})$ with respect to such $\widehat{\Z}$-bases. Noticing that for $R=(x_0,y_0)\in T(E)$ and $\sigma\in G_\Q$ we have
\begin{align*}
    \sigma(\phi(R)) &=\sigma((x_0,y_0/\sqrt{d}))\\
    &=(\sigma(x_0),\sigma(y_0/\sqrt{d}))\\
    &=(\sigma(x_0),\chi(\sigma)\cdot \sigma(y_0)/\sqrt{d})\\
    &=[\chi(\sigma)](\phi(\sigma(R))),
\end{align*}
and applying this for $R\in \{P,Q\}$, the elements of the adelic basis of $T(E)$, it follows that 
$$\rho_{E^\chi}(\sigma) = (\chi(\sigma)\cdot \operatorname{Id})\cdot \rho_{E}(\sigma),$$
as matrices in $\GL(2,\widehat{\Z})$. Thus, we have shown the following result.

\begin{prop}\label{cpx_conj_lemma2}\label{cpx_conj_prop2}
    Let $E/\Q$ be an elliptic curve, and let $E^{\chi}/\Q$ be a quadratic twist of $E$ by a quadratic character $\chi$. Let $\rho_{E} \colon G_{\Q} \to \GL(2,\widehat{\Z})$ and $\rho_{E^\chi}$ be the adelic Galois representations attached to $E$ and $E^\chi$, respectively, with respect to the adelic bases specified above. Then: 
$$\rho_{E^\chi}(\sigma) = (\chi(\sigma)\cdot \operatorname{Id})\cdot \rho_{E}(\sigma),$$
as matrices in $\GL(2,\widehat{\Z})$, for any $\sigma \in G_\Q$.
\end{prop}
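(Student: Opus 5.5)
The plan is to compare the two representations entry by entry on $\widehat{\Z}$-points, working with a concrete model of $E^\chi$. I take $E\colon y^2=x^3+Ax+B$ and $E^\chi\colon dy^2=x^3+Ax+B$. The map $\phi\colon E\to E^\chi$, $(x_0,y_0)\mapsto (x_0,y_0/\sqrt{d})$, is an isomorphism defined over $\Q(\sqrt{d})$; this is a direct substitution into the two Weierstrass equations. Since $\phi$ is a group isomorphism, it induces compatible isomorphisms $E[N]\to E^\chi[N]$ for every $N$, and hence a $\widehat{\Z}$-linear isomorphism $T(E)\to T(E^\chi)$. In particular $\{\phi(P),\phi(Q)\}$ is a $\widehat{\Z}$-basis of $T(E^\chi)$ whenever $\{P,Q\}$ is one of $T(E)$.

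The key step is the twisted equivariance $\sigma(\phi(R))=[\chi(\sigma)]\,\phi(\sigma(R))$ for every torsion point $R$ and every $\sigma\in G_\Q$. For $R=(x_0,y_0)$ this comes from $\sigma(y_0/\sqrt{d})=\sigma(y_0)/\sigma(\sqrt{d})=\chi(\sigma)\,\sigma(y_0)/\sqrt{d}$, using $\chi(\sigma)=\pm1$. When $\chi(\sigma)=-1$, negating the $y$-coordinate is exactly $[-1]$ on a short Weierstrass model. The case $R=O$ is trivial. The identity holds at every finite level $N$ compatibly with the transition maps, so it passes to the inverse limit $T(E)$.

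I then apply this with $R=P$ and $R=Q$. Write $\rho_E(\sigma)=\begin{pmatrix}\alpha&\beta\\ \gamma&\delta'\end{pmatrix}$ in the basis $\{P,Q\}$, meaning $\sigma(P)=[\alpha]P+[\gamma]Q$ and $\sigma(Q)=[\beta]P+[\delta']Q$, with whatever column convention the paper uses. Applying $\phi$, which is $\widehat{\Z}$-linear, gives $\sigma(\phi(P))=[\chi(\sigma)\alpha]\phi(P)+[\chi(\sigma)\gamma]\phi(Q)$, and likewise for $Q$. So the matrix of $\sigma$ on $T(E^\chi)$ in the basis $\{\phi(P),\phi(Q)\}$ is $\chi(\sigma)\rho_E(\sigma)=(\chi(\sigma)\operatorname{Id})\rho_E(\sigma)$. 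Since $\chi(\sigma)\operatorname{Id}$ is scalar, it commutes with everything, so the result does not depend on the row/column convention.

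I expect no serious obstacle. The only points needing care are that $\phi$ is defined over $\overline{\Q}$ rather than $\Q$, so it is not Galois-equivariant and the factor $\chi(\sigma)$ is precisely the defect, and that the bases chosen for the two Tate modules must be matched via $\phi$.
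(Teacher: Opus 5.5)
Your proposal is correct and follows essentially the same route as the paper: use the isomorphism $\phi(x_0,y_0)=(x_0,y_0/\sqrt{d})$, prove the twisted equivariance $\sigma(\phi(R))=[\chi(\sigma)]\bigl(\phi(\sigma(R))\bigr)$, and apply it to $R\in\{P,Q\}$ with the basis of $T(E^\chi)$ taken to be $\{\phi(P),\phi(Q)\}$. Your extra remarks make explicit details the paper leaves implicit: the $\widehat{\Z}$-linearity of $\phi$, compatibility with the inverse limit, and independence of the row/column convention.
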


While the proposition above works for any elliptic curve over $\Q$, we need a slightly more precise statement for our purposes of CM elliptic curves, to ensure that the adelic bases we specified above are compatible with the choices we made in Section \ref{sec-notation}. We clarify this point in the following result.

\begin{prop}\label{prop-basis-preserving}
    Let  $E/\Q$ be an elliptic curve with complex multiplication, and let $E^{\chi}/\Q$ be a quadratic twist of $E$ by a quadratic character $\chi$. Let $\rho_{E} \colon G_{\Q} \to \GL(2,\widehat{\Z})$  be the adelic Galois representation attached to $E$ as specified in Section \ref{sec-notation}, with respect to a $\widehat{\Z}$-basis $\mathcal{B}=\{P,Q\}$ such that $\rho_E(G_{\Q})\subseteq \mathcal{N}_{\delta,\phi}\subseteq \GL(2,\widehat{\Z})$. Let $\mathcal{B}_\chi=\{ \phi(P),\phi(Q)\}$ be the corresponding adelic basis of $T(E^\chi)$. Then, $\rho_{E^\chi/K}(G_{K})\subseteq \mathcal{C}_{\delta,\phi}$ and $\rho_{E^\chi}(G_{\Q})\subseteq \mathcal{N}_{\delta,\phi}$ as well. 
\end{prop}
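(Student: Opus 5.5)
The plan is to read everything off Proposition \ref{cpx_conj_prop2}. With respect to the bases $\mathcal{B}$ and $\mathcal{B}_\chi$, it gives $\rho_{E^\chi}(\sigma) = \chi(\sigma)\,\rho_E(\sigma)$ for every $\sigma \in G_\Q$, with $\chi(\sigma)\in\{\pm1\}$. So every element of $\rho_{E^\chi}(G_\Q)$ is $\pm g$ for some $g \in \rho_E(G_\Q)\subseteq \mathcal{N}_{\delta,\phi}$. It therefore suffices to prove two closure properties:
\begin{enumerate}
\item $-\operatorname{Id}\in \mathcal{C}_{\delta,\phi}$;
\item $\mathcal{C}_{\delta,\phi}$ and $\mathcal{N}_{\delta,\phi}$ are closed under multiplication by $\pm\operatorname{Id}$.
\end{enumerate}

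For the first point, observe that $-\operatorname{Id} = c_{\delta,\phi}(-1,0)$. Its determinant is $1$, so it lies in $\mathcal{C}_{\delta,\phi}(N)$ for every $N$, and hence in the inverse limit $\mathcal{C}_{\delta,\phi}$. Since $\mathcal{C}_{\delta,\phi}\subseteq\mathcal{N}_{\delta,\phi}$ are groups, multiplication by $-\operatorname{Id}$ preserves both. A direct check also works: $-c_{\delta,\phi}(a,b)=c_{\delta,\phi}(-a,-b)$ and $-c_\varepsilon=c_{-\varepsilon}$.

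For the Cartan statement, take $\sigma\in G_K$ (more precisely $G_{K(j)}=G_K$, since $j\in\Q$). By the choice of basis in Section \ref{sec-galoisrepnotation} (Theorem \ref{thm-firstofsec1}(2)), we have $\rho_E(\sigma)\in\mathcal{C}_{\delta,\phi}$. Then $\rho_{E^\chi}(\sigma)=\chi(\sigma)\rho_E(\sigma)\in\mathcal{C}_{\delta,\phi}$. For general $\sigma\in G_\Q$, the same argument with $\mathcal{N}_{\delta,\phi}$ gives $\rho_{E^\chi}(G_\Q)\subseteq\mathcal{N}_{\delta,\phi}$.

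The only real subtlety is making sure Proposition \ref{cpx_conj_prop2} applies verbatim to the given basis. That proposition was derived for a short Weierstrass model, with $\phi(x_0,y_0)=(x_0,y_0/\sqrt d)$, and for an arbitrary $\widehat{\Z}$-basis $\{P,Q\}$ of $T(E)$. I would therefore first note that replacing $E$ by a $\Q$-isomorphic short Weierstrass model transports the basis $\mathcal{B}$ and leaves the matrices unchanged. I would also check that $\phi$ commutes with multiplication maps $[n]$, being a group isomorphism over $\overline{\Q}$, so that $\mathcal{B}_\chi$ is genuinely a compatible adelic basis of $T(E^\chi)$. Once this is in place, no condition on $\mathcal{B}$ beyond $\rho_E(G_\Q)\subseteq\mathcal{N}_{\delta,\phi}$ is needed, and the argument is complete.
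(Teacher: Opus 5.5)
Your proposal is correct and is essentially the paper's argument. The paper also starts from $\rho_{E^\chi}(\sigma)=(\chi(\sigma)\cdot\operatorname{Id})\cdot\rho_E(\sigma)$ and checks the same closure you use, via $\chi(\sigma)\,c_{\delta,\phi}(a,b)=c_{\delta,\phi}(\chi(\sigma)a,\chi(\sigma)b)$ and $\chi(c)\,c_{\delta,\phi}(a,b)c_1=c_{\delta,\phi}(a,b)c_{\chi(c)}$; it additionally phrases the Cartan case as $\sigma$ acting on $T(E^\chi)$ by multiplication by $\chi(\sigma)\alpha$. Your observation that $-\operatorname{Id}=c_{\delta,\phi}(-1,0)\in\mathcal{C}_{\delta,\phi}$ packages that closure cleanly.
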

\begin{proof}
    Suppose $E/\Q$ is as in the statement and we have chosen the basis $\mathcal{B}=\{P,Q\}$ such that $\rho_E(G_{\Q})\subseteq \mathcal{N}_{\delta,\phi}\subseteq \GL(2,\widehat{\Z})$ as in Section \ref{sec-notation}. With respect to this basis, for $\sigma\in G_K=\Gal(\overline{K}/K)$, the matrix $\rho_E(\sigma)\in \mathcal{C}_{\delta,\phi}$ corresponds to the multiplication-by-$\alpha$ matrix of an element $\alpha\in \OK\otimes \widehat{\Z}$ such that $\sigma$ induces the automorphism of $T(E)$ that is induced by $[\alpha]\in \End(E)$, i.e.,
    $$\sigma(P) = [\alpha](P), \ \text{ and } \ \sigma(Q) = [\alpha](Q),$$
    and so $\sigma(R)=[\alpha](R)$ for any $R\in T(E)$. Let $\phi\colon E\to E^\chi$ be the isomorphism defined above. Our work in this section shows that $\sigma(\phi(R))=[\chi(\sigma)](\phi(\sigma(R))$ for any $R\in T(E)$. Thus,
     $$\sigma(\phi(R))=[\chi(\sigma)](\phi(\sigma(R))=[\chi(\sigma)](\phi([\alpha](R)))=[\chi(\sigma)][\alpha](\phi(R))=[\chi(\sigma)\alpha](\phi(R))$$
     and therefore $\sigma$ acts as $[\chi(\sigma)\alpha]$ on $T(E^\chi)$, which corresponds to the equality 
     $$\rho_{E^\chi}(\sigma) = (\chi(\sigma)\cdot \operatorname{Id})\cdot \rho_{E}(\sigma).$$
     In particular, $\rho_{E^\chi}(\sigma)$ is the matrix of $\mathcal{C}_{\delta,\phi}$ that represents multiplication by $\chi(\sigma)\alpha \in \OK\otimes \widehat{\Z}$. This shows that $\rho_{E^\chi/K}(G_K)\subseteq \mathcal{C}_{\delta,\phi}$.

     Alternatively, for $\sigma\in G_K$, we have $\rho_E(\sigma) = c_{\delta,\phi}(a,b)\in \mathcal{C}_{\delta,\phi}$ for some $a,b\in \widehat{\Z}$, and 
$$\rho_{E^\chi}(\sigma) = (\chi(\sigma)\cdot \operatorname{Id})\cdot \rho_{E}(\sigma) = (\chi(\sigma)\cdot \operatorname{Id})\cdot c_{\delta,\phi}(a,b) = c_{\delta,\phi}(\chi(\sigma)\cdot a,\chi(\sigma)\cdot b),$$
which belongs to $\mathcal{C}_{\delta,\phi}$. Further, if $c\in G_\Q$ such that $\rho_E(c)=n \in \mathcal{N}_{\delta,\phi}$, then by the definition of the normalizer, we have $n=c_{\delta,\phi}(a,b)\cdot c_1$, for some $a,b\in\widehat{\Z}$ and $c_1$ as defined in Section \ref{sec-notation}. Thus,
$$\rho_{E^\chi}(c) = (\chi(c)\cdot \operatorname{Id})\cdot \rho_{E}(c) = (\chi(c)\cdot \operatorname{Id})\cdot c_{\delta,\phi}(a,b)\cdot c_1 = c_{\delta,\phi}(\chi(\sigma)\cdot a,\chi(\sigma)\cdot b)\cdot c_1=c_{\delta,\phi}(a,b)\cdot c_{\chi(c)},$$
which belongs to $\mathcal{N}_{\delta,\phi}$. (Note that $-\operatorname{Id}\in \mathcal{N}_{\delta,\phi}$, so $c_1$ and $c_{-1}\in \mathcal{N}_{\delta,\phi}$.) Hence, $\rho_E(G_\Q)\subseteq \mathcal{N}_{\delta,\phi}$, as desired.
\end{proof}

\begin{lemma}\label{lem-conjugatesandtwists}
    Let $M>1$ be an integer, let $E,E'$ be elliptic curves over $\Q$ and suppose that their Galois representations $\rho_{E,M}$ and $\rho_{E,M'}$ are conjugate. Let $\chi\colon G_\Q\to \mu_2$ be a quadratic character. Then, the Galois representations $\chi\otimes \rho_{E,M}$ and $\chi\otimes \rho_{E',M}$ (with tensor products being over $\Z/M\Z$) are also conjugate.
\end{lemma}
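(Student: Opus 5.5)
The plan is to unwind what ``conjugate'' means for representations and check that tensoring with a scalar-valued character commutes with conjugation. Read the hypothesis as: there is $A\in \GL(2,\Z/M\Z)$ with
$$\rho_{E',M}(\sigma)=A\,\rho_{E,M}(\sigma)\,A^{-1}\quad\text{for all }\sigma\in G_\Q.$$
(The statement writes $\rho_{E,M'}$; I take this as a typo for $\rho_{E',M}$.) In the spirit of Proposition \ref{cpx_conj_prop2}, $\chi\otimes\rho_{E,M}$ is realized on $(\Z/M\Z)^\chi\otimes_{\Z/M\Z}(\Z/M\Z)^2\cong(\Z/M\Z)^2$. In the induced basis it is given by
$$(\chi\otimes\rho_{E,M})(\sigma)=(\chi(\sigma)\cdot\operatorname{Id})\cdot\rho_{E,M}(\sigma),$$
where $\chi(\sigma)\in\{\pm1\}$ is viewed in $(\Z/M\Z)^\times$. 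The same formula holds for $E'$.

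The key step is that $\chi(\sigma)\operatorname{Id}$ is a scalar matrix, so it is central in $\GL(2,\Z/M\Z)$. Hence, for every $\sigma$,
$$A\,(\chi\otimes\rho_{E,M})(\sigma)\,A^{-1}=\chi(\sigma)\,A\rho_{E,M}(\sigma)A^{-1}=\chi(\sigma)\rho_{E',M}(\sigma)=(\chi\otimes\rho_{E',M})(\sigma).$$
The same $A$ therefore intertwines the twisted representations, which proves the claim.

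If one prefers a basis-free formulation, use an isomorphism $f\colon E[M]\to E'[M]$ of $\Z/M\Z[G_\Q]$-modules. The map $\operatorname{id}\otimes f$ from $(\Z/M\Z)^\chi\otimes E[M]$ to $(\Z/M\Z)^\chi\otimes E'[M]$ is then $G_\Q$-equivariant, because $\sigma$ acts diagonally and $f$ commutes with $\sigma$. This gives the same conclusion.

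There is no real obstacle here. The only point needing care is to fix the convention that ``conjugate'' means conjugate as representations, via a single matrix valid for all $\sigma$, rather than merely having conjugate images. If only the images $G_{E,M}$ and $G_{E',M}$ were assumed conjugate, the image of $\chi\otimes\rho$ would depend on how $\chi$ correlates with $\rho$, and the argument above would not apply. So I would state explicitly that the hypothesis is read at the level of representations.
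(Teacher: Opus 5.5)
Your proof is correct and follows the paper's argument: the paper also takes a single matrix $B\in\GL(2,\Z/M\Z)$ with $B\rho_{E,M}(\sigma)B^{-1}=\rho_{E',M}(\sigma)$ for all $\sigma$, and uses that $\chi(\sigma)\cdot\operatorname{Id}$ is a central scalar matrix, so the same $B$ intertwines $\chi\otimes\rho_{E,M}$ and $\chi\otimes\rho_{E',M}$. Your reading of $\rho_{E,M'}$ as a typo for $\rho_{E',M}$, and of ``conjugate'' as conjugacy of representations rather than of images, matches how the paper's proof uses the hypothesis.
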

\begin{proof}
Suppose that $\rho_{E,M}$ and $\rho_{E',M}$ are conjugate Galois representations. This means that there is a matrix $B\in \GL(2,\Z/M\Z)$ such that $B\cdot \rho_{E,M}(\sigma)\cdot B^{-1} = \rho_{E',M}(\sigma)$, for all $\sigma\in G_\Q$. Then,
\begin{align*} B\cdot (\chi \otimes \rho_{E,M})(\sigma) \cdot B^{-1} &= B\cdot ((\chi(\sigma)\cdot \operatorname{Id})\cdot \rho_{E,M}(\sigma)) \cdot B^{-1}\\
&= (\chi(\sigma)\cdot \operatorname{Id})\cdot B\cdot \rho_{E,M}(\sigma) \cdot B^{-1}\\
&= (\chi(\sigma)\cdot \operatorname{Id})\cdot \rho_{E',M}(\sigma) = (\chi \otimes \rho_{E',M})(\sigma).
\end{align*}
Therefore, $\chi \otimes \rho_{E,M}$ and $\chi \otimes \rho_{E',M}$ are conjugate Galois representations as well, as claimed.
\end{proof}

\subsection{The cases of \texorpdfstring{$j=0,1728$}{j=0,1728}.}

Now, let $E/\Q$ be an elliptic curve with $j=1728$ or $0$ and therefore with CM by $\mathcal{O}=\OK$, the maximal order of $K=\Q(i)$ or $\Q(\sqrt{-3})$, and $n=4$ or $6$, respectively. Let $d$ be an $n$-th power free integers and let $\chi\colon G_\Q\to \mu_n$ be the cocycle defined by $\chi(\sigma)=\sigma(\sqrt[n]{d})/\sqrt[n]{d}$ (notice that $\chi\colon G_\Q \to \mu_d$ is only a cocycle, but when restricted to $G_K$ it becomes a character). Then, 
$$\widehat{\mathcal{O}},\widehat{\mathcal{O}^\chi}, T(E), T(E^\chi)$$
are $\widehat{\mathcal{O}}[G_K]$-modules as in the beginning of Section \ref{sec-complexconj}. In addition, we remind the reader that $\Aut(E)\cong \mu_n$ in this case, and $\mu_n\subseteq \mathcal{O}$. After choosing compatible adelic bases of $T(E)$ and $T(E^\chi)$ so that $\Aut(T(E))\cong \Aut(T(E^\chi))\cong \GL(2,\widehat{\Z})$, we can also write $\chi(\sigma)\in \Aut(T(E))$ in terms of the same basis, so that we can consider $\chi\colon G_K \to \GL(2,\widehat{\Z})$ (note that before, when $n=2$, we could have done similarly and $\chi(\sigma) = \pm \operatorname{Id}\in \GL(2,\widehat{\Z})$). 

Moreover, there is an isomorphism between $E$ and $E^\chi$ (both given in short Weierstrass form) given by 
$$(x_0,y_0) \mapsto \left(\frac{x_0}{d^{2/n}},\frac{y_0}{d^{3/n}}\right),$$
and if $\zeta\in \mu_n$, then $[\zeta]\in \Aut(E)$ is given by
$$(x_0,y_0) \mapsto \left(\frac{x_0}{\zeta^{2}},\frac{y_0}{\zeta^{3}}\right),$$
and therefore, it follows that 
\begin{align*}
    \sigma(\phi(R)) &=\sigma\left(\left(\frac{x_0}{d^{2/n}},\frac{y_0}{d^{3/n}}\right)\right)\\
    &= \left(\frac{\sigma(x_0)}{\chi(\sigma)^2d^{2/n}},\frac{\sigma(y_0)}{\chi(\sigma)^3d^{3/n}}\right)\\
    &=[\chi(\sigma)](\phi(\sigma(R))),
\end{align*}
for any $R\in T(E)$ and any $\sigma\in G_K$. Hence, we obtain a result that is analogous to Prop. \ref{cpx_conj_prop} and \ref{cpx_conj_prop2} which we record below.

\begin{prop}
\label{cpx_conj_lemma3}\label{cpx_conj_prop3}
Let $E/\Q$ be an elliptic curve with CM by $K=\Q(i)$ or $\Q(\sqrt{-3})$, and let $E^{\chi}/\Q$ be a quadratic twist of $E$ by a cocycle $\chi$ defined as above. Let $\rho_{E} \colon G_K \to \Aut(T(E))$, and $\rho_{E^\chi}$, be the adelic Galois representations attached to $E/K$ and $E^\chi/K$, respectively, and let $\chi\colon G_K \to \Aut(\widehat{\mathcal{O}})$ be the adelic character given by $\chi(\sigma)(n) = (\sigma(\sqrt[n]{d})/\sqrt[n]{d})\cdot n$, for any $n\in \widehat{\mathcal{O}}$. Then, 
\begin{enumerate}
    \item there is an isomorphism of adelic Galois representations:
\[ \rho_{E^{\chi}} \cong \chi \otimes_{\widehat{\mathcal{O}}} \rho_{E}. \]
    \item with a compatible choice of adelic bases for $T(E)$ and $T(E^\chi)$, it follows that 
    $$\rho_{E^\chi}(\sigma) = \chi(\sigma)\cdot \rho_{E}(\sigma),$$
as matrices in $\GL(2,\widehat{\Z})$, for any $\sigma \in G_K$.
\end{enumerate}
\end{prop}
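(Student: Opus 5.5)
The plan is to imitate the proofs of Propositions \ref{cpx_conj_prop} and \ref{cpx_conj_prop2}, replacing the quadratic character by the $\mu_n$-valued character $\chi$ restricted to $G_K$ and $\widehat{\Z}$ by $\widehat{\mathcal{O}}=\mathcal{O}\otimes\widehat{\Z}$. The key point making everything work over $G_K$ is that $\mu_n\subseteq \mathcal{O}=\End(E)$ and, for $\sigma\in G_K$, the automorphisms $[\zeta]$ with $\zeta\in\mu_n$ are defined over $K$, so they commute with the Galois action. Hence $\chi|_{G_K}$ is a genuine homomorphism: the cocycle identity $\chi(\sigma\tau)=\chi(\sigma)\,\sigma(\chi(\tau))$ collapses to multiplicativity because $\sigma$ fixes $\mu_n\subseteq K$.

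First I will fix the isomorphism $\phi\colon E\to E^\chi$, $(x_0,y_0)\mapsto (x_0/d^{2/n},y_0/d^{3/n})$, defined over $K(\sqrt[n]{d})$. I will then use the displayed computation preceding the statement, $\sigma(\phi(R))=[\chi(\sigma)](\phi(\sigma(R)))$ for all $R$ in every $E[N]$ and $\sigma\in G_K$. Passing to the inverse limit gives the same identity on $T(E)$. This requires checking that the formula is compatible with the group law, which holds because $\phi$ and $[\zeta]$ are isomorphisms of elliptic curves, and that it is compatible with the transition maps $[N/M]$, which commute with $\phi$ and $[\zeta]$.

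For part (1), I will define a map $\widehat{\mathcal{O}}^\chi\otimes_{\widehat{\mathcal{O}}}T(E)\to T(E^\chi)$ by $\alpha\otimes R\mapsto [\alpha]\phi(R)$, where $T(E)$ and $T(E^\chi)$ are $\widehat{\mathcal{O}}$-modules via CM; the CM actions commute with $\phi$ since $\phi$ is an isomorphism of CM curves and $\End$ is commutative. This map is an $\widehat{\mathcal{O}}$-module isomorphism because $\widehat{\mathcal{O}}^\chi$ is free of rank one and $\phi$ is bijective. It is $G_K$-equivariant: for $\sigma$ acting diagonally, we have
$$\sigma\cdot(\alpha\otimes R)=\chi(\sigma)\alpha\otimes\sigma R\mapsto [\chi(\sigma)\alpha]\phi(\sigma R)=[\alpha]\sigma(\phi(R)),$$
where the last equality uses the identity above and the fact that $\sigma$ commutes with $[\alpha]$ for $\sigma\in G_K$.

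For part (2), I will choose the basis $\{\phi(P),\phi(Q)\}$ of $T(E^\chi)$ as the image of the basis $\{P,Q\}$ of $T(E)$, and write $\chi(\sigma)$ for the matrix of $[\chi(\sigma)]$ acting on $T(E)$ in the basis $\{P,Q\}$. Applying the identity to $R=P$ and $R=Q$, and using that $[\chi(\sigma)]$ commutes with $\phi$, so that its matrix with respect to $\{\phi(P),\phi(Q)\}$ is the same, gives $\rho_{E^\chi}(\sigma)=\chi(\sigma)\rho_E(\sigma)$. This matrix product does not depend on the order of the factors, since both matrices lie in the commutative Cartan subgroup.

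The only delicate step is the Galois-equivariance of the CM action: the identity $\sigma\circ[\alpha]=[\alpha]\circ\sigma$ holds only for $\sigma\in G_K$. This is exactly why the statement is restricted to $G_K$, and I will make that restriction explicit where it is used.
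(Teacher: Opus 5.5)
Your proposal is correct and follows the paper's route. You use the same isomorphism $\phi$ and the same key identity $\sigma(\phi(R))=[\chi(\sigma)](\phi(\sigma(R)))$ for $\sigma\in G_K$, deduce parts (1) and (2) as in the quadratic case, and supply details the paper leaves implicit: $\chi|_{G_K}$ is a homomorphism because $\mu_n\subseteq K$, and the CM action commutes with $G_K$. One small fix: $\phi\circ[\alpha]=[\alpha]\circ\phi$ does not follow from commutativity of $\End$ alone, but it does follow from the explicit formula $[\zeta](x,y)=(x/\zeta^2,y/\zeta^3)$ on both curves together with $\mathcal{O}=\Z[\zeta]$ (or from the normalization via invariant differentials).
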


% {\bf ALR: Leaving the old lemma below here for now for reference, just so you can compare with the new result above. Then, let us delete it and use the one above.}
% \begin{lemma}
%     Let $E/\Q$ be an elliptic curve with CM by $\Of$, let $M \geq 2$ be a positive integer, let $N \in \Z$ be a non-zero, square-free integer, and let $c \in G_{\Q}$ be a complex conjugation. If $E^N$ is the quadratic twist of $E$ by $N$, then 
%     \[ \rho_{E^N, M}(c) = \chi_{N}(c) \cdot \rho_{E, M}(c). \]
% \end{lemma}

\subsection{Elements not in the Cartan}

In order to compute a mod-$M$ image, we will need to find both the elements of the image in the Cartan subgroup (see next section), and one image element that is not in the Cartan. The next result provides a way to construct such non-Cartan elements.

\begin{prop}\label{prop-reconstructimage}
    Let $E/\Q$ be an  elliptic curve with CM by $\Of$, with $j(E)\neq 0,1728$, and let $N\geq 1$ be a level of definition for $G_E$. Suppose an adelic basis of $T(E)$ has been chosen, as in Thm. \ref{thm-cmrep-intro-alvaro}, so that $G_E=\rho_E(G_\Q)\subseteq \mathcal{N}_{\delta, \phi}$, and let $G_{E/K}=\rho_E(G_K)$. Then:
    \begin{enumerate}
        \item $G_{E/K}=\rho_E(G_K)$ is a subgroup of $\mathcal{C}_{\delta, \phi}$. Moreover, $G_{E/K}=G_E\cap \mathcal{C}_{\delta, \phi}$.
        \item Let $c\in G_\Q$ be an element that does not fix $K/\Q$, and let $C=\rho_E(c)\in \GL(2,\widehat{\Z}).$ Then, $C$ belongs to $\mathcal{N}_{\delta, \phi}\setminus \mathcal{C}_{\delta, \phi}$, and $G_E=\langle C, G_{E/K} \rangle$.
        \item Let $G_{E,N}=\pi_{N}(G_E)$, let $G_{E/K,N}=\pi_{N}(G_K)$ and let $C_N \in G_{E,N}\setminus G_{E/K,N}$. Then, $G_{E,N}=\langle C_N, G_{E/K,N}\rangle$ and $G_E=\pi^{-1}_{N}(\langle C_N, G_{E/K,N}\rangle)$.
    \end{enumerate}
\end{prop}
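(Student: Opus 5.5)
For part (1), I will use Theorem \ref{thm-cmrep-intro-alvaro}(2) in its adelic form. The Galois action of $G_K$ commutes with the CM endomorphisms, since $E$ has CM defined over $K(j_{K,f})=K$ when $j(E)\in\Q$. Hence $T(E)$ is a $G_K$-module on which $G_K$ acts through $(\Of\otimes\widehat{\Z})^\times$. In the chosen basis these automorphisms are exactly the matrices of $\mathcal{C}_{\delta,\phi}$, so $G_{E/K}\subseteq\mathcal{C}_{\delta,\phi}$.

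For the equality $G_{E/K}=G_E\cap\mathcal{C}_{\delta,\phi}$, I need the converse inclusion. Take $\sigma\in G_\Q\setminus G_K$. It acts on $\End(E)\otimes\widehat{\Z}$ by complex conjugation, so $\rho_E(\sigma)$ semi-linearly conjugates the Cartan action. An element of $\mathcal{C}_{\delta,\phi}$ commutes with all of $\mathcal{C}_{\delta,\phi}$. But $\rho_E(\sigma)$ does not commute with, say, $c_{\delta,\phi}(0,1)$: it conjugates it to the matrix of the conjugate endomorphism, which is different. Hence $\rho_E(\sigma)\notin\mathcal{C}_{\delta,\phi}$, giving $G_E\cap\mathcal{C}_{\delta,\phi}=\rho_E(G_K)$. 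Alternatively, I can take this inclusion directly from Theorem \ref{thm-cmrep-intro-alvaro}(2) at each finite level $N\ge 3$ and pass to the inverse limit, since both sides are closed.

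Part (2) then follows directly. Since $c\notin G_K$, part (1) gives $C\in\mathcal{N}_{\delta,\phi}\setminus\mathcal{C}_{\delta,\phi}$. Because $[G_\Q:G_K]=2$, we have $G_\Q=G_K\cup cG_K$, and applying $\rho_E$ gives $G_E=G_{E/K}\cup C\,G_{E/K}=\langle C,G_{E/K}\rangle$.

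For part (3), I read $G_{E/K,N}$ as $\pi_N(G_{E/K})$. Then $[G_{E,N}:G_{E/K,N}]\le 2$, since $G_{E,N}=\pi_N(G_{E/K})\cup\pi_N(C)\pi_N(G_{E/K})$. If some $C_N\in G_{E,N}\setminus G_{E/K,N}$ exists, the index is exactly $2$, and $C_N$ lies in the nontrivial coset; hence $G_{E,N}=\langle C_N,G_{E/K,N}\rangle$. The last claim is then just the definition of level of definition: $G_E=\pi_N^{-1}(\pi_N(G_E))=\pi_N^{-1}(G_{E,N})$. The only subtle point is the assertion $G_{E/K}=G_E\cap\mathcal{C}_{\delta,\phi}$ in (1), and I expect the main obstacle to be making the commutation argument rigorous adelically. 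One caveat: for small $N$ (e.g.\ $N\le 2$) the quotient $G_{E,N}/G_{E/K,N}$ may be trivial, but the statement presupposes $C_N$ exists, so nothing more is needed.
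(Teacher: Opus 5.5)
Your argument is correct, but for (2) and (3) it takes a more elementary route than the paper.

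\textbf{Part (1).} The paper uses exactly your fallback: it takes the finite-level identity $G_{E/K,M}=G_{E,M}\cap\mathcal{C}_{\delta,\phi}(M)$ from Theorem \ref{thm-cmrep-intro-alvaro}(2) and passes to the limit.

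\textbf{Parts (2) and (3).} Here the paper's main argument is an index count. It combines $[\mathcal{N}_{\delta,\phi}:G_E]=2$ (Theorem \ref{thm-adelic-indexQ}), $[\mathcal{C}_{\delta,\phi}:G_{E/K}]=2$ (Proposition \ref{prop-sameindex}) and $[\mathcal{N}_{\delta,\phi}:\mathcal{C}_{\delta,\phi}]=2$ to get $[G_E:G_{E/K}]=2$. The same count modulo a level of definition $N$ gives $[G_{E,N}:G_{E/K,N}]=2$. The decomposition $G_\Q=G_K\cup cG_K$ appears there only as a parenthetical alternative for (2). Pushing that decomposition forward along $\pi_N$, as you do, needs none of these index results and makes (3) formal once $C_N$ is given.

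What the index count buys is that $G_{E/K,N}\neq G_{E,N}$ at every level of definition, so a $C_N$ always exists; the algorithm relies on this. On your route, existence follows for $N\geq 3$ from $K\subseteq\Q(E[N])$ (Theorem \ref{cyc-division-thm}(3)). Indeed, $\pi_N(C)=\rho_{E,N}(c)\in\rho_{E,N}(G_K)$ would force $c\in G_K\cdot\ker\rho_{E,N}=G_K$.

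\textbf{What your route buys.} Your commutation argument in (1) shows directly that $\rho_E(\sigma)\notin\mathcal{C}_{\delta,\phi}$ for every $\sigma\notin G_K$. That is precisely the first claim of (2). The paper (and your inverse-limit alternative) instead deduces $C\notin\mathcal{C}_{\delta,\phi}$ from $G_{E/K}=G_E\cap\mathcal{C}_{\delta,\phi}$. That deduction tacitly uses $\ker\rho_E\subseteq G_K$, i.e.\ $K\subseteq\Q(E_{\mathrm{tors}})$.

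\textbf{Two small points on the commutation argument.}
\begin{enumerate}
\item $c_{\delta,\phi}(0,1)$ need not be invertible, so it need not lie in $\mathcal{C}_{\delta,\phi}$. This does not matter: every $c_{\delta,\phi}(a,b)=a\operatorname{Id}+b\,c_{\delta,\phi}(0,1)$ commutes with it, which is all you use.
\item The fact that $c_{\delta,\phi}(0,1)$ is the matrix of a CM endomorphism comes from how the basis in Theorem \ref{thm-cmrep-intro-alvaro} is constructed, not from the statement of that theorem.
\end{enumerate}
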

\begin{proof}
    Let $E$ and $N$, and a choice of adelic basis of $T(E)$, be as in the statement. Then Theorem \ref{thm-cmrep-intro-alvaro}, part (2), shows that $G_{E/K,M}=G_{E,M}\cap \mathcal{C}_{\delta, \phi}(M)$ for any $M\geq 1$, and therefore the equality also holds adelically, i.e., $G_{E/K}=G_E\cap \mathcal{C}_{\delta, \phi}$. In particular, $G_{E/K}=\rho_E(G_K)$ is a subgroup of $\mathcal{C}_{\delta, \phi}$. This shows (1).

    Now, let $c\in G_\Q$ be an element that does not fix $K/\Q$, and let $C=\rho_E(c)\in \GL(2,\widehat{\Z}).$ Since $c$ does not fix $K$, it follows that $c\not\in G_{K}$, and since $G_{E/K}=G_E\cap \mathcal{C}_{\delta, \phi}$, it follows that $C$ is not in $\mathcal{C}_{\delta, \phi}$. Hence, $C\in \mathcal{N}_{\delta, \phi} \setminus \mathcal{C}_{\delta, \phi}$, as claimed.

    Moreover, since $j(E)\neq 0,1728$, we have $[\mathcal{N}_{\delta, \phi} : G_E]=2$ by Theorem \ref{thm-adelic-indexQ} and, by Prop, \ref{prop-sameindex},
    $$2=[\mathcal{N}_{\delta, \phi}:G_{E}] = [\mathcal{C}_{\delta, \phi}:G_{E}\cap \mathcal{C}_{\delta, \phi}]=[\mathcal{C}_{\delta, \phi}:G_{E/K}].$$
    Since we also have $[\mathcal{N}_{\delta, \phi}:\mathcal{C}_{\delta, \phi}]=2$ by \cite[Thm. 1.1(1)]{lozano-galoiscm}, it follows that $[G_E:G_{E/K}]=2$ as well (alternatively, $[G_\Q:G_K]=2$, so $[G_E:G_{E/K}]=1$ or $2$, but $G_E\neq G_{E,K}$ because $G_E$ is not contained in the Cartan subgroup by \cite[Thm. 6.7]{lozano-galoiscm}). And since $C\in G_E \setminus G_{E/K}$, we conclude that $G_E=\langle C, G_{E/K} \rangle$. This shows (2).

    Finally, let $N\geq 1$ be a level of definition for $G_E$. It follows that $2=[\mathcal{N}_{\delta, \phi}:G_{E}]=[\mathcal{N}_{\delta, \phi}(N):G_{E,N}]$, and replacing adelic groups by groups modulo $N$, the argument we used to show (2) also proves that $G_{E,N}=\langle C_N, G_{E/K,N}\rangle$. Thus, since $N$ is a level of definition, we have
    $$G_E=\pi^{-1}_N(G_{E,N})=\pi^{-1}_{N}(\langle C_N, G_{E/K,N}\rangle),$$
    as desired. This concludes the proof of (3).
\end{proof}

\section{Finding the Image of Cartan}\label{sec-imageofcartan}

Now that we are able to determine the image of complex conjugation, we can turn to understanding the image of Cartan at a level where the adelic image is defined. To do this, we need the following group theoretic result.

\begin{prop}\label{prop-grouptheoryentanglement}
    For each $i=1,2$, let $G_i$ be an abelian group, let $H_i\subseteq G_i$ be a subgroup of index $2$, let $G=G_1\times G_2$, and let $\pi_i\colon G\to G_i$ be the natural projection to the $i$-th coordinate. Let $g_i\in G_i$ be elements such that $G_i=\langle g_i, H_i\rangle$. Then, the subgroup $\cC=\langle (g_1,g_2),H_1\times H_2\rangle$ is the unique subgroup of $G$ with the following properties:
    \begin{enumerate}
        \item $H_1\times H_2 \subseteq \cC$,
        \item $\cC$ is of index $2$ in $G$,
        \item $\pi_i(\cC)=G_i$ for $i=1,2$, and
        \item $\cC\cap (H_1\times G_2) = \cC\cap (G_1\times H_2)$ is of index $4$ in $G$.
    \end{enumerate}
\end{prop}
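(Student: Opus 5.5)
The plan is to pass to the quotient by $H_1\times H_2$, where everything becomes a statement about subgroups of the Klein four-group.

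\textbf{Setting up the quotient.} Since $G_1,G_2$ are abelian, all subgroups involved are normal. Since $[G_i:H_i]=2$, the subgroup $H_1\times H_2$ has index $4$ in $G$. The quotient
$$Q=G/(H_1\times H_2)\cong G_1/H_1\times G_2/H_2\cong \Z/2\Z\times\Z/2\Z.$$
Under this identification the class of $g_i$ generates $G_i/H_i$, because $G_i=\langle g_i,H_i\rangle$. So $(g_1,g_2)$ maps to the diagonal element $(1,1)$. By the correspondence theorem, subgroups of $G$ containing $H_1\times H_2$ are in bijection with subgroups of $Q$, and indices are preserved.

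\textbf{Existence: $\cC$ has properties (1)--(4).}
\begin{enumerate}
\item[(1)] This holds by the definition of $\cC$.
\item[(2)] The image of $\cC$ in $Q$ is $\{0,(1,1)\}$, which has order $2$, hence index $2$ in $Q$. So $[G:\cC]=2$.
\item[(3)] $\pi_i(\cC)$ contains $\pi_i(H_1\times H_2)=H_i$ and $\pi_i((g_1,g_2))=g_i$. Hence $\pi_i(\cC)=\langle g_i,H_i\rangle=G_i$.
\item[(4)] The subgroups $H_1\times G_2$ and $G_1\times H_2$ correspond to $\{0\}\times\Z/2\Z$ and $\Z/2\Z\times\{0\}$ in $Q$. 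Each meets the diagonal subgroup $\{0,(1,1)\}$ only in $0$. So both intersections $\cC\cap(H_1\times G_2)$ and $\cC\cap(G_1\times H_2)$ equal $H_1\times H_2$. This subgroup has index $4$ in $G$.
\end{enumerate}

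\textbf{Uniqueness.} Let $\cC'$ be any subgroup satisfying (1)--(3). By (1), $\cC'$ corresponds to a subgroup of $Q$, and by (2) that subgroup has order $2$. The Klein four-group has exactly three subgroups of order $2$. They correspond to $H_1\times G_2$, to $G_1\times H_2$, and to $\cC$. The first has $\pi_1(H_1\times G_2)=H_1\neq G_1$, and the second has $\pi_2(G_1\times H_2)=H_2\neq G_2$; both violate (3). Hence $\cC'=\cC$.

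This argument shows that (4) is a consequence of (1)--(3) rather than an extra constraint needed for uniqueness. There is no real obstacle. The only point requiring care is justifying the identification $Q\cong(\Z/2\Z)^2$ with $(g_1,g_2)\mapsto(1,1)$, and this is immediate from $G_i/H_i=\langle \bar g_i\rangle$ being of order $2$.
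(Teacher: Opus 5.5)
Your proof is correct and follows the same route as the paper. You pass to $G/(H_1\times H_2)\cong(\Z/2\Z)^2$ via the correspondence theorem, list the three index-$2$ subgroups containing $H_1\times H_2$, and rule out $H_1\times G_2$ and $G_1\times H_2$. Your remark that condition (3) alone already forces uniqueness, making (4) a consequence of (1)--(3), is a correct minor sharpening; the paper invokes (3) and (4) together at that step.
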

\begin{proof}
    Let $G$, $\cC$, $G_i$, $H_i$, $g_i$, and $\pi_i$ be as in the statement of the proposition. We shall prove that $\cC$ verifies conditions (1), (2), (3), and (4), and $\cC$ is the unique such subgroup satisfying all four conditions.

    First note that $H_1\times H_2$ is of index $4$ in $G=G_1\times G_2$, and in fact $G/(H_1\times H_2)\cong \Z/2\Z \times \Z/2\Z$. By the fourth (or lattice) isomorphism theorem, there are precisely $5$ subgroups of $G$ containing $H_1\times H_2$, namely:
$$H_1\times H_2, \langle (g_1,e_2),H_1\times H_2\rangle, \langle (e_1,g_2),H_1\times H_2\rangle, \langle (g_1,g_2),H_1\times H_2\rangle, G.$$
Of these, $G$ itself is of index $1$, $H_1\times H_2$ is of index $4$, and the other $3$ are of index $2$. Since $\pi_i((g_1,g_2))=g_i$, and $H_1\times H_2\subseteq \cC$, it follows that $\pi_i(\cC)=G_i$. Finally,
$$\cC\cap (H_1\times G_2) = \cC\cap (G_1\times H_2)=H_1\times H_2,$$
and so $\cC$ as defined in the statement satisfies (1), (2), (3), and (4).

Now let $\cC'$ be another subgroup of $G$ that satisfies (1), (2), (3), and (4). By (1), we have $H_1\times H_2 \subseteq \cC'$. By (2) and our arguments above, we have 
$$\cC' = \langle (g_1,e_2),H_1\times H_2\rangle,\ \langle (e_1,g_2),H_1\times H_2\rangle, \text{ or } \langle (g_1,g_2),H_1\times H_2\rangle$$
but of these three possibilities there is only one subgroup that satisfies conditions (3) and (4), namely $\langle (g_1,g_2),H_1\times H_2\rangle$.
\end{proof}

In the following result we verify that the Chinese remainder theorem ``commutes'' with the connecting projection maps among Cartan subgroups.

\begin{prop}\label{prop-CRTcompatible}
    Let $M,N$ be relatively prime integers, and let $\delta,\phi$ be fixed. Then, the isomorphism
    $$\phi\colon \mathcal{C}_{\delta, \phi}(M)\times \mathcal{C}_{\delta, \phi}(N) \to \mathcal{C}_{\delta, \phi}(MN)$$
    given by the Chinese remainder theorem preserves our choices of bases. In other words, 
    $$\phi((\pi_{M}(g),\pi_N(g)))=\pi_{MN}(g)$$
    for any $g\in \mathcal{C}_{\delta, \phi}$.
\end{prop}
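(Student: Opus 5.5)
The plan is to reduce the statement to the explicit parametrization of the Cartan groups by pairs $(a,b)$, and to the uniqueness part of the Chinese remainder theorem already established in the proof of Prop.~\ref{prop-normalizerCRT}. First I would make the map $\phi$ explicit. Given $g_M=c_{\delta,\phi}(a_M,b_M)\in\mathcal{C}_{\delta,\phi}(M)$ and $g_N=c_{\delta,\phi}(a_N,b_N)\in\mathcal{C}_{\delta,\phi}(N)$, Prop.~\ref{prop-normalizerCRT} with $d=1$ and $D=MN$ produces the unique element
\[
g_{MN}=c_{\delta,\phi}\bigl(x_0(a_M,a_N),x_0(b_M,b_N)\bigr)\in\mathcal{C}_{\delta,\phi}(MN)
\]
reducing to $g_M$ modulo $M$ and to $g_N$ modulo $N$. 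The isomorphism of Cor.~\ref{cor-galoistheoryofcompositum}(1) is therefore $\phi(g_M,g_N)=g_{MN}$, and equivalently $\phi^{-1}=(\pi_{MN,M},\pi_{MN,N})$. I would note in passing that this map is a bijective homomorphism: reduction maps are homomorphisms, injectivity is CRT uniqueness, and surjectivity is Prop.~\ref{prop-normalizerCRT}.

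Next, take $g\in\mathcal{C}_{\delta,\phi}$, so $g=c_{\delta,\phi}(a,b)$ with $a,b\in\widehat{\Z}$. All the projections $\pi_S$ are entrywise reduction modulo $S$ with the same fixed constants $\delta,\phi$. Hence
\[
\pi_S(g)=c_{\delta,\phi}(a\bmod S,\,b\bmod S)\quad\text{for each } S,
\]
and by compatibility of reductions, $\pi_{MN,M}\circ\pi_{MN}=\pi_M$ and $\pi_{MN,N}\circ\pi_{MN}=\pi_N$. Thus $\pi_{MN}(g)$ is an element of $\mathcal{C}_{\delta,\phi}(MN)$ reducing to $\pi_M(g)$ and $\pi_N(g)$ modulo $M$ and $N$. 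By the uniqueness clause of Prop.~\ref{prop-normalizerCRT}, which rests on uniqueness of solutions of the system (\ref{eq-1}) entry by entry, it must equal $\phi((\pi_M(g),\pi_N(g)))$.

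There is no real obstacle. The only point needing care is to make sure that ``preserves our choices of bases'' amounts exactly to the fact that the compatible $\widehat{\Z}$-basis yields the same matrix shape $c_{\delta,\phi}(\cdot,\cdot)$ at every level, with $\delta,\phi$ independent of the level. This holds by the construction in Section~\ref{sec-notation}, where $\mathcal{C}_{\delta,\phi}$ is defined as the inverse limit of the $\mathcal{C}_{\delta,\phi}(S)$. Given that, the argument is the uniqueness step above.
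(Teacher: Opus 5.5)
Your proof is correct and follows essentially the same route as the paper. Compatibility of the reduction maps shows that $\pi_{MN}(g)$ reduces to $\pi_M(g)$ modulo $M$ and to $\pi_N(g)$ modulo $N$, and uniqueness in the Chinese remainder theorem then forces $\pi_{MN}(g)=\phi((\pi_M(g),\pi_N(g)))$. Your explicit parametrization $g=c_{\delta,\phi}(a,b)$ and your remarks on bijectivity are harmless, but the argument does not need them.
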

\begin{proof}
    Let $g\in \mathcal{C}_{\delta, \phi}$ be an adelic element of the Cartan subgroup, and consider $g_{MN}=\pi_{MN}(g)$, $g_M=\pi_M(g)$, and $g_N=\pi_N(g)$. Since our choices of bases are compatible under reduction maps, it follows that 
    $$\pi_M(\pi_{MN}(g)) = \pi_M(g) \ \text{ and } \ \pi_N(\pi_{MN}(g)) = \pi_N(g),$$
    i.e.,
    $$\pi_{MN,M}(g_{MN})=g_M \ \text{ and } \ \pi_{MN,N}(g_{MN})=g_N.$$
    Now, by definition of the Chinese remainder isomorphism, $\phi((g_M,g_N))$ is the unique matrix $h$ modulo $MN$ such that $h\equiv g_M \bmod M$ and $h\equiv g_N \bmod N$. Since $g_{MN}$ satisfies these properties, and by the uniqueness of $h$, we conclude that $h=g_{MN}$, as desired.
\end{proof}

\begin{figure}
    \centering 
\begin{tikzcd}
&  & {\mathbb{Q}(j_{K,f},E[\ell^nM])} \arrow[rrd, no head] \arrow[dd, "H", no head] \arrow[ddd, "\mathcal{C}", no head, bend right] & & \\
{\mathbb{Q}(j_{K,f},E[\ell^n])} \arrow[rru, no head] \arrow[rrd, "H_{\ell^n}", no head] \arrow[rrdd, "\mathcal{C}({\ell^n})"', no head] \arrow[rrddd, "\mathcal{N}({\ell^n})"', no head, bend right] &  &  
&  & {\mathbb{Q}(j_{K,f},E[M])} \\
&  & F \arrow[d, no head] \arrow[rru, "H_M", no head]  &  &  \\
&  & {K(j_{K,f})} \arrow[rruu, "\mathcal{C}(M)"', no head] \arrow[d, no head] &  & \\
&  & {\mathbb{Q}(j_{K,f})} \arrow[d, no head] \arrow[rruuu, "\mathcal{N}(M)"', no head, bend right] &  &  \\
&  & \mathbb{Q} &  &                           
\end{tikzcd}
\caption{Field diagram of division fields from Theorem \ref{thm-cartan-image}}
\label{fig-fielddiagram}
\end{figure}
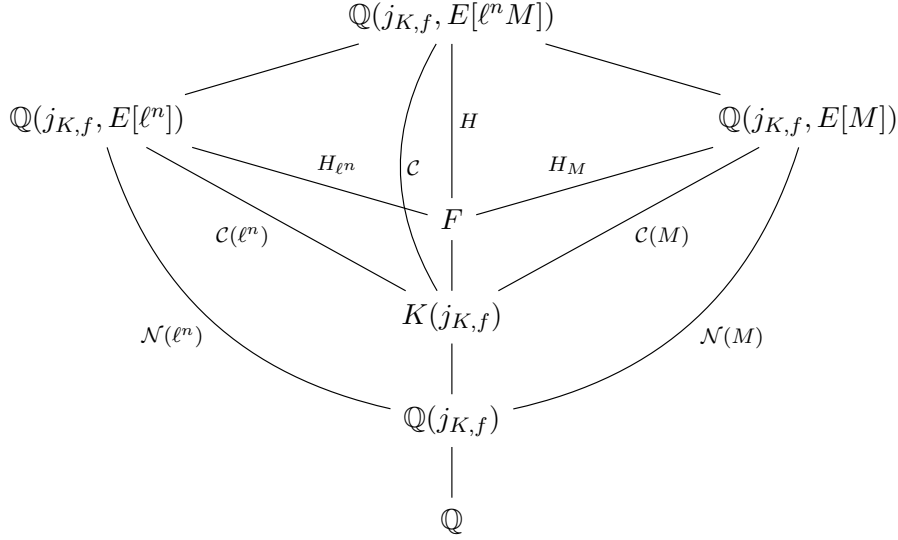

% \begin{lemma}
%     Let $E/\Q$ be an elliptic curve with CM by $\Of$ and $j(E) \neq 0, 1728$. Let $M, N > 2$ be relatively prime integers, let $L := MN$, and let $\rho_{E/K, L} \colon G_{K} \to \GL(2, \Z/L\Z)$ be the mod-$L$ Galois representation attached to $E/K$. 
% \end{lemma}

\begin{theorem}\label{thm-cartan-image} Let $E/\Q$ be an elliptic curve with CM by $\Of$ and $j(E) \neq 0, 1728$. Let $\ell$ be the unique prime dividing $\Delta_K$, and suppose the following:
\begin{enumerate}
    \item There is a non-zero square-free integer $N$ such that $\gcd(\ell,N^\dagger)=1$, and $K(\sqrt{N}) = \Q(E[\ell^n]) \cap \Q(E[N^\dagger])$, for some $n \geq 1$. 
    \item For $A\in \{ \ell^n, N^\dagger\}$ we have $\Gal(\Q(E[A]))/K)\cong \cC_{\delta,\phi}(A)$.
    \item For $A\in \{\ell^n,N^\dagger\}$, let $H_{A} \subseteq \mathcal{C}_{\delta, \phi}(A)$ with $H_{A} \cong \Gal(\Q(E[A])/K(\sqrt{N}))$, and let $g_{A}$ be a fixed element in $\mathcal{C}_{\delta, \phi}(A) \setminus H_{A}$. 
\end{enumerate}
Then, under the isomorphism between $\mathcal{C}_{\delta, \phi}(\ell^n N^\dagger)$ and $\mathcal{C}_{\delta, \phi}(\ell^n) \times \mathcal{C}_{\delta, \phi}(N^\dagger)$ induced by the Chinese remainder theorem, we have $G_{E/K, \ell^n N^\dagger} = \langle (g_{\ell^n}, g_{N^\dagger}), H_{\ell^n} \times H_{N^\dagger} \rangle.$

\end{theorem}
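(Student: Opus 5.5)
Our plan is to apply Proposition \ref{prop-grouptheoryentanglement} with $G_1=\mathcal{C}_{\delta,\phi}(\ell^n)$, $G_2=\mathcal{C}_{\delta,\phi}(N^\dagger)$, $H_1=H_{\ell^n}$, $H_2=H_{N^\dagger}$, $g_1=g_{\ell^n}$, $g_2=g_{N^\dagger}$. We identify $G_{E/K,\ell^nN^\dagger}$ with a subgroup $\cC'$ of $G_1\times G_2$ via Proposition \ref{prop-CRTcompatible} and Corollary \ref{cor-galoistheoryofcompositum}(1),(2). Then we verify that $\cC'$ satisfies conditions (1)--(4) of that proposition. Uniqueness then forces $\cC'=\langle (g_1,g_2),H_1\times H_2\rangle$. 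The Cartan groups are abelian, so the proposition applies. Since $K(\sqrt{N})/K$ is quadratic (as noted in the proof of Theorem \ref{thm-entanglement}), each $H_A$ has index $2$ in $\cC_{\delta,\phi}(A)$ by hypothesis (2), and $G_i=\langle g_i,H_i\rangle$.

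We would translate everything into Galois theory over $K$; note $K(j_{K,f})=K$ since $j(E)\in\Q$. Write $L_A=\Q(E[A])$ and $F=K(\sqrt{N})=L_{\ell^n}\cap L_{N^\dagger}$. Then $\Gal(L_{\ell^nN^\dagger}/K)$ embeds into $\Gal(L_{\ell^n}/K)\times\Gal(L_{N^\dagger}/K)$ as the fibre product over $\Gal(F/K)$:
$$\{(\sigma_1,\sigma_2): \sigma_1|_F=\sigma_2|_F\}.$$
Under $\rho$, this is exactly the image $\cC'=G_{E/K,\ell^nN^\dagger}$.

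The conditions are then checked in turn.
\begin{enumerate}
\item[(1)] $H_1\times H_2\subseteq\cC'$, since pairs that are trivial on $F$ agree on $F$.
\item[(2)] $[G:\cC']=[F:K]=2$, by Corollary \ref{cor-galoistheoryofcompositum}(3) and the computation in its proof.
\item[(3)] The projections $\pi_i(\cC')=G_i$ are surjective, since restriction $\Gal(L_{\ell^nN^\dagger}/K)\to\Gal(L_A/K)$ is surjective, combined with hypothesis (2).
\item[(4)] An element $(\sigma_1,\sigma_2)\in\cC'$ has $\sigma_1\in H_1$ iff $\sigma_1|_F=\mathrm{id}$, iff $\sigma_2|_F=\mathrm{id}$, iff $\sigma_2\in H_2$. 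Hence $\cC'\cap(H_1\times G_2)=\cC'\cap(G_1\times H_2)=H_1\times H_2$, which has index $4$.
\end{enumerate}

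The main obstacle is bookkeeping rather than mathematics: making sure the identification of $\Gal(L_A/K)$ with $\cC_{\delta,\phi}(A)$ is via $\rho_{E/K,A}$ in bases compatible with those at level $\ell^nN^\dagger$. This is what makes the CRT isomorphism of Proposition \ref{prop-CRTcompatible} carry $\rho_{E/K,\ell^nN^\dagger}(\sigma)$ to $(\rho_{E/K,\ell^n}(\sigma),\rho_{E/K,N^\dagger}(\sigma))$. With that, the $H_A$ are images of $\Gal(L_A/F)$, and the argument above goes through.
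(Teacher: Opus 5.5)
Your proposal is correct and follows essentially the same route as the paper: you identify $G_{E/K,\ell^n N^\dagger}$ inside $\mathcal{C}_{\delta,\phi}(\ell^n)\times\mathcal{C}_{\delta,\phi}(N^\dagger)$ via Proposition~\ref{prop-CRTcompatible}, check conditions (1)--(4) of Proposition~\ref{prop-grouptheoryentanglement} using the Galois theory of the compositum $\Q(E[\ell^n])\Q(E[N^\dagger])$ over $K$, and conclude by uniqueness. The only cosmetic difference is in condition (4): you read it off directly from the fibre-product description over $\Gal(K(\sqrt{N})/K)$, whereas the paper obtains it by noting that the other two index-$2$ subgroups containing $H_{\ell^n}\times H_{N^\dagger}$ do not surject onto both factors.
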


\begin{proof}

First, since $\ell$ is the unique prime dividing $\Delta_K$ and $\gcd(\ell,N^\dagger)=1$, the extension $K(\sqrt{N})/K$ is non-trivial quadratic, as in the proof of Theorem \ref{thm-entanglement}.

By assumption (2), we have $\Gal(\Q(E[\ell^n]))/K)\cong \cC_{\delta,\phi}(\ell^n)$, and  $\Gal(\Q(E[N^\dagger])/K) \cong \cC_{\delta,\phi}(N^\dagger)$. Moreover, the index of $\Gal(\Q(E[\ell^n N^\dagger])/K)$ in $\cC_{\delta,\phi}(\ell^n N^\dagger)$ is given precisely by the degree of the extension $\Q(E[\ell^n])\cap \Q(E[N^\dagger])/K$ (see Figure \ref{fig-fielddiagram}) which is $2$ by assumption (1). Hence, the Galois group $\Gal(\Q(E[\ell^n N^\dagger])/K)$ is a subgroup $\cC$ of index $2$ in $\cC_{\delta,\phi}(\ell^n N^\dagger)$. Moving forward, we use the notation $\cC(m) = \cC_{\delta, \phi}(m)$ for simplicity.

Since $\gcd(\ell,N^\dagger)=1$, we have an isomorphism $\cC(\ell^n N^\dagger)\cong \cC(\ell^n)\times \cC(N^\dagger)$ by the Chinese remainder theorem, so it follows that $\cC$ is a subgroup of index $2$ of $\cC(\ell^n)\times \cC(N^\dagger)$. Moreover, let $F := K(\sqrt{N})$ and consider $H = \Gal(\Q(E[\ell^n N^\dagger])/F)$ as a subgroup of $\cC \subseteq \cC(\ell^n)\times \cC(N^\dagger)$, and let $H_{\ell^n}$ and $H_{N^\dagger}$ be as in assumption (3). Notice that $H_{\ell^n}$, $H_{N^\dagger}$, and $H$ are index $2$ subgroups of $\cC(\ell^n)$, $\cC(N^\dagger)$, and $\cC$, respectively. By Galois theory and assumption (1) (see Figure \ref{fig-fielddiagram}), we must have $H_{\ell^n}\times H_{N^\dagger} = H \subseteq \cC$. Thus, $H_{\ell^n} \times H_{N^\dagger} \subseteq \cC(\ell^n)\times \cC(N^\dagger)\cong \cC(\ell^n N^\dagger)$ is a subgroup of index $4$. Moreover, notice that comparing indices inside $\cC(\ell^n N^\dagger)$, it follows that $[\cC: H_{\ell^n}\times H_{N^\dagger}]=2$. Thus:
\begin{enumerate}
    \item[(i)] $H_{\ell^n} \times H_{N^\dagger} \subseteq \cC$,
    \item[(ii)] $\cC$ is of index $2$ in $\cC(\ell^n N^\dagger)$, and
    \item[(iii)] $\pi_{\ell^n N^\dagger, \ell^n}(\cC)\cong \cC(\ell^n)$ and $\pi_{\ell^n N^\dagger, N^\dagger}(\cC)\cong \cC(N^\dagger)$ because these projections correspond to restrictions to the corresponding sub-division fields and, by assumption (2), $\Gal(\Q(E[\ell^n])/K)\cong \cC_{\delta,\phi}(\ell^n)$, and  $\Gal(\Q(E[N^\dagger])/K)\cong \cC_{\delta,\phi}(N^\dagger)$.
\end{enumerate}
As we have seen, we have $[\cC(\ell^n)\times \cC(N^\dagger):\cC]=2$ and $[\cC: H_{\ell^n}\times H_{N^\dagger}]=2$ but note that $\cC$ cannot be either of the index-$2$ subgroups $H_{\ell^n}\times \cC(N^\dagger)$ or  $ \cC(\ell^n) \times H_{N^\dagger}$ of $\cC(\ell^n N^\dagger)$ because neither one satisfies (iii) above. Thus we obtain:
\begin{enumerate} 
    \item[(iv)] $\cC \cap (H_{\ell^n}\times \cC(N^\dagger)) = \cC \cap (\cC(\ell^n) \times H_{N^\dagger})=H_{\ell^n}\times H_{N^\dagger}$ is of index $4$ in $\cC(\ell^n N^\dagger)$.
\end{enumerate}

Hence, we are in the situation described by Prop. \ref{prop-grouptheoryentanglement}. Let $g_{\ell^n}$ and $g_{N^\dagger}$ be as in the statement of the theorem, assumption (3). Then, $\cC \cong \langle (g_{\ell^n},g_{N^\dagger}),H_{\ell^n}\times H_{N^\dagger} \rangle$ by Prop. \ref{prop-grouptheoryentanglement}. We conclude with the observation that $\mathcal{C}$ is the unique subgroup with the properties above by Prop. \ref{prop-grouptheoryentanglement}, that the subgroup $G_{E/K, \ell^n N^\dagger} := \im \rho_{E/K, \ell^n N^\dagger}$ also satisfies properties (i)-(iv) as a subgroup of $\cC(\ell^n)\times \cC(N^\dagger)$, and that the isomorphism induced by the Chinese remainder theorem preserves the choice of basis such that $G_{E/K, \ell^n N^\dagger} \subseteq \mathcal{C}(\ell^n N^\dagger)$ by Prop. \ref{prop-CRTcompatible}. Thus, $G_{E/K, \ell^n N^\dagger} = \mathcal{C}$ as a subgroup of $\cC_{\delta, \phi}(\ell^n N^\dagger)$, as claimed.
\end{proof}

\section{Proof of the Main Result}\label{sec-proofofmaintheorem}

We now begin the proof of the main result of the paper, Theorem \ref{main-result}. First, we prove parts (1) and (2).

\begin{thm}\label{thm-main}
    Let $E/\Q$ be an elliptic curve with complex multiplication by an order $\Of$ of an imaginary quadratic field $K$, such that $j(E)\neq 0,1728$. Let $G_E$ be the image of $\rho_E\colon \GQ\to \GL(2,\widehat{\Z})$. Then,
    \begin{enumerate}
        \item There is a group $\mathcal{N}_{\delta,\phi}$ of $\GL(2,\widehat{\Z})$ such that $G_E\subseteq\mathcal{N}_{\delta,\phi}$ with index $[\mathcal{N}_{\delta,\phi}:G_E]=2$. 
        \item There is an explicitly computable integer $M=M_E\geq 2$ such that $G_E=\pi_M^{-1}(\pi_M(G_E))$, where $\pi_M\colon \mathcal{N}_{\delta,\phi}\to \mathcal{N}_{\delta,\phi}(\Z/M\Z)$. In other words, $G_E$ is defined modulo $M$.
    \end{enumerate}
\end{thm}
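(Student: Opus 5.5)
Part (1) is a direct assembly of earlier results. By Theorem \ref{thm-cmrep-intro-alvaro}(3) there is a compatible system of bases of the torsion of $E$ for which $G_E\subseteq \mathcal{N}_{\delta,\phi}$. Since $E$ is defined over $\Q$, we have $\Q(j_{K,f})=\Q$, and $j(E)\neq 0,1728$. Theorem \ref{thm-adelic-indexQ} then gives $[\mathcal{N}_{\delta,\phi}:G_E]=|\Of^\times|=2$.

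For part (2) we reduce to a twist of a simplest curve. Let $\ell$ be the unique prime dividing $\Delta_K$. By Prop. \ref{prop-correcttwist} there are an $\ell$-simplest curve $E'/\Q$ (from the finite list in Table \ref{simplesttable}) and a square-free $N\neq 0$ with $\gcd(\ell,N^\dagger)=1$ and $E^N\cong E'$. Twisting back, $E\cong (E')^N$. The integer $N$ is explicitly computable: it is the square-free part of the ratio of the $c_6$ (or $c_4$) invariants of $E$ and $E'$ after normalizing to short Weierstrass models, tried against the two or four simplest curves in the CM class. We then split into two cases.

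If $N=1$, then $E\cong E'$ is $\ell$-simplest. Theorem \ref{thm-ell-simplest} together with Remark \ref{rem-ell-level-of-diff} (or directly Theorem \ref{thm-adelicimageofsimplestcurves}) shows the adelic image is defined modulo $M=\ell^{n}$, where $n=n_{E,\ell}$.

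If $N\neq 1$, Corollary \ref{cor-levelofdef} applied to $E'$ and $N$ shows that the adelic image of $(E')^N\cong E$ is defined at level $M=\ell^{n}N^\dagger$, with $n=n_{E',\ell}$. Since $n_{E',\ell}=n_{E,\ell}$ depends only on $\ell$ and $j$, and $N^\dagger$ is given by the explicit formula of Remark \ref{rem-notation}, $M$ is computable. In both cases $M\geq 3\geq 2$.

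One point needs care. The level of definition is a property of $G_E$ relative to a chosen compatible adelic basis. Corollary \ref{cor-levelofdef} uses exactly the index comparison of Theorem \ref{thm-levelofdef}, namely $[\mathcal{N}_{\delta,\phi}(M):G_{E,M}]=2=[\mathcal{N}_{\delta,\phi}:G_E]$. Indices are invariant under conjugation within $\mathcal{N}_{\delta,\phi}$, and Prop. \ref{prop-basis-preserving} guarantees that the twisted basis still lands in $\mathcal{N}_{\delta,\phi}$, so the conclusion transfers to $E$ regardless of which model is used. This compatibility of bases under twisting is the main obstacle. I would handle it by citing Prop. \ref{prop-basis-preserving} and noting that the pair of conditions in Theorem \ref{thm-levelofdef} (the image contained in $\mathcal{N}_{\delta,\phi}$, and equality of indices) is all that is needed.
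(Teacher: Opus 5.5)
Your proposal is correct and follows the paper's proof: part (1) comes from Theorem~\ref{thm-adelic-indexQ}, and part (2) comes from Prop.~\ref{prop-correcttwist}, followed by Theorem~\ref{thm-ell-simplest} in the simplest case ($M=\ell^{n}$) and by Cor.~\ref{cor-levelofdef} otherwise ($M=\ell^{n}N^\dagger$). Two small corrections: the basis-compatibility worry is unnecessary, because Theorem~\ref{thm-cmrep-intro-alvaro}(3) already supplies a suitable basis for $E$ itself and the index $[\mathcal{N}_{\delta,\phi}(M):G_{E,M}]=|\mathcal{N}_{\delta,\phi}(M)|/[\Q(E[M]):\Q]$ does not depend on that choice; and the parenthetical ``or $c_4$'' should be dropped, since $c_4$ scales by $N^2u^4$ under twisting and so cannot recover $N$ (only $c_6$, which is nonzero because $j\neq 1728$, does).
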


\begin{proof}
     Let $E/\Q$ be an elliptic curve with complex multiplication by an order $\Of$ of an imaginary quadratic field $K$, such that $j(E)\neq 0,1728$. We may make a compatible choice of bases, as in Section \ref{sec-notation}, so that $G_E := \im \rho_{E} \subseteq \mathcal{N}_{\delta, \phi}$. Part (1) then follows from Theorem \ref{thm-adelic-indexQ}, so we prove part (2).
     
     First, suppose that $E$ is an $\ell$-simplest CM curve for prime $\ell$. Then for $n := n_{E, \ell}$, the $\ell$-adic image of $E$ is defined at level $\ell^{n}$ (by Definition \ref{def-level-of-dif} and Remark \ref{ell-level-of-diff}), and so $G_E$ is defined modulo $\ell^{n}$ by Theorem \ref{thm-ell-simplest}. In this case, $M = M_E = \ell^n$.

    Now, suppose that $E$ is not a simplest CM curve, and let $\ell$ be the unique prime dividing $\Delta_K$. Then, Prop. \ref{prop-correcttwist} shows that there exists an $\ell$-simplest curve $E'/\Q$ and a non-zero, square-free integer $N \in \Z$ such that $E^N$, the quadratic twist of $E$ by $N$ is isomorphic to $E'$ and, moreover, $\gcd(\ell, N^{\dagger}) = 1$, where $N^{\dagger}$ is defined as in Remark \ref{rem-notation}. Since $E^N \cong E'$ and $j(E)\neq 0,1728$, it follows that $(E')^N \cong E$ as well, so we may assume that $E=(E')^N$ with $\gcd(\ell,N^\dagger)=1$. Then, Cor. \ref{cor-levelofdef} shows that the adelic image of $E$ is defined at level $M = M_E = \ell^n N^\dagger$, where $n=n_{E,\ell}$.
\end{proof}

It remains to prove part (3) of Theorem \ref{main-result}, i.e.,  the image $\pi_M(G_E)$ is explicitly computable. First, we note that for an $\ell$-simplest CM curve we may compute $\pi_{\ell^n}(G_E)$ explicitly using techniques from \cite[Section 12]{elladic}, or with the classification results proved in \cite{modelspaper}. Before we explain Algorithm \ref{alg-main} to compute $\pi_M(G_E)$ in general, which proves part (3) of Theorem \ref{main-result}, we require the following propositions. For the following proposition, we note that the $N$ that appears in the result guaranteed to exist by Prop. \ref{prop-correcttwist}.

\begin{proposition}\label{prop-ellfixingN}
    Let $E/\Q$ be an elliptic curve with CM by $\Of$. Let $N$ be a non-zero square-free integer and suppose that $E$ is not an $\ell$-simplest CM curve, while $E^N$ is an $\ell$-simplest CM curve such that $\gcd(\ell, N^\dagger) = 1$, and let $n := n_{E, \ell}$ (see Rem. \ref{rem-notation}). Then $K(\sqrt{N}) \subseteq K(E[\ell^n])$ and $\Gal(K(E[\ell^n])/K(\sqrt{N})) \cong \Gal(K(E^N[\ell^n])/K)$, and so $\Gal(K(E[\ell^n])/K(\sqrt{N})) \cong G_{E^N/K, \ell^n}$ is conjugate to an index $2$ subgroup of $G_{E/K, \ell^n} = \mathcal{C}_{\delta, \phi}(\ell^n)$, where $G_{E/K, \ell^n} := \im \rho_{E/K, \ell^n}$.
\end{proposition}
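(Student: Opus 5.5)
The plan is to apply Lemma \ref{lem-twistofsimplest} to the simplest curve, with the roles of the curves swapped. Write $E' := E^N$. Since $j(E)\neq 0,1728$ (implicit here, as $n=n_{E,\ell}$ and the twisting setup are those of Section \ref{sec-determinelevelofdef}), we have $(E')^N\cong E$. The curve $E'$ is $\ell$-simplest and $\gcd(\ell,N^\dagger)=1$, so Lemma \ref{lem-twistofsimplest} gives
\[
\Q(E[\ell^n])=\Q((E')^N[\ell^n])=\Q(\sqrt{N},E'[\ell^n]).
\]
Because $\ell^n\geq 3$, Theorem \ref{cyc-division-thm}(3) gives $K\subseteq \Q(E[\ell^n])$ and $K\subseteq \Q(E'[\ell^n])$. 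Hence $K(E[\ell^n])=\Q(E[\ell^n])=K(\sqrt{N},E'[\ell^n])$, and in particular $K(\sqrt{N})\subseteq K(E[\ell^n])$. As in the proof of Theorem \ref{thm-entanglement}, $K(\sqrt N)/K$ is a genuine quadratic extension, since $\ell$ is the only prime ramified in $K$ and $\gcd(\ell,N^\dagger)=1$.

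For the Galois group, I would use the proof of Lemma \ref{lem-twistofsimplest}, which shows $\Q(\sqrt N)\cap\Q(E'[\ell^n])=\Q$. Since $\sqrt N\notin K$ and $K\subseteq \Q(E'[\ell^n])$, this gives $K(\sqrt N)\cap K(E'[\ell^n])=K$. Indeed, if the intersection were larger, it would be the quadratic extension $K(\sqrt N)$. Then $\sqrt N$ would lie in $\Q(E'[\ell^n])$, contradicting $\Q(\sqrt{N})\cap\Q(E'[\ell^n])=\Q$. Standard Galois theory of the compositum $K(E[\ell^n])=K(\sqrt N)\cdot K(E'[\ell^n])$ then gives, by restriction,
\[
\Gal(K(E[\ell^n])/K(\sqrt N))\cong \Gal(K(E'[\ell^n])/K)\cong G_{E'/K,\ell^n}.
\]

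Next comes the identification of $G_{E/K,\ell^n}$. As $E$ is not $\ell$-simplest and $j(E)\neq0,1728$, the index $[\mathcal N_{\delta,\phi}(\ell^\infty):G_{E,\ell^\infty}]$ must be $1$, as in the proof of Lemma \ref{lem-twistofsimplest}. Therefore $G_{E,\ell^n}=\mathcal N_{\delta,\phi}(\ell^n)$, and Proposition \ref{prop-sameindex} gives $G_{E/K,\ell^n}=\mathcal C_{\delta,\phi}(\ell^n)$. Meanwhile $E'$ is $\ell$-simplest and $n=n_{E,\ell}=n_{E',\ell}$ is a level of definition, so $[\mathcal N_{\delta,\phi}(\ell^n):G_{E',\ell^n}]=2$, and by Proposition \ref{prop-sameindex} $G_{E'/K,\ell^n}$ has index $2$ in $\mathcal C_{\delta,\phi}(\ell^n)$.

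The remaining subtle point is the word ``conjugate.'' The subgroup $\Gal(K(E[\ell^n])/K(\sqrt N))$, viewed inside $G_{E/K,\ell^n}=\mathcal C_{\delta,\phi}(\ell^n)$ via $\rho_{E,\ell^n}$, should be identified with $G_{E'/K,\ell^n}$ as a matrix group, not merely abstractly. I would use the twisting basis of Proposition \ref{prop-basis-preserving}, where $\rho_{E}(\sigma)=\chi_N(\sigma)\rho_{E'}(\sigma)$. For $\sigma$ fixing $K(\sqrt N)$ we have $\chi_N(\sigma)=1$, so the two representations agree on this subgroup. With that basis, the subgroup is exactly $G_{E'/K,\ell^n}$. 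In any other basis of the form provided by Theorem \ref{thm-cmrep-intro-alvaro}, it is conjugate to it. I expect this bookkeeping between bases to be the main obstacle, while the field-theoretic steps are direct.
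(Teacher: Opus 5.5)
Your proposal is correct and follows the paper's proof: apply Lemma \ref{lem-twistofsimplest} to the simplest twist $E^N$ to get $\Q(E[\ell^n])=\Q(\sqrt{N},E^N[\ell^n])$, use that $K(\sqrt{N})$ and $K(E^N[\ell^n])$ are linearly disjoint over $K$ to get the Galois-group isomorphism by restriction, and note $G_{E/K,\ell^n}=\mathcal{C}_{\delta,\phi}(\ell^n)$ because $E$ is not $\ell$-simplest. Your additional twisting-basis argument via Proposition \ref{prop-basis-preserving} usefully shows that $\rho_{E,\ell^n}(G_{K(\sqrt{N})})$ equals $G_{E^N/K,\ell^n}$ as a matrix group, a conjugacy the paper only asserts through an abstract isomorphism.
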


\begin{proof}    
    The fact that $K(\sqrt{N}) \subseteq K(E[\ell^n])$ follows from Theorem \ref{thm-entanglement}. Further, we have $\Q(E[\ell^n]) = \Q(\sqrt{N}, E^N[\ell^n])$ by Lemma \ref{lem-twistofsimplest}. Thus, we have
    \[ \Gal(K(E[\ell^n])/K(\sqrt{N})) \cong \Gal(K(\sqrt{N}, E^N[\ell^n])/K(\sqrt{N})) \cong \Gal(K(E^N[\ell^n])/K),\]
    where we are implicitly using that $\gcd(\ell,N^\dagger)=1$ and  $K(E^N[\ell^n])\cap K(\sqrt{N})=K$ (a fact that was shown in the course of the proof of Lemma \ref{lem-twistofsimplest}).

    Fix a compatible choice of basis so that $G_{E/K, \ell^n} \subseteq \mathcal{C}_{\delta, \phi}(\ell^n)$. Since $E$ is not an $\ell$-simplest CM curve, we in fact have $G_{E/K, \ell^n} = \mathcal{C}_{\delta, \phi}(\ell^n)$, and there is an isomorphism $\mathcal{C}_{\delta, \phi}(\ell^n) \cong \Gal(K(E[\ell^n])/K)$. Under this isomorphism, we conclude that $\Gal(K(E[\ell^n])/K(\sqrt{N})) \cong G_{E^N/K, \ell^n}$, which is conjugate to an index $2$ subgroup of $G_{E/K, \ell^n}$, as claimed.
\end{proof}

Let $m > 2$ be an integer and let $\zeta_m$ be a primitive $m$-th root of unity. Fix the isomorphism $(\Z/m\Z)^{\times} \cong \Gal(\Q(\zeta_m)/\Q)$ where $a \mapsto \sigma_a$ and $\sigma_a(\zeta_m) = \zeta_m^a$. If $N$ is a non-zero square-free integer with $\sqrt{N} \in \Q(\zeta_m)$, then we let $\mathcal{A}_{N}^{m}$ denote the subgroup of $(\Z/m\Z)^{\times}$ corresponding to automorphisms fixing $\sqrt{N}$. That is,
\begin{equation}\label{eq_An}
    \mathcal{A}_{N}^{m} := \left\{ a \in (\Z/m\Z)^{\times} : \sigma_a(\sqrt{N}) = \sqrt{N} \right\}.
\end{equation}

With this definition in mind, we state the following proposition.

\begin{proposition}\label{prop-MfixingN}
    Let $E/\Q$ be an elliptic curve with CM by $\Of$, and for non-zero square-free integer $N$ and integer $M > 2$, suppose that $\Q(\sqrt{N})$ is a degree $2$ extension of $\Q$ contained in $\Q(\zeta_M) \subseteq \Q(E[M]) = K(E[M])$, and suppose that $K \cap \Q(\zeta_M) = \Q$. Fix a compatible choice of basis so that $\im \rho_{E/K, M} \subseteq \mathcal{C}_{\delta, \phi}(M)$.
    Suppose further that $G_{E/K, M} := \im \rho_{E/K, M} = \mathcal{C}_{\delta, \phi}(M)$, so $\Gal(K(E[M])/K) \cong \mathcal{C}_{\delta, \phi}(M)$. Then there is a subgroup $H_{M} \subseteq \mathcal{C}_{\delta, \phi}(M)$ corresponding to the Galois group $\Gal(K(E[M])/K(\sqrt{N}))$ that is given by $H_{M} := \{ g \in \mathcal{C}_{\delta, \phi}(M) : \det(g) \in \mathcal{A}_{N}^{M} \}$.
    \end{proposition}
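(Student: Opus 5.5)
The idea is to use that the determinant of the mod-$M$ Galois representation is the cyclotomic character. Concretely, for $\sigma\in G_K$ we have $\sigma(\zeta_M)=\zeta_M^{\det(\rho_{E/K,M}(\sigma))}$. This is the Weil pairing compatibility behind Theorem \ref{cyc-division-thm}(1), and it holds for any choice of $\Z/M\Z$-basis of $E[M]$. So under the identification $\Gal(K(E[M])/K)\cong \mathcal{C}_{\delta,\phi}(M)$ given by $\rho_{E/K,M}$, the restriction of $\sigma$ to $\Q(\zeta_M)$ is $\sigma_{\det g}$, where $g=\rho_{E/K,M}(\sigma)$.

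First I will handle the restriction to $K(\zeta_M)$. Since $K\cap\Q(\zeta_M)=\Q$, restriction gives an isomorphism $\Gal(K(\zeta_M)/K)\cong\Gal(\Q(\zeta_M)/\Q)\cong(\Z/M\Z)^\times$. Combined with the observation above, the composite map
$$\mathcal{C}_{\delta,\phi}(M)\cong \Gal(K(E[M])/K)\to \Gal(K(\zeta_M)/K)\cong (\Z/M\Z)^\times$$
is exactly $g\mapsto \det g$.

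Next I identify the subgroup. Because $\sqrt{N}\in\Q(\zeta_M)$, we have $K(\sqrt N)\subseteq K(\zeta_M)\subseteq K(E[M])$. By Galois theory, $\Gal(K(E[M])/K(\sqrt N))$ consists of those $\sigma$ whose restriction to $K(\zeta_M)$ fixes $\sqrt N$. Since $\sqrt N\in\Q(\zeta_M)$, this is the same as requiring that the restriction of $\sigma$ to $\Q(\zeta_M)$, namely $\sigma_{\det g}$, fixes $\sqrt N$. That says precisely $\det g\in\mathcal{A}_N^M$. Hence $H_M=\{g\in\mathcal{C}_{\delta,\phi}(M):\det g\in\mathcal{A}_N^M\}$.

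Finally, a sanity check. The index of $H_M$ in $\mathcal{C}_{\delta,\phi}(M)$ is $2$, because two facts hold:
\begin{enumerate}
\item $\det\colon\mathcal{C}_{\delta,\phi}(M)\to(\Z/M\Z)^\times$ is surjective, since it corresponds to the surjective restriction to $K(\zeta_M)$.
\item $\mathcal{A}_N^M$ has index $2$, since $\Q(\sqrt N)/\Q$ is quadratic.
\end{enumerate}

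The main point needing care is the first step: making sure the determinant equals the cyclotomic character in the chosen basis, with no twist coming from the basis choice, and that $K\cap\Q(\zeta_M)=\Q$ is what lets us pass between $\Gal(K(\zeta_M)/K)$ and $(\Z/M\Z)^\times$ via $\sigma_a$. The determinant fact is basis-independent, and the disjointness is given as a hypothesis, so I expect this step to be routine.
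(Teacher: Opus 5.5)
Your proof is correct and is essentially the paper's argument: both rest on $\det\circ\rho_{E,M}$ being the mod-$M$ cyclotomic character, combined with the Galois correspondence for $\Q(\sqrt{N})\subseteq\Q(\zeta_M)$. The only difference is the route: the paper first identifies $\Gal(\Q(E[M])/\Q(\sqrt{N}))$ inside $\mathcal{N}_{\delta,\phi}(M)$, using $G_{E,M}=\mathcal{N}_{\delta,\phi}(M)$, and then restricts to the Cartan, whereas you work over $K$ directly and so skip that intermediate step. Note also that the hypothesis $K\cap\Q(\zeta_M)=\Q$ is needed only for your index-$2$ check (surjectivity of $\det$ on $\mathcal{C}_{\delta,\phi}(M)$), not for the set equality defining $H_M$.
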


\begin{proof}
    Let $\rho_{E,M}$ be the mod-$M$ Galois representation attached to $E/\Q$ with respect to our usual choice of $\Z/M\Z$-basis such that its image $G_{E, M}$ is contained in $\mathcal{N}_{\delta, \phi}(M)$. In this case, since we have $G_{E/K, M} = \mathcal{C}_{\delta, \phi}(M)$, we also have $G_{E,M} = \mathcal{N}_{\delta, \phi}(M)$. It follows that $\Gal(\Q(E[M])/\Q) \cong \mathcal{N}_{\delta, \phi}(M)$ and \[ \Gal(\Q(E[M])/\Q(\zeta_M)) \cong \mathcal{N}_{\delta, \phi}(M) \cap \SL(2, \Z/M\Z) = \{ g \in \mathcal{N}_{\delta, \phi}(M) : \det(g) = 1 \}, \] 
    and hence $\Gal(\Q(E[M])/\Q(\sqrt{N})) \cong  \{ g \in \mathcal{N}_{\delta, \phi}(M) : \det(g) \in \mathcal{A}_{N}^{M}  \}$ by Galois theory. Further, since $\Q(E[M]) = K(E[M])$ for all $M>2$, and $K \cap \Q(\zeta_M) = \Q$ by assumption, we also have 
    \[ \Gal(K(E[M])/K(\sqrt{N})) \cong H_M = \{ g \in \mathcal{C}_{\delta, \phi}(M) : \det(g) \in \mathcal{A}_{N}^{M}  \}, \]
    as desired.
\end{proof}

\subsection{Algorithm to compute the adelic level of definition and adelic image}\label{subsec-alg-main} In this section we prove part (3) of Theorem \ref{main-result}. In order to do so, we will explain how to explicitly compute a Galois image at a level for which the adelic image is defined, using Algorithm \ref{alg-main}. First we state the justifications and details for the algorithm, which outlines a way to explicitly compute $\pi_{M}(G_E)$ as above. The computation proceeds as follows.

\begin{enumerate}
    \item Let $E/\Q$ be an elliptic curve with CM and $j(E)\neq 0,1728$. Given $j(E)$, the $j$-invariant of $E$, we can identify the CM field $K$, the CM order $\Of$, the unique prime $\ell$ that divides the discriminant $\Delta_K$, and a minimal short Weierstrass integral model $y^2 = x^3 + Ax + B$, such that $A$ and $B$ are minimal in the sense that if $d$ is a non-zero integer such that $d^4\mid A$ and $d^6\mid B$, then $d=\pm 1$.
    \item Consider the set of $\ell$-simplest CM curves with CM by $\Of$, as described in Table \ref{simplesttable}. Let $\{ A_i, B_i \}_{i \in I }$ be the set of coefficients of these simplest CM curves in short Weierstrass form, where $I$ is an indexing set of size $2$ or $4$.
    \item  If $E/\Q$ is $\ell$-simplest, i.e., it is isomorphic (over $\Q$) to some curve in Table \ref{simplesttable}, then its image can be determined as per Theorem \ref{thm-ell-simplest}. An explicit algorithm to determine this image is described in \cite[Section 12]{elladic}. Thus, we may assume $E$ is not $\ell$-simplest from now on. 
    \item By Lemma \ref{prop-correcttwist}, there is a non-zero square-free integer $N$ such that $E'=E^N$ is an $\ell$-simplest curve such that $\gcd(\ell,N^\dagger)=1$ (note that $N\neq 1$ since $E$ is assumed to be non-simplest). In order to determine $N$, we first find all indices $i \in I$ such that $A\equiv A_i \bmod (\Q^\times)^2$ and $B\equiv B_i \bmod (\Q^\times)^3$. Then we find the index $i\in I$, and a non-zero square-free $N$ such that $\gcd(\ell,N^\dagger)=1$ and the twist $E^N$ given by $y^2 = x^3 + N^2 A x + N^3 B$ is isomorphic to the $\ell$-simplest CM curve $E': y^2 = x^3 + A_i x + B_i$.

    \item Let $n = n_{E, \ell}$ be defined as in Remark \ref{rem-notation}. By Corollary \ref{cor-levelofdef}, the integer $M=\ell^n N^\dagger$ is an adelic level of definition for the image of $E$. (Note: Prop. \ref{prop-minimalleveldivideslevelsofdef} shows that the minimal level of definition $M_E$ is a divisor of $M$, and by Cor. \ref{cor-indexisalways2} the level $M_E$ is divisible by $\ell$.) 
    \item Let $M := \ell^n N^\dagger$. Proposition \ref{prop-reconstructimage} shows that $G_{E, M} = \langle C, G_{E/K, M} \rangle$, where $C := \rho_{E, M}(c)$ for any element $c \in G_{\Q}$ that does not fix $K$ and $G_{E/K, M}$ is the image of $\rho_{E/K, M} \colon G_{K} \to \GL(2, \Z/M\Z)$. 
    \item By Thm. \ref{thm-oddprimedividingdisc}(a) or Thm. \ref{thm-m8and16alvaro}, depending on whether $\ell > 2$ or $\ell = 2$, the image $\rho_{E^N,\ell^n}(G_\Q)$ contains an element of the form $c_\varepsilon$ (see Section \ref{sec-notation}). We remark here that \cite{modelspaper} identifies a precise element $c_\varepsilon$ in the mod-$\ell^n$ image. In other words, there is $c\in G_\Q$ such that $c_\varepsilon = \rho_{E^N,\ell^n}(c)$. Moreover, $c$ does not fix $K$, i.e., $c_\varepsilon \in \mathcal{N}_{\delta, \phi}(\ell^n)\setminus \mathcal{C}_{\delta, \phi}(\ell^n)$. Since the adelic image of $E^N$ is defined modulo $\ell^n$, we can choose any lift $c_{\varepsilon,M} \in \mathcal{N}_{\delta, \phi}(\Z/M\Z)$ of $c_{\varepsilon}\in \mathcal{N}_{\delta, \phi}(\Z/\ell^n\Z)$ and so there is an element $c'\in G_\Q$ such that $\rho_{E^N,M}(c')=c_{\varepsilon,M}$, and $c'$ does not fix $K$ (because $c$ does not, and $c'$ is equal to $c$ when we restrict to the action on $\ell^n$-torsion). Finally, it follows from Prop. \ref{cpx_conj_prop2} that $\rho_{E, M}(c') = (\chi_{N}(c') \cdot \Id) \cdot \rho_{E^N, M}(c')$, where $\chi_{N}$ is the quadratic character such that $E^N = E^{\chi_N}$. We let $C=\rho_{E,M}(c')$.
    
    \item From Theorem \ref{thm-cartan-image}, if $H_{A}$ is a subgroup of $\mathcal{C}_{\delta, \phi}(A)$ corresponding to $\Gal(\Q(E[A])/K(\sqrt{N}))$ and $g_{A} \in \mathcal{C}_{\delta, \phi}(A) \setminus H_{A}$ for $A \in \{ \ell^n, N^\dagger \}$, then $G_{E/K, M} = \langle (g_{\ell^n}, g_{N^\dagger}), H_{\ell^n} \times H_{N^\dagger} \rangle$ as a subgroup of $\mathcal{C}_{\delta, \phi}(M) \cong \mathcal{C}_{\delta, \phi}(\ell^n) \times \mathcal{C}_{\delta, \phi}(N^\dagger)$.
    \item By Prop. \ref{prop-ellfixingN}, we have $H_{\ell^n}$ is conjugate to $G_{E^N/K, \ell^n}$ as a subgroup of $\mathcal{C}_{\delta, \phi}(\ell^n)$, and by Prop. \ref{prop-MfixingN}, we have 
    \[ H_{N^\dagger} = \{ g \in \mathcal{C}_{\delta, \phi}(N^\dagger) : \det(g) \in \mathcal{A}_{N}^{N^\dagger}  \}, \] with $\mathcal{A}_{N}^{N^\dagger} $ as in (\ref{eq_An}).
    \item Note, from the proof of Theorem \ref{thm-cartan-image}, that $G_{E/K, M} = \langle (g_{\ell^n}, g_{N^\dagger}), H_{\ell^n} \times H_{N^\dagger} \rangle$ is the unique subgroup of $\mathcal{C}_{\delta, \phi}(M)$ satisfying 
    \begin{enumerate}
        \item $H_{\ell^n} \times H_{N^\dagger} \subseteq G_{E/K, M}$,
        \item $[\mathcal{C}_{\delta, \phi}(M) : G_{E/K, M}] = 2$,
        \item $G_{E/K, M} \bmod{\ell^n} = \mathcal{C}_{\delta, \phi}(\ell^n)$ and $G_{E/K, M} \bmod{N^\dagger} = \mathcal{C}_{\delta, \phi}(N^\dagger)$, and 
        \item $G_{E/K, M} \cap (H_{\ell^n} \times \cC(N^\dagger)) = G_{E/K, M} \cap (\cC(\ell^n) \times H_{N^\dagger})=H_{\ell^n}\times H_{N^\dagger}$
    \end{enumerate}
    \item It is thus sufficient to compute $G_{E/K, M}$ by realizing $H_{\ell^n} \times H_{N^\dagger}$, $H_{\ell^n} \times \cC(N^\dagger)$, and $\cC(\ell^n) \times H_{N^\dagger}$ as subgroups of $\mathcal{C}_{\delta, \phi}(M)$ via the Chinese remainder theorem (see Prop. \ref{prop-CRTcompatible}) and then finding the unique subgroup $G_{E/K,M}$ satisfying conditions (a)-(d) above.
    \item Together, we finally compute $G_{E, M} = \langle C, G_{E/K, M} \rangle$, with $C=\rho_{E,M}(c')$ as above.
\end{enumerate}

This work justifies the following algorithm, whose implementation completes the proof of Theorem \ref{thm-main}.

\begin{algorithm} \label{alg-main}
    \mbox{}
        \begin{enumerate}[leftmargin=6em]
        \item[\textsc{Input:}] An elliptic curve $E/\Q$ with CM by $\Of$ and $j(E) \neq 0, 1728$.
        \item[\textsc{Output:}] A Galois image $G_{E,M} := \im \rho_{E,M}$ where $M$ is an adelic level of definition for $E$.
    \end{enumerate}

    \begin{enumerate}
        \item Compute a non-zero square-free integer $N$ such that $E^N$ is an $\ell$-simplest CM curve for a prime $\ell$.
        \item If $N \in \{1, - \ell\}$ and $\ell$ is odd, or if $N \in \{\pm 1, \pm 2 \}$ and $\ell = 2$, return $G_{E, \ell^n}$ where $n := n_{E, \ell}$ as in Remark \ref{rem-notation}. Otherwise, continue to (3).
        \item Compute $N^{\dagger}$ and $n := n_{E, \ell}$ as in Remark \ref{rem-notation}, and let $M := \ell^{n} N^\dagger$.
        \item Compute $H_{\ell^n} \subseteq \mathcal{C}_{\delta, \phi}(\ell^n)$ and $H_{N^\dagger} \subseteq \mathcal{C}_{\delta, \phi}(N^\dagger)$ as in Theorem \ref{thm-cartan-image}, and compute $\mathcal{C}_{\delta, \phi}(M).$ 
        \item Compute $H_{\ell^n} \times H_{N^\dagger}$, $H_{\ell^n}\times \mathcal{C}_{\delta, \phi}(N^\dagger)$, and  $\mathcal{C}_{\delta, \phi}(\ell^n) \times H_{N^\dagger}$ as subgroups of $\mathcal{C}_{\delta, \phi}(M)$ via the Chinese remainder theorem.
        \item For $g_{\ell^{n}} \in \mathcal{C}_{\delta, \phi}(\ell^n) \setminus H_{\ell^n}$ and $g_{N^\dagger} \in \mathcal{C}_{\delta, \phi}(N^\dagger) \setminus H_{N^\dagger}$ do:
        \begin{enumerate}
            \item Compute $\mathcal{C} := \langle (g_{\ell^n}, g_{N^\dagger} ), H_{\ell^n} \times H_{N^\dagger} \rangle$ as a subgroup of $\mathcal{C}_{\delta, \phi}(M)$ via CRT.
            \item Check if $[\mathcal{C}_{\delta, \phi}(M) : \mathcal{C}] = 2$.
            \item Check if $\mathcal{C} \bmod \ell^n \cong \mathcal{C}_{\delta, \phi}(\ell^n)$ and $\mathcal{C} \bmod N^\dagger \cong \mathcal{C}_{\delta, \phi}(N^\dagger)$.
            \item Check if $\mathcal{C} \cap (H_{\ell^n}\times \mathcal{C}_{\delta, \phi}(N^\dagger)) = \cC \cap (\mathcal{C}_{\delta, \phi}(\ell^n) \times H_{N^\dagger})$.
        \end{enumerate}
        \item If all properties are satisfied, define $G_{E/K, M} := \mathcal{C}$. Otherwise return to (6).
        \item For $c \in G_{\Q}$ with $c_{\varepsilon} = \rho_{E^N, \ell^n}(c)$, compute $C := (\chi_{N}(c) \cdot \Id) \cdot c_{\varepsilon}$, and compute $C_M$ as a lift of $C$ modulo $M$.
        \item Return $G_{E, M} := \langle C_M, G_{E/K, M} \rangle$. 
    \end{enumerate}
\end{algorithm}

An implementation of this algorithm in \texttt{Magma} can be found in the GitHub repository \cite{githubrepo}.

\begin{remark}
    For an elliptic curve $E/\Q$ with CM by $\Of$ and $j(E) \neq 0, 1728$, let $M = \ell^{n_{E, \ell}} N^{\dagger}$ be the adelic level of definition for $E$ found using Algorithm \ref{alg-main}. We note that $M$ is not always the minimal level of definition, and in particular, if $\ell$ or $N^{\dagger}$ is even, we may find that the minimal adelic level of definition is a proper divisor of $M$. We illustrate these possibilities as follows.
\end{remark}

\begin{example}
    Let $E/\Q : y^2 = x^3-99x-378$ be the elliptic curve with LMFDB label \href{https://www.lmfdb.org/EllipticCurve/Q/288/d/1}{288.d1}. This curve has CM by $\Z[2i]$, the order of conductor $2$ in $K = \Q(i)$, which has discriminant $\Delta_K f^2 = -16$. The elliptic curve $E^{-3}/\Q : y^2=x^3-11x+14$ is a $2$-simplest CM curve with label \href{https://www.lmfdb.org/EllipticCurve/Q/32/a/2}{32.a2}. Thus,  since $(-3)^{\dagger} = 3$, we find that the adelic image of $E$ is defined at level $2^4 \cdot 3 = 48$.

    We can then compute $G_{E, 48}$ via Algorithm \ref{alg-main}. However, reducing this image modulo $12$, we find that $[\mathcal{N}_{-4, 0}(12) : G_{E, 12}] = 2$, so by Theorem \ref{thm-levelofdef}, the adelic image of $E$ is defined at level $12 = 2^2 \cdot 3$. In this case, the adelic level of definition produced by our algorithm is not minimal because $[\mathcal{N}_{-4, 0}(4) : G_{E^{-3}, 4}] = 2$, so the $2$-adic image of $E^{-3}$ is defined at level $4$.
\end{example}

\begin{example}
    Let $E/\Q : y^2=x^3-595x+5586$ be the elliptic curve with LMFDB label \href{https://www.lmfdb.org/EllipticCurve/Q/784/f/3}{784.f3}. This curve has CM by $\Z[\sqrt{-7}]$, the order of conductor $2$ in $K = \Q(\sqrt{-7})$, which has discriminant $\Delta_K f^2 = -28$. The elliptic curve $E^{-1}/\Q : y^2+xy=x^3-x^2-37x-78$ is a $7$-simplest CM curve with label  \href{https://www.lmfdb.org/EllipticCurve/Q/49/a/3}{49.a3}. Thus, since $(-1)^{\dagger} = 4$, the adelic image of $E$ is defined at level $7 \cdot 4 = 28$ by Algorithm \ref{alg-main}. 
    
    We can compute $G_{E, 28}$ via Algorithm \ref{alg-main}. However, reducing this image modulo $14$, we find that $[\mathcal{N}_{-7, 0}(14) : G_{E, 14}] = 2$, so by Theorem \ref{thm-levelofdef}, the adelic image of $E$ is defined at level $14 = 7 \cdot 2$. In this case, the adelic level of definition produced by our algorithm is not minimal because there is an entanglement between the $2$- and $7$-division fields of $E$, rather than just between the $4$- and $7$-division fields of $E$. This occurs because $\Q(E[2]) = \Q(\sqrt{7})$, and since $\sqrt{-1}$ and $\sqrt{-7}$ are contained in $\Q(E[7])$ by Lemma \ref{lem-twistofsimplest} and Thm. \ref{sqrt-general}, respectively, we also have $\Q(\sqrt{7}) \subseteq \Q(E[7])$.
\end{example}

\begin{remark}
    Let $E/\Q$ be an elliptic curve with CM by $\Of$ and $j(E) \neq 0, 1728$, and let $M :=\ell^{n_{E, \ell}} N^{\dagger}$ be the adelic level of definition for $E$ found using Algorithm \ref{alg-main}. From the examples above, it is clear that we can determine the minimal level of definition for $E$ by finding the smallest divisor $d$ of $M$ for which $[\mathcal{N}_{\delta, \phi}(d) : G_{E,d}] = 2$, which is not a computationally intensive process.
\end{remark}

\section{Levels of Differentiation}\label{sec-levelsofdiff}

In Definition \ref{def-level-of-dif}, we define an \emph{adelic level of differentiation} for an elliptic curve $E/\Q(j_{K, f})$ with CM by $\Of$ to be a positive integer $M \geq 2$ such that $M$ is an adelic level of definition for $E$, and if $E'/\Q(j_{K,f})$ is another elliptic curve with CM by $\Of$, then $G_E$ is conjugate to $G_{E'}$ in $\GL(2, \widehat{\Z})$ only if $G_{E, M}$ is conjugate to $G_{E', M}$ in $\GL(2, \Z/M\Z)$.

For $E/\Q$ with CM by $\Of$ and $j(E) \neq 0, 1728$, we have shown that if $N$ is a non-zero square-free integer such that $E^N$ is an $\ell$-simplest CM curve where $\gcd(\ell, N^\dagger) = 1$, then $M := \ell^{n_{E, \ell}} N^{\dagger}$ is an adelic level of definition for $E$ (see Rem. \ref{rem-notation} for notation). We wish to show that $M$ is an adelic level of differentiation as well.

% We begin by proving that conjugation at the level of the normalizer implies conjugation at the level of the Cartan.

% \begin{lemma}\label{lem-grptheoryconj}
%     Let $\mathcal{G}$ be a group, let $G_1, G_2$ be subgroups of $\mathcal{G}$, and let $H_i \subseteq G_i$ be subgroups of index $2$ for $i = 1, 2$. If $G_1$ and $G_2$ are conjugate in $\mathcal{G}$, then $H_1$ and $H_2$ are conjugate in $\mathcal{G}$.
% \end{lemma}

% \begin{proof}
%     Suppose that $G_2 = g G_1 g^{-1}$ for $g \in \mathcal{G}$. This conjugation induces an isomorphism $G_1/H_1 \to G_2/H_2$ defined by $g_1 \bmod H_1 \mapsto g g_1 g^{-1} \bmod H_2$ for $g_1 \in G_1$. For $g_1 \in G_1 \setminus H_1$, $g_1 \bmod H_1$ is a generator for $G_1/H_1$, and is sent to a generator $g g_1 g^{-1} \bmod H_2$, so $g g_1 g^{-1} \in G_2 \setminus H_2$. Therefore $H_2 = g H_1 g^{-1}$.
% \end{proof}

% \begin{corollary}\label{cor-conj}
%     Let $E/\Q(j_{K,f})$ and $E'/\Q(j_{K,f})$ be elliptic curves with CM by $\Of$. For positive integer $M \geq 3$, if $G_{E, M}$ is conjugate to $G_{E',M}$ in $\GL(2, \Z/M\Z)$, then $G_{E/K, M}$ is conjugate to $G_{E'/K, M}$ in $\GL(2, \Z/M\Z)$.
% \end{corollary}

% \begin{proof}
%     If $M \geq 3$, then $G_{E/K, M}$ and $G_{E'/K, M}$ are index $2$ subgroups of $G_{E,M}$ and $G_{E', M}$, respectively. The result follows from \ref{lem-grptheoryconj}.
% \end{proof}

We first characterize $N^{\dagger}$ for $N$ non-zero and square-free.

\begin{lemma}\label{lem-structureofN}
    Let $N$ be a non-zero, square-free integer, and consider $N^{\dagger}$ as in Remark \ref{rem-notation}. Then, $N^{\dagger} = 2^k \cdot N'$ where $N' := (N^{\dagger})^{\text{sf}}$ is the odd, square-free part of $N^{\dagger}$, and $k \in \{0, 2, 3\}$. More specifically,
    \begin{itemize}
        \item $N^{\dagger} = N'$ if $N \equiv 1 \bmod 4$, or
        \item $N^{\dagger} = 4 \cdot N'$ if $N \equiv 3 \bmod 4$, or
        \item $N^{\dagger} = 8 \cdot N'$ if $N \equiv 2 \bmod 4$.
    \end{itemize}
\end{lemma}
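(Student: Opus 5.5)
This is a direct case analysis on $N \bmod 4$, using the definition of $N^\dagger$ from Remark \ref{rem-notation}. Since $N$ is square-free, it is not divisible by $4$, so exactly one of the three cases $N\equiv 1,2,3 \bmod 4$ holds. In each case I will write $N^\dagger$ as a power of $2$ times an odd square-free integer. Then I will check that this odd factor equals $(N^\dagger)^{\text{sf}}$.

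If $N\equiv 1 \bmod 4$, then $N^\dagger=|N|$. This is odd, because $N$ is odd, and square-free, because $N$ is. So $N^\dagger = N'$ with $k=0$.

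If $N\equiv 3 \bmod 4$, then $N^\dagger = 4|N|$ with $|N|$ odd and square-free. Writing $4|N| = |N|\cdot 2^2$, the square-free part is $N'=|N|$, and $N^\dagger = 4N'$ with $k=2$.

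If $N\equiv 2 \bmod 4$, write $N=2N_0$ with $N_0$ odd. Since $N$ is square-free, so is $N_0$. Then $N^\dagger = 4|N| = 8|N_0|$. Writing $8|N_0| = (2|N_0|)\cdot 2^2$, the square-free part of $N^\dagger$ is $2|N_0|$, which is not odd. So here I will read $N'$ as the odd square-free part, namely $|N_0|$, which gives $N^\dagger = 8N'$ with $k=3$. With this reading, $N'$ is the odd part of $(N^\dagger)^{\text{sf}}$ in every case, and it coincides with $(N^\dagger)^{\text{sf}}$ in the first two cases.

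The only real subtlety is the meaning of ``odd, square-free part'' in the $N\equiv 2 \bmod 4$ case. I will state explicitly that $N'$ denotes the largest odd divisor of $N^\dagger$. In all three cases this divisor is $|N|$ with any factor of $2$ removed, so it is square-free because $N$ is. Everything else is immediate arithmetic.
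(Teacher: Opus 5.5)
Your proposal is correct and follows the paper's proof, which consists of the same case-by-case inspection of $N \bmod 4$. Your flag on the case $N\equiv 2 \bmod 4$ is also right: there $(N^\dagger)^{\text{sf}} = 2|N_0|$ is even, so $N^\dagger = 8N'$ holds only when $N'$ is read as the odd part of $N^\dagger$, which is how the paper actually uses $N'$ later (``the odd, square-free parts of $N^\dagger$ and $P^\dagger$'').
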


\begin{proof}
    In Theorem \ref{kronecker-weber}, we defined 
    \[ N^{\dagger} = \disc(\mathcal{O}_{\Q(\sqrt{N})}) =\begin{cases} |N| & \text{ if } N \equiv 1 \bmod 4, \\
|4N| & \text{ if } N \equiv 2,3 \bmod 4. \\
\end{cases}\]
The properties follow by inspection of the different cases for $N$.

% If $N \equiv 1 \bmod 4$, then $N^{\dagger} = |N|$. Since $N$ is square-free, we have that $N' := |N|$ is also square-free in this case, so $N^{\dagger} = N' = (N^{\dagger})^{\text{sf}}$.

% If $N \equiv 3 \bmod 4$, then $N^{\dagger} = 4 \cdot |N|$, where $|N|$ is a odd, positive integer. Again, since $N$ is square-free, so is $N' := |N|$, in which case 

\end{proof}

We now prove that the level of definition produced by Algorithm \ref{alg-main} shares all prime divisors with the minimal adelic level of definition.

\begin{proposition}\label{lem-primediv}
    Let $E/\Q$ be an elliptic curve with CM by $\Of$ with $j(E) \neq 0, 1728$. Let $N$ be a non-zero square-free integer such that $E^N$ is a simplest CM curve for the prime $\ell$ where $\gcd(\ell, N^{\dagger}) = 1$, and let $n := n_{E, \ell}$, so $M := \ell^n N^{\dagger}$ is an adelic level of definition for $E$. If $M_E$ is the minimal level of definition for $E$, then a prime $p$  is a divisor of $M_E$ if and only if $p$ is a divisor of $M$.
\end{proposition}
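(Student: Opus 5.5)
The forward direction is immediate. By Prop.~\ref{prop-minimalleveldivideslevelsofdef}, $M_E$ divides every level of definition, and in particular $M_E \mid M$. So every prime divisor of $M_E$ divides $M$. The content lies in the converse: every prime $p \mid M = \ell^n N^\dagger$ must divide $M_E$. The prime $\ell$ is handled by Cor.~\ref{cor-indexisalways2}, which says every level of definition is divisible by $\ell$. It remains to treat primes $p \mid N^\dagger$, so $p \neq \ell$. If $N = 1$ there are no such primes; likewise in the simplest cases $N\in\{1,-\ell\}$ or $N \in \{\pm1,\pm2\}$ for $\ell = 2$, where $M=\ell^n$ and nothing more is needed. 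So assume $E$ is not $\ell$-simplest and $N \neq 1$.

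Fix a prime $p \mid N^\dagger$ and suppose, for contradiction, that $p \nmid M_E$. Write $M_E = \ell^a m$ with $m \mid N^\dagger$ and $p \nmid m$.

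The key idea is that the entanglement field $K(\sqrt{N})$ from Theorem~\ref{thm-entanglement} must be visible at level $M_E$. Since $M_E$ is a level of definition, the index satisfies $[\mathcal{C}_{\delta,\phi}(M_E):G_{E/K,M_E}] = 2$ by Prop.~\ref{prop-sameindex}. On the other hand, Cor.~\ref{cor-indexisalways2}'s argument (via Cor.~\ref{cor-galoistheoryofcompositum}(4)) shows that $G_{E,\ell^n}$ and $G_{E,N^\dagger}$ are full, so $G_{E,\ell^a}$ and $G_{E,m}$ are full as well. Applying Cor.~\ref{cor-galoistheoryofcompositum}(3) to the coprime pair $\ell^a, m$, index $2$ forces $[\Q(E[\ell^a]) \cap \Q(E[m]) : K] = 2$. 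Call this quadratic extension $L/K$. Since $\ell^a \mid \ell^n$ and $m \mid N^\dagger$, we get $L \subseteq \Q(E[\ell^n]) \cap \Q(E[N^\dagger]) = K(\sqrt{N})$, hence $L = K(\sqrt{N})$. In particular $\sqrt{N} \in \Q(E[m])$.

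Now derive the contradiction by ramification. The field $\Q(\sqrt N)$ is ramified at $p$, since $p \mid N^\dagger = \operatorname{disc}\Q(\sqrt N)$, and $p\neq \ell$. I would show that $p$ is unramified in $\Q(E[m])$ when $p \nmid m$. Here $E = (E')^N$, and $E'$ has good reduction away from $\ell$ (and away from $2,3$ in the conductor-$36$ case). A twist of $E'$ by the character $\chi_N$ becomes $E'$ over $\Q(\sqrt N)$, so $E$ attains good reduction at $p$ over an extension ramified at $p$ of ramification index $2$. Over $\Q_p^{\mathrm{nr}}$, inertia acts on $E[m]$ through $\chi_N$, that is, by $-\mathrm{Id}$. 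The field $\Q(E[m])$ then contains $\Q(\sqrt N)$ exactly in the compatible way, so this alone does not contradict.

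I therefore expect this ramification step to be the main obstacle, and would replace it with a cleaner index count. Write $N = N_1 N_2$, where $N_2$ collects the part of $N$ whose discriminant is supported on primes dividing $m$. Then $\sqrt{N}\in\Q(E[m])$ should imply $\sqrt{N_2'}\in\Q(\zeta_m)K$ for a suitable factor, and one compares with the explicit description of $H_{N^\dagger}$ in Prop.~\ref{prop-MfixingN}. That description shows $\Q(E[m])\cap K(\sqrt N)=K$ unless $N^\dagger\mid m$, because $K(E[m])$ has Galois group $\mathcal{C}_{\delta,\phi}(m)$ and its quadratic subextensions over $K$ come from characters of $\mathcal{C}_{\delta,\phi}(m)$ of conductor dividing $m$. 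The character cutting out $K(\sqrt N)$ factors through $\det$ and has conductor exactly $N^\dagger$. This forces $N^\dagger \mid m$, contradicting $p \nmid m$.

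The $p=2$ subtlety, distinguishing $N^\dagger$ divisible by $4$ versus $8$ (Lemma~\ref{lem-structureofN}), only matters for divisibility by powers of $2$, not for the prime support. This is why the statement concerns primes only.
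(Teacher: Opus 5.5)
Your argument is correct up to $K(\sqrt N)=\Q(E[\ell^a])\cap\Q(E[m])$, hence $\sqrt N\in\Q(E[m])$. This is essentially the paper's intermediate step. The paper works at level $\ell^nP$ with $P=N^\dagger/p^{\nu_p(N^\dagger)}$, a multiple of your $m$, and obtains $\sqrt N\in\Q(E[P])$.

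The gap is in the final contradiction. Your ``cleaner index count'' rests on two claims you never prove.
\begin{enumerate}
\item[(i)] Every quadratic subextension of $K(E[m])/K$ has ``conductor dividing $m$'', i.e.\ is unramified away from $m$.
\item[(ii)] The character of $\mathcal{C}_{\delta,\phi}(m)$ cutting out $K(\sqrt N)$ factors through $\det$.
\end{enumerate}
Claim (i) cannot be taken for granted, and it conflicts with your own previous paragraph. The identification $\Gal(K(E[m])/K)\cong\mathcal{C}_{\delta,\phi}(m)$ comes from $\rho_{E/K,m}$, not from the Artin map of a ray class field of conductor $m$. As you computed, inertia at $p$ acts on $E[m]$ through $\chi_N$, so $K(E[m])/K$ is ramified above $p$. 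There is then no general reason for its quadratic subextensions to be unramified at $p$. Claim (ii) is equivalent to $K(\sqrt N)\subseteq K(\zeta_m)$, since $\det\circ\rho_{E,m}$ is the cyclotomic character. That is exactly what needs proving, so the argument is circular at its crucial point; the $N=N_1N_2$ sketch is only asserted with ``should''. A minor point: $N=-\ell$, or $N\in\{-1,\pm2\}$ when $\ell=2$, are already excluded by $\gcd(\ell,N^\dagger)=1$. For those values $M$ would not equal $\ell^n$ anyway.

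The missing idea is to turn $\sqrt N\in\Q(E[P])$ into a second entanglement, one that contradicts the fullness of $G_{E,N^\dagger}$, which you already recorded.
\begin{itemize}
\item Let $d=\nu_p(N^\dagger)$, so $p^d\geq 3$ by Lemma~\ref{lem-structureofN}, and $P\geq 3$.
\item Choose $p^\ast$ with $\sqrt{p^\ast}\in\Q(\zeta_{p^d})\subseteq\Q(E[p^d])$ and $(N/p^\ast)^\dagger=P$. Take $p^\ast=(-1)^{(p-1)/2}p$ for odd $p$. For $p=2$, take $p^\ast=-1$ if $N\equiv 3\bmod 4$, and a suitable $p^\ast\in\{\pm 2\}$ if $N\equiv 2\bmod 4$.
\item Then $\sqrt{N/p^\ast}\in\Q(\zeta_P)\subseteq\Q(E[P])$, so $\sqrt{p^\ast}\in\Q(E[P])$.
\item Hence $K(\sqrt{p^\ast})$ lies in $\Q(E[p^d])\cap\Q(E[P])$, and it is quadratic over $K$ because $p\neq\ell$.
\item By Cor.~\ref{cor-galoistheoryofcompositum}, $G_{E/K,N^\dagger}$ then has index at least $2$ in $\mathcal{C}_{\delta,\phi}(p^d)\times\mathcal{C}_{\delta,\phi}(P)\cong\mathcal{C}_{\delta,\phi}(N^\dagger)$. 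This contradicts $G_{E,N^\dagger}=\mathcal{N}_{\delta,\phi}(N^\dagger)$.
\end{itemize}

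Your abandoned ramification idea can also be completed, with one exception. If $\sqrt N\in\Q(E[P])$, Lemma~\ref{lem-twistimage}(1) gives $\Q(E'[P])\subseteq\Q(\sqrt N,E[P])=\Q(E[P])$. Both mod-$P$ images are all of $\mathcal{N}_{\delta,\phi}(P)$; for $E'$ this holds because $E'$ is $\ell$-simplest and $\gcd(P,\ell)=1$. So the two fields coincide and $\sqrt N\in\Q(E'[P])$. That field is unramified at $p$ by N\'eron--Ogg--Shafarevich whenever $E'$ has good reduction at $p$, while $\Q(\sqrt N)$ is ramified at $p$. This leaves the conductor-$36$ curves with $p=2$ to be handled separately, whereas the paper's argument is uniform.
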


\begin{proof}

    Since $M_E \mid M$ by Prop. \ref{prop-minimalleveldivideslevelsofdef}, we suppose towards a contradiction that there is a prime $p$ such that $p \mid M$ but $p \nmid M_E$. Since $\ell$ divides all levels of definition by Cor. \ref{cor-indexisalways2}, we know that $p \neq \ell$. In particular, $p \mid N^{\dagger}$ since $\gcd(\ell, N^{\dagger}) = 1$.
    
    Let $M_E = \ell^m N'$ with $1 \leq m \leq n$ and $N' \mid N^{\dagger}$. If $N' = 1$, then $M_E = \ell^m$, so $E$ is a simplest CM curve for the prime $\ell$. Since $E^N$ is also an $\ell$-simplest CM curve, and $\gcd(\ell, N^{\dagger}) = 1$, we have $E^N \cong E$ and $N = 1$ by Cor. \ref{cor-notsimplest}. Thus $M = \ell^n$, a contradiction.

    Assume $N' > 1$. Define $d := \nu_{p}(N^{\dagger})$, the $p$-adic valuation of $N^{\dagger}$, and define $P := N^{\dagger}/p^d$. Note that we must have $N' \mid P$ and $P > 1$. Also, by Lemma \ref{lem-structureofN}, $p^d \geq 3$. Since $M_E = \ell^m N'$ is a level of definition for $E$ and $M_E \mid \ell^n P$, it follows that $\ell^n P$ is a level of definition for $E$ as well. Therefore $[\mathcal{N}_{\delta, \phi}(\ell^n P) : G_{E, \ell^n P}] = 2$.
    
    By Prop. \ref{prop-sameindex}, we have that $[\mathcal{C}_{\delta, \phi}(\ell^n P) : G_{E/K, \ell^n P}] = 2$. However, since $P \mid N^{\dagger}$ and $[\mathcal{N}_{\delta, \phi}(N^{\dagger}) : G_{E, N^{\dagger}}] = 1$, we have $[\mathcal{C}_{\delta, \phi}(P) : G_{E/K, P}] = [\mathcal{N}_{\delta, \phi}(P) : G_{E, P}] = 1$. Since we also have $[\mathcal{C}_{\delta, \phi}(\ell^n) : G_{E/K, \ell^n}] = 1$, that means that $G_{E/K, \ell^n P}$ is an index $2$ subgroup of $G_{E/K, \ell^n} \times G_{E/K, P}$. By Cor. \ref{cor-galoistheoryofcompositum}, $[\Q(E[\ell^n]) \cap \Q(E[P]) : K] = 2$. Further, by Theorem \ref{thm-entanglement}, we have $K(\sqrt{N}) = \Q(E[\ell^n]) \cap \Q(E[N^{\dagger}])$, and since $\Q(E[P]) \subseteq \Q(E[N^{\dagger}])$, it must be that $K(\sqrt{N}) = \Q(E[\ell^n]) \cap \Q(E[P])$.

    We now wish to show that $\Q(E[p^d])$ and $\Q(E[P])$ have a non-trivial entanglement. If $p$ is odd, then $d = 1$, and we may choose $p^{\ast} := (-1)^{(p-1)/2} p$, so $\Q(\sqrt{p^{\ast}}) \subseteq \Q(\zeta_{p}) \subseteq \Q(E[p]) = \Q(E[p^d])$, and $(N/p^{\ast})^{\dagger} = P$. If $p = 2$, then $d \geq 2$, and $N \equiv 2, 3 \bmod 4$. If $N \equiv 3 \bmod 4$, we choose $p^{\ast} := -1$, and if $N \equiv 2 \bmod 4$,  there is some choice of $p^{\ast} \in \{\pm 2\}$ so that, in all cases, $\Q(\sqrt{p^{\ast}}) \subseteq \Q(\zeta_{p^d}) \subseteq \Q(E[p^d])$ and $(N/p^{\ast})^{\dagger} = P$.

    Since $(N/p^{\ast})^{\dagger} = P$ in all cases, it must be that $\Q(\sqrt{N/p^{\ast}}) \subseteq \Q(\zeta_{P}) \subseteq \Q(E[P])$, and therefore $\Q(\sqrt{N}), \Q(\sqrt{N/p^{\ast}}),$ and $\Q(\sqrt{p^{\ast}})$ are contained in $\Q(E[P])$. Further, since $p^d \geq 3$, we have that $K \subseteq \Q(E[p^d]) \cap \Q(E[P])$, and so $K(\sqrt{p^{\ast}}) \subseteq \Q(E[p^d]) \cap \Q(E[P])$. The fact that $K(\sqrt{p^{\ast}})/K$ is a quadratic extension follows because $\ell$ is the unique prime dividing $\Delta_K = \disc (\mathcal{O}_K)$, and $\ell \neq p$.

    This means that $[G_{E/K, p^d} \times G_{E/K, P} : G_{E/K, N^{\dagger}}] = [\Q(E[p^d]) \cap \Q(E[P]) : K] \geq 2$. However, $G_{E/K, N^{\dagger}} \cong \mathcal{C}_{\delta, \phi}(N^{\dagger})$, so $[G_{E/K, p^d} \times G_{E/K, P} : G_{E/K, N^{\dagger}}] = 1$, a contradiction.    
\end{proof}

The following two results together prove that, for the cases we wish to consider, conjugation at the level of the normalizer implies conjugation at the level of the Cartan.

\begin{proposition}\label{prop-MfixingN-redux}
    Let $E/\Q$ be an elliptic curve with CM by $\Of$, let $m \in \Z$ be the square-free integer such that $K = \Q(\sqrt{m})$, and suppose $M > 2$ is a positive integer divisible by $|\Delta_K| = m^{\dagger}$. Let $G_{E, M} := \im \rho_{E, M}$ be the mod-$M$ Galois image of $E$ and let $G_{E/K, M} := \im \rho_{E/K, M}$. Using notation as in Prop. \ref{prop-MfixingN}, we have that
    \[ G_{E/K, M} = \{ g \in G_{E, M} : \det(g) \in \mathcal{A}^M_{m}\}. \]
    \end{proposition}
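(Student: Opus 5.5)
The plan is to identify both sides as the same subgroup of $\Gal(\Q(E[M])/\Q)$ via Galois theory, using the Weil pairing to read off the action on $\sqrt{m}$ from the determinant. Since $M$ is divisible by $m^\dagger=|\Delta_K|$, Theorem \ref{cyc-division-thm}(2) gives $K=\Q(\sqrt{m})\subseteq \Q(\zeta_{m^\dagger})\subseteq \Q(\zeta_M)\subseteq \Q(E[M])$, so the set $\mathcal{A}^M_m$ from (\ref{eq_An}) is well defined. (Alternatively, $M>2$ and Theorem \ref{cyc-division-thm}(3) already give $K\subseteq\Q(E[M])$, but we need the more precise statement that $K$ lies in the cyclotomic subfield.) Since $E$ is defined over $\Q$, the representation $\rho_{E,M}$ induces an isomorphism $\Gal(\Q(E[M])/\Q)\cong G_{E,M}$, and $G_{E/K,M}$ is the image of $G_K$, i.e.\ the subgroup corresponding to $\Gal(\Q(E[M])/K)$.

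The key step is the standard Weil pairing compatibility: for every $\sigma\in G_\Q$ we have $\sigma(\zeta_M)=\zeta_M^{\det\rho_{E,M}(\sigma)}$, so $\sigma|_{\Q(\zeta_M)}=\sigma_{\det\rho_{E,M}(\sigma)}$ under the fixed identification $(\Z/M\Z)^\times\cong\Gal(\Q(\zeta_M)/\Q)$. Hence $\sigma$ fixes $\sqrt{m}$ if and only if $\sigma_{\det\rho_{E,M}(\sigma)}$ fixes $\sqrt{m}$, that is, if and only if $\det \rho_{E,M}(\sigma)\in\mathcal{A}^M_m$. Since $K=\Q(\sqrt m)$, an element $\sigma$ lies in $G_K$ exactly when it fixes $\sqrt m$. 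Taking images under $\rho_{E,M}$, the elements $g=\rho_{E,M}(\sigma)\in G_{E,M}$ coming from $G_K$ are exactly those with $\det g\in\mathcal{A}^M_m$. This is well defined because the condition depends only on $g$, and the action on $E[M]$ determines the action on $\zeta_M$. This gives both inclusions at once.

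The only delicate point is the basis convention. The determinant is independent of the basis, so conjugating into $\mathcal{N}_{\delta,\phi}(M)$ changes nothing. We only need the Weil pairing identity $e_M(\sigma P,\sigma Q)=\sigma(e_M(P,Q))$ together with $e_M(\sigma P,\sigma Q)=e_M(P,Q)^{\det}$ for the chosen basis $\{P,Q\}$, and $\zeta_M=e_M(P,Q)$ is a primitive $M$-th root of unity. I expect no real obstacle beyond carefully checking that $\sqrt m\in\Q(\zeta_M)$, which uses $m^\dagger\mid M$. As a sanity check, $G_{E/K,M}=G_{E,M}\cap\mathcal{C}_{\delta,\phi}(M)$ from Theorem \ref{thm-cmrep-intro-alvaro}(2) is consistent with this description, but it is not needed.
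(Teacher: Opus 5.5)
Your proof is correct and follows essentially the same route as the paper: both place $K=\Q(\sqrt m)$ inside $\Q(\zeta_M)$ using $m^\dagger \mid M$, and then identify $\Gal(\Q(E[M])/K)$ inside $G_{E,M}$ through the determinant. Your explicit Weil-pairing step $\sigma|_{\Q(\zeta_M)}=\sigma_{\det\rho_{E,M}(\sigma)}$ is exactly the fact the paper leaves implicit when it passes from $\Gal(\Q(E[M])/\Q(\zeta_M))\cong\{g\in G_{E,M}:\det(g)=1\}$ to the description of $\Gal(\Q(E[M])/K)$ ``by Galois theory''.
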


\begin{proof}
    Note that $K \subseteq \Q(\zeta_M) \subseteq \Q(E[M])$ by Theorem \ref{kronecker-weber}. By hypothesis, $G_{E, M} \cong \Gal(\Q(E[M])/\Q)$ and $G_{E/K, M} \cong \Gal(\Q(E[M])/K)$. Further, it follows that 
    \[ \Gal(\Q(E[M])/\Q(\zeta_M)) \cong G_{E, M} \cap \SL(2, \Z/M\Z) = \{ g \in G_{E, M} : \det(g) = 1 \}, \] 
    and hence $\Gal(\Q(E[M])/K) \cong  \{ g \in G_{E, M} : \det(g) \in \mathcal{A}_{m}^{M}  \}$ by Galois theory. Therefore $G_{E/K, M} = \{ g \in G_{E, M} : \det(g) \in \mathcal{A}_{m}^{M}  \}$ as desired.
\end{proof}

\begin{corollary}\label{cor-conjugate-at-cartan}
    Let $E/\Q$ and $E'/\Q$ be elliptic curves with CM by $\Of$, and let $M > 2$ be a positive integer divisible by, but not equal to, $|\Delta_K|$. If $G_{E, M}$ is conjugate to $G_{E', M}$ in $\GL(2, \Z/M\Z)$, then $G_{E/K, M}$ is conjugate to $G_{E'/K, M}$ in $\GL(2, \Z/M\Z)$.
\end{corollary}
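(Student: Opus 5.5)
The plan is to use Prop.~\ref{prop-MfixingN-redux} to describe each Cartan image as the subgroup of the full mod-$M$ image cut out by a condition on the determinant. A conjugating matrix preserves determinants, so it will carry one Cartan image onto the other.

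Suppose $B\in\GL(2,\Z/M\Z)$ satisfies $BG_{E,M}B^{-1}=G_{E',M}$. Since $|\Delta_K|\mid M$ and $M>2$, Prop.~\ref{prop-MfixingN-redux} applies to both $E$ and $E'$ with the same $m$ (they share the CM field $K$). It gives
\[ G_{E/K,M}=\{g\in G_{E,M}:\det(g)\in\mathcal{A}^M_m\}, \qquad G_{E'/K,M}=\{g\in G_{E',M}:\det(g)\in\mathcal{A}^M_m\}. \]
The set $\mathcal{A}^M_m$ depends only on $m$ and $M$, not on the curve. Conjugation preserves determinants, since $\det(BgB^{-1})=\det(g)$. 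Hence $g\mapsto BgB^{-1}$ restricts to a bijection from $G_{E/K,M}$ onto $\{h\in G_{E',M}:\det(h)\in\mathcal{A}^M_m\}=G_{E'/K,M}$. This is exactly the claim.

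The main obstacle is checking that the hypotheses of Prop.~\ref{prop-MfixingN-redux} genuinely hold for both curves. In particular, $K\subseteq\Q(\zeta_M)$ requires $m^\dagger=|\Delta_K|$ to divide $M$, which is assumed here. One should also confirm that the identification $G_{E,M}\cong\Gal(\Q(E[M])/\Q)$ is compatible with $\det$ being the cyclotomic character. That compatibility is standard from the Weil pairing: $\sigma(\zeta_M)=\zeta_M^{\det\rho_{E,M}(\sigma)}$.

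The extra hypothesis $M\neq|\Delta_K|$ does not appear to be needed in this argument. Its only purpose would be to guarantee $M>2$ and $M\geq 3$ in edge cases. If it is needed at all, the place to check is $|\Delta_K|=3$ or $4$, where one must verify that $\mathcal{A}^M_m$ is still a proper index-$2$ subgroup. It is, because $\Q(\sqrt m)\subseteq\Q(\zeta_M)$ is a quadratic subfield.
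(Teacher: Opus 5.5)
Your argument is correct and is essentially the paper's. Both rest on Prop.~\ref{prop-MfixingN-redux}, which identifies $G_{E/K,M}$ and $G_{E'/K,M}$ as the elements of $G_{E,M}$ and $G_{E',M}$ with determinant in $\mathcal{A}^M_m$, combined with the invariance of $\det$ under conjugation; the paper adds an unneeded detour through generators $C_M\in G_{E,M}\setminus G_{E/K,M}$, and you are right that the hypothesis $M\neq|\Delta_K|$ is never actually used.
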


\begin{proof}
    Let $m$ be the square-free integer such that $K = \Q(\sqrt{m})$. Since $K/\Q$ is a quadratic extension and $M \neq |\Delta_K|$, it follows that $\mathcal{A}_{m}^{M}$ is an index $2$ subgroup of $\det(G_{E, M}) = \det(G_{E', M})$ by Galois theory. By Proposition \ref{prop-reconstructimage}, we can choose $C_M \in G_{E, M} \setminus G_{E/K, M}$ and $C'_M \in G_{E', M} \setminus G_{E'/K, M}$ such that $G_{E, M} = \langle C_M, G_{E/K, M} \rangle$ and $G_{E', M} = \langle C'_M, G_{E'/K, M} \rangle$, and by Prop. \ref{prop-MfixingN-redux}, we can make our choice so that $\det(C_M)$ and $\det(C'_M)$ are not contained in $\mathcal{A}_{m}^{M}$. Therefore any conjugation sending $G_{E, M}$ to $G_{E', M}$ must also send $G_{E/K, M}$ to $G_{E'/K, M}$, since conjugation preserves determinants.
\end{proof}

In the following two propositions we show that images resulting from certain twists of simplest CM curves do not result in conjugate images.

\begin{proposition}\label{prop-notconjugate-level8}
    Let $E/\Q$ and $E'/\Q$ be elliptic curves with CM by $\Of$ and $j_{K, f} \neq 0, 1728$. Let $\ell$ be the unique prime dividing $\Delta_K$, and suppose that $\ell \neq 2$.  Let $P := -2 \cdot N$ and $P' := 2 \cdot N$ for $N$ a square-free integer satisfying $N \equiv 1 \bmod 4$.     If the twists $E^P$ and $(E')^{P'}$ are $\ell$-simplest CM curves such that $\ell$ is relatively prime to both $P^{\dagger}$ and $(P')^{\dagger}$, then for $M := \ell P^{\dagger}$, $G_{E, M}$ is not conjugate to $G_{E', M}$ in $\GL(2, \Z/ M\Z)$.
\end{proposition}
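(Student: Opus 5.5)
The approach is to find a conjugation invariant of the mod-$M$ image that records the quadratic field $\Q(\sqrt{P})$ through which $\Q(E[\ell])$ and $\Q(E[P^\dagger])$ are entangled. Then we check that this invariant distinguishes $P=-2N$ from $P'=2N$.

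\textbf{Setup.} Since $N\equiv 1\bmod 4$, both $P$ and $P'$ are $\equiv 2 \bmod 4$. Hence $P^\dagger=(P')^\dagger=8|N|$, so $M=\ell P^\dagger=\ell (P')^\dagger$ is the level produced by Corollary \ref{cor-levelofdef} for both curves. Because $\ell$ is odd and is the unique prime dividing $\Delta_K$, we have $|\Delta_K|=\ell$. Thus $|\Delta_K|$ divides $M$ and $M\neq |\Delta_K|$.

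\textbf{Step 1: reduce to the Cartan images.} Suppose, for a contradiction, that $G_{E,M}$ and $G_{E',M}$ are conjugate by some $B\in\GL(2,\Z/M\Z)$. By Corollary \ref{cor-conjugate-at-cartan}, the same $B$ conjugates $G_{E/K,M}$ onto $G_{E'/K,M}$.

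\textbf{Step 2: describe the Cartan images explicitly.} The hypotheses of Theorem \ref{thm-cartan-image} hold for $E$ with the twisting integer $P$: assumption (1) is Theorem \ref{thm-entanglement}, and assumption (2) follows from Corollary \ref{cor-galoistheoryofcompositum}(4). So, under the Chinese remainder identification $\cC_{\delta,\phi}(M)\cong \cC_{\delta,\phi}(\ell)\times\cC_{\delta,\phi}(P^\dagger)$, we get
\[
G_{E/K,M}=\langle (g_\ell,g_{P^\dagger}),\, H_\ell\times H_{P^\dagger}\rangle .
\]
By Proposition \ref{prop-MfixingN}, $H_{P^\dagger}=\{g\in\cC_{\delta,\phi}(P^\dagger):\det g\in\mathcal{A}_P^{P^\dagger}\}$. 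The same description holds for $E'$, with $H'_{P^\dagger}$ defined using $\mathcal{A}_{P'}^{P^\dagger}$.

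\textbf{Step 3: the invariant.} Let $U\subseteq G_{E/K,M}$ be the subgroup of elements congruent to $\operatorname{Id}$ modulo $\ell$. We have $\operatorname{Id}\in H_\ell$ and $g_\ell\notin H_\ell$. An element $(g_\ell,g_{P^\dagger})^{\epsilon}(h_1,h_2)$ with $\epsilon\in\{0,1\}$ and $h_i$ in the respective $H$'s has first coordinate $\operatorname{Id}$ only if $\epsilon=0$ and $h_1=\operatorname{Id}$. Therefore $U=\{(\operatorname{Id},h): h\in H_{P^\dagger}\}$, and so
\[
\det(U)=\det(H_{P^\dagger}) \bmod P^\dagger .
\]
Conjugation by $B$ preserves the kernel of reduction mod $\ell$ and preserves determinants. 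Hence $\det(U)$ is a conjugation invariant, and we would obtain $\det(H_{P^\dagger})=\det(H'_{P^\dagger})$ inside $(\Z/P^\dagger\Z)^\times$.

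\textbf{Step 4: compute both sides.} Since $\cC_{\delta,\phi}(P^\dagger)=G_{E/K,P^\dagger}\cong\Gal(\Q(E[P^\dagger])/K)$ and $K\cap\Q(\zeta_{P^\dagger})=\Q$ (because $\ell\nmid P^\dagger$), the determinant maps $\cC_{\delta,\phi}(P^\dagger)$ onto all of $(\Z/P^\dagger\Z)^\times$. Consequently $\det(H_{P^\dagger})=\mathcal{A}_P^{P^\dagger}$ and $\det(H'_{P^\dagger})=\mathcal{A}_{P'}^{P^\dagger}$. These are the subgroups of $(\Z/P^\dagger\Z)^\times$ fixing $\Q(\sqrt{-2N})$ and $\Q(\sqrt{2N})$, respectively. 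These two fields are distinct, because $-1$ is not a rational square. So the subgroups differ, which is a contradiction.

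\textbf{Main obstacle.} The delicate step is Step 2. We must confirm that Theorem \ref{thm-cartan-image} applies with $n=n_{E,\ell}=1$, in particular that $\Gal(\Q(E[\ell])/K)$ is the full Cartan subgroup $\cC_{\delta,\phi}(\ell)$. We must also confirm that the CRT coordinates are compatible with the bases of Proposition \ref{prop-CRTcompatible}, so that $U$ really is the subgroup $\{\operatorname{Id}\}\times H_{P^\dagger}$ in those coordinates.
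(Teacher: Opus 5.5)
Your proof is correct and has the same skeleton as the paper's proof. Both pass to the Cartan images with Corollary \ref{cor-conjugate-at-cartan}, both describe them via Theorem \ref{thm-cartan-image} as $\langle (g_\ell,g_{P^\dagger}),H_\ell\times H_{P^\dagger}\rangle$ with $H_{P^\dagger}$ cut out by $\mathcal{A}_{\mp 2N}^{P^\dagger}$, and both conclude from $\mathcal{A}_{-2N}^{P^\dagger}\neq\mathcal{A}_{2N}^{P^\dagger}$.

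The genuine difference is how the conjugating matrix is controlled. The paper uses Theorem \ref{thm-oddprimedividingdisc}(a) to make both mod-$\ell$ components equal to $J_{\delta,0}$. It then simply asserts that the conjugator can be taken of the form $(\Id,B_{P^\dagger})$. That is true but not argued: one has to show that the $\ell$-component of an arbitrary conjugator normalizes $H_\ell$ and $\cC_{\delta,\phi}(\ell)$, and hence normalizes the target group.

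Your invariant $U=\ker\bigl(G_{E/K,M}\to G_{E/K,\ell}\bigr)=\{\Id\}\times H_{P^\dagger}$, together with $\det$, is preserved by any conjugator. So you need neither that reduction nor the identification of the two mod-$\ell$ images. You also check that $\det$ maps $\cC_{\delta,\phi}(P^\dagger)$ onto $(\Z/P^\dagger\Z)^\times$, using $K\cap\Q(\zeta_{P^\dagger})=\Q$. This makes explicit the equality $\det(H_{P^\dagger})=\mathcal{A}_P^{P^\dagger}$, which the paper uses silently when it says $H^{2N}_{P^\dagger}$ and $H^{-2N}_{P^\dagger}$ cannot be conjugate.

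The paper's componentwise formulation stays closer to the explicit generators output by Algorithm \ref{alg-main}, but yours is the cleaner proof. Your worry about CRT compatibility does not arise. Being congruent to $\Id$ modulo $\ell$ is intrinsic to a matrix modulo $M$, so $U$ does not depend on any choice of CRT coordinates.
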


\begin{proof}
    Note that $P^{\dagger} = (P')^{\dagger} = 8 N^{\dagger}$ by Lemma \ref{lem-structureofN}, $n_{E, \ell} = 1$ by Rem. \ref{rem-notation}, and by Cor. \ref{cor-levelofdef}, $M = 8 \ell N^{\dagger}$ is an adelic level of definition for both $E$ and $E'$. By Theorem \ref{thm-entanglement}, we have that 
    \[ \Q(E[\ell]) \cap \Q(E[8 N^{\dagger}]) = K(\sqrt{-2 N}) \quad \text{ and } \quad  \Q(E'[\ell]) \cap \Q(E'[8 N^{\dagger}]) = K(\sqrt{2 N}).\]
    % By Theorem \ref{thm-oddprimedividingdisc}(a), and without loss of generality, we can represent $G_{E^P, \ell} = \langle C, G_{\delta, 0}^{2, 1}(\ell) \rangle$ and $G_{E', \ell} = \langle C', G_{\delta, 0}^{2, 1}(\ell) \rangle$ where $C, C' \in \{c_{1}, c_{-1}\}$ and $G_{\delta, 0}^{2, 1}(\ell)$ is the group defined as in Thm. \ref{thm-oddprimedividingdisc}(a). Thus, 
     We first note that, since $\ell \neq 2$ and $E^{P}$ and $(E')^{P'}$ have non-maximal $\ell$-adic images by hypothesis, the classification in Theorem \ref{thm-j0ell3alvaro}(a) shows that
    \[ G_{E^P/K, \ell} \cong G_{(E')^{P'}/K, \ell} \cong J_{\delta, 0} = \left\langle \left\{ \left(\begin{array}{cc} a^2 & b\\ \delta b & a^2 \\ \end{array}\right): a \in (\Z/\ell\Z)^\times, b \in \Z/\ell\Z \right\} \right\rangle.\]
    
    By Thm. \ref{thm-cartan-image} and from the discussion in Section \ref{subsec-alg-main}, we have that
    \[ G_{E/K, M} \cong \langle (g_{\ell}, g_{P^{\dagger}}), H_{\ell} \times H_{P^{\dagger}}^{-2N} \rangle \]
    and 
    \[ G_{E'/K, M} \cong \langle (g_{\ell}, g_{P^{\dagger}}), H_{\ell} \times H_{P^{\dagger}}^{2N} \rangle, \]
    where $H_{\ell} = J_{\delta, 0}$, $H_{P^{\dagger}}^{m} = \{ g \in \mathcal{C}_{\delta, \phi}(P^{\dagger}) : \det(g) \in \mathcal{A}_m^{P^{\dagger}} \}$
    for $m = \pm 2N$, and where we choose $g_{\ell} \in \mathcal{C}_{\delta, \phi}(\ell) \setminus H_{\ell}$ and $g_{P^{\dagger}} \in \mathcal{C}_{\delta, \phi}(P^{\dagger}) \setminus (H_{P^{\dagger}}^{-2N} \cup H_{P^{\dagger}}^{2N})$. %Also, the identifications of the modulo $M$ images above explicitly come from the isomorphism between $\mathcal{C}_{\delta, \phi}(M)$ and $\mathcal{C}_{\delta, \phi}(\ell) \times \mathcal{C}_{\delta, \phi}( P^{\dagger})$ induced by the Chinese remainder theorem.    
    %Note that $g_{M}$ can be chosen in such a way since $H_{M}^{-2N}$ and $H_{M}^{2N}$ are index $2$ subgroups of $\mathcal{C}_{\delta, \phi}(M)$, and therefore their union is not all of $\mathcal{C}_{\delta, \phi}(M)$.
    Note also that $\mathcal{A}_{2N}^{P^{\dagger}} \neq \mathcal{A}_{-2N}^{P^{\dagger}}$, and since conjugation preserves determinants, we cannot have $H_{P^{\dagger}}^{2N}$ conjugate to $H_{P^{\dagger}}^{-2N}$ in $\GL(2, \Z/P^{\dagger}\Z)$. 

    Suppose, towards a contradiction, that $G_{E/K, M}$ is conjugate to $G_{E'/K, M}$. Since we have chosen to represent the two groups so that they are identical in their mod-$\ell$ components, we may choose a matrix $B_{P^{\dagger}} \in \GL(2, \Z/P^{\dagger}\Z)$ such that 
    \[ (\Id, B_{P^{\dagger}})  \langle (g_{\ell}, g_{P^{\dagger}}), H_{\ell} \times H_{P^{\dagger}}^{-2N} \rangle (\Id, B_{P^{\dagger}})^{-1} = \langle (g_{\ell}, g_{P^{\dagger}}), H_{\ell} \times H_{P^{\dagger}}^{2N} \rangle. \]
    Since $g_{P^{\dagger}} \in \mathcal{C}_{\delta, \phi}(P^{\dagger}) \setminus (H_{P^{\dagger}}^{-2N} \cup H_{P^{\dagger}}^{2N})$, we can choose $g_{P^{+}}$ such that $\det(g_{P^{\dagger}}) \notin  \mathcal{A}_{2N}^{P^{\dagger}} \cup \mathcal{A}_{-2N}^{P^{\dagger}}$. Hence $B_{P^{\dagger}} g_{P^{\dagger}} B_{P^{\dagger}}^{-1}$ is not in $H_{P^{\dagger}}^{2N}$, so $B_{\dagger}H_{P^{\dagger}}^{-2N} B_{\dagger}^{-1} = H_{P^{\dagger}}^{2N}$, a contradiction.

    Therefore $G_{E/K, M}$ is not conjugate to $G_{E'/K, M}$ in $\GL(2, \Z/M\Z)$ and by Corollary \ref{cor-conjugate-at-cartan}, $G_{E, M}$ is not conjugate to $G_{E', M}$ in $\GL(2, \Z/M\Z)$.
    \end{proof}
    
    % Now, from Theorem \ref{thm-oddprimedividingdisc} and Algorithm \ref{alg-main}, we can write $G_{E, M} = \langle C, G_{E/K, M} \rangle$ and $G_{E', M} = \langle C', G_{E'/K, M} \rangle$, where $C, C' \in \{c_{1}, c_{-1} \}$ (see Section \ref{sec-notation} for the definition of $c_{\varepsilon}$ where $\varepsilon \in \{\pm 1\}$).

    % Since $\ell \neq 2$ and $j(E)$ is rational, it must be that $\ell \in \{3, 7, 11, 19, 43, 67, 163 \}$, so $\ell \equiv 3 \bmod 4$. Therefore, by Lemma \ref{lem-det-image}, no element in $\mathcal{C}_{\delta, \phi}(\ell)$ has determinant $-1 \bmod \ell$. Further, since $\ell \mid M$, no element in $\mathcal{C}_{\delta, \phi}(M)$ has determinant $-1 \bmod M$, including all elements in $G_{E/K, M}$ and $G_{E'/K, M}$. 

    % However, $\det(C) = \det(C') = -1$, and since conjugation preserves determinants, any conjugation between $G_{E, M}$ and $G_{E',M}$ would also be a conjugation between $G_{E/K, M}$ and $G_{E'/K, M}$. Therefore $G_{E,M}$ is not conjugate to $G_{E',M}$ in $\GL(2, \Z/M\Z)$.

\begin{proposition}\label{prop-notconjugate-level4and8}
    Let $E/\Q$ and $E'/\Q$ be elliptic curves with CM by $\Of$ and assume $j_{K, f} \neq 0, 1728$. Denote by $\ell$ the unique prime dividing $\Delta_K$, and assume $\ell \neq 2$.    Let $P := -N$ and $P' \in \{\pm 2N\}$ for $N$ a square-free integer satisfying $N \equiv 1 \bmod 4$, and suppose that 
    \begin{itemize}
        \item $E^P$ and $(E')^{P'}$ are $\ell$-simplest CM curves, and 
        \item $\gcd(\ell, P^{\dagger}) = 1$.
    \end{itemize}
Then for $M := \ell P^{\dagger}$, $G_{E, M }$ and $G_{E', M}$ are not conjugate in $\GL(2, \Z/ M\Z)$.
\end{proposition}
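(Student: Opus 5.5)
We argue at the level of the Cartan subgroup and compare indices. Here $P=-N\equiv 3\bmod 4$, so by Lemma \ref{lem-structureofN} we have $P^\dagger=4|N|$, while $(P')^\dagger=8|N|$. Hence $M=4\ell|N|$, which is divisible by $|\Delta_K|=\ell$ and different from it.

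\textbf{Step 1: reduce to the Cartan.} By Corollary \ref{cor-conjugate-at-cartan}, it suffices to show that $G_{E/K,M}$ and $G_{E'/K,M}$ are not conjugate in $\GL(2,\Z/M\Z)$. Conjugate subgroups of $\GL(2,\Z/M\Z)$ have the same order. Both groups sit inside the same finite group $\mathcal{C}_{\delta,\phi}(M)$, so it suffices to show that their indices in $\mathcal{C}_{\delta,\phi}(M)$ differ.

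\textbf{Step 2: the index for $E$.} By Corollary \ref{cor-levelofdef} applied to $E=(E^P)^{P}$, the level $M=\ell P^\dagger$ is a level of definition for $E$. Hence $[\mathcal{N}_{\delta,\phi}(M):G_{E,M}]=2$, and by Prop. \ref{prop-sameindex} we get $[\mathcal{C}_{\delta,\phi}(M):G_{E/K,M}]=2$.

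\textbf{Step 3: the index for $E'$.} We aim to show $[\mathcal{C}_{\delta,\phi}(M):G_{E'/K,M}]=1$.

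First, $8|N|$ is a level at which Theorem \ref{thm-entanglement} gives
\[ \Q(E'[\ell])\cap \Q(E'[8|N|])=K(\sqrt{P'}). \]
Corollary \ref{cor-galoistheoryofcompositum}(4) then shows that the images of $E'$ mod $\ell$ and mod $8|N|$ are full. Since $4|N|$ divides $8|N|$, the image mod $4|N|$ is full as well.

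By Corollary \ref{cor-galoistheoryofcompositum}(3), the index of $G_{E'/K,M}$ is the degree over $K$ of
\[ L:=\Q(E'[\ell])\cap \Q(E'[4|N|]). \]
This field $L$ is contained in $\Q(E'[\ell])\cap\Q(E'[8|N|])=K(\sqrt{P'})$. So it suffices to show that $\sqrt{\pm 2N}\notin \Q(E'[4|N|])$.

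Since $N\equiv 1\bmod 4$, we have $\sqrt N\in\Q(\zeta_{|N|})\subseteq \Q(E'[4|N|])$. Thus the claim is equivalent to $\sqrt{\pm 2}\notin \Q(E'[4|N|])$ (the sign being that of $P'/N$).

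\textbf{Main obstacle.} I expect the hardest part to be proving $\sqrt{\pm2}\notin \Q(E'[4|N|])$. I would try the following.

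\begin{itemize}
\item Since the image mod $4|N|$ is full, Corollary \ref{cor-galoistheoryofcompositum} gives that $K(E'[4])$ and $K(E'[|N|])$ are linearly disjoint over $K$, with groups $\mathcal{C}_{\delta,\phi}(4)$ and $\mathcal{C}_{\delta,\phi}(|N|)$.
\item A quadratic subextension of their compositum corresponds to a character of $\mathcal{C}_{\delta,\phi}(4)\times\mathcal{C}_{\delta,\phi}(|N|)$ of order $2$, so it is a product of a quadratic character from each factor.
\item Since $\ell$ is odd, $K(\sqrt{\pm2})/K$ is ramified only above $2$. I would show this forces the $|N|$-component to be trivial. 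To do this, I would use that the quadratic subextensions of $K(E'[|N|])/K$ coming from the $\det$-character are $K(\sqrt{p^\ast})$ with $p\mid N$ odd, and these ramify at odd primes.
\item If this ramification argument fails because of bad reduction at $2$, I would instead enumerate the index-$2$ subgroups of the explicit finite group $\mathcal{C}_{\delta,\phi}(|N|)$ directly.
\item What remains is to check that $K(\sqrt{\pm2})\not\subseteq K(E'[4])$. Here the image is $\mathcal{N}_{\delta,\phi}(4)$, so this is a finite computation with the quadratic subfields of $K(E'[4])$, which should be $K(i)$ and fields not of the form $K(\sqrt{\pm 2})$. Alternatively, one can observe that $\sqrt{\pm2}\in\Q(E'[4])$ together with the $2$-adic image being full would contradict the computation of $\Q(E'[4])\cap\Q^{\mathrm{ab}}$ from the determinant.
\end{itemize}

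With Step 3 established, the indices $2$ and $1$ differ, so the groups are not conjugate.
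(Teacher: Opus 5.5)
Your Steps 1 and 2 are fine. The claim driving Step 3, that $[\mathcal{C}_{\delta,\phi}(M):G_{E'/K,M}]=1$ (equivalently $\sqrt{\pm 2}\notin\Q(E'[4|N|])$), is false for some orders covered by the statement. These are the two orders of conductor $f=2$ with $\ell$ odd, namely $\Delta_Kf^2\in\{-12,-28\}$.

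In those cases $\phi=0$ and $\delta\in\{-3,-7\}$. An element of $\mathcal{N}_{\delta,0}(2^\infty)$ congruent to $\operatorname{Id}$ mod $4$ cannot lie in the coset $\mathcal{C}_{\delta,0}\,c_1$. So it is $c_{\delta,0}(a,b)$ with $a\equiv 1$ and $b\equiv 0 \bmod 4$, and therefore $\det=a^2-\delta b^2\equiv 1\bmod 8$.

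Consequently the mod-$8$ cyclotomic character factors through $\rho_{E',4}$. That is, $\Q(\zeta_8)\subseteq\Q(E'[4])$ for every curve with CM by $\Of$, however it is twisted. This gives:
\begin{itemize}
\item $\sqrt{P'}=\sqrt{\pm 2}\sqrt{N}\in\Q(E'[4|N|])$;
\item $\Q(E'[\ell])\cap\Q(E'[4|N|])=K(\sqrt{P'})$;
\item $M$ is a level of definition for $E'$, and $[\mathcal{N}_{\delta,\phi}(M):G_{E',M}]=2=[\mathcal{N}_{\delta,\phi}(M):G_{E,M}]$.
\end{itemize}
Concretely, take $\Delta_Kf^2=-28$ and $N=1$. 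Let $E=\texttt{784.f3}$ (the twist of $\texttt{49.a3}$ by $-1$) and let $E'$ be the twist of $\texttt{49.a1}$ by $2$. Then both mod-$28$ images have index $2$.

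Both of your proposed ways to finish break at exactly this point. The finite check that $K(\sqrt{\pm 2})\not\subseteq K(E'[4])$ fails, and so does the determinant argument, because for this order $\det \bmod 8$ is already a function of the matrix mod $4$.

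For odd $f$, the conclusion of your Step 3 does hold. Even there, the ramification sketch is not yet a proof. Since $E'$ has bad reduction at $2$, $K(E'[|N|])/K$ can contain quadratic subextensions ramified above $2$ that are not cut out by the determinant. These arise from characters $\omega$ of $\mathcal{C}_{\delta,\phi}(|N|)$ with $\omega(-1)=-1$.

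So no comparison of indices or orders can prove the statement in general. You need an invariant that separates two index-$2$ subgroups, and this is what the paper supplies.

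The paper's proof first disposes of your case, $G_{E',M}=\mathcal{N}_{\delta,\phi}(M)$. It then assumes $M$ is also a level of definition for $E'$. Via Theorem \ref{thm-cartan-image}, both Cartan images are written as $\langle (g_\ell,g),H_\ell\times H\rangle$ with the same $\ell$-part $H_\ell=J_{\delta,0}$:
\begin{itemize}
\item for $E$, $H=H_{P^\dagger}=\{g:\det g\in\mathcal{A}_P^{P^\dagger}\}$;
\item for $E'$, $H=\pi'(H_{2P^\dagger})$, the reduction mod $P^\dagger$ of $\{g\in\mathcal{C}_{\delta,\phi}(2P^\dagger):\det g\in\mathcal{A}_{P'}^{2P^\dagger}\}$.
\end{itemize}
These two subgroups have different determinant groups, because some automorphism of $\Q(\zeta_{8|N|})$ fixes $\sqrt{P'}$ but not $\sqrt{P}$. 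Since conjugation preserves determinants, the images cannot be conjugate.

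A convenient way to package this covers both cases uniformly. Any conjugation carrying $G_{E,M}$ to $G_{E',M}$ also carries
\[
G_{E,M}\cap\ker\bigl(\GL(2,\Z/M\Z)\to\GL(2,\Z/\ell\Z)\bigr)
\]
onto the analogous subgroup for $E'$, since that kernel is normal. Under the Chinese remainder theorem the first subgroup is $\{1\}\times H_{P^\dagger}$, with determinant group $\mathcal{A}_P^{P^\dagger}$ on the $P^\dagger$-component. For $E'$ the determinant group is different, so the two subgroups cannot correspond.
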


\begin{proof}
    From Lemma \ref{lem-structureofN}, we have that $M = \ell P^{\dagger} = 4 \ell N^{\dagger}$ and $(P')^{\dagger} = 8 N^{\dagger}$ so $\gcd(\ell, (P')^{\dagger}) = 1$. By Rem. \ref{rem-notation}, $n_{E, \ell} = 1$, and by Cor. \ref{cor-levelofdef}, $M$ and $2M$ are adelic levels of definition for $E$ and $E'$, respectively. If $G_{E', M} \cong \mathcal{N}_{\delta, \phi}(M)$ then we are done, so we assume that $M$ is also a level of definition for $E'$, hence $[\mathcal{N}_{\delta, \phi}(M) : G_{E', M}] = 2$.

    Since $\ell \neq 2$ and $E^P$ and $(E')^{P'}$ have non-maximal $\ell$-adic images by hypothesis, it follows from Theorem \ref{thm-oddprimedividingdisc}(a) that 
    \[ G_{E^P/K, \ell} \cong G_{(E')^{P'}/K, \ell} \cong J_{\delta, 0} = \left\langle \left\{ \left(\begin{array}{cc} a^2 & b\\ \delta b & a^2 \\ \end{array}\right): a \in (\Z/\ell\Z)^\times, b \in \Z/\ell\Z \right\} \right\rangle. \]
 Then by Thm. \ref{thm-cartan-image} and from the discussion in Section \ref{subsec-alg-main}, we have that
    \[ G_{E/K, M} \cong \langle (g_{\ell}, g_{P^{\dagger}}), H_{\ell} \times H_{P^{\dagger}} \rangle \quad \text{ and } \quad
 G_{E'/K, 2M} \cong \langle (g_{\ell}, g_{2P^{\dagger}}), H_{\ell} \times H_{2P^{\dagger}} \rangle, \]
    where $H_{\ell} = J_{\delta, 0}$, $H_{P^{\dagger}} = \{ g \in \mathcal{C}_{\delta, \phi}(P^{\dagger}) : \det(g) \in \mathcal{A}_{P}^{P^{\dagger}} \}$, and $H_{2P^{\dagger}} = \{ g \in \mathcal{C}_{\delta, \phi}(2P^{\dagger}) : \det(g) \in \mathcal{A}_{P'}^{2P^{\dagger}} \}$ for some $g_{\ell} \in \mathcal{C}_{\delta, \phi}(\ell) \setminus H_{\ell}$, $g_{P^{\dagger}} \in \mathcal{C}_{\delta, \phi}(P^{\dagger}) \setminus H_{P^{\dagger}}$, and $g_{2P^{\dagger}} \in \mathcal{C}_{\delta, \phi}(2P^{\dagger}) \setminus H_{2P^{\dagger}}$.

   For ease of notation, we let $\pi \colon \mathcal{C}_{\delta, \phi}(2M) \to \mathcal{C}_{\delta, \phi}(M)$ and $\pi' \colon \mathcal{C}_{\delta, \phi}(2P^{\dagger}) \to \mathcal{C}_{\delta, \phi}(P^\dagger)$ denote the natural projection maps between their respective Cartan subgroups.  By Proposition \ref{prop-CRTcompatible}, we have
   \[ \pi(G_{E'/K, 2M}) = G_{E'/K, M} \cong \langle (g_{\ell}, \pi'(g_{2P^{\dagger}})), H_{\ell} \times \pi'(H_{2P^{\dagger}}) \rangle. \]

   Since $\Q(\zeta_{8}), \Q(\zeta_{N^{\dagger}}) \subseteq \Q(\zeta_{8 N^{\dagger}})$, by Theorem \ref{kronecker-weber}, $\Q(\sqrt{P}, \sqrt{P'}) \subsetneq \Q(\zeta_{8 N^{\dagger}})$, so there exists a choice of $\sigma \in \Gal(\Q(\zeta_{8 N^{\dagger}})/\Q)$ such that $\sigma(\sqrt{P'}) = \sqrt{P'}$ but $\sigma(\sqrt{P}) \neq \sqrt{P}$. Thus, there is an element $B \in H_{2P^{\dagger}}$ such that $\det(\pi'(B)) \notin \det(H_{P^{\dagger}}) = \mathcal{A}_{P}^{P^{\dagger}}$. Therefore $\mathcal{A}_{P}^{P^{\dagger}} \neq \det(\pi'(H_{2P^{\dagger}}))$, and so $H_{P^{\dagger}}$ and $\pi'(H_{2 P^{\dagger}})$ are not conjugate.

   Suppose, towards a contradiction, that $G_{E/K, M}$ is conjugate to $G_{E'/K, M}$. We first note that $\det(\pi'(H_{2P^{\dagger}}))$ must be an index $2$ subgroup of $\det(\mathcal{C}_{\delta, \phi}(P^{\dagger}))$, otherwise $G_{E/K, M}$ and $G_{E'/K, M}$ would be of different sizes. Since we have chosen to represent the two groups so that they are identical in their mod-$\ell$ components, our hypothesis implies there is a choice of matrix $B_{P^{\dagger}} \in \GL(2, \Z/P^{\dagger}\Z)$ such that 
    \[ (\Id, B_{P^{\dagger}})  \langle (g_{\ell}, g_{P^{\dagger}}), H_{\ell} \times H_{P^{\dagger}} \rangle (\Id, B_{P^{\dagger}})^{-1} = \langle (g_{\ell}, \pi'(g_{2P^{\dagger}})), H_{\ell} \times \pi'(H_{2P^{\dagger}}) \rangle. \]

   Since $\det(H_{P^{\dagger}}) = \mathcal{A}_{P}^{P^{\dagger}}$ and $\det(\pi'(H_{2P^{\dagger}}))$ are both of index $2$ but not equal, we can choose $g_{P^{\dagger}}$ such that $\det(g_{P^{\dagger}}) \notin \mathcal{A}_{P}^{P^{\dagger}} \cup \det(\pi'(H_{2P^{\dagger}}))$. Thus, $B_{P^{\dagger}} g_{P^{\dagger}} B_{P^{\dagger}}$ cannot be contained in $\pi'(H_{2P^{\dagger}})$, so conjugation by $B_{P^{\dagger}}$ must send $H_{P^{\dagger}}$ to $\pi'(H_{2P^{\dagger}})$, a contradiction. It follows that $G_{E/K, M}$ is not conjugate to $G_{E'/K, M}$ in $\GL(2, \Z/M\Z)$, and therefore $G_{E, M}$ is not conjugate to $G_{E', M}$ in $\GL(2, \Z/M\Z)$ by Cor. \ref{cor-conjugate-at-cartan}.
\end{proof}

We conclude with the main result of this section.

\begin{theorem}
    Let $E/\Q$ be an elliptic curve with CM by $\Of$ with $j(E) \neq 0, 1728$. Let $N$ be a non-zero square-free integer such that $E^N$ is a simplest CM curve for the prime $\ell$ where $\gcd(\ell, N^{\dagger}) = 1$, and let $n := n_{E, \ell}$. Then $\ell^n N^{\dagger}$ is an adelic level of differentiation for $E$.
\end{theorem}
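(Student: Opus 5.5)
Set $M:=\ell^n N^\dagger$. By Cor. \ref{cor-levelofdef} (or by Theorem \ref{thm-ell-simplest} when $N=1$), $M$ is already an adelic level of definition for $E$. So condition (1) of Definition \ref{def-level-of-dif} holds, and only condition (2) needs proof. I would argue by contraposition. Let $E'/\Q$ be another curve with CM by $\Of$ such that $G_{E,M}$ is conjugate to $G_{E',M}$ in $\GL(2,\Z/M\Z)$. By Prop. \ref{prop-correcttwist}, write $E'=(E'_0)^{N'}$ with $E'_0$ an $\ell$-simplest curve and $\gcd(\ell,(N')^\dagger)=1$. The goal is to show that $G_E$ and $G_{E'}$ are conjugate in $\GL(2,\widehat{\Z})$.

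First I would extract invariants from the mod-$M$ image.

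\begin{enumerate}
\item Compare indices. The index of $G_{E',M}$ in $\mathcal{N}_{\delta,\phi}(M)$ equals that of $G_{E,M}$, namely $2$. Hence $E'$ is not surjective mod $M$, so $M$ is a level of definition for $E'$ by Theorem \ref{thm-levelofdef}.
\item Locate the minimal levels. The minimal level $M_{E'}$ divides $M$ by Prop. \ref{prop-minimalleveldivideslevelsofdef}. Applying Prop. \ref{lem-primediv} to both curves, $\ell^n N^\dagger$ and $\ell^{n}(N')^\dagger$ have the same prime divisors as $M_E$ and $M_{E'}$ respectively.
\item Pass to the Cartan. Since $\Delta_K\mid M$ with $M\neq |\Delta_K|$ (the case $N=1$ is trivial via Theorem \ref{thm-40simplestcurves}(2e)), Cor. \ref{cor-conjugate-at-cartan} gives that $G_{E/K,M}$ and $G_{E'/K,M}$ are conjugate.
\item Read off the entanglement field. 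The reductions mod $\ell^n$ are conjugate, and determinants are conjugation-invariant. The subgroup $H_{N^\dagger}$ is cut out by $\det\in\mathcal{A}_N^{N^\dagger}$ (Prop. \ref{prop-MfixingN}). Together with Theorem \ref{thm-cartan-image}, this lets us recover from $\det$ of the image the quadratic field $\Q(\sqrt N)$ cutting out the entanglement. This should force $(N')^\dagger=N^\dagger$ and, generically, $N'=N$.
\end{enumerate}

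The main obstacle is the $2$-part, where distinct $N$ can share the same odd part. The cases $N'=\pm 2N$ versus $-2N$, and $-N$ versus $\pm 2N$ with $N\equiv 1\bmod 4$, are exactly what Props. \ref{prop-notconjugate-level8} and \ref{prop-notconjugate-level4and8} rule out. I would organize the argument by Lemma \ref{lem-structureofN} according to $N\bmod 4$, invoking those propositions to exclude each mismatched pair. When $\ell=2$, the $2$-part of $N^\dagger$ is absent and the odd analysis suffices.

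Once $N'=N$ up to this bookkeeping, it remains to handle the simplest curves $E_0$ and $E'_0$. Their mod-$\ell^n$ images are recovered from $G_{E,M}$ and $G_{E',M}$: they are the subgroups $H_{\ell^n}$ of Prop. \ref{prop-ellfixingN}, together with the twisted complex conjugation $C$. Since $\ell^n$ is an $\ell$-adic level of differentiation (Remark \ref{rem-ell-level-of-diff}), the $\ell$-adic images of $E_0$ and $E'_0$ are conjugate, so $E_0\cong E'_0$ by Theorem \ref{thm-40simplestcurves}(2e). Hence $E\cong E'$ over $\Q$, and in particular $G_E$ and $G_{E'}$ are conjugate.

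If $E_0$ and $E'_0$ differ by a twist among $\{1,-\ell\}$ (or $\{\pm1,\pm2\}$), the same determinant/$H_{\ell^n}$ argument distinguishes them. Alternatively, one can conclude directly: both adelic images are $\pi_M^{-1}$ of conjugate mod-$M$ groups inside $\mathcal{N}_{\delta,\phi}$, so lift the conjugating matrix to $\GL(2,\widehat{\Z})$. I would try this direct lift first, checking that a lift can be chosen to normalize $\ker\pi_M$ in $\mathcal{N}_{\delta,\phi}$, which holds because $\ker\pi_M$ is the full principal congruence subgroup intersected with $\mathcal{N}_{\delta,\phi}$. That would make the case analysis unnecessary.
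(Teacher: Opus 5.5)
Your main line is the paper's proof. It has the same pieces: the index comparison, Prop.~\ref{lem-primediv} to match the odd parts of $N^\dagger$ and $(N')^\dagger$, and the split by Lemma~\ref{lem-structureofN} with Props.~\ref{prop-notconjugate-level8} and~\ref{prop-notconjugate-level4and8} excluding mismatched $2$-parts. Then, once $N'=N$, you recover conjugate mod-$\ell^n$ images of the simplest twists from the determinant-cut subgroup and the twisted non-Cartan element, and finish with Theorem~\ref{thm-40simplestcurves}(2e).

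One link is missing. Applying Prop.~\ref{lem-primediv} to each curve separately relates $N^\dagger$ to $M_E$ and $(N')^\dagger$ to $M_{E'}$; with only $M_{E'}\mid M$ you get one inclusion of prime sets, not equality. You need $M_E=M_{E'}$. This follows by applying your step 1 to every divisor $m\mid M$: the reductions mod $m$ are conjugate, hence have equal index in $\mathcal{N}_{\delta,\phi}(m)$, and Theorem~\ref{thm-levelofdef} applies. Your step 4 (reading off $\Q(\sqrt N)$ from determinants) is not carried out, but with this link in place it is not needed.

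The ``direct lift'' you say you would try first does not work as justified, so it cannot replace the case analysis. Let $\Gamma(M)=\ker(\GL(2,\widehat{\Z})\to\GL(2,\Z/M\Z))$. For a lift $\tilde B\in\GL(2,\widehat{\Z})$ of $B$, you get $\tilde B G_E\tilde B^{-1}=\{h\in\tilde B\mathcal{N}_{\delta,\phi}\tilde B^{-1} : h\bmod M\in G_{E',M}\}$. This equals $G_{E'}$ only if $\tilde B$ conjugates $\ker\pi_M=\Gamma(M)\cap\mathcal{N}_{\delta,\phi}$ into $\mathcal{N}_{\delta,\phi}$. Normality of $\Gamma(M)$ is irrelevant here, since $\tilde B(\Gamma(M)\cap\mathcal{N}_{\delta,\phi})\tilde B^{-1}=\Gamma(M)\cap\tilde B\mathcal{N}_{\delta,\phi}\tilde B^{-1}$. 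Modulo $M^{k+1}$, the kernel elements are $I+MX$ with $X$ in the algebra generated by $\theta=c_{\delta,\phi}(0,1)$ modulo $M^k$. So $\tilde B$ must normalize $\Z_p[\theta]$ for every $p\mid M$; equivalently, $B$ must lie in $\prod_{p^k\| M}\mathcal{N}_{\delta,\phi}(p^k)$ under CRT.

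An arbitrary conjugator between $G_{E,M}$ and $G_{E',M}$ need not have this form. Already at the ramified prime, $\Of/\ell\Of\cong\mathbb{F}_\ell[\epsilon]/(\epsilon^2)$ has automorphisms $\epsilon\mapsto c\epsilon$ with $c\neq\pm 1$. These are realized by conjugation in $\GL(2,\mathbb{F}_\ell)$, normalizing $\mathcal{C}_{\delta,\phi}(\ell)$, but do not lift to the adelic normalizer. To make the shortcut work you would have to show that some conjugator can always be chosen in that subgroup. That is a genuine normalizer computation your proposal does not supply, and it is why the paper instead proves the stronger conclusion $E\cong_{\Q}E'$.
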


\begin{proof}

    Let $M := \ell^n N^{\dagger}$ be defined as above, which is the level of definition for $E$ as in Algoritgm \ref{alg-main}. Suppose that $E'$ is an elliptic curve with CM by $\Of$ and that $G_{E, M}$ and $G_{E',M}$ are conjugate as subgroups of $\GL(2, \Z/M\Z)$. We wish to prove that $E \cong_{\Q} E'$, which implies $G_{E}$ is conjugate to $G_{E'}$ in $\GL(2, \widehat{\Z})$.

    Since $M$ is an adelic level of definition for $E$, by Thm \ref{thm-main} part (1), we have that $[\mathcal{N}_{\delta, \phi}(M) : G_{E,M}] = 2$, and hence $[\mathcal{N}_{\delta, \phi}(M) : G_{E',M}] = 2$. So $M$ is also an adelic level of definition for $E'$ by Thm. \ref{thm-levelofdef}. By a similar argument, if $m$ is an adelic level of definition for either $E$ or $E'$, and $m \mid M$, then $m$ is an adelic level of definition for both $E$ and $E'$. In particular, if $M_E$ and $M_{E'}$ denote the minimal levels of definition for $E$ and $E'$, respectively, then $M_E = M_{E'}$.
    
    Suppose we have $M' := \ell^n P^{\dagger}$ for a non-zero square-free integer $P$, where $\gcd(\ell, P^{\dagger}) = 1$ and $(E')^P$ is an $\ell$-simplest CM curve. Then $M'$ is a level of definition for $E'$ by Cor. \ref{cor-levelofdef}.

    From Lemma \ref{lem-primediv}, it follows that $M$, $M'$, and $M_{E} = M_{E'}$ share all prime divisors. In particular, by Lemma \ref{lem-structureofN}, if $N'$ and $P'$ are the odd, square-free parts of $N^{\dagger}$ and $P^{\dagger}$, respectively, then  $N' = P'$ and $N' \equiv 1 \bmod 4$. Further, since $P^{\dagger}$ and $N^{\dagger}$ share all prime divisors, we are in one of three cases. Without loss of generality, we have that
    \begin{enumerate}
        \item $N = P$ and $N^{\dagger} = P^{\dagger}$, or
        \item $N \neq P$ and $N^{\dagger} = P^{\dagger} = 8 N'$, or
        \item $N \neq P$, $N^{\dagger} = 4 N'$, and $P^{\dagger} = 8 N'$.
    \end{enumerate}
    
    In the first case, if $N = P = 1$, then $N^{\dagger} = P^{\dagger} = 1$, $E = E^N$ and $E' = (E')^P$ are both $\ell$-simplest CM curves, and $M = \ell^n$. Then $G_{E, \ell^n}$ is conjugate to $G_{E', \ell^n}$, so it follows from Theorem \ref{thm-40simplestcurves}(2e) that $E \cong_{\Q} E'$, as desired.
    
    In the second case, we can assume, without loss of generality, that $N = 2N'$ and $P = -2N'$, and in the third case, we can show that $N = -N''$ and $P = \pm 2 N''$ for some positive, square-free integer $N'' \equiv 1 \bmod 4$. Note that in both the second and third cases, $N^{\dagger}$ and $P^{\dagger}$ are even, so $\ell \neq 2$ since $\gcd(\ell, N^{\dagger}) = \gcd(\ell, P^{\dagger}) = 1$.

    Thus, the second and third cases are those described in Propositions \ref{prop-notconjugate-level8} and \ref{prop-notconjugate-level4and8}, respectively. In either case, we have shown that $G_{E, M}$ is not conjugate to $G_{E', M}$, which contradicts our hypothesis.

    Thus, we assume $N = P \neq 1$ and $N^{\dagger} = P^{\dagger}$, so $M = M'$ is a level of definition for both curves, and the steps of Algorithm \ref{alg-main} can be applied to construct the Galois image modulo $M$ of both curves.

    Recall that we assume $G_{E, M}$ is conjugate to $G_{E', M}$. For $C \in \{E, E'\}$, we define $G_{C, M, N} := \{ g \in G_{C, M} : \det(g) \in \mathcal{A}_{N}^{M} \}$, the subgroup of $G_{C, M}$ isomorphic to $\Gal(\Q(C[M])/\Q(\sqrt{N}))$. We also define $G_{C/K, M, N} := G_{C, M, N} \cap G_{C/K, M}$, which is isomorphic to $\Gal(\Q(C[M])/K(\sqrt{N}))$. Note that $G_{E, M, N}$ is conjugate to $G_{E', M, N}$ and $G_{E/K, M, N}$ is conjugate to $G_{E'/K, M, N}$ in $\GL(2, \Z/M\Z)$ since determinants are preserved by conjugation (see Proposition \ref{prop-MfixingN-redux}).
    
    Proposition \ref{prop-ellfixingN} shows that $G_{E/K, M, N} \bmod{\ell^n} \cong G_{E^N/K, \ell^n}$ and $G_{E'/K, M, N} \bmod{\ell^n} \cong G_{(E')^N/K, \ell^n}$, which are conjugate in $\GL(2,\Z/\ell^n\Z)$ since conjugation is preserved by reduction.

    Now, let $\sigma_E \in \Gal(\Q(E[M])/\Q)$ be an automorphism that does not fix $K$ and does not fix $\Q(\sqrt{N})$. Under an appropriate choice of basis, this corresponds to an element $g_E := \rho_{E,M}(\sigma_E) \in G_{E, M}$ that is not in $G_{E, M, N}$ or in $\mathcal{C}_{\delta, \phi}(M)$. Note that, since $\sigma_E$ does not fix $\Q(\sqrt{N})$, $\chi_N(\sigma_E) = -1$, where $\chi_N$ is the quadratic character  corresponding to the twist $E^N = E^{\chi_N}$. Thus, by Proposition \ref{cpx_conj_lemma2}, we have that $(\chi_{N}(\sigma_E) \cdot \Id) \cdot g_E = -\Id \cdot g_E \in G_{E^N, M}$. 
    
    Now, let $g_{E, \ell^n} := \pi_{M, \ell^n}(-\Id \cdot g_E)$. Note that, since $g_E \notin \mathcal{C}_{\delta, \phi}(M)$, $g_{E, \ell^n} \notin \mathcal{C}_{\delta, \phi}(\ell^n)$, and therefore $G_{E^N, \ell^n} = \langle g_{E, \ell}, G_{E^N, \ell^n} \rangle$ by Proposition \ref{prop-reconstructimage}.

    Let $B \in \GL(2, \Z/M\Z)$ be a conjugation matrix sending $G_{E, M}$ to $G_{E', M}$. We define $g_{E'} := B g_E B^{-1} \in G_{E', M}$, and note that $g_{E'}$ must also satisfy $g_{E'} \notin G_{E', M, N}$ and $g_{E'} \notin \mathcal{C}_{\delta, \phi}(M)$. Thus, for $g_{E', \ell^n} := \pi_{M, \ell^n}(g_{E'})$, we have that $g_{E, \ell^n} \notin \mathcal{C}_{\delta, \phi}(\ell^n)$, so $G_{(E')^N, \ell^n} = \langle g_{E', \ell^n}, G_{(E')^N/K, \ell^n} \rangle$. Finally, by construction, we have $G_{(E')^N, \ell^n}$ conjugate to $G_{E^N, \ell^n}$ in $\GL(2, \Z/\ell^n \Z)$. Since $E^N$ and $(E')^N$ are $\ell$-simplest, and $\ell^n$ is an $\ell$-adic level of differentiation, it follows by Theorem \ref{thm-40simplestcurves}(2e) that $E^N \cong_{\Q} (E')^N$, so $E \cong_{\Q} E'$ by the theory of twists.
    \end{proof}

\bibliography{bibliography}
\bibliographystyle{plain}

\end{document}